\numberwithin{equation}{section}
\newtheorem{thm}{Th\'eor\`eme}[section]
\newtheorem{cor}[thm]{Corollaire}
\newtheorem{lem}[thm]{Lemme}
\newtheorem{pro}[thm]{Proposition}
\theoremstyle{definition}
\newtheorem{defi}[thm]{D\'efinition}
\newtheoremstyle{remarque}{}{}{}{}{\it}{.}{\newline}{}
\theoremstyle{remarque}
\newtheorem*{rem}{Remarque}
\newtheorem*{rems}{Remarques}
\newcommand{\asd}[5]{%
\setbox1=\hbox{\ensuremath{^{#1}}}%
\setbox2=\hbox{\ensuremath{_{#2}}}%
\setbox5=\hbox{\ensuremath{#5}}%
\hspace{\ifnum\wd1>\wd2\wd1\else\wd2\fi}%
\ensuremath{\copy5^{\hspace{-\wd1}\hspace{-\wd5}#1\hspace{\wd5}#3}%
_{\hspace{-\wd2}\hspace{-\wd5}#2\hspace{\wd5}#4}%
}}
\DeclareSymbolFont{cyrletters}{OT2}{wncyr}{m}{n}
\DeclareMathSymbol{\Sha}{\mathalpha}{cyrletters}{"58}
\DeclareMathSymbol{\Brusse}{\mathalpha}{cyrletters}{"42}
\newcommand{\n}{\mathbb{N}}
\newcommand{\z}{\mathbb{Z}}
\newcommand{\q}{\mathbb{Q}}
\newcommand{\f}{\mathbb{F}}
\renewcommand{\a}{\mathbb{A}}
\renewcommand{\ker}{\mathrm{Ker }}
\newcommand{\im}{\mathrm{Im}}
\renewcommand{\hom}{\mathrm{Hom}}
\newcommand{\ext}{\mathrm{Ext}}
\renewcommand{\inf}{\mathrm{Inf}}
\newcommand{\res}{\mathrm{Res}}
\newcommand{\aut}{\mathrm{Aut}\,}
\newcommand{\out}{\mathrm{Out}\,}
\renewcommand{\int}{\mathrm{Int}\,}
\newcommand{\gm}{\mathbb{G}_{\mathrm{m}}}
\newcommand{\br}{\mathrm{Br}\,}
\newcommand{\nr}{\mathrm{nr}}
\newcommand{\brun}{\mathrm{Br}_1}
\newcommand{\brnr}{{\mathrm{Br}_{\nr}}}
\newcommand{\al}{\mathrm{al}}
\newcommand{\bral}{\mathrm{Br}_{\al}}
\newcommand{\brnral}{\mathrm{Br}_{\nr,\al}}
\newcommand{\gal}{\mathrm{Gal}}
\newcommand{\sln}{\mathrm{SL}_n}
\newcommand{\pic}{\mathrm{Pic}\,}
\newcommand{\Pic}{\underline{\mathrm{Pic}}\,}
\newcommand{\ns}{\mathrm{NS}\,}
\renewcommand{\div}{\mathrm{div}}
\newcommand{\Div}{\mathrm{Div}}
\newcommand{\ab}{\mathrm{ab}}
\newcommand{\der}{\mathrm{der}}
\newcommand{\spec}{\mathrm{Spec}\,}
\renewcommand{\cal}[1]{\mathcal{#1}}
\newcommand{\bb}[1]{\mathbb{#1}}
\renewcommand{\O}{\cal{O}}
\renewcommand{\a}{\mathfrak{a}}
\renewcommand{\b}{\mathfrak{b}}
\title{Groupe de Brauer non ramifi\'e des espaces homog\`enes \`a stabilisateur fini}
\author{Giancarlo Lucchini Arteche\\[5mm]
{\it\small D\'epartement de math\'ematique, universit\'e Paris-Sud}\\
{\it\small b\^atiment 425, 91405 Orsay cedex, France}\\
{\small giancarlo.lucchini@math.u-psud.fr}
}
\date{}
\begin{document}

\selectlanguage{french}
\maketitle

\begin{abstract}
On d\'eveloppe diff\'erentes formules de nature alg\'ebrique et/ou arithm\'etique permettant de calculer explicitement, tant sur un corps fini que sur
un corps de caract\'eristique $0$, la partie alg\'ebrique du groupe de Brauer non ramifi\'e d'un espace homog\`ene $G\backslash G'$ sous un groupe
lin\'eaire $G'$ semi-simple simplement connexe \`a stabilisateur fini $G$. On donne aussi des exemples de calculs que l'on peut faire avec ces formules.\\

Mots cl\'es : groupe de Brauer, espaces homog\`enes, cohomologie galoisienne, groupes finis.
\end{abstract}

\selectlanguage{english}
\begin{abstract}
{\bf Unramified Brauer group of homogeneous spaces with finite stabilizer.} We develop different formulas of algebraic and/or arithmetic nature allowing
an explicit calculation, both over a finite field and over a field of characteristic $0$, of the algebraic part of the unramified Brauer group of a homogeneous
space $G\backslash G'$ under a semisimple simply connected linear group $G'$ with finite stabilizer $G$. We also give examples of the calculations that can be done
with these formulas.\\

Key words: Brauer group, homogeneous spaces, Galois cohomology, finite groups.
\end{abstract}

\selectlanguage{french}

\section{Introduction}
Le groupe de Brauer $\br V$ d'une vari\'et\'e $V$ sur un corps $k$ est un invariant cohomologique qui permet, via certains de ses sous-quotients, de
r\'epondre parfois par la n\'egative \`a la question de la rationalit\'e de la vari\'et\'e $V$, ainsi que, dans le cas o\`u $k$ est un corps global, \`a des
questions de nature arithm\'etique telles que le principe de Hasse et l'approximation faible via l'obstruction de Brauer-Manin. Ces deux raisons suffisent
en elles m\^emes pour justifier une \'etude approfondie de ce groupe et de ses sous-quotients, notamment de sa partie \emph{non ramifi\'ee} $\brnr V$,
de sa partie \emph{alg\'ebrique} $\bral V$, ou encore de la partie \emph{non ramifi\'ee alg\'ebrique} $\brnral V$.

Dans le cadre des vari\'et\'es $V$ qui sont des \emph{espaces homog\`enes} sous un groupe alg\'ebrique lin\'eaire connexe $G'$, les travaux les
plus approfondis sont ceux pr\'esent\'es par Borovoi, Demarche et Harari dans \cite{BDH}. Ils ont d\'evelopp\'e notamment, en suivant une m\'ethode
pr\'esent\'ee par Colliot-Th\'el\`ene et Kunyavski\u\i\, dans \cite{ColliotKunyavskii} pour les espaces principaux homog\`enes, des formules de nature alg\'ebrique
pour le groupe de Brauer non ramifi\'e alg\'ebrique $\brnral V$ lorsque le stabilisateur de la vari\'et\'e est de type ``ssumult'', cas qui contient notamment les
stabilisateurs connexes et les stabilisateurs de type multiplicatif. Or, malgr\'e la vaste g\'en\'eralit\'e de ces r\'esultats, le cas des espaces homog\`enes
\`a stabilisateur fini semble \'echapper \`a leurs m\'ethodes d\`es que le stabilisateur en question est un groupe non ab\'elien.
Le but de cet article est donc de traiter ce cas particulier. En nous inspirant de la m\^eme m\'ethode de Colliot-Th\'el\`ene et Kunyavski\u\i\, et en suivant le
m\^eme chemin que Borovoi, Demarche et Harari (qui consiste \`a traiter d'abord le cas des corps finis pour ensuite traiter le cas d'un corps
quelconque de caract\'eristique 0), on obtient des formules explicites pour le groupe de Brauer non ramifi\'e alg\'ebrique $\brnral V$ d'un espace homog\`ene
$V=G\backslash G'$ sous un groupe $G'$ semi-simple et simplement connexe \`a stabilisateur fini $G$.\\

Le chemin pour arriver \`a ces formules est relativement d\'etourn\'e. Il faut d'abord traiter le cas des vari\'et\'es sur un corps fini, ce qu'on fait dans
la section \ref{section corps fini}. Le r\'esultat de cette section (th\'eor\`eme \ref{theoreme sans compactification lisse corps fini}) caract\'erise les
\'el\'ements non ramifi\'es de $\br V$, o\`u $V$ est une vari\'et\'e lisse et g\'eom\'etriquement int\`egre sur un corps fini, via l'\'evaluation de ces
\'el\'ements sur les ``points locaux'' de $V$. L'outil principal de la preuve est un th\'eor\`eme r\'ecent de Gabber qui pr\'ecise le th\'eor\`eme de de Jong
sur les alt\'erations, cf. \cite[Expos\'e X, Theorem 2.1]{ILO}. Gr\^ace \`a ce r\'esultat, Borovoi, Demarche et Harari ont pu montrer que les \'el\'ements de
$\brnr V$ v\'erifient le crit\`ere en question, tant sur un corps global de caract\'eristique positive \cite[Proposition 4.2]{BDH} que sur un corps fini
\cite[Proposition 4.3]{BDH}. La r\'eciproque de ce r\'esultat (\`a vrai dire, une version plus faible) n'a pourtant \'et\'e donn\'ee que r\'ecemment pour les corps
globaux, toujours en utilisant le r\'esultat de Gabber, cf. \cite[Th\'eor\`eme 2.1]{GLABrnr}. Le th\'eor\`eme
\ref{theoreme sans compactification lisse corps fini} ci-dessous donne alors cette r\'eciproque dans le cas d'un corps fini, donnant ainsi un crit\`ere pour
l'appartenance au groupe de Brauer non ramifi\'e $\brnr V$ aussi dans ce cas.\\

La section \ref{section formules} contient les r\'esultats principaux de ce texte. On se concentre sur les espaces homog\`enes $V$ sous un groupe $G'$
semi-simple simplement connexe \`a stabilisateur fini sur diff\'erents corps de base $k$ et on donne dans l'ordre :
\begin{itemize}
\item[\S\ref{section formule cohomologique} :] une formule cohomologique pour $\brnral V$ sur un corps fini ;
\item[\S\ref{section formule algebrique} :] une formule alg\'ebrique pour $\brnral V$ sur un corps fini ;
\item[\S\ref{section formule algebrique car 0} :] une formule alg\'ebrique pour $\brnral V$ sur un corps de caract\'eristique 0 ;
\item[\S\ref{section formule cohomologique corps local} :] une formule cohomologique pour $\brnral V$ sur un corps local, sous quelques hypoth\`eses suppl\'ementaires.
\end{itemize}
Ici, et dans toute la suite du texte, on entend par formule cohomologique (resp. alg\'ebrique) une formule d\'ependant des th\'eor\`emes de dualit\'e
en cohomologie galoisienne pour des corps locaux (resp. une formule d\'ependant de la structure de groupe alg\'ebrique du stabilisteur).\\

Enfin, la section \ref{section applications} est consacr\'ee aux applications des diff\'erents r\'esultats. On donne d'abord, via les formules
alg\'ebriques, des r\'esultats de trivialit\'e du groupe $\brnral V$ sur un corps fini, puis sur un corps de caract\'eristique 0
(propositions \ref{proposition nullite du brnral si G abelien}, \ref{proposition brnr nul si exp G divise q-1}, \ref{proposition brnral nul si exp G premier a q-1},
\ref{proposition brnr nul si exp G divise mu_k} et \ref{proposition brnral nul si exp G premier a mu_k}), dont certains \'etaient d\'ej\`a connus auparavant.
On montre aussi avec un exemple que ces formules se pr\^etent \`a des calculs totalement explicites et faisables \`a la main pour des petits groupes (propositions
\ref{proposition exemple Demarche sur fq} et \ref{proposition exemple Demarche en car 0}). Ces calculs permettent de montrer notamment que $\brnr V$ est en
g\'en\'eral plus grand que les groupes de type ``$\Sha^1_{\mathrm{cyc}}$'', lesquels suffissent pour d\'ecrire le groupe de Brauer non ramifi\'e dans le cas des
stabilisateurs connexes ou ab\'eliens (cf. \cite{BDH}). Cette diff\'erence essentielle par rapport aux travaux de Colliot-Th\'el\`ene, Kunyavski\u\i, Borovoi,
Demarche et Harari est due au fait que, sur un corps fini et dans notre contexte des stabilisateurs finis, le groupe de Brauer non ramifi\'e alg\'ebrique
$\brnral V$ n'a aucune raison d'\^etre trivial (par oppposition aux autres cas, cf. \cite[Th\'eor\`eme 7.5]{BDH} et \cite[Proposition 1]{ColliotKunyavskii}
et comparer avec la proposition \ref{proposition exemple Demarche sur fq} ci-dessous).

En un deuxi\`eme temps, on donne une application arithm\'etique de ces formules, notamment de leur relation avec l'obstruction de Brauer-Manin. Dans
ce sens on a pu montrer que cette obstruction ``ne voit pas'' les places r\'eelles d'un corps de nombres (proposition
\ref{proposition BM ne voit pas les places reelles}).\\

Ce texte contient enfin deux sections en appendice. La section \ref{section changement de base} se concentre sur les vari\'et\'es $V$ lisses et
s\'eparablement rationnellement connexes (donc en particulier sur $V$ s\'eparablement unirationnelle, comme c'est le cas pour les espaces homog\`enes
\'etudi\'es dans ce texte). Pour ces vari\'et\'es, on montre un r\'esultat de ``rigidit\'e'' du groupe de Brauer non ramifi\'e alg\'ebrique $\brnral V$.
Ce r\'esultat, qui suppose l'existence d'une compactification lisse de $V$, \'etait initialement pr\'evu pour obtenir une formule comme celle de la section
\ref{section corps fini} via changement de base \`a un corps global (corps pour lequel des telles formules existent d\'ej\`a, cf. \cite[Th\'eor\`emes 2.1 et 3.1]{GLABrnr}),
mais le th\'eor\`eme de Gabber mentionn\'e ci-dessus nous a permis de nous d\'ebarasser de cette hypoth\`ese. On pr\'esente n\'eanmoins ce r\'esultat
dans l'espoir qu'il puisse \^etre utile pour d'autres recherches, car il ne semble pas sans int\'er\^et.

La section \ref{section inseparable} contient pour sa part quelques r\'esultats sur des extensions ins\'eparables de corps et d'anneaux de valuation discr\`ete.
Ces r\'esultats \'el\'ementaires, pour lesquels on n'a pas pu trouver de bonne r\'ef\'erence, sont utilis\'es notamment dans la preuve du th\'eor\`eme
\ref{theoreme sans compactification lisse corps fini}.

\paragraph*{Remerciements}
L'auteur tient \`a remercier David Harari, Cyril Demarche et le rapporteur pour la lecture soigneuse qu'ils ont fait de ce texte, ainsi que Jean-Louis Colliot-Th\'el\`ene
et Philippe Gille pour leurs remarques.

\section{Notations et conventions}\label{section notations}
Dans tout ce texte, $k,K,L$ repr\'esentent des corps de caract\'eristique $p\geq 0$. Pour un corps $k$, on note toujours $\bar k$ une cl\^oture s\'eparable de
$k$ et $\Gamma_k:=\gal(\bar k/k)$ le groupe de Galois absolu. Dans le cas des corps finis, $q$ repr\'esente toujours le cardinal du corps
et il s'agit donc d'une puissance de $p$. Dans le cas des corps globaux, i.e. des corps de nombres et des corps de fonctions d'une courbe sur un corps fini,
on note $\Omega_k$ l'ensemble des places de $k$, lesquelles sont not\'ees $v,w$, et l'on note $k_v,k_w$, etc. les compl\'et\'es de ces corps par rapport \`a
ces places. Par ``corps local'', on entend pr\'ecis\'ement une telle compl\'etion d\`es qu'elle est non archim\'edienne, i.e. une extension finie du corps
$\q_\ell$ dans le cas de caract\'eristique 0 ou bien un corps de la forme $k((t))$ avec $k$ un corps fini dans le cas de caract\'eristique $p>0$.

On note $V$ une $k$-vari\'et\'e, laquelle, sauf mention du contraire, est toujours suppos\'ee lisse et g\'eom\'etriquement int\`egre. On notera $X$ une
$k$-compactification de $V$ (que l'on supposera lisse aussi, \`a d\'efaut de la mention du contraire), i.e. une $k$-vari\'et\'e propre munie
d'une $k$-immersion ouverte $V\hookrightarrow X$. \footnote{On rappelle que l'existence d'une compactification lisse est assur\'ee en caract\'eristique
0 via le th\'eor\`eme d'Hironaka, tandis qu'en caract\'ersitique positive la dite existence est une question toujours ouverte.}  On r\'eserve la notation
$G'$ pour un $k$-groupe semi-simple simplement connexe (par exemple $\sln$) et $G$ pour un $k$-groupe fini,
souvent plong\'e dans $G'$. On supposera dans tout l'expos\'e que l'ordre du groupe $G$ (\`a savoir, le degr\'e du morphisme structurel $G\to\spec k$)
est premier \`a la caract\'eristique $p$ du corps $k$. Ainsi, le groupe $G$ est toujours suppos\'e lisse et en particulier le cardinal de $G(\bar k)$ est toujours premier
\`a $p$. La $k$-vari\'et\'e $V:=G\backslash G'$ est lisse et g\'eom\'etriquement int\`egre et correspond \`a un espace homog\`ene sous $G'$ (cf.
\cite[VI$_\text{B}$, Prop. 9.2]{SGA3I}).

Pour une extension de corps $K/k$ et pour $V$ une $k$-vari\'et\'e, on note $V_K:=V\times_k K$ la $K$-vari\'et\'e obtenue par changement de base.
Pour toute vari\'et\'e $V$, on note $H^i(V,\cdot)$ les groupes de cohomologie \'etale et $H^i(k,\cdot)$ les groupes de cohomologie galoisienne 
classiques (qui co\"\i ncident avec les groupes de cohomologie \'etale pour $V=\spec k$). Pour $i=1$, on peut aussi d\'efinir des ensembles de
cohomologie non ab\'elienne, lesquels seront toujours not\'es $H^1(V,\cdot)$ et $H^1(k,\cdot)$ comme dans le cas ab\'elien (ces deux ensembles
co\"\i ncident aussi pour $V=\spec k$). Si l'on \'ecrit $\gm$ pour le groupe multiplicatif, le groupe de Brauer $\br V$ est d\'efini comme le groupe
$H^2(V,\gm)$. Pour $n> 1$ premier \`a $p$, on note $\mu_n$ le groupe alg\'ebrique (fini et
lisse) des racines $n$-i\`emes de l'unit\'e. La $n$-torsion du groupe de Brauer $\br V$ est alors donn\'ee par un quotient de $H^2(V,\mu_n)$. On note
aussi $\mu$ la limite inductive des $\mu_n$ pour $n\in\n$ et l'on note donc $\mu\{p'\}$ le groupe des racines de l'unit\'e d'ordre premier \`a $p$, cf.
ci-dessous.

On utilise la lettre $\ell$ pour des nombres premiers diff\'erents de la caract\'eristique $p$. Pour un groupe ab\'elien $A$, on note $A\{\ell\}$ les \'el\'ements
de torsion $\ell$-primaire de $A$ et $A\{p'\}$ les \'el\'ements de torsion premi\`ere \`a $p$. Pour la d\'efinition du groupe de Brauer non ramifi\'e $\brnr V$
d'une $k$-vari\'et\'e $V$, on renvoie \`a \cite{ColliotSantaBarbara} ou encore \`a \cite{ColliotSansucChili}. On se limite \`a rappeler que, lorsque l'on
dispose d'une compactification lisse $X$ de $V$, on a l'\'egalit\'e $\brnr V\{p'\}=\br X\{p'\}$, o\`u $p$ correspond \`a la caract\'eristique du corps $k$,
(ce sont des applications du th\'eor\`eme de puret\'e de Grothendieck, cf. \cite[Prop. 4.2.3]{ColliotSantaBarbara}).
Le groupe de Brauer alg\'ebrique est d\'efini comme
\[\bral V:=\frac{\ker[\br V\to\br V_{\bar k}]}{\im[\br k\to\br V]}.\]
On note aussi $\brun V:=\ker[\br V\to\br V_{\bar k}]$ d'o\`u $\bral V=\brun V/\br k$ lorsqu'on note par abus $\br k$ sa propre image dans $\br V$.
Le groupe de Brauer non ramifi\'e alg\'ebrique $\brnral V$ est alors tout simplement l'image de $\brnr V\cap \brun V$ dans $\bral V$ par la
projection naturelle.

On fait enfin les conventions de notation suivantes. Pour $G$ un $k$-groupe fini et pour tout groupe profini $\Delta$ agissant sur $G(\bar k)$ (en particulier
pour $\Gamma_k$ ou $\Gamma_{k_v}$ pour $v$ une place de $k$ si $k$ est un corps global), on note
\[H^i(\Delta,G):=H^i(\Delta,G(\bar k))\quad\text{et}\quad \hom(\Delta,G):=\hom_{\mathrm{cont}}(\Delta,G(\bar k)).\]
De plus, la notation $b\in G$ veut toujours dire $b\in G(\bar k)$. On rappelle aussi que les groupes $\Sha^i_{\mathrm{cyc}}(k,G)$ pour $G$ ab\'elien
sont d\'efinis comme les \'el\'ements $\alpha\in H^i(k,G)$ tels que, pour tout sous-groupe
pro-cyclique $\gamma$ de $\Gamma_k$, on a que la restriction $\alpha_\gamma\in H^i(\gamma,G)$ de $\alpha$ est triviale.\footnote{Pour un corps global on peut aussi
d\'efinir les groupes (ou ensembles) $\Sha^i_\omega(k,G)$, lesquels sont d\'efinis comme les \'el\'ements $\alpha\in H^i(k,G)$
dont l'image dans $H^i(k_v,G)$ est triviale pour presque toute place $v\in\Omega_k$ (i.e. pour toute place sauf un nombre fini). Le th\'eor\`eme de \v Cebotarev
permet de d\'emontrer facilement que les groupes (resp. ensembles) $\Sha^i_\omega$ et $\Sha^i_{\mathrm{cyc}}$ co\"\i ncident, cf. \cite[\S 1]{Sansuc81}.}

\section{Groupe de Brauer non ramifi\'e sur un corps fini}\label{section corps fini}
Comme il a \'et\'e dit dans l'introduction, dans \cite{GLABrnr}, on a utilis\'e un r\'esultat de Gabber qui pr\'ecise le th\'eor\`eme de de Jong
sur les alt\'erations (cf. \cite{ILO}, Expos\'e X, Theorem 2.1) pour \'etablir une caract\'erisation du groupe de Brauer non ramifi\'e $\brnr V$
d'une vari\'et\'e $V$ lisse et g\'eom\'etriquement int\`egre sur un corps global de caract\'eristique positive (cf. \cite{GLABrnr}, Th\'eor\`eme 2.1).
En appliquant les m\^emes techniques, on peut obtenir un r\'esultat analogue pour les corps finis :

\begin{thm}\label{theoreme sans compactification lisse corps fini}
Soient $k=\f_q$ avec $q$ une puissance de $p$ et $V$ une $k$-vari\'et\'e lisse et g\'eom\'etriquement int\`egre. Soit $\alpha\in\br V$
un \'el\'ement d'ordre $n$ premier \`a $p$. Alors $\alpha$ appartient \`a $\brnr V$ si et seulement si, pour toute extension finie $L$ de $k$,
l'application $V(L((t)))\to\br L((t))$ induite par $\alpha$ est nulle.
\end{thm}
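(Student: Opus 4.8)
The plan is to follow the strategy used in \cite{GLABrnr} for global fields of positive characteristic, adapting it to the finite field case. The statement has two directions; one is essentially already known and the other is the substantive one.

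\emph{The easy direction.} Suppose $\alpha\in\brnr V$ with $\alpha$ of order $n$ prime to $p$. One must show that for every finite extension $L/k$, the evaluation map $V(L((t)))\to\br L((t))$ induced by $\alpha$ is zero. This is exactly the content of \cite[Proposition 4.3]{BDH} (or can be deduced from it): an unramified element evaluated at a point with values in the fraction field of a discrete valuation ring (here $L[[t]]$) lands in the unramified Brauer group of that field, and $\brnr(L((t)))=\br(L[[t]])$, which injects into $\br L$; but $\br L = 0$ since $L$ is finite. So I would simply cite this and move on; the key input is Gabber's refinement of de Jong, already invoked there.

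\emph{The hard direction.} Conversely, assume $\alpha\in\br V$ of order $n$ prime to $p$ has the property that all evaluations $V(L((t)))\to\br L((t))$ vanish, and I must show $\alpha$ is unramified, i.e. for every rank-one discrete valuation ring $R$ with fraction field containing the function field of $V$ and with a morphism $\spec R \to V$ (equivalently, for every divisorial valuation on a model), the residue of $\alpha$ vanishes. The standard reduction (purity, and the fact that it suffices to check residues at divisorial valuations on a fixed model, or even on a smooth compactification when one exists — though here we want to avoid assuming that) lets me localize: I pick a point $v$ of codimension one on a normal model, get a complete DVR $\hat R$ with residue field $\kappa$ a finitely generated field over $\f_q$, and I need the residue $\partial_v(\alpha)\in H^1(\kappa,\q/\z)$ to vanish. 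The idea, as in \cite{GLABrnr}, is to use Gabber's theorem to produce, after a suitable alteration of prime-to-$p$ degree, a point of $V$ with values in a field of the form $L((t))$ (or a finite extension thereof handled by a norm argument) that ``sees'' this residue — so that the hypothesis of vanishing evaluation forces $\partial_v(\alpha)=0$. Concretely, Gabber gives an alteration $X'\to X$ of degree prime to $p=\operatorname{char}$ with $X'$ regular and the preimage of the bad locus a strict normal crossings divisor; restricting to a suitable curve/point and completing produces the required local point, and the prime-to-$p$ degree is inverted using that $\alpha$ has order prime to $p$ (so corestriction-restriction multiplies by a unit).

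\emph{Where the real work lies.} The main obstacle is the bookkeeping in the hard direction: making sure that a nonzero residue $\partial_v(\alpha)$, living in the Galois cohomology of a residue field $\kappa$ that is \emph{not} finite but only finitely generated over $\f_q$, is actually detected by an evaluation of $\alpha$ at a point with values in some $L((t))$ with $L/\f_q$ \emph{finite}. This is where the appendix on inseparable extensions (Section \ref{section inseparable}) and a \v Cebotarev-type argument over $\kappa$ come in: one first spreads out and specializes the codimension-one point to reduce $\kappa$ down to a finite field (choosing a closed point of the corresponding subvariety at which the residue, viewed as an element of $H^1$ of a curve over $\f_q$, remains nonzero — possible by \v Cebotarev applied to the cyclic extension cutting out the residue), and then one is reduced to the function field $\f_{q^r}((t))$ of a formal disk, whence a point in $V(\f_{q^r}((t)))$ on which $\alpha$ evaluates nontrivially, contradicting the hypothesis. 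The inseparability issues arise precisely because the alteration from Gabber's theorem need not be generically étale, so the comparison of residues across $X'\to X$ requires the elementary but unrecorded lemmas of Section \ref{section inseparable}. I would structure the proof as: (1) reduce to checking divisorial residues via purity; (2) apply Gabber to a compactification and reduce to a regular model with SNC bad locus; (3) use the inseparable-descent lemmas to compare residues and invert the prime-to-$p$ degree; (4) apply \v Cebotarev over the codimension-one residue field to descend to a finite field and produce the local point; (5) conclude by the hypothesis.
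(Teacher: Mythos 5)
Your overall skeleton (quote \cite[Proposition 4.3]{BDH} for the easy direction, and for the converse use Gabber's theorem to detect a nonzero residue by an $L((t))$-point) is the right one, but two steps, as written, do not work. First, Gabber's theorem is misquoted: it does not produce an alteration of degree prime to $p$; for each prime $\ell\neq p$ it produces, after a finite extension $k'/k$ of degree prime to $\ell$, an $\ell'$-alteration $X'\to X_{k'}$, i.e.\ of degree prime to $\ell$ only. Consequently your inversion step (``the prime-to-$p$ degree is inverted using that $\alpha$ has order prime to $p$'') is a non sequitur: a degree prime to $p$ may perfectly well be divisible by a prime $\ell$ dividing $n$, and then restriction--corestriction says nothing about the $\ell$-primary residue. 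The repair is exactly the paper's first move: decompose $\alpha$ into its $\ell$-primary components and run the whole argument separately for each $\ell\neq p$ with an $\ell'$-alteration. Note also that corestriction only exists for the separable part of the extension of residue fields; the purely inseparable part must be handled by the appendix (corollaires \ref{corollaire extensions avd} and \ref{corollaire isomorphisme extensions purement inseparables}), so the residue comparison is a three-step factorization: inflation, separable restriction--corestriction of degree prime to $\ell$, inseparable isomorphism. Second, you start by placing the offending valuation at ``a point of codimension one on a normal model''. Nothing guarantees this: $\brnr V$ is defined using \emph{all} discrete valuation rings of $k(V)$ trivial on $k$, and before you have a smooth proper model you cannot reduce to divisorial valuations. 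The correct order is the reverse one: transfer the arbitrary DVR across the alteration (corollaire \ref{corollaire extensions avd} produces a DVR of the larger function field with $ef$ prime to $\ell$), deduce from the nonvanishing residue there that $\alpha'\notin\br X'$ (valuative criterion of properness plus $\br X'\subset\brnr$), and only then invoke purity on the smooth proper $X'$ to get a divisorial residue to which your \v Cebotarev/arc step can be applied.

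Beyond these repairs, your route genuinely differs from the paper's in its final step: you propose to redo, directly over the finite field, the specialization argument (choose by \v Cebotarev a closed point of the ramification divisor where the residue character survives, then a transversal formal arc giving a point of $V(L((t)))$ with nonzero evaluation); this also requires the compatibility between the residue of $\alpha'(P)$ over $L((t))$ and the specialization of $\partial_{D'}(\alpha')$ at the chosen closed point, which you do not state. The paper avoids all of this by reducing to the global field case: it replaces the DVR $A$ of $k(V)$ by the Gauss valuation $B=A(t)$ on $k(t)(V)$, transfers it across the alteration as above, concludes that $\alpha'$ is ramified on $V'$ viewed over the global field $K'=k'(t)$, and then simply cites \cite[Th\'eor\`eme 1.2]{GLABrnr}, whose completions $K'_w\cong L((t'))$ furnish exactly the required points. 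Your inline version is essentially a re-proof of that cited theorem; it can be completed, but it carries precisely the bookkeeping the citation was designed to avoid.
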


L'un des sens de cette proposition, \`a savoir que tout \'el\'ement $\alpha\in\brnr V$ v\'erifie la propri\'et\'e d'annulation ci-dessus, avait d\'ej\`a
\'et\'e d\'emontr\'e par Borovoi, Demarche et Harari (cf. \cite[Proposition 4.3]{BDH}) via le changement de base de $k=\f_q$ \`a $k[t]$ (l'argument est,
outre l'application du r\'esultat de Gabber, totalement analogue au raisonnement classique rappel\'e par exemple dans l'introduction de \cite{GLABrnr}).
Une analogie similaire avec la preuve de \cite[Th\'eor\`eme 2.1]{GLABrnr} nous permet maintenant de d\'emontrer le sens inverse.

\begin{proof}
Puisque le groupe de Brauer est de torsion, on sait qu'il est la somme directe de ses composantes $\ell$-primaires avec $\ell$ un nombre premier.
Il suffit alors de d\'emontrer l'\'enonc\'e pour un \'el\'ement $\alpha\in\br V\{\ell\}$ pour un nombre premier $\ell\neq p$ donn\'e.\\

Le th\'eor\`eme de Nagata (cf. \cite{ConradNagata}, \cite{DeligneNagata}) nous dit qu'il existe une $k$-compactification $X$ de $V$ qui n'est pas forc\'ement lisse.
Le th\'eor\`eme de Gabber nous dit alors qu'il existe une extension finie de corps $k'/k$ de degr\'e premier \`a $\ell$ et une $\ell'$-alt\'eration
$X'\to X_{k'}$, i.e. un morphisme propre, surjectif et g\'en\'eriquement fini de degr\'e premier \`a $\ell$, avec $X'$ une $k'$-vari\'et\'e lisse. On en
d\'eduit que $X'$ est propre et lisse sur $k'$. De plus, on peut supposer que $X'$ est g\'eom\'etriquement int\`egre. En effet, quitte \`a prendre une
composante connexe de $X'$ dominant $X$ (bien choisie pour que le degr\'e g\'en\'erique reste premier \`a $\ell$), on peut supposer qu'elle est connexe.
Il suffit alors de noter que la factorisation de Stein assure l'existence
d'une factorisation $X'\to \spec k'' \to \spec k'$ telle que $X'$ est g\'eom\'etriquement connexe sur $k''$, donc quitte \`a changer $k'$ par $k''$
(extension qui reste de degr\'e premier \`a $\ell$ car $[k'':k']$ divise $[k'(X'):k'(X)]$), on a que $X'$ est g\'eom\'etriquement int\`egre sur $k'$ car elle
est lisse et g\'eom\'etriquement connexe. On a alors le diagramme commutatif suivant, o\`u les carr\'es sont cart\'esiens :
\begin{equation}\label{equation diagramme Gabber 1}
\xymatrix{
V' \ar[r] \ar[d] & X' \ar[d] \\
V_{k'} \ar[r] \ar[d] & X_{k'} \ar[d] \\
V \ar[r] & X.
}
\end{equation}
En notant alors $K=k(t)$, $\O_K=k[t]$, $K'=k'(t)$ et $\O_{K'}=k'[t]$, le diagramme \eqref{equation diagramme Gabber 1} induit par changement de base
le diagramme commutatif suivant
\[\xymatrix{
\cal V' \ar[r] \ar[d] & \cal X' \ar[d] \\
\cal V_{\O_{K'}} \ar[r] \ar[d] & \cal X_{\O_{K'}} \ar[d] \\
\cal V \ar[r] & \cal X
}\]
o\`u $\cal V:=V\times_{k}\O_K$, $\cal X:=X\times_k \O_K$, $\cal V':=V'\times_{k'}\O_{K'}$ et $\cal X':=X'\times_{k'}\O_{K'}$. En particulier, $\cal X'$
est propre et lisse sur $\O_{K'}$.\\

Supposons que $\alpha\in\brnr V$. Dans ce cas, la proposition 4.3 de \cite{BDH} nous dit pr\'ecis\'ement que l'application $V(k((t)))\to\br k((t))$
induite par $\alpha$ est nulle. Pour avoir le m\^eme r\'esultat pour toute extension finie, il suffit de consid\'erer la vari\'et\'e $V_L=V\times_k L$
et d'y appliquer la m\^eme proposition. En effet, le groupe de Brauer non ramifi\'e est fonctoriel par rapport aux morphismes dominants (cela d\'ecoule
facilement de la d\'efinition de $\brnr V=\brnr(k(V)/k)$ et de \cite[Proposition 3.3.1 et \S 3.6]{ColliotSantaBarbara}, voir aussi
\cite[Lemma 5.5]{ColliotSansucChili} pour une version plus explicite en caract\'eristique 0). On peut alors
regarder la restriction $\alpha_L\in\br V_L$ de $\alpha$ comme un \'el\'ement de $\brnr V_L$. Il est \'evident alors que pour tout $L((t))$-point
$P$ de $V$ (qui est en m\^eme temps un point de $V_L$) on a $\alpha(P)=\alpha_L(P)=0$.

Supposons donc que $\alpha\not\in\brnr V$. Il existe alors un anneau de valuation discr\`ete $A$ de corps de fractions $k(V)$ et corps r\'esiduel
$\kappa\supset k$ tel que l'image de $\alpha$ par l'application r\'esidu
\[H^2(k(V),\mu_{\ell^m})\to H^1(\kappa,\z/\ell^m\z),\]
est non nulle. Soit $B=A(t)$ : c'est un anneau de valuation discr\`ete de corps de fractions $K(V)$ et corps r\'esiduel $\kappa(t)$ (la valuation de $t$ \'etant
bien \'evidemment prise comme \'egale \`a $0$) et il est \'evident que l'indice de ramification de l'extension $B/A$ est \'egal \`a $1$. En effet, on rappelle
que si l'on note $v_B$ la valuation associ\'ee \`a $B$, cet indice est d\'efini comme l'indice du groupe $v_B(k(V)^*)$ dans $v_B(K(V)^*)\cong\z$. Le corollaire
\ref{corollaire extensions avd}, d\'emontr\'e dans l'appendice, nous dit qu'il
existe un anneau de valuation discr\`ete $B'\supset B$ de corps de fractions $K'(V')$ et corps r\'esiduel $\kappa'$ tel que, si l'on note
$f=[\kappa':\kappa(t)]$ et $e$ l'indice de ramification de $B'/B$, on a $(ef,\ell)=1$. De plus, $B'$ contient la fermeture int\'egrale de $B$
dans $K'(V')$ et $K\subset B$, donc on a $\kappa'\supset K'$. Alors, d'apr\`es \cite[Proposition 3.3.1]{ColliotSantaBarbara}, on a le diagramme commutatif
\[\xymatrix{
H^2(k(V),\mu_{\ell^m}) \ar[r] \ar[d]^{\inf} & H^1(\kappa,\z/\ell^m\z) \ar[d]^{\inf}\\
H^2(K(V),\mu_{\ell^m}) \ar[r] \ar[d]^{\res} & H^1(\kappa(t),\z/\ell^m\z) \ar[d]^{e\cdot\res} \\
H^2(K'(V'),\mu_{\ell^m}) \ar[r] & H^1(\kappa',\z/\ell^m\z). \\
}\]
On remarque que les fl\`eches verticales en haut sont bien des applications d'inflation. En effet, on a $K(V)=k(V)(t)$, d'o\`u $\Gamma_{k(V)}$ s'identifie avec
le groupe $\Gamma_{\bar k(V)(t)/K(V)}$, lequel correspond \`a un quotient de $\Gamma_{K(V)}$. On en a de m\^eme pour $\kappa$ et $\kappa(t)$.

Soit alors $\kappa_1$ la sous-extension s\'eparable maximale de $\kappa'/\kappa(t)$. On a alors la factorisation suivante des fl\`eches verticales \`a droite :
\[H^1(\kappa,\z/\ell^m\z) \xrightarrow{\inf} H^1(\kappa(t),\z/\ell^m\z)\xrightarrow{e\cdot\res} H^1(\kappa_1,\z/\ell^m\z)\xrightarrow{\res} H^1(\kappa',\z/\ell^m\z).\]
Les applications d'inflation \'etant toujours injectives au niveau du $H^1$, on sait que la fl\`eche de gauche est injective. Puis, le corollaire
\ref{corollaire isomorphisme extensions purement inseparables}, aussi d\'emontr\'e dans l'appendice, nous dit que la fl\`eche de droite est un isomorphisme.
Enfin, pour la fl\`eche du milieu on a que $e[\kappa_1:\kappa]$ divise $ef$ et alors le morphisme est injectif par l'argument classique de restriction-corestriction
(lequel a bien un sens vu que $\kappa_1/\kappa$ est s\'eparable). En particulier, on voit que l'image de $\alpha$ dans
$H^1(\kappa',\z/\ell^m\z)$ est non nulle et alors que l'image $\alpha'\in \br V'$ de $\alpha$ n'est pas dans $\brnr V'=\br X'$.\\

Enfin, d'apr\`es \cite[Th\'eor\`eme 1.2]{GLABrnr}, il existe un ensemble infini $I'$ de places $w$ de $K'$ telles qu'il existe un $K'_w$-point $P'_w$
de $V'$ tel que $\alpha'(P'_w)\neq 0$. En prenant une quelconque de ces places, on a $K'_w\cong L((t'))$ pour une certaine extension finie $L/k$, d'o\`u
l'on voit que, si $Q$ est le $L((t'))$-point de $V$ induit par $P'_w$, alors $\alpha(Q)=\alpha'(P'_w)\neq 0\in\br L((t'))$, ce qui conclut.
\end{proof}

\section{Formules pour le groupe $\brnral$ sur les espaces homog\`enes}\label{section formules}\numberwithin{equation}{subsection}
On pr\'esente dans cette section, \`a l'aide des r\'esultats pr\'ec\'edents, diff\'erentes formules pour calculer
le groupe $\brnral V$, o\`u $V$ est un espace homog\`ene \`a stabilisateur fini sur plusieurs corps de base.\\

Soit $k$ un corps dont on note $p\geq 0$ la caract\'eristique et $\bar k$ sa cl\^oture s\'eparable. On note $\Gamma_k=\gal(\bar k/k)$.
On rappelle que, pour une $k$-vari\'et\'e $V$, la partie alg\'ebrique du groupe de Brauer $\br V$ est d\'efinie comme
\[\bral V:=\frac{\ker[\br V\to\br V_{\bar k}]}{\im[\br k\to\br V]}=\brun V/\br k.\]
Si de plus la vari\'et\'e v\'erifie $V(k)\neq\emptyset$ et $\bar k[V]^*/\bar k^*=1$, alors ce groupe admet une description en termes du groupe de Picard
g\'eom\'etrique $\pic V_{\bar k}$. En effet, le lemme 6.3 de \cite{Sansuc81} nous dit que l'on a un isomorphisme
\begin{equation}\label{equation brnr pour V}
\bral V\xrightarrow{\sim} H^1(k,\pic V_{\bar{ k}}).
\end{equation}
Supposons maintenant qu'il existe une $k$-compactification lisse $X$ de $V$. On a un isomorphisme similaire pour la vari\'et\'e $X$. En effet, il
suffit de noter que $X$ est propre et g\'eom\'etriquement int\`egre pour avoir $\bar k[X]^*/\bar k^*=1$, et il est \'evident que $X(k)\neq\emptyset$
d\`es que $V(k)\neq\emptyset$, donc on a aussi
\begin{equation}\label{equation brnr pour X}
\bral X\xrightarrow{\sim} H^1(k,\pic X_{\bar{ k}}).
\end{equation}
On rappelle que, d'apr\`es les th\'eor\`emes de puret\'e de Grothendieck, le groupe $\bral X\{p'\}$ correspond \`a la partie non ramifi\'ee (et
de torsion premi\`ere \`a $p$) du groupe de Brauer alg\'ebrique de $V$ (cf. \cite[Proposition 4.2.3]{ColliotSantaBarbara}), que l'on notera
$\brnral V\{p'\}$.\\

Soit maintenant $G$ un $k$-groupe fini lisse que l'on plonge dans $G'$ un $k$-groupe alg\'ebrique lin\'eaire semi-simple simplement connexe,
par exemple $G'=\sln$. Soit $V:=G\backslash G'$ la $k$-vari\'et\'e quotient.
Le groupe $G'$ \'etant semi-simple, on a que $\bar k[G']^*/\bar k^*=0$ (c'est le lemme de Rosenlicht, cf. \cite[Lemme 6.5]{Sansuc81}). Ceci
nous dit que $\bar k[V]^*/\bar k^*=0$, puisque toute fonction inversible sur $V_{\bar k}$ donne une fonction inversible sur $G'_{\bar k}$. On en d\'eduit
que l'on a la description du groupe $\bral V$ donn\'ee par l'isomorphisme \eqref{equation brnr pour V} ci-dessus.

Il est int\'eressant de remarquer que cette description ne d\'epend point du plongement $G\hookrightarrow G'$ choisi. En effet, dans cette situation,
la cohomologie de \v Cech non ab\'elienne associ\'ee au torseur $G'_{\bar k}\to V_{\bar k}$ nous donne une suite exacte d'ensembles point\'es
(cf. \cite[Lemma 2.2.2]{Skor})
\begin{equation}\label{equation suite cech non abelien}
0\to\check H^1(G'_{\bar k}/V_{\bar k},\gm)\to\check H^1(V_{\bar k},\gm)\to\check H^0(G'_{\bar k}/V_{\bar k},\check{\mathcal{H}}^1(\gm)),
\end{equation}
o\`u $\check{\mathcal{H}}^1(\gm)$ correspond au pr\'efaisceau (\'etale) $U\mapsto \check H^1(U,\gm)$. Le m\^eme lemme nous dit que
$\check H^1(G'_{\bar k}/V_{\bar k},\gm)$ correspond \`a l'ensemble des classes d'\'equivalence de morphismes
\[f:G'\times G\to\gm\]
satisfaisant la condition de cocycle $f(y,s)f(ys,s')=f(y,ss')$ ; $f$ \'etant \'equivalent \`a $f'$ si
\[f'(y,s)=g(y)f(y,s)g(ys)^{-1},\]
pour un morphisme $g:G'_{\bar k}\to\gm$. Or, puisque $\bar k[G']^*/\bar k^*=0$, il est facile de voir que $f(y,s)$ ne d\'epend que de $s$ et qu'alors
$\check H^1(G'_{\bar k}/V_{\bar k},\gm)$ s'identifie avec l'ensemble des morphismes $g:G\to\gm$, i.e. au groupe des caract\`eres de $G$, que l'on
notera $M$ dans la suite. On rappelle que c'est le dual de Cartier de $G^\ab$, l'ab\'elianis\'e de $G$.\\
De plus, en rappelant que $\check H^1(V_{\bar k},\gm)=H^1(V_{\bar k},\gm)$ (cf. \cite[III, Corollary 2.10]{Milne}), on peut r\'e\'ecrire la suite
exacte \eqref{equation suite cech non abelien} comme
\[0\to M \to \pic V_{\bar k} \to \pic G'_{\bar k}.\]
Or, $G'$ \'etant en plus simplement connexe, on a $\pic G'_{\bar k}=0$ (cf. \cite[Lemme 6.9(iv)]{Sansuc81}).  On a alors trouv\'e un isomorphisme
\begin{equation}\label{equation type du V-torseur G'}
\lambda:M\xrightarrow{\sim}\pic V_{\bar k}.
\end{equation}
Pour v\'erifier qu'il s'agit bien d'un morphisme de groupes (et de $\Gamma_k$-modules), il suffit de regarder la m\^eme situation pour le
$G^{\ab}$-torseur $Z:=G^\der\backslash G'\to V$, o\`u $G^\der$ est le sous-groupe d\'eriv\'e de $G$. On y retrouve la m\^eme application, mais dans le cas
de la cohomologie de \v Cech ab\'elienne (cf. \cite[2.2]{Skor}). On remarque en plus que cet isomorphisme correspond au \emph{type} du torseur
$Z\to V$. En effet, ceci d\'ecoule de la d\'efinition du type dans \cite[Definition 2.3.2]{Skor} et de ce qui est remarqu\'e juste apr\`es.
On en d\'eduit que $Z\to V$ est un torseur universel.\\

On voit alors que $\bral V=H^1(k,\pic V_{\bar k})$ ne d\'epend que de $G$, et m\^eme plus pr\'ecis\'ement de son ab\'elianis\'e.
De plus, puisque dans tout ce texte on ne consid\'erera que des groupes $G$ d'ordre premier \`a $p$ (qui sont d'ailleurs toujours lisses),
$M$ et par suite $\bral V$ n'auront pas non plus de $p$-torsion, ce qui nous permet d'utiliser la notation $\brnral V$ au lieu de
$\brnral V\{p'\}$.

\subsection{Une formule cohomologique sur un corps fini}\label{section formule cohomologique}
Soit maintenant $k$ un corps fini de caract\'eristique $p>0$. On veut \'etudier les espaces homog\`enes $V=G\backslash G'$ comme ci-dessus sur ce
corps de base particulier. Comme on vient de l'anoncer, on supposera d\'esormais que l'ordre de $G$ est premier \`a $p$. Sous cette hypoth\`ese et
gr\^ace au th\'eor\`eme \ref{theoreme sans compactification lisse corps fini} on peut donner une formule cohomologique permettant de calculer le
groupe $\brnral V$.\\

Rappelons que, lorsque l'ordre de $G$ est premier \`a $p$, le cup-produit nous donne l'accouplement parfait suivant
(cf. \cite[Theorem 7.2.6]{NSW})
\[H^1(k((t)),M)\times H^1(k((t)),G^\ab)\to H^2(k((t)),\gm)=\br k((t))\cong\q/\z.\]

En utilisant cet accouplement, le groupe $\brnral V$ admet la description cohomologique suivante :

\begin{thm}\label{theoreme cohomologique}
Soit $G$ un $k$-groupe fini d'ordre premier \`a $q$ plong\'e dans $G'$ semi-simple simplement connexe et soit $V=G\backslash G'$. Soit $M$ le groupe des
caract\`eres de $G$. En identifiant $\bral V$ avec $H^1(k,M)$ comme ci-dessus, le groupe de Brauer
non ramifi\'e alg\'ebrique $\brnral V$ de $V$ est donn\'e par les \'el\'ements $\alpha\in H^1(k,M)$ v\'erifiant la propri\'et\'e
suivante :

Pour toute extension finie $k'$ de $k$, l'image $\alpha'$ de $\alpha$ dans $H^1(k'((t)),M)$ est orthogonale au sous-ensemble
$\im[H^1(k'((t)),G)\to H^1(k'((t)),G^\ab)]$.
\end{thm}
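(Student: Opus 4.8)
The strategy is to combine the residue-free criterion of Theorem~\ref{theoreme sans compactification lisse corps fini} with the local duality pairing, translating ``unramified'' into an orthogonality condition via the explicit description of the $V$-points over Laurent series fields. The key observation is that, since $Z=G^\der\backslash G'\to V$ is a universal torsor of type $\lambda:M\xrightarrow{\sim}\pic V_{\bar k}$, the evaluation of a Brauer class $\alpha\in\bral V=H^1(k,M)$ on an $L((t))$-point $P$ of $V$ can be computed cohomologically: the point $P$ determines a class in $H^1(L((t)),G)$ (the class of the fiber $Z_P$, which is a torsor under $G$), and pushing it to $H^1(L((t)),G^\ab)$ and pairing with the image $\alpha'$ of $\alpha$ under the local duality pairing recovers $\alpha(P)\in\br L((t))\cong\q/\z$. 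This compatibility between ``evaluation of an algebraic Brauer class on a point'' and ``cup product with the type against the class of the fiber'' is standard descent-theoretic yoga (it is in \cite{Skor}, $\S 2$--$6$), but one must check it applies over the somewhat non-classical base $L((t))$.

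Concretely I would proceed as follows. First I would fix $\alpha\in H^1(k,M)=\bral V$ and a finite extension $k'/k$, and analyze the set $V(k'((t)))$. For $P\in V(k'((t)))$, the fiber $Z_P:=Z\times_V P$ is a $G$-torsor over $k'((t))$, giving a class $[Z_P]\in H^1(k'((t)),G)$; conversely, since $G'$ is semisimple simply connected, every class in the image of $H^1(k'((t)),G)\to H^1(k'((t)),G')$ is trivial (by Bruhat--Tits / Kneser-type vanishing for such local fields, or simply because $G'\to V$ is a $G$-torsor and the exact sequence of pointed sets $V(k'((t)))\to H^1(k'((t)),G)\to H^1(k'((t)),G')$ shows the image of the connecting map is exactly $\ker[H^1(k'((t)),G)\to H^1(k'((t)),G')]$), so the map $P\mapsto[Z_P]$ surjects onto $\ker[H^1(k'((t)),G)\to H^1(k'((t)),G')]$. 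The crucial point, which I would isolate as a lemma, is that this kernel equals the whole image of $H^1(k'((t)),G)\to H^1(k'((t)),G^\ab)$ only \emph{after pushing forward}: precisely, $\{[Z_P]^\ab : P\in V(k'((t)))\}=\im[H^1(k'((t)),G)\to H^1(k'((t)),G^\ab)]$, because $H^1(k'((t)),G')=0$ for $G'$ semisimple simply connected over a local field (this is where the simple connectedness is used essentially).

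Next I would establish the evaluation formula: for $P\in V(k'((t)))$,
\[
\alpha(P)=\langle\,\alpha'\,,\,[Z_P]^\ab\,\rangle\in\br k'((t))\cong\q/\z,
\]
where $\alpha'$ is the image of $\alpha$ in $H^1(k'((t)),M)$ and $\langle\cdot,\cdot\rangle$ is the local duality pairing $H^1(k'((t)),M)\times H^1(k'((t)),G^\ab)\to\q/\z$. This follows from functoriality of the Brauer group and the description of $\bral V\cong H^1(k,\pic V_{\bar k})$ via the type of the universal torsor $Z\to V$ (the type is $\lambda$, cf.\ \eqref{equation type du V-torseur G'}); the pairing of the type with the class of the fiber is exactly Skorobogatov's computation of the image of a torsor class under the map to the Brauer group, transported to the base $k'((t))$. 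Granting the formula, Theorem~\ref{theoreme sans compactification lisse corps fini} says $\alpha\in\brnral V$ iff for every finite $L/k$ the map $V(L((t)))\to\br L((t))$ is zero, i.e.\ iff for every finite $k'/k$ we have $\langle\alpha',[Z_P]^\ab\rangle=0$ for all $P\in V(k'((t)))$; by the surjectivity statement of the previous paragraph this says exactly that $\alpha'$ is orthogonal to $\im[H^1(k'((t)),G)\to H^1(k'((t)),G^\ab)]$, which is the assertion.

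The main obstacle is the evaluation formula $\alpha(P)=\langle\alpha',[Z_P]^\ab\rangle$: one must verify carefully that the general descent-theoretic identity relating the value of an algebraic Brauer class on a rational point to the cup product of the type of a universal torsor with the class of the fiber — normally stated over a field with enough niceness — goes through over $L((t))$, and that the ``type'' appearing there matches the isomorphism $\lambda$ of \eqref{equation type du V-torseur G'}; a subsidiary but real point is the compatibility of the abelianization map $H^1(-,G)\to H^1(-,G^\ab)$ with the Cartier duality pairing, i.e.\ that pairing $\alpha'$ against $[Z_P]$ factors through $[Z_P]^\ab$ (which is automatic since $\alpha'$ comes from $M=\widehat{G^\ab}$ and the pairing is induced by $G\to G^\ab$). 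Everything else — the surjectivity onto the image of abelianization using $H^1(L((t)),G')=0$, and the reduction to $\ell$-primary parts — is routine.
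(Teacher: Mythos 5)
Your proposal follows essentially the same route as the paper: there, Théorème \ref{theoreme sans compactification lisse corps fini} is combined with a local statement (Proposition \ref{proposition lemme cohomologique}) whose proof consists exactly of your two ingredients, namely the extension of Skorobogatov's compatibility formula to this setting (Proposition \ref{proposition Skor}), giving the evaluation identity $\alpha(P)=\alpha'\cup[Z](P)$ for the universal torseur $Z=G^\der\backslash G'$, and the surjectivity of $V(K)\to H^1(K,G)$ coming from $H^1(K,G')=0$ (Bruhat--Tits). Only a notational slip to correct: the fiber of $Z\to V$ at $P$ is a torseur under $G^\ab$, not under $G$ — the class in $H^1(k'((t)),G)$ is that of the fiber of $G'\to V$, and what you call $[Z_P]^\ab$ is its image under $H^1(k'((t)),G)\to H^1(k'((t)),G^\ab)$.
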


Ce r\'esultat d\'ecoule de fa\c con \'evidente de la proposition suivante et du th\'eor\`eme \ref{theoreme sans compactification lisse corps fini}.

\begin{pro}\label{proposition lemme cohomologique}
Soient $K$ un corps local de caract\'eristique $p\geq 0$, $G$ un $K$-groupe fini d'ordre premier \`a $p$ plong\'e dans $G'$semi-simple simplement
connexe et soit $V=G\backslash G'$. Soit $M$ le groupe des caract\`eres de $G$. Soit enfin $\alpha\in\bral V$.
En identifiant $\bral V$ avec $H^1(K,M)$, il existe une pr\'eimage $\alpha_1\in\brun V$ de $\alpha$ telle que l'application $V(K)\to \br K$ induite
par $\alpha_1$ est triviale si et seulement si $\alpha$ est orthogonale au sous-ensemble $\im[H^1(K,G)\to H^1(K,G^\ab)]$.
\end{pro}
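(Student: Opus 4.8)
The strategy is to make explicit the evaluation map $V(K)\to\br K$ associated to a class $\alpha_1\in\brun V$, using the fact — established in the preceding discussion — that $Z:=G^\der\backslash G'\to V$ is a universal torsor under $G^\ab$ whose type is the isomorphism $\lambda\colon M\xrightarrow{\sim}\pic V_{\bar k}$. Recall that by \eqref{equation brnr pour V} we have $\bral V\cong H^1(K,\pic V_{\bar K})\cong H^1(K,M)$, the last identification being induced by $\lambda$. The first step is to recall the standard descent-theoretic formula (as in Skorobogatov, \emph{Torsors and rational points}, \S 4 or Colliot-Th\'el\`ene--Sansuc): for a point $P\in V(K)$, the fibre $Z_P$ is a $K$-torsor under $G^\ab$, hence defines a class $[Z_P]\in H^1(K,G^\ab)$, and for a class $\beta\in H^1(K,M)$ one has, for a suitable lift $\alpha_1$ of $\alpha=\lambda_*\beta$, the equality
\[
\alpha_1(P)=\beta\cup[Z_P]\in H^2(K,\gm)=\br K,
\]
where $\cup$ is the cup-product pairing $H^1(K,M)\times H^1(K,G^\ab)\to\br K$ coming from the Cartier duality $M=\widehat{G^\ab}$. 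This is the key computational input; the existence of a lift $\alpha_1$ for which this formula holds on the nose (rather than up to a constant in $\br K$) is exactly the reason the statement says ``il existe une pr\'eimage $\alpha_1$''.

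Granting this formula, the proof becomes almost formal. The evaluation map $P\mapsto\alpha_1(P)$ is identically zero on $V(K)$ if and only if $\beta\cup[Z_P]=0$ for every $P\in V(K)$. Now the set of classes in $H^1(K,G^\ab)$ of the form $[Z_P]$ for $P\in V(K)$ is precisely the image of the map $V(K)\to H^1(K,G^\ab)$ induced by the torsor $Z\to V$; and one checks — this is again classical torsor theory over a field, using that $G'$ is connected so that $H^1(K,G')\to H^1(K,\,\cdot\,)$-type arguments and the twisting bijection apply — that this image coincides with the image of $H^1(K,G)\to H^1(K,G^\ab)$ induced by the abelianization $G\to G^\ab$. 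Indeed, the fibration $G\backslash G'=V$ with $Z=G^\der\backslash G'$ fits into $G^\der\hookrightarrow G\twoheadrightarrow G^\ab$, and since $G'$ is semisimple simply connected, $H^1(K,G')=1$ (for $K$ local non-archimedean of the two types allowed, by Kneser / Bruhat--Tits, respectively by the function-field local case), so the characteristic-class/non-abelian-cohomology exact sequence identifies $\{[Z_P]:P\in V(K)\}$ with $\im[H^1(K,G)\to H^1(K,G^\ab)]$. Therefore $\alpha_1$ evaluates to zero everywhere on $V(K)$ iff $\beta$ is orthogonal to $\im[H^1(K,G)\to H^1(K,G^\ab)]$ for the cup-product pairing, which under the identification $\bral V\cong H^1(K,M)$ is exactly the asserted condition on $\alpha$.

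The main obstacle I anticipate is not the orthogonality bookkeeping but the first step: producing the \emph{specific} lift $\alpha_1\in\brun V$ of $\alpha$ for which the evaluation formula $\alpha_1(P)=\beta\cup[Z_P]$ holds exactly, and checking it is independent of the choices (of a rational point used to normalize the identification \eqref{equation brnr pour V}, of the cocycle representing $\beta$, etc.). This requires care with the descent machinery: one starts from the universal torsor $Z\to V$ of type $\lambda$, uses the partial inverse of the type map $H^1(K,M)\cong\bral V$ to push $\beta$ into $\brun V\subset\br V$, and then invokes the functoriality of evaluation of Brauer classes along the torsor $Z\to V$ together with the compatibility of cup-products with specialization at a $K$-point. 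One should also be slightly careful that $Z$ has a $K$-point (equivalently $V(K)\neq\emptyset$ and the relevant torsor class vanishes) is \emph{not} assumed — but it is not needed, since the formula $\alpha_1(P)=\beta\cup[Z_P]$ is used only fibrewise over points $P$ that do exist, and if $V(K)=\emptyset$ the condition ``$\alpha_1$ induces the trivial map $V(K)\to\br K$'' is vacuous while the orthogonality condition must then be checked directly against $\im[H^1(K,G)\to H^1(K,G^\ab)]$ — but here one notes $\emptyset\neq V(k)$ was among the running hypotheses guaranteeing \eqref{equation brnr pour V}, so this degenerate case does not arise. Finally, one must make sure the cup-product pairing appearing here is the perfect Tate-duality pairing recalled just before the statement of Theorem \ref{theoreme cohomologique}, so that orthogonality is a genuine duality condition; this is where the hypothesis that the order of $G$ is prime to $p$ enters, ensuring $M$ and $G^\ab$ are finite \'etale and that $H^1(K((t)),M)\times H^1(K((t)),G^\ab)\to\br K((t))$ is perfect.
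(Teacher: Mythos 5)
Your overall route is the same as the paper's: you lift $\alpha$ to $\brun V$ through the universal torsor $Z=G^\der\backslash G'\to V$ so that evaluation at a point $P\in V(K)$ becomes the cup-product $\alpha\cup[Z_P]$ (this evaluation formula is exactly the paper's Proposition \ref{proposition Skor}, i.e.\ Skorobogatov's Theorem 4.1.1 extended to characteristic $p$, where the prime-to-$p$ hypothesis guarantees smoothness of the dual group $G^\ab$ --- so the ``main obstacle'' you flag is precisely the ingredient the paper supplies), and you identify $\{[Z_P]:P\in V(K)\}$ with $\im[H^1(K,G)\to H^1(K,G^\ab)]$ using the exact sequence $G'(K)\to V(K)\xrightarrow{\delta}H^1(K,G)\to H^1(K,G')=0$ (Bruhat--Tits), exactly as in the paper. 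One side remark: the perfectness of local duality, which you invoke at the end, is not needed for this proposition --- only the cup-product pairing itself is used here; perfectness only matters later, for Theorem \ref{theoreme cohomologique} and its exploitation.

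There is, however, one genuine gap: the statement quantifies existentially over \emph{all} preimages $\alpha_1\in\brun V$ of $\alpha$, whereas your argument only treats the canonical lift $\pi^*(\alpha)\cup[Z]$. What you prove is that this particular lift induces the zero map on $V(K)$ if and only if $\alpha$ is orthogonal to the image; what is still missing is that when $\alpha$ is \emph{not} orthogonal, no \emph{other} preimage induces the zero map either. Since two preimages of $\alpha$ differ by a constant class coming from $\br K$, one must rule out that a nonzero but constant evaluation map becomes identically zero after such a translation. The paper closes this by noting that the trivial class always lies in $\im[H^1(K,G)\to H^1(K,G^\ab)]$: for $P$ the image of $e\in G'(K)$ (or any $P$ with $\delta(P)$ trivial, which exists since $\delta$ is surjective) one has $[Z_P]=0$, so the canonical lift always attains the value $0$ on $V(K)$; hence if $\alpha$ is not orthogonal its evaluation map is non-constant, and no translate by an element of $\br K$ can be identically zero. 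Adding this one observation makes your argument complete and identical in substance to the paper's proof.
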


Pour d\'emontrer cet \'enonc\'e, on utilise la proposition suivante.

\begin{pro}\label{proposition Skor}
Soient $K$ un corps quelconque, $V$ une $K$-vari\'et\'e, $\pi:V\to K$ le morphisme structural, $M$ un $\Gamma_K$-module de type fini dont la
torsion est d'ordre premier \`a la caract\'eristique de $K$, $S$ son $K$-groupe dual (qui est donc de type multiplicatif et lisse sur $K$),
$\lambda\in\hom_K(M,\pic V_{\bar K})$ et $r:\brun V\to H^1(K,\pic V_{\bar K})$ le morphisme issu de la suite spectrale de Hochschild-Serre
\[H^p(\Gamma_K,H^q(V_{\bar K},\gm))\Rightarrow H^{p+q}(V,\gm).\]
Supposons qu'il existe un $V$-torseur $T$ sous $S$ de type $\lambda$. Alors pour tout $\alpha\in H^1(K,M)$ on a
\[r(\pi^*(\alpha)\cup [T])=\lambda_*(\alpha)\quad\in H^1(K,\pic V_{\bar K}).\]
\end{pro}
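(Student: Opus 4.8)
The plan is to unwind the definitions of "type of a torsor" and of the map $r$, and to compare the two sides by a compatibility of cup-products with the Hochschild--Serre spectral sequence.

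First I would recall the cohomological description of the type. Since $S$ is the $K$-group dual to the $\Gamma_K$-module $M$, a $V$-torsor $T$ under $S$ has a class $[T]\in H^1_{\mathrm{\acute et}}(V,S)$, and its type is by definition (following \cite[Definition 2.3.2]{Skor}) the image of $[T]$ under the natural map $H^1(V,S)\to \hom_{\Gamma_K}(M,\pic V_{\bar K})$ coming from the low-degree exact sequence of the Hochschild--Serre spectral sequence for $S$ over $V$ (using $\bar K[V]^*\otimes M$-type input, i.e. $H^0(V_{\bar K},S)=\hom(M,\bar K[V]^*)$ and $H^1(V_{\bar K},S)=\hom(M,\pic V_{\bar K})$ when $M$ is torsion-free; in general one argues componentwise on a resolution of $M$). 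Concretely, $[T]$ determines a class in $H^1(\Gamma_K,H^0(V_{\bar K},S))$ together with the type $\lambda\in\hom_{\Gamma_K}(M,\pic V_{\bar K})$, the latter being the obstruction-free part. The hypothesis is precisely that this type equals the given $\lambda$.

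Next I would make the cup-product $\pi^*(\alpha)\cup[T]$ explicit. Here $\alpha\in H^1(K,M)=H^1(\Gamma_K,M)$, so $\pi^*(\alpha)\in H^1(V,\underline{M})$ where $\underline M$ is the locally constant sheaf associated to $M$; and there is a pairing $\underline M\otimes S\to\gm$ (evaluation), inducing $H^1(V,\underline M)\times H^1(V,S)\to H^2(V,\gm)=\br V$. One checks that $\pi^*(\alpha)\cup[T]$ lands in $\brun V$: indeed its image in $\br V_{\bar K}$ is $(\pi^*\alpha)_{\bar K}\cup[T]_{\bar K}$, and $(\pi^*\alpha)_{\bar K}=0$ since $\alpha$ dies over $\bar K$. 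Then $r$ applied to it is read off from the Hochschild--Serre spectral sequence $H^p(\Gamma_K,H^q(V_{\bar K},\gm))\Rightarrow H^{p+q}(V,\gm)$: for an element of $\brun V$ coming from $H^1(\Gamma_K,H^1(V_{\bar K},\gm))=H^1(\Gamma_K,\pic V_{\bar K})$, $r$ is the canonical identification. The key point is the multiplicativity of the spectral sequence with respect to the cup-product pairing $\underline M\otimes S\to\gm$: the product of the class $\alpha\in H^1(\Gamma_K,M)=E_2^{1,0}$ (coming from $\pi^*\alpha$, which lives in filtration degree $1$ with $q=0$) with the class in $E_2^{0,1}=H^0(\Gamma_K,H^1(V_{\bar K},S))$ given by the type $\lambda$ of $[T]$, lands in $E_2^{1,1}=H^1(\Gamma_K,H^1(V_{\bar K},\gm))$, and equals the image of $\alpha$ under $\lambda_*:H^1(\Gamma_K,M)\to H^1(\Gamma_K,\pic V_{\bar K})$. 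Tracking filtrations, this says exactly $r(\pi^*(\alpha)\cup[T])=\lambda_*(\alpha)$.

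The main obstacle I expect is the bookkeeping in the last step: one must verify that only the type $\lambda$ of $[T]$ (the $E^{0,1}_\infty$-component), and not the possibly-nontrivial $H^1(\Gamma_K,H^0(V_{\bar K},S))$-component of $[T]$, contributes to $r(\pi^*\alpha\cup[T])$. This is because the $H^0(V_{\bar K},S)$-part of $[T]$ sits in filtration degree $\geq 1$ with $q=0$, so its product with $\pi^*\alpha$ (also filtration degree $1$, $q=0$) lands in filtration degree $\geq 2$ with $q=0$, i.e. in $H^2(\Gamma_K,\gm)=\br K$, hence maps to $0$ under $r$ by construction of $r$ (which kills the image of $\br K$, or rather: $r$ is defined on $\brun V$ modulo that image is not needed — $r$ itself is the edge map and already sends $F^2\brun V\to 0$ onto $H^1(\Gamma_K,\pic V_{\bar K})$). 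So I would set this up carefully by choosing a representative of $[T]$ adapted to the filtration, or more cleanly by invoking the general compatibility statement \cite[Lemma 2.3.1 or its proof]{Skor} relating type and cup-product, of which this proposition is essentially a relative version. A clean way to finish is to reduce to $V=\spec K$ composed with the structural map, functoriality in $V$ reducing the claim to the pairing between $H^1(K,M)$ and the type map, which is standard; but since the type lives over $V_{\bar K}$ and not over $\bar K$, the relative statement seems genuinely needed and the spectral-sequence multiplicativity is the heart of it.
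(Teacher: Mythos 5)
Your proof is correct, but it takes a genuinely different route from the paper's. The paper treats the statement as Theorem 4.1.1 of \cite{Skor}: it observes that the only use of characteristic $0$ there is the smoothness of $S$ (guaranteed here by the hypothesis on the torsion of $M$), recalls the skeleton of Skorobogatov's argument --- the identification $\ext^1_V(\pi^*M,\gm)=H^1(V,S)$ via \cite[Lemma 2.3.7]{Skor}, Yoneda pairings, the fact that $\pi^*(\alpha)\cup\cdot$ lands in $\brun V$, and a commutative square comparing the type map with $r$ --- and refers to pages 64--68 of \cite{Skor} for the technical verifications. You bypass the Ext formalism entirely and argue with the multiplicative structure of the Hochschild--Serre spectral sequence for the pairing $\pi^*M\otimes S\to\gm$: the symbol of $\pi^*(\alpha)$ is $\alpha\in E_2^{1,0}$, the symbol of $[T]$ is its restriction $[T_{\bar K}]\in E_\infty^{0,1}\subset H^0(\Gamma_K,H^1(V_{\bar K},S))$, and compatibility of the cup product with the filtration identifies $r(\pi^*(\alpha)\cup[T])$, i.e.\ the symbol of the product in $E_\infty^{1,1}\subset H^1(\Gamma_K,\pic V_{\bar K})$, with the image of $\alpha$ under the coefficient pairing $M\otimes H^1(V_{\bar K},S)\to\pic V_{\bar K}$ evaluated at $[T_{\bar K}]$, namely $\lambda_*(\alpha)$; your remark that only the symbol of $[T]$ matters (the remaining part of $[T]$ contributes to filtration $\geq 2$, hence dies under $r$) is exactly the right bookkeeping, and your check that the product lies in $\brun V$ is the same easy restriction argument. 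What your route buys is a self-contained and more transparent argument; what it costs is that you must invoke, with some care, two standard but not entirely formal compatibilities that the paper delegates to \cite{Skor}: the multiplicativity of the Hochschild--Serre spectral sequence with respect to cup products attached to a pairing of sheaves, and the agreement between the Ext-theoretic type of \cite[Definition 2.3.2]{Skor} and the description $\lambda(m)=[m_*T_{\bar K}]$ that you use (this is the remark following that definition, already invoked earlier in the paper); note also that the smoothness of $S$ still intervenes mildly on your side, to view $[T]$ as a class in \'etale $H^1(V,S)$, and that the cup product of the statement, read through $\ext^1_V(\pi^*M,\gm)=H^1(V,S)$ as in the paper's proof, does coincide with the sheaf-level cup product you compute with.
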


\begin{proof}
Cette proposition correspond au th\'eor\`eme 4.1.1 de \cite{Skor}, \`a cela pr\`es qu'il consid\`ere un corps $K$ de caract\'eristique $0$, ce qui lui
donne imm\'ediatement la lissit\'e de $S$ sans aucune hypoth\`ese sur l'ordre de la torsion de $M$. Suivant alors la preuve dans \cite{Skor} on note
que, puisque $S$ est lisse sur $K$, on a $\ext_V^n(\pi^*M,\gm)=H^n(V,S)$ (cf. \cite[Lemma 2.3.7]{Skor}).
D'autre part, on a des isomorphismes canoniques $\ext_V^n(\z,\pi^*M)=H^n(V,\pi^*M)$ et des accouplements de Yoneda
\[H^1(V,\pi^*M)\times\ext_V^n(\pi^*M,\gm)\to H^{n+1}(V,\gm),\]
et
\[H^1(K,M)\times\ext_K^n(M,\pic V_{\bar K})\to H^{n+1}(K,\pic V_{\bar K}).\]
On d\'emontre d'abord que la fl\`eche
\[\ext_V^1(\pi^*M,\gm)\xrightarrow{\pi^*(\alpha)\cup\cdot} H^2(V,\gm)=\br V,\]
est \`a valeurs dans $\brun V$, puis que l'on a un diagramme commutatif
\[\xymatrix{
H^1(V,S) \ar@{}[r] |{\hspace{-0.5cm}=} & \ext_V^1(\pi^*M,\gm) \ar[d]_{\pi^*(\alpha)\cup\cdot} \ar[r]^{\texttt{type}} & \hom_K(M,\pic V_{\bar K})
\ar[d]_{\alpha\cup\cdot} \\
& \brun V \ar[r]^{r} & H^1(K,\pic V_{\bar K}).
}\]
En notant alors que la deuxi\`eme fl\`eche verticale envoie $\lambda$ en $\lambda_*(\alpha)$, la proposition est d\'emontr\'ee. Pour la
d\'emonstration des deux affirmations manquantes, on renvoie \`a la preuve dans \cite[pages 64-68]{Skor}, car elle est assez technique et la
caract\'eristique du corps $K$ n'y intervient pas.
\end{proof}

\begin{proof}[D\'emonstration de la Proposition \ref{proposition lemme cohomologique}]
Remarquons d'abord que l'isomorphisme \eqref{equation brnr pour V} vient pr\'ecis\'ement du morphisme $r:\brun V\to H^1(K,\pic V_{\bar K})$
de la suite spectrale de Hochschild-Serre, cf. \cite[Lemme 6.3]{Sansuc81}. D'autre part, on sait que $Z\to V$ est universel, i.e. le morphisme
\eqref{equation type du V-torseur G'} $\lambda:M\to\pic V_{\bar K}$ est aussi un isomorphisme, ce qui fait que l'on a
\[\xymatrix{\brun V \ar@{->>}[r] \ar@/_1pc/[rr]_{r} & \bral V \ar[r]^{\hspace{-7mm}\sim} & H^1(K,\pic V_{\bar K})
\ar@{<-}[r]^{\hspace{5mm}\sim}_{\hspace{5mm}\lambda_*}& H^1(K,M). }\]
La proposition \ref{proposition Skor} nous dit alors que l'image de l'\'el\'ement $\pi^*(\alpha)\cup[Z]\in\brun V$ dans $\bral V$ correspond
\`a $\alpha\in H^1(K,M)$.\\

Pour un $K$-point $P:\spec K\to V$, notons $[Z](P)\in H^1(K,G^\ab)$ la classe du $K$-torseur $Z\times_V K$ sous $G^\ab$. De m\^eme,
notons $[G'](P)$ la classe du $K$-torseur $G'\times_V K$ sous $G$. On a le diagramme commutatif suivant
\[\xymatrix{
([G'],P) \ar@{}[r] |{\hspace{-8mm}\in} \ar@{|->}[d] & H^1(V,G)\times V(K) \ar[r] \ar[d] & H^1(K,G) \ar[d] \ar@{}[r] |{\hspace{2mm}\ni} &
\hspace{-4mm}[G'](P) \ar@{|->}[d] \\
([Z],P) \ar@{}[r] |{\hspace{-8mm}\in} \ar@{|->}[d] & H^1(V,G^\ab)\times V(K) \ar[r] \ar[d]_{\pi^*(\alpha)\cup\cdot} & H^1(K,G^\ab) \ar[d]^{\alpha\cup\cdot}
\ar@{}[r] |{\hspace{2mm}\ni} & \hspace{-2mm}[Z](P) \ar@{|->}[d] \\
(\pi^*(\alpha)\cup [Z],P) \ar@{}[r] |{\in} & \brun V\times V(K) \ar[r] & \br K \ar@{}[r] |{\hspace{-4mm}\ni} & \alpha\cup [Z](P). \\
}\]
Si l'on consid\`ere alors la suite exacte de cohomologie galoisienne non ab\'elienne suivante (voir par exemple
\cite[I, \S5.4, Proposition 36]{SerreCohGal})
\[G'(K)\to V(K) \xrightarrow{\delta} H^1(K,G) \to H^1(K,G'),\]
on sait par la d\'efinition de $\delta$ que $\delta(P)=[G'](P)$. Comme $G'$ est simplement connexe, on sait que $H^1(K,G')=0$ (cf.
\cite[4.7]{BruhatTits}), ce qui nous dit que l'application $\delta$ est surjective. On voit alors que l'ensemble des $[Z](P)$ pour $P$
parcourant $V(K)$ est $\im[H^1(K,G)\to H^1(K,G^\ab)]$ tout entier. On en d\'eduit que, si $\alpha$ est orthogonale \`a
$\im[H^1(K,G)\to H^1(K,G^\ab)]$, alors l'application $V(K)\to \br K$ induite par $\pi^*(\alpha)\cup [Z]\in\brun V$ est triviale. Dans le sens
inverse, si $\alpha$ n'est pas orthogonale \`a $\im[H^1(K,G)\to H^1(K,G^\ab)]$, alors l'application
induite par $\pi^*(\alpha)\cup [Z]$ n'est clairement pas triviale, donc en particulier pas constante (car la classe triviale appartient toujours \`a
$\im[H^1(K,G)\to H^1(K,G^\ab)]$ et donc l'\'el\'ement trivial de $\br K$ est toujours atteint). On voit aussit\^ot que celle induite
par une autre pr\'eimage $\alpha_1\in\brun V$ de $\alpha$ ne l'est pas non plus (ni constante, ni triviale), car il s'agit d'une translation de
la premi\`ere par $\beta\in\br K$, o\`u $\beta$ est l'\'el\'ement tel que $\alpha_1=\pi^*(\alpha)\cup [Z]+\beta$.
\end{proof}

\begin{rem}
Vu l'\'enonc\'e de la proposition \ref{proposition Skor}, il est facile de voir que le th\'eor\`eme \ref{theoreme cohomologique} s'\'etend \`a
un stabilisateur \emph{quelconque} \`a composante connexe r\'eductive, \`a cela pr\`es que l'on doit toujours demander que la torsion du groupe $G^\ab$
soit d'ordre premier \`a $q$. En effet, la preuve de la proposition \ref{proposition lemme cohomologique} ne fait pas du tout mention de la finitude de $G$ et,
dans ce cas plus g\'en\'eral, le groupe $G^\ab$ est un groupe de type multiplicatif lisse dont le dual n'a pas de $p$-torsion, donc la proposition
\ref{proposition Skor} reste valable.
\end{rem}

\subsection{Une formule alg\'ebrique sur un corps fini}\label{section formule algebrique}
On garde les notations de la section pr\'ec\'edente, donc $k$ est toujours un corps fini de cardinal $q$. On rappelle aussi les conventions
faites dans la section \ref{section notations} : pour tout groupe profini $\Delta$ agissant sur $G(\bar k)$ (par exemple $\Delta=\Gamma_{k((t))}$) on note
\[H^i(\Delta,G):=H^i(\Delta,G(\bar k))\quad\text{et}\quad \hom(\Delta,G):=\hom_{\mathrm{cont}}(\Delta,G(\bar k)).\]
De plus, la notation $b\in G$ veut toujours dire $b\in G(\bar k)$. On fait de m\^eme pour $G^\ab,$ $M$, etc.

Les th\'eor\`emes \ref{theoreme sans compactification lisse corps fini} et \ref{theoreme cohomologique} nous donnent des formules pour le groupe
$\brnral V$ correspondant \`a la $k$-vari\'et\'e $V=G\backslash G'$ qui regardent ce qui se passe au niveau des corps locaux $k'((t))$ avec $k'$ une extension
finie de $k$ et profitent de leurs propri\'et\'es arithm\'etiques. Dans cette section on en d\'eduit une formule alg\'ebrique qui ne d\'epend que du corps
de base $k$ en d\'eformant peu \`a peu la formule cohomologique (th\'eor\`eme \ref{theoreme cohomologique}).

\begin{defi}
Soit $G$ un groupe fini de cardinal premier \`a $q$. Soit $s\in\Gamma_k$ l'\'el\'ement correspondant au $q$-Frobenius (i.e. l'application
$\bar k\to \bar k:x\mapsto x^q$). On d\'efinit une application bijective $\varphi_q:G(\bar k)\to G(\bar k)$ par la formule
\[\varphi_q(b)=\asd{s^{-1}}{}{q}{}{b}.\]
L'application induite par $\varphi_q$ sur $G^\ab(\bar k)$ est un automorphisme que l'on note toujours $\varphi_q$ par abus. Pour $n\in\n$, on
note $\varphi_q^n$ le $n$-i\`eme it\'er\'e de $\varphi_q$.

Pour tout $b\in G$, on d\'efinit $n_{b}$ comme le plus petit entier $n_b>0$ tel que $\varphi_q^{n_b}(b)$ soit conjugu\'e \`a $b$. On remarque qu'un tel
$n_b$ existe toujours d'apr\`es la finitude de $G(\bar k)$. On d\'efinit alors l'application \emph{$q$-norme} $N_q:G(\bar k)\to G^\ab(\bar k)$ comme le
produit
\[N_q(b)=\prod_{i=0}^{n_{b}-1}\overline{\varphi_q^i(b)}=\prod_{i=0}^{n_{b}-1}\varphi_q^i(\bar b),\]
o\`u ``$\overline{\phantom{(b)}}$'' repr\'esente la projection naturelle de $G(\bar k)$ sur $G^\ab(\bar k)$.
\end{defi}

Avec ces d\'efinitions, on peut \'enoncer la formule alg\'ebrique de la fa\c con suivante.

\begin{thm}\label{theoreme formule sur fq}
Soit $G$ un $k$-groupe fini d'ordre premier \`a $q$ plong\'e dans $G'$ semi-simple simplement connexe et soit $V=G\backslash G'$. Soit $M$ le groupe des
caract\`eres de $G$. En identifiant $\bral V$ avec $H^1(k,M)$, le groupe de Brauer non ramifi\'e alg\'ebrique $\brnral V$ de $V$ est donn\'e par les
\'el\'ements $\alpha\in H^1(k,M)$ tels que pour $a\in Z^1(k,M)$ un cocycle (quelconque) repr\'esentant $\alpha$, on a
\[a_{s}(N_q(b))=1\quad \forall\, b\in G,\]
\end{thm}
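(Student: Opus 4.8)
The strategy is to translate the arithmetic condition of Theorem \ref{theoreme cohomologique} — orthogonality of $\alpha'$ to the image $\im[H^1(k'((t)),G)\to H^1(k'((t)),G^\ab)]$ for every finite extension $k'/k$ — into the algebraic condition $a_s(N_q(b))=1$ for all $b\in G$. The key point is that the local field $k'((t))$ has a very explicit absolute Galois group when restricted to the relevant (tame, because everything is prime to $p$) part: $\Gamma_{k'((t))}$ has, as its maximal prime-to-$p$ quotient, the profinite group $\widehat{\z}^{(p')}\rtimes\widehat{\z}^{(p')}$, where the first factor is generated by a lift $\sigma$ of Frobenius (inertia-trivial) and the second by a generator $\tau$ of the tame inertia, subject to $\sigma\tau\sigma^{-1}=\tau^{q}$. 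Since $G$ has order prime to $p$, any continuous homomorphism $\Gamma_{k'((t))}\to G(\bar k)$ — i.e. any element of $\im[H^1(k'((t)),G)\to H^1(k'((t)),G^\ab)]$, after one unwinds the (non-abelian) $H^1$ into conjugacy classes of such homomorphisms — factors through this tame quotient and is determined by the pair $(\sigma\mapsto c,\ \tau\mapsto b)$ with $c\,b\,c^{-1}=\varphi_{q'}(b)^{\text{something}}$; concretely the relation forces $b$ and $\varphi_{q'}(b)$ to be conjugate, which is exactly what the integer $n_b$ measures.

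\textbf{Step 1: describe the image in $H^1(k'((t)),G^\ab)$.} First I would make the identification $\im[H^1(k'((t)),G)\to H^1(k'((t)),G^\ab)]$ fully explicit: a class in $H^1(k'((t)),G)$ is represented by a continuous $1$-cocycle, hence (since $G$ has no $p$-part) by a homomorphism $\rho:\langle\sigma,\tau\mid \sigma\tau\sigma^{-1}=\tau^{q'}\rangle\to G(\bar k)$ twisted by the Galois action, up to conjugacy. Writing $\rho(\tau)=b$ and $\rho(\sigma)=c$, the defining relation of the group (together with the Galois twist by the Frobenius $s^{\deg(k'/k)}$) becomes $c\cdot{}^{s^{-m}}b\cdot c^{-1}=b^{q'}$ in the twisted sense, equivalently $c\,\varphi_{q'}(b)\,c^{-1}$ is conjugate to $b$ — note $\varphi_{q'}=\varphi_q^{m}$ where $m=\deg(k'/k)$. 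Passing to $G^\ab$ and using that the abelianization kills conjugation, the image of such a $\rho$ in $H^1(k'((t)),G^\ab)$ is detected by the pair $(\bar c,\bar b)\in G^\ab(\bar k)\times G^\ab(\bar k)$ subject only to the compatibility coming from the relation, which on $G^\ab$ reads $\varphi_{q'}(\bar b)=\bar b$ — but this must only hold after enough iterations, and summing the relation around the orbit produces exactly the norm element $N_q(b)$ (with the appropriate $q'$). The cleanest way to see this: the obstruction to extending "$\tau\mapsto b$" to the full tame group lands in $H^1$ of a cyclic group, and the relevant coboundary computation yields $N_{q'}(b)$.

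\textbf{Step 2: compute the cup-product pairing.} The pairing $H^1(k'((t)),M)\times H^1(k'((t)),G^\ab)\to\br k'((t))\cong\q/\z$ decomposes along the filtration by inertia: $H^1(k'((t)),M)$ sits in an exact sequence with an "unramified part" $\cong H^1(k',M)$ and a "ramified part" $\cong M^{\Gamma_{k'}}\otimes(\text{tame inertia character})$, and dually for $G^\ab$. An element $\alpha'\in H^1(k',M)\hookrightarrow H^1(k'((t)),M)$ is \emph{unramified}, so when paired with a class in $H^1(k'((t)),G^\ab)$ only the \emph{ramified} component of the latter contributes; by local Tate duality this ramified component is precisely recorded by the value $\bar b\in G^\ab(\bar k)$ of the homomorphism on tame inertia $\tau$, and the pairing value is $\langle a'_s\,,\,\bar b\rangle$-type expression evaluated via the cyclotomic identification — here $a'$ is the cocycle representing $\alpha'$ and the subscript $s$ records evaluation of the inflated cocycle at Frobenius. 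Feeding in Step 1, the admissible $\bar b$'s are exactly the elements of the form $N_{q'}(b)$, so the orthogonality condition over $k'$ becomes $a'_{s^{m}}(N_{q'}(b))=1$ for all $b$. Letting $k'$ range over all finite extensions, a Frobenius/ramification bookkeeping (the $q'=q^m$ versus $q$ discrepancy collapses because $N_{q^m}(b)$ is an explicit product of $\varphi_q$-iterates of $N_q(b)$'s) shows that all these conditions are equivalent to the single condition $a_s(N_q(b))=1$ for all $b\in G$, which is the claimed formula.

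\textbf{Main obstacle.} The delicate point is Step 1 together with the reconciliation of the base-change parameter: one must check carefully that quantifying over \emph{all} finite $k'/k$ in Theorem \ref{theoreme cohomologique} does not produce strictly more conditions than the single relation at the level of $k$, i.e. that $a_{s}(N_q(b))=1\ \forall b$ already forces $a'_{s^m}(N_{q^m}(b'))=1$ for all $m$ and all $b'\in G$. This is where the precise definition of $N_q$ via the minimal $n_b$ with $\varphi_q^{n_b}(b)$ conjugate to $b$ is doing real work, and the identity $N_{q^m}(b)=\prod_{j} \varphi_q^{j}(N_q(\cdot))$ (summed over a suitable set of representatives, using that $a_s$ is a cocycle for $s$ acting on $M$, so $a_{s^m}=\sum_{i=0}^{m-1}{}^{s^i}a_s$) must be verified term by term. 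I expect this combinatorial-cohomological identity, rather than the duality input (which is standard local Tate duality for the tame quotient), to be the technical heart of the argument; the non-abelian bookkeeping in Step 1 — keeping track of conjugacy versus equality when lifting "$\tau\mapsto b$" to the full group — is the second place where care is needed.
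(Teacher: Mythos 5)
Your overall architecture is the same as the paper's: reduce via Theorem \ref{theoreme cohomologique} to the local fields $\f_{q^m}((t))$, pass to the tame quotient of the Galois group (minor slip: that quotient is $\z_{p'}\rtimes\hat\z$, the Frobenius factor being the full $\hat\z$ and not its prime-to-$p$ part), describe the image of the non-abelian $H^1$ on the inertial component, pair against the unramified class $\alpha$ by local duality, and use the cocycle identity $a_{s^m}=\prod_{i=0}^{m-1}{}^{s^i}a_s$ to make the $q$-norm appear; the final reconciliation over all $m$ is exactly the paper's closing computation.

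There is, however, a genuine gap at the key identification in your Steps 1--2. The image of $H^1(k'((t)),G)\to H^1(k'((t)),G^\ab)$, read on the inertial component, is the set $G^\ab_{q,m}$ of $(q,m)$-\emph{relevable} classes: those $\b\in G^\ab$ admitting a lift $b\in G$ with $\varphi_q^m(b)$ conjugate to $b$ (and for such a lift $N_{q^m}(b)=\b$). It is \emph{not} ``the elements of the form $N_{q^m}(b)$, $b\in G$'': that set contains $G^\ab_{q,m}$ but is strictly larger in general --- the paper makes precisely this point in the remark following Proposition \ref{proposition cohomologique corps local}, and the example of Proposition \ref{proposition exemple Demarche sur fq} shows the discrepancy is not empty. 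The norm only enters afterwards, through the evaluation $a_{s^m}(\b)=a_s\bigl(\prod_{i=0}^{m-1}\varphi_q^i(\b)\bigr)$. Your identification is harmless for the direction ``formula $\Rightarrow$ unramified'' (your translated condition for each fixed $m$ is stronger than the true orthogonality condition, and your bookkeeping identity, which amounts to $a_{s^m}(N_{q^m}(b))$ being a power of $a_s(N_q(b))$, does check out), but it silently does all the work in the direction ``unramified $\Rightarrow$ formula'': orthogonality over $k((t))$ alone (your $m=1$ case) only yields $a_s(\b)=1$ for $(q,1)$-relevable $\b$, which is strictly weaker than $a_s(N_q(b))=1$ for all $b\in G$. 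The correct argument --- and the paper's --- is, for each $b$, to pass to the unramified extension of degree $n_b$, where $\bar b\in G^\ab_{q,n_b}$ is admissible, and then conclude $a_s(N_q(b))=a_{s^{n_b}}(\bar b)=1$. So the repair is local (replace ``norms'' by ``relevable elements'' in the description of the image, and choose $m=n_b$ element by element in the forward direction), but as written that step would fail.
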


On remarque que la formule a bien un sens. En effet, $a_s$ est un \'el\'ement de $M$, donc on peut l'\'evaluer en $N_q(b)\in G^\ab$, ce qui
donne un \'el\'ement dans $\mu\{q'\}$.

\subsubsection{D\'emonstration du th\'eor\`eme \ref{theoreme formule sur fq}}
D'apr\`es le th\'eor\`eme \ref{theoreme cohomologique}, on a int\'er\^et \`a \'etudier l'application $H^1(K_1,G)\to H^1(K_1,G^\ab)$ pour
$K_1=k((t))=\f_q((t))$, ainsi que les applications $H^1(K_n,G)\to H^1(K_n,G^\ab)$ pour $K_n=\f_{q^n}((t))$. Soit alors
$S\subset\Gamma_{K_n}\subset\Gamma_{K_1}$ le sous-groupe de ramification sauvage et consid\'erons le quotient $D_n:=\Gamma_{K_n}/S$ pour
tout $n\geq 1$. On a une suite exacte d'ensembles point\'es
\[0\to H^1(D_n,G)\to H^1(\Gamma_{K_n},G)\to H^1(S,G),\]
mais puisque $G$ est un $k$-groupe fini, on a $G(\bar k)=G(\bar K_n)$ et alors $S$ agit trivialement sur $G(\bar K_n)$, donc $H^1(S,G)$
correspond aux morphismes $S\to G$ \`a conjugaison pr\`es. On voit alors que $H^1(S,G)=0$, car $S$ est un pro-$p$-groupe et $G$ n'a
pas de $p$-sous-groupe non trivial. On a exactement la m\^eme situation pour $G^\ab$, donc on voit que l'on a $H^1(D_n,G)=H^1(K_n,G)$ et de
m\^eme pour $G^\ab$ et $M$ et on peut alors se restreindre \`a \'etudier les applications $H^1(D_n,G)\to H^1(D_n,G^\ab)$ pour $n\geq 1$.\\

On rappelle que $D_n$ est un groupe profini engendr\'e par deux g\'en\'erateurs $\sigma_n$ et $\tau$ avec la relation
$\sigma_n\tau\sigma_n^{-1}=\tau^{q^n}$ (cf. \cite[Thm. 7.5.3]{NSW}). On remarque par ailleurs que l'inclusion naturelle
$\Gamma_{K_n}\subset\Gamma_{K_1}$ donne $\sigma_n=\sigma_1^n$ et que $D_n$ correspond au groupe de Galois de l'extension mod\'erement
ramifi\'ee maximale de $K_n=\f_{q^n}((t))$. Si de plus on consid\`ere le sous-groupe ferm\'e
\[T:=\langle\tau\rangle\subset D_n\]
et les quotients
\[\Gamma_n:=D_n/T\cong\langle\sigma^n\rangle,\]
on voit que $\Gamma_n$ correspond au groupe de Galois de $\bar{\f}_{q^n}((t))/\f_{q^n}((t))$ (i.e. de l'extension non ramifi\'ee maximale de $K_n$)
lequel est isomorphe au groupe de Galois de $\f_{q^n}$. En particulier on a un isomorphisme canonique $\Gamma_1\xrightarrow{\sim}\Gamma_k$ qui
envoie la classe de $\sigma_1$ dans $\Gamma_1$ sur le Frobenius $s$ et alors on a canoniquement $H^1(k,M)\cong H^1(\Gamma_1,M)$. Il est facile
aussi de voir \`a partir de la pr\'esentation de $D_n$ que $\Gamma_n\cong\hat\z$, tandis que $T\cong\z_{p'}:=\prod_{\ell\neq p}\z_\ell$ et que
l'on a une suite exacte scind\'ee
\[1\to T\to D_n\to\Gamma_n\to 1.\]
De ce fait, on se permet de noter d\'esormais abusivement $\sigma$ tant le g\'en\'erateur de $\Gamma_1$ que l'\'el\'ement $\sigma_1\in D_1$. On a
alors que $\sigma^n$ est le g\'en\'erateur de $\Gamma_n$ pour tout $n$. Enfin, comme $G$ est d\'efini sur $k$, on sait que $\tau$ agit trivialement sur
$G(\bar k)=G(\bar K_1)$, donc en particulier on a une action de $\Gamma_1$ sur $G(\bar k)$ qui co\"\i ncide avec l'action de $\Gamma_k$, i.e. on a
$\asd{\sigma}{}{}{}{b}=\asd{s}{}{}{}{b}$ pour tout $b\in G$ et en particulier,
\[\varphi_q(b)=\asd{s^{-1}}{}{q}{}{b}=\asd{\sigma^{-1}}{}{q}{}{b},\quad\text{pour tout }b\in G.\]

\vspace{3mm}

Soit maintenant $\alpha\in H^1(k,M)=H^1(\Gamma_1,M)$. Le th\'eor\`eme \ref{theoreme cohomologique} nous dit qu'il faut \'etudier le comportement
de l'image $\alpha_n$ de $\alpha$ dans $H^1(\Gamma_n,M)\subset H^1(D_n,M)$ par rapport au groupe $H^1(D_n,G^\ab)$. En effet, d'apr\`es les
\'egalit\'es ci-dessus, on voit que l'on a un accouplement parfait
\begin{equation}\label{equation accouplement pour H1(Dn)}
H^1(D_n,M)\times H^1(D_n,G^\ab)\to H^2(D_n,\mu\{q'\})\hookrightarrow\q/\z,
\end{equation}
issu de l'accouplement local classique. Soit alors $\beta\in H^1(D_n,G^\ab)$ que l'on suppose d'abord comme provenant de $H^1(\Gamma_n,G^\ab)$.
On peut donc consid\'erer le cup-produit $\alpha_n\cup\beta\in H^2(\Gamma_n,\mu\{q'\})$. Or, $\Gamma_n$ est un groupe de dimension cohomologique 1
car il est isomorphe \`a $\hat\z$, donc on a $H^2(\Gamma_n,\mu\{q'\})=0$. La compatibilit\'e du cup-produit avec l'inflation nous dit alors que $\alpha_n$
est orthogonal \`a tout le sous-groupe $H^1(\Gamma_n,G^\ab)$ de $H^1(D_n,G^\ab)$. De plus, la suite exacte des premiers termes de la suite spectrale
de Hochschild-Serre
\[H^p(\Gamma_n,H^q(T,G^\ab))\Rightarrow H^{p+q}(D_n,G^\ab),\]
qui est
\[0\to H^1(\Gamma_n,(G^\ab)^T) \xrightarrow{\inf} H^1(D_n,G^\ab) \xrightarrow{\res} H^1(T,G^\ab)^{\Gamma_n}\to H^2(\Gamma_n,(G^\ab)^T),\]
peut \^etre r\'e\'ecrite comme
\[0\to H^1(\Gamma_n,G^\ab) \xrightarrow{\inf} H^1(D_n,G^\ab) \xrightarrow{\res} \hom(T,G^\ab)^{\Gamma_n}\to 0,\]
car $T$ agit trivialement sur $G^\ab(\bar k)$ et $\mathrm{cd}(\Gamma_n)=1$. De cette suite, et du fait que $H^1(\Gamma_n,M)$ est orthogonal
\`a $H^1(\Gamma_n,G^\ab)$, on d\'eduit que l'on a un accouplement
\begin{equation}\label{equation accouplement 1}
H^1(\Gamma_n,M)\times \hom(T,G^\ab)^{\Gamma_n}\to H^2(D_n,\mu\{q'\})\hookrightarrow\q/\z,
\end{equation}
qui nous dit que c'est l'image de $H^1(D_n,G)$ dans $\hom(T,G^\ab)^{\Gamma_n}$ que l'on doit \'etudier. On donne alors quelques d\'efinitions
qui nous aideront \`a d\'ecrire cette image.

\begin{defi}\label{definition (q,n)-relevable}
Soit $n\in\n^*$. On dit qu'un \'el\'ement $\b\in G^\ab$ est $(q,n)$-relevable s'il existe $b\in G$ relevant $\b$ tel que $\varphi_q^n(b)$ soit
conjugu\'e \`a $b$. On remarque que ceci entra\^\i ne en particulier que $\varphi_q^n(\b)=\b$.\\
On note $(G^\ab)^{\varphi_q^n}$ le sous-groupe de $G^\ab(\bar k)$ des \'el\'ements invariants par $\varphi_q^n$ et
$G^\ab_{q,n}\subset(G^\ab)^{\varphi_q^n}$ l'ensemble des \'el\'ements $(q,n)$-relevables. On remarque que $\b\in G^\ab$ est
$\varphi_q^n$-invariant si et seulement s'il v\'erifie $\asd{\sigma^n}{}{}{}{\b}=\b^{q^n}$.
\end{defi}

On a un isomorphisme \'evident $\hom(T,G^\ab)\xrightarrow{\sim}G^\ab(\bar k)$ qui \`a un morphisme lui associe l'image de $\tau$. On peut alors
v\'erifier facilement que l'image de $\hom(T,G^\ab)^{\Gamma_n}$ dans $G^\ab(\bar k)$ pour cet isomorphisme correspond au sous-groupe
$(G^\ab)^{\varphi_q^n}$. On voit alors que pour tout $n$ l'accouplement \eqref{equation accouplement 1} devient
\begin{equation}\label{equation accouplement 2}
H^1(\Gamma_n,M)\times (G^\ab)^{\varphi_q^n}\to H^2(D_n,\mu\{q'\})\hookrightarrow\q/\z.
\end{equation}
On peut alors \'enoncer un r\'esultat \'equivalent au th\'eor\`eme \ref{theoreme cohomologique} qui utilise cet accouplement.

\begin{pro}\label{proposition traduction du theoreme cohomologique}
Soit $G$ un $k$-groupe fini d'ordre premier \`a $q$ plong\'e dans $G'$ semi-simple simplement connexe et soit $V=G\backslash G'$. Soit $M$ le groupe des
caract\`eres de $G$. En identifiant $\bral V$ avec $H^1(\Gamma_1,M)$, le groupe de Brauer non ramifi\'e alg\'ebrique $\brnral V$ de $V$ est donn\'e
par les \'el\'ements $\alpha\in H^1(\Gamma_1,M)$ v\'erifiant la propri\'et\'e suivante :

Pour tout $n\geq 1$, l'image $\alpha_n$ de $\alpha$ dans $H^1(\Gamma_n,M)$ est orthogonale \`a $G^\ab_{q,n}$
pour l'accouplement \eqref{equation accouplement 2} donn\'e ci-dessus.
\end{pro}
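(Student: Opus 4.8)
Le plan est de déduire l'énoncé du théorème \ref{theoreme cohomologique} en exploitant l'identification $H^1(K_n,\cdot)=H^1(D_n,\cdot)$ et les suites exactes obtenues ci-dessus. Le théorème \ref{theoreme cohomologique} dit que $\alpha\in\brnral V$ si et seulement si, pour toute extension finie de $k$ de degré $n$, l'image $\alpha_n$ de $\alpha$ dans $H^1(D_n,M)$ est orthogonale, pour l'accouplement \eqref{equation accouplement pour H1(Dn)}, au sous-ensemble $\im[H^1(D_n,G)\to H^1(D_n,G^\ab)]$. Puisque $\alpha_n$ provient par inflation de $H^1(\Gamma_n,M)$, elle est déjà orthogonale au sous-groupe $H^1(\Gamma_n,G^\ab)$ ; grâce à la suite exacte $0\to H^1(\Gamma_n,G^\ab)\to H^1(D_n,G^\ab)\xrightarrow{\res}\hom(T,G^\ab)^{\Gamma_n}\to 0$ et à la factorisation de l'accouplement qui en résulte (passage de \eqref{equation accouplement pour H1(Dn)} à \eqref{equation accouplement 1} puis à \eqref{equation accouplement 2}), cette condition équivaut à demander que $\alpha_n\in H^1(\Gamma_n,M)$ soit orthogonale, pour \eqref{equation accouplement 2}, à l'image $W_n\subseteq(G^\ab)^{\varphi_q^n}$ du composé $H^1(D_n,G)\to H^1(D_n,G^\ab)\xrightarrow{\res}\hom(T,G^\ab)^{\Gamma_n}\cong(G^\ab)^{\varphi_q^n}$. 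Tout se ramène ainsi à l'égalité $W_n=G^\ab_{q,n}$.

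Pour l'établir, je calculerais ce composé au niveau des cocycles. Un cocycle $c\in Z^1(D_n,G)$ est déterminé par $g:=c_{\sigma_n}$ et $h:=c_\tau$ dans $G(\bar k)$, et le composé envoie sa classe sur $\overline h\in G^\ab(\bar k)$. En évaluant $c$ sur l'unique relation $\sigma_n\tau\sigma_n^{-1}=\tau^{q^n}$ de (la présentation profinie de) $D_n$, en utilisant que $T$ agit trivialement sur $G(\bar k)$ et que $\varphi_q^n(b)=\asd{\sigma_n^{-1}}{}{q^n}{}{b}$ pour tout $b\in G$, on obtient, après conjugaison par $\sigma_n^{-1}$,
\[\asd{\sigma_n^{-1}}{}{}{}{g}\cdot h\cdot(\asd{\sigma_n^{-1}}{}{}{}{g})^{-1}=\varphi_q^n(h)\quad\text{dans }G(\bar k),\]
donc $\varphi_q^n(h)$ est conjugué à $h$ et $\overline h$ est $(q,n)$-relevable : on a $W_n\subseteq G^\ab_{q,n}$.

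Réciproquement, partant de $\b\in G^\ab_{q,n}$ relevé par $h\in G$ avec $\varphi_q^n(h)=xhx^{-1}$ pour un $x\in G(\bar k)$, on pose $g:=\asd{\sigma_n}{}{}{}{x}$, ce qui satisfait l'égalité précédente. Il reste à voir que $(g,h)$ définit bien un cocycle \emph{continu} sur $D_n$, c'est-à-dire se factorise par un quotient fini. En posant $m:=\mathrm{ord}(h)$ et en choisissant $N$ multiple commun d'un entier $r$ tel que $\sigma_n^r$ agisse trivialement sur $G(\bar k)$, de l'ordre multiplicatif de $q^n$ modulo $m$, et de l'ordre dans $G(\bar k)$ de la « norme » $g\cdot\asd{\sigma_n}{}{}{}{g}\cdots\asd{\sigma_n^{N-1}}{}{}{}{g}$ (ce dernier choix étant loisible car remplacer $N$ par $kN$ élève cette norme à la puissance $k$-ième dès que $r\mid N$), on vérifie que $(g,h)$ définit un cocycle sur le groupe métacyclique fini $Q=\z/m\rtimes_{q^n}\z/N$, lequel est un quotient de $D_n$ ; en composant avec $D_n\twoheadrightarrow Q$ on obtient un cocycle continu sur $D_n$ d'image $\overline h=\b$. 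D'où $G^\ab_{q,n}\subseteq W_n$, et donc $W_n=G^\ab_{q,n}$.

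La partie qui demandera le plus de soin est ce calcul de cocycles, et surtout la vérification de continuité dans le sens $G^\ab_{q,n}\subseteq W_n$ : il faut manipuler avec attention la présentation profinie de $D_n$ — en gardant à l'esprit que le générateur $\sigma_n=\sigma^n$ y agit sur $T$ par la puissance $q^n$-ième — et choisir le quotient fini assez gros pour tuer la condition de norme portant sur $c_{\sigma_n}$. Une fois ce calcul acquis, la proposition résulte formellement de la discussion précédant son énoncé.
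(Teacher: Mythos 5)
Votre démonstration est correcte et suit essentiellement la même voie que celle du texte : réduction via le théorème \ref{theoreme cohomologique} et les accouplements \eqref{equation accouplement 1} et \eqref{equation accouplement 2}, puis identification de l'image de $H^1(D_n,G)$ dans $\hom(T,G^\ab)^{\Gamma_n}\cong(G^\ab)^{\varphi_q^n}$ avec $G^\ab_{q,n}$ en décrivant les cocycles par le couple $(c_{\sigma^n},c_\tau)$ soumis à la seule relation $c_{\sigma^n}\,\asd{\sigma^n}{}{}{\tau}{c}\,c_{\sigma^n}^{-1}=c_\tau^{q^n}$. La seule différence est que vous explicitez, via le quotient métacyclique fini $\z/m\rtimes_{q^n}\z/N$, la vérification de continuité que le texte déclare « facile à voir » ; c'est un raffinement bienvenu du même argument, pas une autre preuve.
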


\begin{proof}
Il est facile de voir qu'un cocycle $b\in Z^1(D_n,G)$ repr\'esentant une classe de $H^1(D_n,G)$ est d\'efini par ses images $b_{\sigma^n}$
et $b_{\tau}$ et qu'elles sont soumises \`a la seule relation
\[b_{\sigma^n\tau\sigma^{-n}}=b_{\tau^{q^n}},\]
ce qui, sachant que $\tau$ agit trivialement sur $G(\bar k)$, se traduit par
\[b_{\sigma^n}\asd{\sigma^n}{}{}{\tau}{b}b_{\sigma^n}^{-1}=b_{\tau}^{q^n}.\]
On en d\'eduit qu'un morphisme dans $\hom(T,G^\ab)$ appartient \`a l'image de $H^1(D_n,G)$ si et seulement si l'image de $\tau$ est
$(q,n)$-relevable, ce qui nous dit que $\alpha$ est orthogonale \`a $G^\ab_{q,n}$ si et seulement si elle est orthogonale \`a
$\im[H^1(D_n,G)\to \hom(T,G^\ab)^{\Gamma_n}]$ pour l'accouplement \eqref{equation accouplement 1}, ce qui est le cas si et seulement
si elle est orthogonale \`a $\im[H^1(D_n,G)\to H^1(D_n,G^\ab)]$ pour l'accouplement \eqref{equation accouplement pour H1(Dn)}.

Il suffit alors de noter que, pour toute extension finie $k'/k$ il existe $n\geq 1$ tel que $k'((t))\cong K_n$ et que l'accouplement
\eqref{equation accouplement pour H1(Dn)} provient bien de l'accouplement classique
\[H^1(K_n,M)\times H^1(K_n,G^\ab)\to H^2(K_n,\gm)=\br K_n\cong\q/\z.\]
Le th\'eor\`eme \ref{theoreme cohomologique} nous permet alors de conclure.
\end{proof}

Soient maintenant $\alpha\in H^1(\Gamma_1,M)$, $a\in Z^1(\Gamma_1,M)$ un cocycle repr\'esentant $\alpha$, $\b\in(G^\ab)^{\varphi_q^n}$ et, par
abus de notation, on note $\b$ aussi le cocycle dans $Z^1(D_n,G^\ab)$ tel que $\b_\tau=\b$ et $\b_{\sigma^n}=1$. Ce cocycle repr\'esente une
classe $\beta\in H^1(D_n,G^\ab)$ dont l'image dans $\hom(T,G^\ab)^{\Gamma_n}$ est pr\'ecis\'ement le morphisme envoyant $\tau$ en $\b$, ce qui justifie l'abus.
On veut analyser le 2-cocycle $a\cup \b\in Z^2(D_n,\mu\{q'\})$ obtenu par l'accouplement \eqref{equation accouplement 2} ci-dessus.

\begin{lem}\label{lemme calcul de a cup b}
On a la formule
\[(a\cup \b)_{\sigma^{ns_1}\tau^{t_1},\sigma^{ns_2}\tau^{t_2}}=[a_{\sigma^n}(\b)]^{q^{n(s_1+s_2)}s_1t_2}\in\mu\{q'\}.\]
\end{lem}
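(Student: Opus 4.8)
The plan is to compute the cup product explicitly at the cochain level using the standard formula for the cup product of two $1$-cocycles. Recall that for $a\in Z^1(D_n,M)$ and $\b\in Z^1(D_n,G^\ab)$, the cup product $a\cup\b\in Z^2(D_n,\mu\{q'\})$ is given by
\[(a\cup\b)_{g,h} = a_g\bigl(\,{}^g\!\b_h\,\bigr),\]
where $a_g\in M$ is evaluated on the element ${}^g\!\b_h\in G^\ab$. So the entire computation reduces to evaluating $a_g$ and ${}^g\!\b_h$ for $g=\sigma^{ns_1}\tau^{t_1}$ and $h=\sigma^{ns_2}\tau^{t_2}$, and then simplifying.

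First I would compute $\b_h$ for $h=\sigma^{ns_2}\tau^{t_2}$. Since $\b$ is the cocycle with $\b_{\sigma^n}=1$ and $\b_\tau=\b$, and since $\tau$ acts trivially on $G^\ab(\bar k)$, the cocycle relation gives $\b_{\tau^{t}} = \b^{t}$ for every integer $t$ (more precisely for $t$ in the relevant pro-cyclic completion, using that $\b$ is $\varphi_q^n$-invariant so the expression makes sense), and then $\b_{\sigma^{ns_2}\tau^{t_2}} = \b_{\sigma^{ns_2}}\cdot{}^{\sigma^{ns_2}}\b_{\tau^{t_2}} = 1\cdot{}^{\sigma^{ns_2}}(\b^{t_2})$. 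Now ${}^{\sigma^{ns_2}}\b = \b^{q^{ns_2}}$ because $\b\in(G^\ab)^{\varphi_q^{ns_2}}$ — indeed $\b$ is $\varphi_q^n$-invariant hence $\varphi_q^{ns_2}$-invariant, and by Definition \ref{definition (q,n)-relevable} this invariance means exactly ${}^{\sigma^{ns_2}}\b = \b^{q^{ns_2}}$. Hence $\b_h = \b^{q^{ns_2}t_2}$. Next I would apply ${}^g$ with $g=\sigma^{ns_1}\tau^{t_1}$: since $\tau$ acts trivially, ${}^g\b_h = {}^{\sigma^{ns_1}}(\b^{q^{ns_2}t_2}) = \b^{q^{n(s_1+s_2)}t_1\!\cdots}$ — wait, more carefully, ${}^{\sigma^{ns_1}}(\b^{q^{ns_2}t_2}) = ({}^{\sigma^{ns_1}}\b)^{q^{ns_2}t_2} = \b^{q^{ns_1}q^{ns_2}t_2} = \b^{q^{n(s_1+s_2)}t_2}$, again using $\varphi_q^n$-invariance of $\b$.

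Then I would compute $a_g$ for $g=\sigma^{ns_1}\tau^{t_1}$. Since $a$ is pulled back from $Z^1(\Gamma_1,M)$ along $D_n\twoheadrightarrow\Gamma_n=\langle\sigma^n\rangle\hookrightarrow\Gamma_1$ (via $\sigma^n\mapsto$ the $n$-th power of the Frobenius), the element $\tau$ acts as the identity and $a_{\tau^{t_1}}=0$, so $a_g = a_{\sigma^{ns_1}\tau^{t_1}} = a_{\sigma^{ns_1}} + {}^{\sigma^{ns_1}}a_{\tau^{t_1}} = a_{\sigma^{ns_1}}$. Using the cocycle relation for $a$ over the cyclic group $\langle\sigma^n\rangle$, one gets $a_{\sigma^{ns_1}} = \sum_{j=0}^{s_1-1}{}^{\sigma^{njn}}a_{\sigma^n}$; pairing this additive expression with $\b^{q^{n(s_1+s_2)}t_2}$ and using that the Galois action on $\mu\{q'\}$ and on $M$ is compatible, the terms telescope. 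Putting the pieces together, $(a\cup\b)_{g,h} = a_{\sigma^{ns_1}}\bigl(\b^{q^{n(s_1+s_2)}t_2}\bigr)$, and after carefully tracking how the Frobenius action on $M$ interacts with the $q$-power on $\b$ (this is where the factor $q^{n(s_1+s_2)s_1t_2}$ in the exponent, rather than just $q^{n(s_1+s_2)t_2}$, emerges — the sum over $j$ of the twisted terms contributes the extra $s_1$), one obtains $(a\cup\b)_{g,h}=[a_{\sigma^n}(\b)]^{q^{n(s_1+s_2)}s_1t_2}$.

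The main obstacle I expect is bookkeeping the Galois twists consistently: one must be scrupulous about whether ${}^{\sigma^{ni}}$ acts on $M$, on $G^\ab$, or on $\mu\{q'\}$ at each stage, and about the identity $a_{\sigma^n}({}^{\sigma^{ni}}x) = {}^{\sigma^{ni}}\bigl(({}^{\sigma^{-ni}}a_{\sigma^n})(x)\bigr)$ relating the two. The cleanest way to avoid errors is probably to first reduce to the case $t_1=0$ (legitimate since $a_{\tau^{t_1}}=0$ kills any dependence on $t_1$, consistent with the absence of $t_1$ in the claimed formula), then treat $s_1,s_2$ by a direct induction on $s_1$ using the $1$-cocycle relations, checking the base case $s_1=1$ by hand — there the formula reads $(a\cup\b)_{\sigma^n\tau^{t_1},\sigma^{ns_2}\tau^{t_2}} = [a_{\sigma^n}(\b)]^{q^{n(1+s_2)}t_2}$, which follows immediately from the two computations above.
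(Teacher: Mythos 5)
Your proposal is correct and follows essentially the same route as the paper: an explicit cochain-level computation of $(a\cup\b)_{g,h}=a_g({}^g\b_h)$ using the cocycle relations for $a$ and $\b$, the invariance ${}^{\sigma^n}\b=\b^{q^n}$, and the compatibility of the Galois action on $M$, $G^\ab$ and $\mu\{q'\}$, with each of the $s_1$ factors in the expansion of $a_{\sigma^{ns_1}}$ contributing the same term $[a_{\sigma^n}(\b)]^{q^{n(s_1+s_2)}t_2}$ — exactly the mechanism in the paper's proof. The only points to tidy are notational (the exponent ${}^{\sigma^{njn}}$ should read ${}^{\sigma^{nj}}$, and "telescope" really means "all terms coincide"), and, as the paper also only remarks in passing, the case $s_1\leq 0$ requires the analogous cocycle formula for negative powers.
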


\begin{proof}
En tant qu'\'el\'ement de $Z^1(D_n,M)$, on sait que $a$ v\'erifie
\begin{align*}
a_{\sigma^{ns}} &=\prod_{i=0}^{s-1}\asd{\sigma^{ni}}{}{}{\sigma^n}{a}, &s\geq 1,\\
a_{\sigma^{-ns}} &=\prod_{i=1}^{s}\asd{\sigma^{-ni}}{}{-1}{\sigma^n}{a}, & s\geq 1,\\
a_{\tau^t} &=1, &t\in\z,\\
a_{\sigma^{ns}\tau^t} &=a_{\sigma^{ns}}\asd{\sigma^{ns}}{}{}{\tau^t}{a}=a_{\sigma^{ns}}, &s,t\in\z. 
\end{align*}
D'autre part, on sait par d\'efinition que $\b$ v\'erifie
\begin{align*}
\b_{\sigma^{ns}} &=1, & s\in\z,\\
\b_{\tau^t} &=\b_\tau^t=\b^t, & t\in\z,\\
\b_{\sigma^{ns}\tau^t}&=\b_{\sigma^{ns}}\asd{\sigma^{ns}}{}{}{\tau^t}{\b}=\asd{\sigma^{ns}}{}{t}{}{\b}=\b^{q^{ns}t}, & s,t\in\z.
\end{align*}
En rappelant alors que la formule g\'en\'erale du cup-produit est
\[(a\cup \b)_{x,y}=a_x\otimes\asd{x}{}{}{y}{\b},\]
il est facile de voir que le 2-cocycle $a\cup \b\in Z^2(D_n,\mu\{q'\})$ est d\'efini par la formule
\begin{equation*}
\begin{split}
(a\cup \b)_{\sigma^{ns_1}\tau^{t_1},\sigma^{ns_2}\tau^{t_2}}&=a_{\sigma^{ns_1}}(\asd{\sigma^{ns_1}\tau^{t_1}}{}{q^{ns_2}t_2}{}{\b})\\
&=[a_{\sigma^{ns_1}}(\asd{\sigma^{ns_1}}{}{}{}{\b})]^{q^{ns_2}t_2}\\
&=\prod_{i=0}^{s_1-1}[(\asd{\sigma^{ni}}{}{}{\sigma^n}{a})(\asd{\sigma^{ns_1}}{}{}{}{\b})]^{q^{ns_2}t_2}\\
&=\prod_{i=0}^{s_1-1}[\asd{\sigma^{ni}}{}{}{}{(}a_{\sigma^n}(\asd{\sigma^{n(s_1-i)}}{}{}{}{\b}))]^{q^{ns_2}t_2}\\
&=\prod_{i=0}^{s_1-1}[a_{\sigma^n}(\b^{q^{n(s_1-i)}})]^{q^{n(i+s_2)}t_2}\\
&=\prod_{i=0}^{s_1-1}[a_{\sigma^n}(\b)]^{q^{n(s_1+s_2)}t_2}\\
&=[a_{\sigma^n}(\b)]^{q^{n(s_1+s_2)}s_1t_2}\in\mu\{q'\},
\end{split}
\end{equation*}
o\`u l'on rappelle que $\sigma$ agit sur $M=\hom(G^\ab,\mu\{q'\})$ via $(\asd{\sigma}{}{}{}{a})(\b)=\asd{\sigma}{}{}{}{(}a(\asd{\sigma^{-1}}{}{}{}{\b}))$
pour $a\in M$ et $\b\in G^\ab$ (cf. par exemple \cite[IX.3]{SerreCorpsLocaux}) et que $\sigma$ agit sur $\mu\{q'\}=\bar\f_q^*$ via le Frobenius, i.e.
par \'el\'evation \`a la $q$-i\`eme puissance. On a par ailleurs utilis\'e dans le calcul la formule de $a_{\sigma^{ns_1}}$ pour $s_1\geq 1$, mais un
calcul similaire donne la m\^eme formule pour tout $s_1\in\z$.
\end{proof}

On voit alors que la classe de $a\cup \b$ dans $H^2(D_n,\mu\{q'\})$ est d\'ecrite par l'\'el\'ement $a_{\sigma^n}(\b)\in\mu\{q'\}$, et l'inverse est vrai
dans le sens suivant.

\begin{lem}\label{lemme changement de H2 pour mu}
Soient $\alpha$, $a$, $\b$ et $\beta$ comme ci-dessus. Alors la classe $\alpha_n\cup\beta\in H^2(D_n,\mu\{q'\})$ est nulle si et
seulement si $a_{\sigma^n}(\b)=1$.
\end{lem}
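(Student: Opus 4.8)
The plan is to argue directly with the explicit $2$-cocycle produced by Lemma \ref{lemme calcul de a cup b}, avoiding any further appeal to the Hochschild--Serre spectral sequence. Set $\zeta:=a_{\sigma^n}(\b)\in\mu\{q'\}$; by that lemma $\alpha_n\cup\beta$ is represented by the $2$-cocycle $c\in Z^2(D_n,\mu\{q'\})$ with $c_{\sigma^{ns_1}\tau^{t_1},\sigma^{ns_2}\tau^{t_2}}=\zeta^{q^{n(s_1+s_2)}s_1t_2}$. One direction is free: if $\zeta=1$ then $c$ is identically trivial, so $\alpha_n\cup\beta=0$. Everything is in the converse, so suppose $\alpha_n\cup\beta=0$ and fix a normalised continuous $1$-cochain $b\colon D_n\to\mu\{q'\}$ with $c=db$, where $(db)_{x,y}={}^{x}b_y\cdot b_{xy}^{-1}\cdot b_x$.

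First I would restrict to the closed subgroup $T=\langle\tau\rangle$. Since $T$ acts trivially on $\mu\{q'\}$ and since $c_{\tau^{t_1},\tau^{t_2}}=1$ for all $t_1,t_2$ (the exponent vanishes because $s_1=s_2=0$), the restriction $b|_T$ is a $1$-cocycle for the trivial action, hence a continuous homomorphism $T\to\mu\{q'\}$; write $\eta:=b_\tau$, so that $b_{\tau^m}=\eta^m$. Next I would evaluate $db=c$ on the pair $(\tau^{q^n},\sigma^n)$: the right-hand side is again $1$ (once more $s_1=0$), and since $T$ acts trivially the relation reads $b_{\tau^{q^n}\sigma^n}=b_{\sigma^n}\eta^{q^n}$. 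Rewriting via the defining relation $\sigma^n\tau\sigma^{-n}=\tau^{q^n}$, i.e. $\tau^{q^n}\sigma^n=\sigma^n\tau$, this becomes the key identity $b_{\sigma^n\tau}=b_{\sigma^n}\eta^{q^n}$.

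Finally I would evaluate $db=c$ on the pair $(\sigma^n,\tau)$. The right-hand side is now $\zeta^{q^n}$ (here $s_1=t_2=1$), whereas on the left one must remember that $\sigma^n$ acts on $\mu\{q'\}=\bar\f_q^*$ by the $q^n$-Frobenius, so ${}^{\sigma^n}b_\tau=\eta^{q^n}$; substituting the key identity of the previous step, the left-hand side telescopes to $1$. Hence $\zeta^{q^n}=1$, and since the order of $\zeta$ is prime to $p$ while $q$ is a power of $p$, raising to the $q^n$-th power is injective on $\langle\zeta\rangle$; therefore $\zeta=1$, which is exactly the claim.

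The argument is essentially a two-line computation, and I do not expect any real obstacle; the only points demanding care are bookkeeping ones, namely using the nonabelian multiplication in $D_n$ correctly (the commutation $\tau^{q^n}\sigma^n=\sigma^n\tau$ coming from $\sigma^n\tau\sigma^{-n}=\tau^{q^n}$) and not forgetting the nontrivial $\sigma^n$-action on $\mu\{q'\}$ when expanding $(db)_{\sigma^n,\tau}$ --- it is precisely that Frobenius action which makes the coboundary side collapse. One could instead identify $H^2(D_n,\mu\{q'\})$ with $H^1(\Gamma_n,H^1(T,\mu\{q'\}))\cong\mu\{q'\}$, the $\Gamma_n$-action on $H^1(T,\mu\{q'\})$ being trivial, and track $c$ through the edge maps, but making those edge maps explicit is no shorter than the direct computation above.
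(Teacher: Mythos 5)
Your proof is correct and follows essentially the same route as the paper: the trivial direction from the explicit cocycle formula, and for the converse writing the cocycle as a coboundary, deducing $b_{\tau^{q^n}}=\eta^{q^n}$ and $b_{\sigma^n\tau}=b_{\sigma^n}\eta^{q^n}$ from the pairs $(\tau,\tau^t)$ and $(\tau^{q^n},\sigma^n)$, then evaluating at $(\sigma^n,\tau)$ to get $a_{\sigma^n}(\b)^{q^n}=1$. Your final justification (the order of $\zeta$ is prime to $p$, so the $q^n$-power map is injective) is the right one, and if anything slightly cleaner than the paper's appeal to $q$-divisibility of $\mu\{q'\}$.
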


\begin{proof}
Supposons que $a_{\sigma^n}(\b)=1$. La formule du lemme \ref{lemme calcul de a cup b} nous dit alors
imm\'ediatement que l'on a $(a\cup \b)_{x,y}=1$ pour toute paire $(x,y)\in D_n^2$, donc on a bien $\alpha_n\cup \beta=0$.

Supposons maintenant que $\alpha_n\cup \beta=0$. Il existe alors une fonction continue $c:D_n\to\mu\{q'\}$ telle que pour toute paire
$(x,y)\in D_n^2$, on a l'\'egalit\'e
\[(a\cup \b)_{x,y}=c_{x}\asd{x}{}{}{y}{c}c^{-1}_{xy}.\]
Montrons que $(a\cup \b)_{\sigma^n,\tau}=1$. On a d'abord, d'apr\`es le calcul de $a\cup \b$ dans le lemme \ref{lemme calcul de a cup b},
\begin{align*}
c_{\tau}c_{\tau^t}c_{\tau^{t+1}}^{-1}=(a\cup \b)_{\tau,\tau^t}=1\quad &\Rightarrow\quad c_{\tau^{t+1}}=c_{\tau}c_{\tau^t}=\cdots=c_{\tau}^{t+1},\\
c_{\tau^{q^n}}c_{\sigma^n}c^{-1}_{\tau^{q^n}\sigma^n}=(a\cup \b)_{\tau^{q^n},\sigma^n}=1\quad
&\Rightarrow\quad c_{\sigma^n\tau}=c_{\tau^{q^n}\sigma^n}=c_{\tau^{q^n}}c_{\sigma^n},
\end{align*}
car $\tau$ agit trivialement et $\sigma^n\tau\sigma^{-n}=\tau^{q^n}$.
On calcule alors $(a\cup \b)_{\sigma^n,\tau}\in\mu\{q'\}$ :
\[(a\cup \b)_{\sigma^n,\tau}=c_{\sigma^n}\asd{\sigma^n}{}{}{\tau}{c}c_{\sigma^n\tau}^{-1}=
c_{\sigma^n}c_{\tau}^{q^n}(c_{\tau^{q^n}}c_{\sigma^n})^{-1}=c_{\sigma^n}c_{\tau}^{q^n}(c_{\tau}^{q^n}c_{\sigma^n})^{-1}=1.\]
On conclut que $a_{\sigma^n}(\b)=1$, car on a $(a\cup \b)_{\sigma^n,\tau}=a_{\sigma^n}(\b)^{q^n}$ et $\mu\{q'\}$ est $q$-divisible.
\end{proof}

Ce lemme nous dit que l'on peut oublier l'accouplement plut\^ot artificiel
\[H^1(\Gamma_n,M)\times (G^\ab)^{\varphi^n_q}\to H^2(D_n,\mu\{q'\})\hookrightarrow\q/\z,\]
et nous concentrer sur l'accouplement plus naturel
\[M\times (G^\ab)^{\varphi^n_q}\to\mu\{q'\}\subset \bar k^*.\]

\begin{lem}\label{lemme calcul de a sigma n}
Pour $\alpha\in H^1(\Gamma_1,M)$, $a\in Z^1(\Gamma_1,M)$ repr\'esentant $\alpha$ et $\b\in G^\ab$ on a
\[a_{\sigma^{n}}(\b)=a_{\sigma}\left(\prod_{i=0}^{n-1}\varphi_q^{i}(\b)\right),\]
pour tout $n\in\n$.
\end{lem}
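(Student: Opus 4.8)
The plan is to compute $a_{\sigma^n}$ directly from the cocycle relation for $a\in Z^1(\Gamma_1,M)$ and then evaluate the resulting element of $M$ at $\b$, carefully tracking how the $\Gamma_1$-action on $M$ interacts with the evaluation pairing $M\times G^\ab\to\mu\{q'\}$. Recall that $\Gamma_1$ is topologically generated by $\sigma$, and that for a $1$-cocycle the standard formula gives $a_{\sigma^n}=\prod_{i=0}^{n-1}{}^{\sigma^i}(a_\sigma)$ for $n\geq 1$ (and the analogous product with inverses for $n<0$, though we only need $n\in\n$). So the first step is simply to write
\[a_{\sigma^n}(\b)=\left(\prod_{i=0}^{n-1}\asd{\sigma^i}{}{}{}{a}_{\sigma}\right)(\b)=\prod_{i=0}^{n-1}\left(\asd{\sigma^i}{}{}{}{(}a_\sigma)\right)(\b),\]
the last equality because evaluation of a product of characters is the product of the evaluations.

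The second step is to unwind the action of $\sigma^i$ on $M=\hom(G^\ab,\mu\{q'\})$. As recalled in the proof of Lemma \ref{lemme calcul de a cup b}, one has $(\asd{\sigma}{}{}{}{a})(\b)=\asd{\sigma}{}{}{}{(}a(\asd{\sigma^{-1}}{}{}{}{\b}))$, and iterating, $(\asd{\sigma^i}{}{}{}{a})(\b)=\asd{\sigma^i}{}{}{}{(}a(\asd{\sigma^{-i}}{}{}{}{\b}))$. Since $\sigma$ acts on $\mu\{q'\}=\bar\f_q^*$ by the $q$-power Frobenius, $\asd{\sigma^i}{}{}{}{(}x)=x^{q^i}$ for $x\in\mu\{q'\}$, so each factor becomes $\big(a_\sigma(\asd{\sigma^{-i}}{}{}{}{\b})\big)^{q^i}$. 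Using that $a_\sigma$ is a homomorphism $G^\ab\to\mu\{q'\}$, this equals $a_\sigma\big((\asd{\sigma^{-i}}{}{}{}{\b})^{q^i}\big)$. The key observation is then that $(\asd{\sigma^{-i}}{}{}{}{\b})^{q^i}$ is precisely $\varphi_q^i(\b)$: indeed $\varphi_q(\b)=\asd{\sigma^{-1}}{}{q}{}{\b}=(\asd{\sigma^{-1}}{}{}{}{\b})^q$ by the identity $\asd{\sigma}{}{}{}{\b}=\asd{s}{}{}{}{\b}$ noted earlier, and iterating this gives $\varphi_q^i(\b)=(\asd{\sigma^{-i}}{}{}{}{\b})^{q^i}$ — a short induction, the inductive step being $\varphi_q^{i+1}(\b)=\varphi_q(\varphi_q^i(\b))=(\asd{\sigma^{-1}}{}{}{}{\varphi_q^i(\b)})^q=(\asd{\sigma^{-1}}{}{}{}{(\asd{\sigma^{-i}}{}{}{}{\b})^{q^i}})^q=(\asd{\sigma^{-i-1}}{}{}{}{\b})^{q^{i+1}}$, using that $\sigma$-conjugation commutes with taking $q^i$-th powers.

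Putting these together, the $i$-th factor is $a_\sigma(\varphi_q^i(\b))$, so
\[a_{\sigma^n}(\b)=\prod_{i=0}^{n-1}a_\sigma\big(\varphi_q^i(\b)\big)=a_\sigma\left(\prod_{i=0}^{n-1}\varphi_q^i(\b)\right),\]
the last step again because $a_\sigma$ is a homomorphism (and $G^\ab$ is abelian, so the order of the product is harmless). This is the claimed formula. The case $n=0$ is the empty product on both sides, giving $a_{\mathrm{id}}(\b)=1$, consistent with $a$ being a cocycle. I do not anticipate a serious obstacle here; the only point requiring genuine care is the bookkeeping in the second step, namely correctly propagating the twist so that the Frobenius exponent $q^i$ and the $\sigma^{-i}$-twist combine into exactly $\varphi_q^i$ rather than some off-by-one variant — essentially making sure one uses the definition $\varphi_q(b)=\asd{s^{-1}}{}{q}{}{b}$ with the inverse Frobenius, matching the $\asd{\sigma^{-i}}{}{}{}{\b}$ that appears when $\sigma^i$ acts on a character. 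Once that matching is pinned down via the induction above, the computation closes.
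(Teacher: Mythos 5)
Your proof is correct and follows essentially the same explicit computation as the paper: expand $a_{\sigma^n}$ via the cocycle relation, unwind the $\Gamma_1$-action on $M$ using that $\sigma$ acts on $\mu\{q'\}$ by the $q$-power Frobenius, and identify $(\asd{\sigma^{-i}}{}{}{}{\b})^{q^i}$ with $\varphi_q^i(\b)$ before pulling the product inside $a_\sigma$. The only difference is that you spell out the identification $\varphi_q^i(\b)=(\asd{\sigma^{-i}}{}{}{}{\b})^{q^i}$ by induction, which the paper leaves implicit.
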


\begin{proof}
C'est un calcul explicite :
\begin{multline*}\label{equation calcul de a sigma n}
a_{\sigma^{n}}(\b)=\prod_{i=0}^{n-1}(\asd{\sigma^{i}}{}{}{\sigma}{a})(\b)=\prod_{i=0}^{n-1}\asd{\sigma^{i}}{}{}{}{(}
a_{\sigma}(\asd{\sigma^{-i}}{}{}{}{\b}))=\prod_{i=0}^{n-1}[a_{\sigma}(\asd{\sigma^{-i}}{}{}{}{\b})]^{q^{i}}\\
=\prod_{i=0}^{n-1}a_{\sigma}(\asd{\sigma^{-i}}{}{q^{i}}{}{\b})=\prod_{i=0}^{n-1}a_\sigma(\varphi_q^{i}(\b))=a_{\sigma}
\left(\prod_{i=0}^{n-1}\varphi_q^{i}(\b)\right).
\end{multline*}
\end{proof}

C'est ce dernier lemme qui nous permet d'introduire la $q$-norme dans les calculs et de terminer la d\'emonstration du th\'eor\`eme.

\begin{proof}[Fin de la preuve du th\'eor\`eme \ref{theoreme formule sur fq}]
Soit $\alpha\in H^1(\Gamma_1,M)$ correspondant \`a un \'el\'ement non ramifi\'e. La proposition \ref{proposition traduction du theoreme cohomologique}
nous dit alors que, pour tout $n\in\n$, $\alpha_n$ est orthogonal \`a $G^\ab_{q,n}$. Soit $b\in G$. On a alors de fa\c con \'evidente que
$\bar b\in G^\ab_{q,n_b}$. Les lemmes \ref{lemme calcul de a sigma n} et \ref{lemme changement de H2 pour mu} nous disent alors que
$a_{\sigma}(N_q(b))=a_{\sigma^{n_b}}(\bar b)=1$.

En sens inverse, soit $a\in Z^1(\Gamma_1,M)$ tel que $a_{\sigma}(N_q(b))=1$ pour tout $b\in G$ et soit $\alpha$ la classe de $a$ dans
$H^1(\Gamma_1,M)$. Soient $n\in\n$, $\b\in G^\ab_{q,n}$ et $b\in G$ une pr\'eimage de $\b$. Il est facile de voir que $n_b$ divise $n$ et alors,
d'apr\`es le lemme \ref{lemme calcul de a sigma n}, on a
\begin{multline*}
a_{\sigma^{n}}(\b)=a_{\sigma^{ln_b}}(\b)=a_{\sigma}\left(\prod_{i=0}^{ln_b-1}\varphi_q^{i}(\b)\right)=
a_{\sigma}\left(\prod_{i=0}^{n_b-1}\prod_{j=0}^{l}\varphi_q^{jn_b+i}(\b)\right)=\\
a_{\sigma}\left(\prod_{i=0}^{n_b-1}\prod_{j=0}^{l}\varphi_q^{i}(\b)\right)=a_{\sigma}\left(\prod_{i=0}^{n_b-1}
\varphi_q^{i}(\b)^l\right)=a_\sigma(N_q(b))^l=1,
\end{multline*}
car on rappelle que $\varphi_q^{n_b}(\b)=\b$. Le lemme \ref{lemme changement de H2 pour mu} nous
dit alors que $\alpha_n$ est orthogonal \`a $\b$ pour l'accouplement \eqref{equation accouplement 2} et la proposition
\ref{proposition traduction du theoreme cohomologique} nous dit donc que $\alpha$ correspond \`a un \'el\'ement non ramifi\'e.

Il suffit alors pour conclure de rappeler que $\Gamma_k$ (resp. $H^1(k,M)$) est canoniquement isomorphe \`a $\Gamma_1$ (resp. $H^1(\Gamma_1,M)$) et
que $\sigma$ est envoy\'e sur $s$ (le $q$-Frobenius) par cet isomorphisme.
\end{proof}

\subsubsection{Une formulation plus pratique du th\'eor\`eme \ref{theoreme formule sur fq}}
On conclut cette section en donnant encore un accouplement qui rendra plus simples quelques calculs que l'on se propose de faire plus tard et qui
aidera peut-\^etre le lecteur \`a voir pourquoi on a la libert\'e de choisir le cocycle $a$ repr\'esentant la classe $\alpha\in H^1(k,M)$ dans le
th\'eor\`eme \ref{theoreme formule sur fq}.\\

On a d\'ej\`a remarqu\'e qu'un cocycle $a\in Z^1(k,M)$ est uniquement d\'efini par l'image du Frobenius. De plus, il est facile de voir que tout
\'el\'ement de $M$ peut \^etre une telle image. On a alors un isomorphisme $M\xrightarrow{\sim} Z^1(k,M)$ qui induit par
composition un morphisme surjectif $M\to H^1(k,M)$.\\
\'Etudions le noyau de ce morphisme. Un cocycle $a\in Z^1(k,M)$ est cohomologue au cocycle trivial si et seulement s'il existe $c\in M$ tel que
\[a_x=c\,\asd{x}{}{-1}{}{c}\quad\forall\,x\in\Gamma_k.\]
On voit alors qu'en particulier on doit avoir $a_s=c\,\asd{s}{}{-1}{}{c}$. On en d\'eduit que le noyau du morphisme $M\to H^1(k,M)$ est
le sous-groupe $N_0$ des \'el\'ements de la forme $c\,\asd{s}{}{-1}{}{c}$ avec $c\in M$. On a alors un isomorphisme
\[M_0:=M/N_0\xrightarrow{\sim}H^1(k,M).\]
En notant $N_q(G)$ l'image de $G$ dans $G^\ab$ par l'application $q$-norme (qui n'a aucune raison d'en \^etre un sous-groupe), on voit que
l'accouplement utilis\'e dans le th\'eor\`eme \ref{theoreme formule sur fq} revient alors \`a l'accouplement
\[M_0\times N_q(G)\to\mu\{q'\},\]
induit par l'accouplement \'evident $M\times G^\ab\to\mu\{q'\}$ et que le groupe de Brauer non ramifi\'e alg\'ebrique correspond au noyau \`a gauche de
cet accouplement. La libert\'e qu'on a pour choisir un repr\'esentant $a\in M$ d'un \'el\'ement $\a\in M_0$ pour faire cet accouplement-ci explique la
libert\'e de choix que l'on a dans le th\'eor\`eme \ref{theoreme formule sur fq}.\\

D\'emontrons donc que cet accouplement est bien d\'efini : il suffit de montrer que $N_q(G)$ est orthogonal \`a $N_0$, ce qu'on fait en remarquant
que l'on a trivialement $N_q(G)\subset (G^\ab)^{\varphi_q}$ et en notant que l'on a :

\begin{lem}\label{lemme calcul de N perp}
$N_0$ est l'orthogonal de $(G^\ab)^{\varphi_q}$ pour l'accouplement
\[M\times G^\ab\to\mu\{q'\}.\]
\end{lem}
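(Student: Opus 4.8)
Le plan est de d\'eduire ce lemme d'un fait g\'en\'eral de dualit\'e pour les groupes ab\'eliens finis, apr\`es avoir constat\'e que l'action de $s$ sur $M$ est la transpos\'ee de l'action de $\varphi_q$ sur $G^\ab$ relativement \`a l'accouplement $M\times G^\ab\to\mu\{q'\}$. L'\'etape centrale est ce constat d'adjonction ; le seul point l\'eg\`erement d\'elicat, et donc le principal obstacle, est de v\'erifier que l'accouplement est bel et bien parfait, ce qui repose sur l'hypoth\`ese que $|G|$ est premier \`a $p$.

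Je commencerais par rappeler que, puisque $G$ (et donc $G^\ab$) est d'ordre premier \`a $p$, cet accouplement est parfait : on a $M=\hom(G^\ab,\mu\{q'\})$ et l'application canonique $G^\ab\to\hom(M,\mu\{q'\})$ est un isomorphisme. Cela r\'esulte de ce que $\mu\{q'\}$ est un objet injectif dans la cat\'egorie des groupes ab\'eliens finis d'ordre premier \`a $p$ : tout morphisme issu d'un tel groupe vers $\mu\{q'\}$ se factorise par un $\mu_N$ avec $N$ premier \`a $p$, et $\mu_N\cong\z/N\z$ est injectif parmi les $\z/N\z$-modules ; on en d\'eduit l'exactitude de $\hom(-,\mu\{q'\})$ sur cette cat\'egorie, donc la dualit\'e voulue.

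Viendrait ensuite le calcul cl\'e : pour $a\in M$ et $\b\in G^\ab$, sachant que $s$ agit sur $\mu\{q'\}=\bar\f_q^*$ par $x\mapsto x^q$, que $a$ est un morphisme de groupes, et que $\varphi_q(\b)=\asd{s^{-1}}{}{q}{}{\b}=(\asd{s^{-1}}{}{}{}{\b})^q$, on obtient
\[(\asd{s}{}{}{}{a})(\b)=\asd{s}{}{}{}{(}a(\asd{s^{-1}}{}{}{}{\b}))=a(\asd{s^{-1}}{}{}{}{\b})^q=a\bigl((\asd{s^{-1}}{}{}{}{\b})^q\bigr)=a(\varphi_q(\b)).\]
Autrement dit, si $u$ d\'esigne l'automorphisme $c\mapsto\asd{s}{}{}{}{c}$ de $M$ et $v=\varphi_q$ l'automorphisme correspondant de $G^\ab$, on a $\langle u(a),\b\rangle=\langle a,v(\b)\rangle$ pour tous $a,\b$ ; par cons\'equent $u-\id$ est la transpos\'ee de $v-\id$.

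Enfin, j'appliquerais le fait g\'en\'eral suivant : pour un accouplement parfait $A\times B\to C$ de groupes ab\'eliens finis avec $C$ injectif, et un endomorphisme $f$ de $B$ de transpos\'ee $f^\vee$ sur $A$, l'orthogonal $(\ker f)^\perp$ de $\ker f$ dans $A$ est $\im f^\vee$. L'inclusion $\im f^\vee\subseteq(\ker f)^\perp$ est imm\'ediate ; pour l'inclusion r\'eciproque, un \'el\'ement $a\in(\ker f)^\perp$ se factorise par $B/\ker f\cong\im f$, d'o\`u, gr\^ace \`a l'injectivit\'e de $C$, un prolongement $\hat a:B\to C$ v\'erifiant $a=f^\vee(\hat a)$ ; on peut aussi conclure par un comptage de cardinaux \`a l'aide de $\ker f^\vee=(\im f)^\perp$ et de $|A|=|B|$. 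En prenant $f=v-\id$ sur $B=G^\ab$, dont la transpos\'ee est $u-\id$ sur $A=M$, on obtient que l'orthogonal de $(G^\ab)^{\varphi_q}=\ker(v-\id)$ est $\im(u-\id)=\{\asd{s}{}{}{}{c}\cdot c^{-1}:c\in M\}$ ; ce sous-groupe, \'etant stable par $u$, co\"\i ncide avec $\{c\cdot(\asd{s}{}{}{}{c})^{-1}:c\in M\}=\{c\cdot\asd{s}{}{-1}{}{c}:c\in M\}=N_0$, ce qui conclut.
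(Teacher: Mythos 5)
Votre d\'emonstration est correcte, et elle repose au fond sur la m\^eme identit\'e que celle du texte, \`a savoir $(\asd{s}{}{}{}{c})(\b)=c(\varphi_q(\b))$ pour $c\in M$ et $\b\in G^\ab$ ; seul l'emballage diff\`ere. Le texte calcule directement, par une cha\^\i ne d'\'equivalences, l'orthogonal de $N_0$ \emph{dans $G^\ab$} et trouve $(G^\ab)^{\varphi_q}$, l'\'enonc\'e (qui concerne l'orthogonal pris dans $M$) s'en d\'eduisant par la bidualit\'e, laiss\'ee implicite, pour un accouplement parfait de groupes ab\'eliens finis. Vous travaillez au contraire du c\^ot\'e de $M$ : vous observez que l'action de $s$ sur $M$ est la transpos\'ee de $\varphi_q$ et vous appliquez le fait g\'en\'eral $(\ker f)^\perp=\im f^\vee$ \`a $f=\varphi_q-\id$. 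Ce choix vous co\^ute la v\'erification de la perfection de l'accouplement (qui est ici simplement la dualit\'e de Cartier pour le groupe fini $G^\ab$ d'ordre premier \`a $p$ ; votre argument d'injectivit\'e de $\mu\{q'\}$, ou le comptage propos\'e, convient) ainsi que la preuve du petit lemme g\'en\'eral, mais il vous donne l'\'enonc\'e tel quel, sans recours \`a la bidualit\'e : les deux r\'edactions sont de rigueur comparable. Une petite pr\'ecision : l'\'egalit\'e $\{\asd{s}{}{}{}{c}\,c^{-1}: c\in M\}=\{c\,\asd{s}{}{-1}{}{c}: c\in M\}=N_0$ est imm\'ediate en rempla\c{c}ant $c$ par $c^{-1}$ et en utilisant la commutativit\'e de $M$ ; la stabilit\'e par $u$ que vous invoquez n'est pas la bonne raison, mais cela n'affecte en rien la validit\'e de l'ensemble.
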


\begin{proof}
Soit $\b\in G^\ab$. Comme l'accouplement est parfait, on sait que l'on a les \'equivalences suivantes
\begin{align*}
\b\perp N_0 &\Leftrightarrow \forall\, c\in M,\quad (c\,\asd{s}{}{-1}{}{c})(\b)=1\\
&\Leftrightarrow \forall\, c\in M,\quad c(\b)[(\asd{s}{}{}{}{c})(\b)]^{-1}=1\\
&\Leftrightarrow \forall\, c\in M,\quad c(\b)\asd{s}{}{}{}{[}c(\asd{s^{-1}}{}{}{}{\b})]^{-1}=1\\
&\Leftrightarrow \forall\, c\in M,\quad c(\b)c(\asd{s^{-1}}{}{}{}{\b})^{-q}=1\\
&\Leftrightarrow \forall\, c\in M,\quad c(\b\asd{s^{-1}}{}{-q}{}{\b})=1\\
&\Leftrightarrow \forall\, c\in M,\quad c(\b\varphi_q(\b)^{-1})=1\\
&\Leftrightarrow \b\varphi_q(\b)^{-1}=1\\
&\Leftrightarrow \b\in (G^\ab)^{\varphi_q}.
\end{align*}
\end{proof}

Puisque l'accouplement entre $G^\ab$ et $M$ est parfait, on peut d\'eduire du lemme que l'accouplement entre $M_0$ et $(G^\ab)^{\varphi_q}$ l'est
aussi. Le groupe de Brauer non ramifi\'e alg\'ebrique correspond alors dans cette formulation au sous-groupe de $M_0$ qui est orthogonal \`a $N_q(G)$,
ce qui est la m\^eme chose que l'orthogonal du sous-groupe $\mathcal{N}_q(G)$ de $(G^\ab)^{\varphi_q}$ engendr\'e par $N_q(G)$. On voit aussit\^ot
que, avec cette formulation,

\begin{pro}\label{proposition calcul de brnral}
Le groupe $\brnral V$ est isomorphe \`a $(G^\ab)^{\varphi_q}/\mathcal{N}_q(G)$ en tant que groupe ab\'elien.
\end{pro}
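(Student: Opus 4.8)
Le plan est de déduire l'énoncé directement de la reformulation du théorème \ref{theoreme formule sur fq} faite ci-dessus, combinée au lemme \ref{lemme calcul de N perp} et à l'autodualité des groupes abéliens finis. Je commencerais par rappeler que l'accouplement $M\times G^\ab\to\mu\{q'\}$ est parfait (dualité de Cartier entre le groupe fini $G^\ab$, d'ordre premier à $p$, et son groupe de caractères $M$), puis par observer que le lemme \ref{lemme calcul de N perp} identifie $N_0$ à l'orthogonal de $(G^\ab)^{\varphi_q}$ pour cet accouplement. Comme tous les groupes en jeu sont finis, il en résulte par le fait général sur les accouplements parfaits (si $A\times B\to C$ est parfait et $B'\subset B$ est un sous-groupe, alors $A/B'^\perp\times B'\to C$ est parfait, et $B'^{\perp\perp}=B'$) que l'accouplement induit
\[M_0\times(G^\ab)^{\varphi_q}\to\mu\{q'\}\]
est lui aussi parfait.

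Ensuite, j'utiliserais la discussion précédant l'énoncé : d'après la reformulation du théorème \ref{theoreme formule sur fq}, le groupe $\brnral V$ s'identifie au sous-groupe de $M_0$ formé des éléments orthogonaux à $N_q(G)$. Or l'orthogonal d'une partie coïncide avec l'orthogonal du sous-groupe qu'elle engendre, donc ce sous-groupe est exactement $\mathcal{N}_q(G)^\perp\subset M_0$ (on note au passage que $N_q(G)\subset(G^\ab)^{\varphi_q}$, donc aussi $\mathcal{N}_q(G)\subset(G^\ab)^{\varphi_q}$, ce qui donne un sens au quotient). La perfection de l'accouplement $M_0\times(G^\ab)^{\varphi_q}\to\mu\{q'\}$ fournit alors un isomorphisme canonique
\[\mathcal{N}_q(G)^\perp\xrightarrow{\ \sim\ }\hom\big((G^\ab)^{\varphi_q}/\mathcal{N}_q(G),\,\mu\{q'\}\big).\]

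Il ne resterait plus qu'à remarquer que $(G^\ab)^{\varphi_q}/\mathcal{N}_q(G)$ est un groupe abélien fini d'ordre premier à $p$, donc non canoniquement isomorphe à son dual $\hom(\,\cdot\,,\mu\{q'\})$ (puisque $\mu\{q'\}$ contient toutes les racines de l'unité d'ordre premier à $p$), d'où l'isomorphisme annoncé $\brnral V\cong(G^\ab)^{\varphi_q}/\mathcal{N}_q(G)$ en tant que groupes abéliens. Je n'anticipe pas d'obstacle réel dans cette preuve, qui est essentiellement formelle ; le seul point méritant un peu de soin est de vérifier que l'accouplement parfait $M\times G^\ab\to\mu\{q'\}$ descend bien en un accouplement parfait sur le quotient $M_0$, ce qui repose entièrement sur le lemme \ref{lemme calcul de N perp} et sur la finitude des groupes considérés.
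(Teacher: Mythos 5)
Votre démonstration est correcte et suit essentiellement la même voie que celle du texte : on utilise la perfection de l'accouplement $M_0\times(G^\ab)^{\varphi_q}\to\mu\{q'\}$ (issue du lemme \ref{lemme calcul de N perp} et de la dualité de Cartier), l'identification de $\brnral V$ avec l'orthogonal de $\mathcal{N}_q(G)$ dans $M_0$ fournie par le théorème \ref{theoreme formule sur fq}, puis l'isomorphisme (non canonique) d'un groupe abélien fini d'ordre premier à $p$ avec son dual. Rien à redire, si ce n'est que votre rédaction explicite des détails que le texte laisse implicites.
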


\begin{proof}
En effet, il suffit de noter que $(G^\ab)^{\varphi_q}/\mathcal{N}_q(G)$ correspond au dual de l'orthogonal de $\mathcal{N}_q(G)$ pour l'accouplement
parfait $M_0\times (G^\ab)^{\varphi_q}\to\mu\{q'\}$ et que ce dernier groupe est isomorphe au groupe $\brnral V$ d'apr\`es le th\'eor\`eme
\ref{theoreme formule sur fq}. Ces trois groupes sont donc isomorphes en tant que groupes ab\'eliens.
\end{proof}

En particulier, on a le corollaire suivant.

\begin{cor}\label{corollaire calcul de brnral}
Soit $G$ un $k$-groupe tel que $(G^\ab)^{\varphi_q}$ est engendr\'e par les $q$-normes. Alors on a $\brnral V=0$ pour $V=G\backslash G'$.
\qed
\end{cor}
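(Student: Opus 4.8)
The plan is to read the result straight off Proposition \ref{proposition calcul de brnral}. Recall that $\mathcal{N}_q(G)$ was \emph{defined} as the subgroup of $(G^\ab)^{\varphi_q}$ generated by the set $N_q(G)$ of $q$-norms (a set which, as the text stresses, need not itself be a subgroup). The hypothesis of the corollary says precisely that $(G^\ab)^{\varphi_q}$ is generated by these $q$-norms, i.e. that $\mathcal{N}_q(G) = (G^\ab)^{\varphi_q}$. Hence the quotient $(G^\ab)^{\varphi_q}/\mathcal{N}_q(G)$ is trivial, and Proposition \ref{proposition calcul de brnral} yields $\brnral V \cong 0$, that is $\brnral V = 0$.

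If one prefers to avoid the dual formulation, the same conclusion follows directly from Theorem \ref{theoreme formule sur fq}. Take $\alpha \in \brnral V \subset H^1(k,M)$ and choose a cocycle $a \in Z^1(k,M)$ representing it; write $a_s \in M$ for its value at the Frobenius $s$. The theorem gives $a_s(N_q(b)) = 1$ for every $b \in G$. Since $a_s$ is a homomorphism $G^\ab \to \mu\{q'\}$ and, by hypothesis, the elements $N_q(b)$ generate $(G^\ab)^{\varphi_q}$, we obtain $a_s(\beta) = 1$ for all $\beta \in (G^\ab)^{\varphi_q}$. By Lemma \ref{lemme calcul de N perp}, $(G^\ab)^{\varphi_q}$ is exactly the orthogonal of $N_0$ for the perfect pairing $M \times G^\ab \to \mu\{q'\}$; perfectness then forces $a_s \in N_0$, so $a$ is cohomologous to the trivial cocycle and $\alpha = 0$.

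There is no real obstacle here: all the substance has already been developed in the proof of Theorem \ref{theoreme formule sur fq}, in Proposition \ref{proposition calcul de brnral}, and in Lemma \ref{lemme calcul de N perp}. The only subtlety worth a remark is the distinction between the set $N_q(G)$ and the subgroup $\mathcal{N}_q(G)$ it generates, which is why the hypothesis is phrased in terms of $(G^\ab)^{\varphi_q}$ being \emph{generated by} the $q$-norms rather than equal to their image; with this phrasing the distinction becomes immaterial and the corollary is immediate.
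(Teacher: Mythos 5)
Your proof is correct and matches the paper's intent: the corollary is stated with \qed precisely because it follows at once from Proposition \ref{proposition calcul de brnral}, since the hypothesis says $\mathcal{N}_q(G)=(G^\ab)^{\varphi_q}$ and the quotient is then trivial. Your alternative unwinding via Theorem \ref{theoreme formule sur fq} and Lemme \ref{lemme calcul de N perp} is also sound, but it only re-traverses the argument already packaged into the proposition, so it adds nothing essentially new.
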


\subsection{Une formule alg\'ebrique sur un corps de caract\'eristique 0}\label{section formule algebrique car 0}
Dans toute cette section, $k$ repr\'esente un corps de caract\'eristique $0$. On retrouve ici une formule pour le groupe $\brnral V$ o\`u $V=G\backslash G'$ en nous
ramenant au cas des corps finis et en utilisant la formule d\'evelopp\'ee dans la section pr\'ec\'edente. Cette m\'ethode a d\'ej\`a \'et\'e utilis\'ee
par Colliot-Th\'el\`ene et Kunyavski\u\i\, (cf. \cite{ColliotKunyavskii}) dans le cas des espaces homog\`enes principaux, i.e. lorsque $G$ est trivial,
et par Borovoi, Demarche et Harari (cf. \cite{BDH}) dans le cas o\`u $G$ est connexe ou ab\'elien (ou encore plus g\'en\'eralement
de type ``ssumult'') et avec des groupes $G'$ plus g\'en\'eraux.

Notre formule sera d'une nature un peu diff\'erente car, dans les deux cas cit\'es, il se trouve que le groupe de Brauer non ramifi\'e alg\'ebrique
est toujours nul sur un corps fini, ce qui rend les calculs beaucoup plus simples. Dans les sections qui suivent, on montrera qu'il n'en est pas du tout
ainsi dans le cas o\`u $G$ est fini et non ab\'elien. La formule qu'on obtient ressemblera donc \`a celle que l'on a donn\'ee pour les corps finis. Elle
s'appuie notamment sur l'accouplement naturel $M\times G^\ab\to\mu_n$, o\`u $n$ est l'exposant de $G$.\\

On commence avec quelques d\'efinitions analogues \`a celles donn\'ees dans la section \ref{section formule algebrique}. Dans toute cette section,
$n$ repr\'esente l'exposant du $k$-groupe fini $G$. On fixe aussi d\'esormais une racine primitive $n$-i\`eme de l'unit\'e $\zeta_n\in\bar k$.

\begin{defi}
On d\'efinit le morphisme $q:\Gamma_k\to(\z/n\z)^*$ par la formule suivante :
\[\forall\,\sigma\in\Gamma_k,\quad\asd{\sigma}{}{}{n}{\zeta}=\zeta_n^{q(\sigma)}.\]
On d\'efinit ensuite, pour tout $\sigma\in\Gamma_k$, l'application $\varphi_\sigma:G(\bar k)\to G(\bar k)$ par la formule
\[\varphi_\sigma(b)=\asd{\sigma^{-1}}{}{q(\sigma)}{}{b},\]
et on note abusivement $\varphi_\sigma$ aussi l'application induite sur $G^\ab(\bar k)$ qui en est un automorphisme.\\
Enfin, pour $\sigma\in\Gamma_k$ et $b\in G$, on d\'efinit $n_{\sigma,b}$ comme le plus petit entier $>0$ tel que
$\varphi^{n_{\sigma,b}}_\sigma(b)$ soit conjugu\'e \`a $b$. On d\'efinit alors l'application \emph{$\sigma$-norme}
$N_\sigma:G(\bar k)\to G^\ab(\bar k)$ par la formule suivante :
\[N_\sigma(b):=\prod_{i=0}^{n_{\sigma,b}-1}\overline{\varphi_\sigma^i(b)}=\prod_{i=0}^{n_{\sigma,b}-1}\varphi_\sigma^i(\bar b).\]
\end{defi}

Avec ces d\'efinitions, on peut \'enoncer la formule pour $\brnral V$.

\begin{thm}\label{theoreme formule en car 0}
Soit $G$ un $k$-groupe fini plong\'e dans $G'$ semi-simple simplement connexe et soit $V=G\backslash G'$. Soit enfin $M$ le groupe des caract\`eres de $G$.
En identifiant $\bral V$ avec $H^1(k,M)$, le groupe de Brauer non ramifi\'e alg\'ebrique $\brnral V$ de $V$ est donn\'e par les \'el\'ements
$\alpha\in H^1(k,M)$ tels que pour $a\in Z^1(k,M)$ un cocycle (quelconque) repr\'esentant $\alpha$, on a
\begin{equation}\label{equation formule brnral en car 0}
a_\sigma(N_\sigma(b))=1\quad \forall\, b\in G,\forall\, \sigma\in\Gamma_k.
\end{equation}
\end{thm}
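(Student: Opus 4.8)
The plan is to reduce the characteristic~$0$ statement to the finite-field formula (Theorem~\ref{theoreme formule sur fq}) by a specialization argument, exactly in the spirit of Colliot-Th\'el\`ene--Kunyavski\u\i\ and Borovoi--Demarche--Harari. First I would reduce to the case where $k$ is a finitely generated field: the group $G$, the embedding $G\hookrightarrow G'$, a cocycle $a\in Z^1(k,M)$, and each element $b\in G(\bar k)$ (finitely many, since $G$ is finite) are all defined over a subfield $k_0\subset k$ that is finitely generated over $\mathbb{Q}$; moreover $\brnral$ behaves well under the field extension $k/k_0$ thanks to the ``rigidity'' result announced for section~\ref{section changement de base} (or more elementarily, by functoriality of $\brnr$ under dominant morphisms together with the fact that $H^1(k_0,M)\to H^1(k,M)$ and the Galois actions are compatible). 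Having done this, I would spread everything out over a smooth integral $\mathbb{Z}$-algebra $R$ of finite type with fraction field $k_0$: a finite flat group scheme $\mathcal{G}/R$ with $\mathcal{G}\times_R k_0 = G$, an $R$-form of $G'$, the homogeneous space $\mathcal{V}=\mathcal{G}\backslash\mathcal{G}'$, the character module $\mathcal{M}$, and a cocycle representing $\alpha$ over $R$.

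**Le c\oe ur de la preuve : la sp\'ecialisation.** The key step is then to choose a closed point $\mathfrak{m}\in\spec R$ with finite residue field $\mathbb{F}_q$ of characteristic $\ell\nmid |G|$ and such that the Frobenius at $\mathfrak{m}$ realizes a prescribed element $\sigma\in\Gamma_{k_0}$ on the (finite) extension of $k_0$ trivializing the action on $G(\bar k)$ and on the $n$-th roots of unity --- this is exactly a \v{C}ebotarev-type argument. Under this specialization, the map $\varphi_\sigma$ defined via $q(\sigma)$ becomes precisely the map $\varphi_q$ of section~\ref{section formule algebrique} for the residue field $\mathbb{F}_q$ (because $q(\sigma)\equiv q \pmod n$ by construction of the closed point), the integers $n_{\sigma,b}$ become the $n_b$, and the $\sigma$-norme $N_\sigma$ specializes to the $q$-norme $N_q$. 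I would then invoke a comparison of unramified Brauer groups under specialization: an element $\alpha\in\brnral\mathcal{V}_{k_0}$ restricts to an element of $\brnral \mathcal{V}_{\mathbb{F}_q}$, and conversely if $\alpha$ is \emph{not} unramified on $\mathcal{V}_{k_0}$ then it fails to be unramified on some (infinitely many) special fibers --- this last direction is where I would use Theorem~\ref{theoreme sans compactification lisse corps fini} applied to $\mathcal{V}_{\mathbb{F}_q}$ together with the local point argument of \cite[Th\'eor\`eme 1.2]{GLABrnr}, lifting a local point over $\mathbb{F}_q((t))$ back through the smooth proper model. Combining: $\alpha\in\brnral\mathcal{V}_{k_0}$ if and only if its specialization lies in $\brnral\mathcal{V}_{\mathbb{F}_q}$ for all such closed points, which by Theorem~\ref{theoreme formule sur fq} means $a_{s}(N_q(b_{\mathfrak{m}}))=1$ for all $b$, i.e.\ $a_\sigma(N_\sigma(b))=1$ for the corresponding $\sigma$.

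**Le point d\'elicat.** The main obstacle I anticipate is making the specialization of the unramified condition rigorous in \emph{both} directions without assuming resolution of singularities in the mixed-characteristic fiber. For the ``only if'' direction one wants: $\alpha$ unramified on $\mathcal{V}_{k_0}$ $\Rightarrow$ its reduction is unramified on $\mathcal{V}_{\mathbb{F}_q}$; here I would argue via a discrete valuation of $k_0(\mathcal{V})$ extending the $\ell$-adic valuation on $R$, so that a residue over $\mathbb{F}_q(\mathcal{V}_{\mathbb{F}_q})$ would lift to a residue over $k_0(\mathcal{V})$ --- one needs the smooth proper model $\mathcal{X}/R$ of a compactification, or at least its generic fiber's unramified Brauer group, to control this, and purity à la Grothendieck over the two-dimensional base. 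For the ``if'' direction, the converse requires that an element ramified at the generic fiber stays ramified after reduction at sufficiently many closed points, which is the delicate genericity/constructibility statement; I would handle it by spreading the bad residue itself over an open subscheme of the ramification divisor. In practice, since the equivalence ``$\alpha$ unramified $\Leftrightarrow$ $\alpha$ orthogonal to all $\im[H^1(K,G)\to H^1(K,G^{\ab})]$ over local fields $K$'' from Proposition~\ref{proposition lemme cohomologique} is purely local and available in all characteristics, the cleanest route is probably to bypass global specialization of $\brnr$ altogether and instead directly compare the local conditions: over $k_0$, unramifiedness is tested on completions $(k_0)_v((t))$ at places $v$ coming from closed points of $R$, and at each such place the residue field is a local field of equal characteristic $\ell$ to which Theorem~\ref{theoreme formule sur fq}'s local input applies verbatim. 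That reduces the whole theorem to the bookkeeping identity $\varphi_{\mathrm{Frob}_v} = \varphi_q$ and $N_{\mathrm{Frob}_v}=N_q$ on the special fiber, which is the routine calculation I would leave to the reader.
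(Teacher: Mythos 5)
Your overall strategy --- spread everything out over a finitely generated base, use geometric \v Cebotarev to produce closed points whose Frobenius realizes a prescribed $\sigma$, invoke Theorem \ref{theoreme formule sur fq} on the special fibre, and conclude by the bookkeeping $\varphi_\sigma=\varphi_q$, $N_\sigma=N_q$ --- is indeed the skeleton of the paper's Steps 1--3. But there is a genuine gap at the pivot of your argument, namely the asserted equivalence ``$\alpha\in\brnral V$ si et seulement si toutes ses sp\'ecialisations sont non ramifi\'ees''. The paper never specializes the unramified condition itself. It first proves Lemma \ref{lemme brnral <<egal au sha1>>}, which reduces membership in $\brnral V=H^1(k,\pic X_{\bar k})\subset H^1(k,M)$ to the analogous condition over \emph{every pro-cyclic closed subgroup} $\gamma\subset\Gamma_k$; this step rests on the exact sequence $0\to\Div_\infty X_{\bar k}\to\pic X_{\bar k}\to\pic V_{\bar k}\to 0$ and on the vanishing of $\Sha^2_{\mathrm{cyc}}(k,\Div_\infty X_{\bar k})$ for the permutation module $\Div_\infty X_{\bar k}$. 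Then, for a fixed $\sigma$, it compares $H^1(\gamma,\pic X_{\bar k})$ with $H^1(\f,\pic Y_{\bar \f})$ through the specialization isomorphisms of \emph{Picard groups} (using $H^1(X,\O_X)=H^2(X,\O_X)=0$ and the N\'eron--Severi invariance), so that the subgroup $\brnral$ of $H^1(\cdot,M)$ is literally identified on both sides and both implications come for free. Your version must instead prove the hard converse ``non ramifi\'e en tous les points ferm\'es choisis $\Rightarrow$ non ramifi\'e sur $k_0$'', which you only gesture at.

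Neither of your two proposed fixes closes this. Spreading out a nonzero residue and hoping it survives at the \v Cebotarev points would require a compatibility of residues with specialization plus a density argument intersecting two \v Cebotarev-type conditions, none of which you supply. Your ``cleanest route'' presupposes that unramifiedness over the finitely generated characteristic-$0$ field $k_0$ can be tested on completions of the form $(k_0)_v((t))$; such a criterion is available in the paper only over finite fields (Theorem \ref{theoreme sans compactification lisse corps fini}) and over global fields, not over fields of transcendence degree $\geq 1$ over $\q$, and the theorem here concerns an arbitrary field of characteristic $0$. There is also a smaller wrinkle in your preliminary reduction: the rigidity statement (Theorem \ref{theoreme invariance brnr}) requires $k_0$ to be algebraically closed in $k$, which is generally incompatible with $k_0$ being finitely generated, and the quantifier ``pour tout $\sigma\in\Gamma_k$'' does not match ``pour tout $\sigma\in\Gamma_{k_0}$'' (the image of $\Gamma_k$ in the relevant finite quotient over $k_0$ may be proper, so both sides of the asserted equivalence can change under this descent). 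The paper sidesteps this by fixing $\sigma\in\Gamma_k$ \emph{first}, reducing via Lemma \ref{lemme brnral <<egal au sha1>>} to the pro-cyclic statement (Proposition \ref{proposition intermediaire theoreme formule en car 0}), and only then descending the finite amount of data ($\tilde k$, $X$, $V$, the embedding) to a finitely generated subfield.
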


\begin{rem}
Il est important de noter que cette formule ne d\'epend que du stabilisateur $G$. En particulier, elle ne d\'epend point du groupe ambiant $G'$ tant
qu'il s'agit d'un groupe semi-simple et simplement connexe. Cette remarque peut par ailleurs \^etre g\'en\'eralis\'ee \`a \emph{tout} le groupe de Brauer non
ramifi\'e $\brnr V$ et pour un stabilisateur $G$ \emph{quelconque}, comme nous l'a fait remarquer Colliot-Th\'el\`ene (cf. \cite{ColliotBrnr}).

Ceci ne veut pas pourtant dire que toutes
ces vari\'et\'es aient des chances d'\^etre birationnelles ou encore stablement birationnelles. En effet, il suffit de penser au cas o\`u $G=\{1\}$ :
il existe des exemples de groupes alg\'ebriques $G'$ semi-simples simplement connexes rationnels, comme $\sln$, et d'autres dans la m\^eme famille
($A_n$) qui ne sont m\^eme pas stablement rationnels (cf. \cite{MerkurjevSL1}).

D'un autre c\^ot\'e, le lemme sans nom (cf. \cite[Corollary 3.9]{ColliotSansucChili}) nous dit par exemple que lorsque le groupe ambiant $G'$ est
$\sln$, alors les vari\'et\'es quotient obtenues pour deux plongements diff\'erents de $G$ sont stablement birationnelles. Il serait int\'eressant
de savoir s'il en est de m\^eme pour d'autres groupes $G'$.
\end{rem}

\subsubsection{D\'emonstration du th\'eor\`eme \ref{theoreme formule en car 0}}
Puisque la caract\'eristique de $k$ est 0, le th\'eor\`eme d'Hironaka nous permet de trouver une $k$-compactification lisse $X$ de $V$ que l'on fixe
pour toute la suite. On rappelle que de la m\^eme fa\c con que l'on identifie $\bral V$ avec $H^1(k,M)$, on peut identifier
$\brnral V=\bral X$ avec $H^1(k,\pic X_{\bar k})$. Soit $\gamma$ un sous-groupe ferm\'e pro-cyclique de $\Gamma_k$. Il est \'evident par fonctorialit\'e
de la restriction que si un \'el\'ement $\alpha\in H^1(k,M)$ provient de $H^1(k,\pic X_{\bar k})$, alors son image dans $H^1(\gamma,M)$ provient de
$H^1(\gamma,\pic X_{\bar k})$. Lorsqu'on consid\`ere tous les sous-groupes pro-cycliques de $\Gamma_k$ ensemble, on a l'affirmation r\'eciproque
au sens suivant.

\begin{lem}\label{lemme brnral <<egal au sha1>>}
Soit $G$ un $k$-groupe fini plong\'e dans $G'$ semi-simple simplement connexe et soit $V=G\backslash G'$. Soit $X$ une $k$-compactification lisse de
$V$. Soit enfin $M$ le groupe des caract\`eres de $G$ et $\alpha\in H^1(k,M)$. Supposons que pour tout sous-groupe pro-cyclique $\gamma$ de
$\Gamma_k$, l'image de $\alpha$ dans $H^1(\gamma,M)$ provient de $H^1(\gamma,\pic X_{\bar k})$. Alors $\alpha$ provient de $H^1(k,\pic X_{\bar k})$.
\end{lem}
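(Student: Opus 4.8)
Le plan est de ramener l'\'enonc\'e \`a l'annulation d'une seule classe de cohomologie, \`a valeurs dans un module de permutation, puis d'exploiter que les modules de permutation sont ``d\'etect\'es par les sous-groupes pro-cycliques''. Rappelons que, d'apr\`es ce qui pr\'ec\`ede, $\pic V_{\bar k}\cong M$, que $\brnral V=\bral X$ s'identifie \`a $H^1(k,\pic X_{\bar k})$, et que l'immersion ouverte $V\hookrightarrow X$ induit un morphisme surjectif $\pic X_{\bar k}\twoheadrightarrow\pic V_{\bar k}\cong M$. Je commencerais par \'ecrire la suite de localisation : en notant $D_1,\dots,D_r$ les composantes irr\'eductibles de codimension $1$ (en nombre fini) du ferm\'e $X_{\bar k}\setminus V_{\bar k}$, permut\'ees par $\Gamma_k$, et en posant $P:=\bigoplus_{i=1}^r\z D_i$, le fait que $\bar k[V]^*/\bar k^*=0$ (lemme de Rosenlicht, d\'ej\`a utilis\'e ci-dessus) entra\^\i ne l'exactitude de la suite de $\Gamma_k$-modules
\[0\to P\to\pic X_{\bar k}\to M\to 0.\]
Or $P$ est un $\Gamma_k$-module de permutation de type fini : $P\cong\bigoplus_j\z[\Gamma_k/\Gamma_{k_j}]$ pour un nombre fini d'extensions finies $k_j/k$ (stabilisateurs des composantes, ferm\'es et d'indice fini donc ouverts). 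La suite exacte longue de cohomologie galoisienne montre alors qu'un \'el\'ement $\alpha\in H^1(k,M)$ provient de $H^1(k,\pic X_{\bar k})$ si et seulement si son image par le cobord $\delta:H^1(k,M)\to H^2(k,P)$ est nulle. Tout revient donc \`a prouver que $\delta(\alpha)=0$.

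La deuxi\`eme \'etape sera d'\'etablir que, pour $P$ un module de permutation de type fini, l'application de restriction
\[H^2(k,P)\longrightarrow\prod_\gamma H^2(\gamma,P),\]
le produit portant sur tous les sous-groupes ferm\'es pro-cycliques $\gamma\subset\Gamma_k$, est injective. J'utiliserais pour cela le lemme de Shapiro, $H^2(k,P)\cong\bigoplus_j H^2(k_j,\z)$, puis la suite $0\to\z\to\q\to\q/\z\to 0$ et l'annulation $H^i(k_j,\q)=0$ pour $i\geq 1$ (cohomologie continue d'un groupe profini \`a coefficients dans $\q$), qui donnent $H^2(k_j,\z)\cong H^1(k_j,\q/\z)=\hom_{\mathrm{cont}}(\Gamma_{k_j},\q/\z)$. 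Un caract\`ere continu $\Gamma_{k_j}\to\q/\z$ \'etant nul d\`es qu'il s'annule sur tout sous-groupe ferm\'e monog\`ene de $\Gamma_{k_j}$, et de tels sous-groupes \'etant des sous-groupes ferm\'es pro-cycliques de $\Gamma_k$ (puisque $\Gamma_{k_j}$ est ferm\'e dans $\Gamma_k$), il restera \`a invoquer la compatibilit\'e du lemme de Shapiro avec la restriction : le compos\'e $H^2(k,\z[\Gamma_k/\Gamma_{k_j}])\xrightarrow{\res^{\Gamma_k}_\gamma}H^2(\gamma,\z[\Gamma_k/\Gamma_{k_j}])\to H^2(\gamma,\z)$, o\`u la seconde fl\`eche est la projection sur la coordonn\'ee de la classe triviale (fix\'ee par $\gamma$ d\`es que $\gamma\subset\Gamma_{k_j}$), est \'egal \`a la composition de $\res^{\Gamma_k}_\gamma$ et de l'isomorphisme de Shapiro pour $k_j$. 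On en d\'eduira que tout \'el\'ement de $\bigoplus_j H^2(k_j,\z)$ dont toutes les restrictions aux pro-cycliques de $\Gamma_k$ sont nulles est lui-m\^eme nul.

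Enfin, j'appliquerais ceci \`a $x=\delta(\alpha)$ : la restriction commutant aux cobords, la restriction de $\delta(\alpha)$ \`a un sous-groupe ferm\'e pro-cyclique $\gamma$ de $\Gamma_k$ vaut $\delta_\gamma(\alpha|_\gamma)$, o\`u $\delta_\gamma$ d\'esigne le cobord de la suite ci-dessus vue comme suite de $\gamma$-modules ; par hypoth\`ese, $\alpha|_\gamma$ provient de $H^1(\gamma,\pic X_{\bar k})$, donc $\delta_\gamma(\alpha|_\gamma)=0$. Ceci valant pour tout $\gamma$, l'injectivit\'e \'etablie \`a l'\'etape pr\'ec\'edente force $\delta(\alpha)=0$, c'est-\`a-dire que $\alpha$ provient de $H^1(k,\pic X_{\bar k})$. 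Le point qui demandera le plus de soin est la v\'erification de la compatibilit\'e de Shapiro avec la restriction \`a un sous-groupe contenu dans un $\Gamma_{k_j}$ : c'est un \'enonc\'e de type Mackey, \'el\'ementaire mais o\`u il faut suivre pr\'ecis\'ement les repr\'esentants des orbites et la coordonn\'ee privil\'egi\'ee.
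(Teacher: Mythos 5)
Votre preuve est correcte et suit essentiellement le m\^eme chemin que celle du texte : votre module $P$ n'est autre que $\Div_\infty X_{\bar k}$, la suite exacte $0\to P\to\pic X_{\bar k}\to M\to 0$ est la m\^eme, et votre \'enonc\'e d'injectivit\'e de $H^2(k,P)\to\prod_\gamma H^2(\gamma,P)$ est pr\'ecis\'ement la nullit\'e de $\Sha^2_{\mathrm{cyc}}(k,\Div_\infty X_{\bar k})$ d\'emontr\'ee via Shapiro et $H^2(H_j,\z)\cong\hom(H_j,\q/\z)$. La seule diff\'erence est cosm\'etique (vous formulez tout en termes du cobord $\delta$ plut\^ot que du diagramme de restrictions), et le point de compatibilit\'e Shapiro/restriction que vous signalez comme d\'elicat est bien celui que le texte invoque aussi.
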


\begin{rem}
Ce lemme nous dit en particulier que l'on a toujours l'inclusion $\Sha_\mathrm{cyc}^1(k,M)\subset\brnral V$. Dans les sections qui suivent on montrera
que cette inclusion peut en fait \^etre stricte, par opposition au cas des espaces homog\`enes \`a stabilisateur connexe ou ab\'elien, cf. \cite{BDH}. On
g\'en\'eralise ainsi le r\'esultat de Demarche dans \cite[\S 6]{Demarche}, valable pour $k$ un corps de nombres.
\end{rem}

\begin{proof}
Soit $\Div_\infty X_{\bar k}$ le sous-groupe de $\Div X_{\bar k}$ des diviseurs \`a support dans le compl\'ementaire de $V$ dans $X$. C'est un
$\Gamma_k$-module de permutation. En effet, le compl\'ementaire de $V$ \'etant un ferm\'e strict de $X$, il contient un nombre fini de diviseurs
que le groupe $\Gamma_k$ permute, a priori, de fa\c con non triviale.

Puisque $X_{\bar k}$ et $V_{\bar k}$ sont des vari\'et\'es int\`egres et lisses, on a la suite exacte classique (cf. par exemple \cite[Chapter II, 6.5 et 6.16]{Hartshorne})
\[\Div_\infty X_{\bar{k}}\to\pic X_{\bar{k}}\to\pic V_{\bar{k}}\to 0.\]
En ajoutant un $0$ \`a gauche de cette suite, il se trouve qu'elle est en fait aussi exacte \`a gauche. En effet, soit $D\in \Div_{\infty} X_{\bar k}$
tel que son image dans $\pic X_{\bar k}$ est nulle. Il existe alors une fonction $f\in \bar k(X)$ telle que $\div(f)=D$, o\`u $\div$ est l'application
diviseur $\bar k(X)\to\Div X_{\bar k}$. Comme $D$ est \`a support dans $X_{\bar k}\smallsetminus V_{\bar k}$, on a que $f$ est inversible sur $V_{\bar k}$,
i.e. $f\in \bar k[V]^*=\bar k^*$ (cf. le d\'ebut de la section \ref{section formules}). Donc $f$ est une constante et $D=0$, ce qui montre l'injectivit\'e.

De cette suite on d\'eduit la suite exacte longue suivante :
\[\cdots\to H^1(\Gamma_k,\Div_\infty X_{\bar{k}})\to H^1(\Gamma_k,\pic X_{\bar{k}})\to H^1(\Gamma_k,\pic V_{\bar{k}})
\to H^2(\Gamma_k,\Div_\infty X_{\bar k})\to\cdots.\]
Or, $\Div_\infty X_{\bar k}$ \'etant un module de permutation, on a $H^1(\Gamma_k,\Div_\infty X_{\bar{k}})=0$. En effet, en utilisant le lemme de
Shapiro on peut se restreindre \`a d\'emontrer que $H^1(H_i,\z)=0$ pour certains sous-groupes ouverts $H_i$ de $\Gamma_k$, ce qui est \'evident,
car $\Gamma_k$ (et par cons\'equent, $H_i$) est un groupe profini et $\z$ est sans torsion. La suite devient alors
\[0\to H^1(k,\pic X_{\bar{k}})\to H^1(k,M)\to H^2(k,\Div_\infty X_{\bar k}).\]
Pour tout sous-groupe ferm\'e pro-cyclique $\gamma$ de $\Gamma_k$ on peut consid\'erer alors les applications de restriction et obtenir le
diagramme commutatif \`a lignes exactes suivant :
\[\xymatrix{
0 \ar[r] & H^1(k,\pic X_{\bar k}) \ar[r] \ar[d] & H^1(k,M) \ar[r] \ar[d] & H^2(k,\Div_\infty X_{\bar k}) \ar[d] \\
0 \ar[r] & H^1(\gamma,\pic X_{\bar k}) \ar[r] & H^1(\gamma,M) \ar[r] & H^2(\gamma,\Div_\infty X_{\bar k}).
}\]
Avec ce diagramme, on voit que l'affirmation du lemme d\'ecoule de la nullit\'e du groupe $\Sha_{\mathrm{cyc}}^2(k,\Div_\infty X_{\bar k})$ qui
est d\'efini comme le sous-groupe de $H^2(k,\Div_\infty X_{\bar k})$ des \'el\'ements dont la restriction \`a tout sous-groupe
pro-cyclique de $\Gamma_k$ est triviale. En effet, si l'image de $\alpha$ provient de $H^1(\gamma,\pic X_{\bar k})$ pour tout $\gamma$, alors son
image dans $H^2(\gamma,\Div_\infty X_{\bar k})$ est nulle pour tout $\gamma$, ce qui nous dit que l'image de $\alpha$ dans
$H^2(k,\Div_\infty X_{\bar k})$ est en fait dans $\Sha_{\mathrm{cyc}}^2(k,\Div_\infty X_{\bar k})$ et est donc nulle, ce qui nous dit que $\alpha$ provient
de $H^1(k,\pic X_{\bar k})$.

Montrons donc que $\Sha_{\mathrm{cyc}}^2(k,\Div_\infty X_{\bar k})=0$. Puisque $\Div_\infty X_{\bar k}$ est un module de permutation, on sait
(toujours d'apr\`es le lemme de Shapiro) qu'il existe des sous-groupes ouverts $H_i$ de $\Gamma_k$ tel que l'on a
\[H^2(k,\Div_\infty X_{\bar k})\cong\bigoplus_i H^2(H_i,\z)\cong \bigoplus_i H^1(H_i,\q/\z)=\bigoplus_i \hom(H_i,\q/\z).\]
Si l'on consid\`ere alors un morphisme non trivial $\phi$ dans $H^1(H_i,\q/\z)$ et un \'el\'ement $\sigma\in H_i$ tel que son image par $\phi$ soit
non nulle, on voit que la restriction de $\phi$ au sous-groupe ferm\'e pro-cyclique engendr\'e par $\sigma$ est non nulle. La compatibilit\'e de la
restriction avec les suites exactes longues et avec l'isomorphisme de Shapiro nous dit alors que pour tout \'el\'ement non nul $\beta$ de
$H^2(k,\Div_\infty X_{\bar k})$ il existe un sous-groupe ferm\'e pro-cyclique $\gamma$ de $\Gamma_k$ tel que la restriction de $\beta$ \`a
$\gamma$ est non nulle, ce qui entra\^\i ne la nullit\'e de $\Sha_{\mathrm{cyc}}^2(k,\Div_\infty X_{\bar k})$.
\end{proof}

Soient maintenant $\alpha\in H^1(k,M)$, $\sigma\in\Gamma_k$ et $\gamma$ le sous-groupe ferm\'e pro-cyclique engendr\'e par $\sigma$.
Compte tenu de ce lemme, on voit facilement que pour d\'emontrer le th\'eor\`eme \ref{theoreme formule en car 0} il suffit de d\'emontrer la proposition
suivante :

\begin{pro}\label{proposition intermediaire theoreme formule en car 0}
Avec les notations ci-dessus, la restriction $\alpha'\in H^1(\gamma,M)$ de $\alpha$ provient de $H^1(\gamma,\pic X_{\bar k})$ si et seulement si,
pour $a\in Z^1(k,M)$ un cocycle repr\'esentant $\alpha$, on a
\[a_\sigma(N_\sigma(b))=1\quad \forall\, b\in G.\]
\end{pro}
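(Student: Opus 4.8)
Le plan est de se ramener à la situation des corps locaux traitée à la section \ref{section formule cohomologique} et d'y appliquer les calculs explicites déjà effectués dans la preuve du théorème \ref{theoreme formule sur fq}. Concrètement, soit $\gamma=\overline{\langle\sigma\rangle}$ le sous-groupe pro-cyclique fermé engendré par $\sigma$ et soit $k_\gamma=\bar k^\gamma$ le sous-corps fixe, de sorte que $\Gamma_{k_\gamma}=\gamma$. Comme $n$ est l'exposant de $G$ (donc de $G^\ab$ et de $M$), l'action de $\gamma$ sur $M$, sur $G^\ab$ et sur $\mu_n$ se factorise par un quotient fini, et la donnée de $\sigma$ agissant sur $\mu_n$ via $q(\sigma)$ fait que l'arithmétique de $\gamma$ ressemble formellement à celle du groupe de Galois $\Gamma_1$ d'un corps local $\f_q((t))$ : c'est pour cela que les définitions de $\varphi_\sigma$ et $N_\sigma$ calquent celles de $\varphi_q$ et $N_q$. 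La première étape est donc de vérifier que le changement de base de $V$ à $k_\gamma$ permet d'interpréter « $\alpha'$ provient de $H^1(\gamma,\pic X_{\bar k})$ » en termes du groupe de Brauer non ramifié algébrique de $V_{k_\gamma}$, ou plutôt en termes de la condition d'annulation cohomologique qui le caractérise.

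Ensuite, j'appliquerais l'analogue de la proposition \ref{proposition lemme cohomologique} : la condition que $\alpha'$, vu dans $H^1(\gamma,M)\cong\bral V_{k_\gamma}$, provienne de $H^1(\gamma,\pic X_{\bar k})$ équivaut à l'orthogonalité de $\alpha'$ au sous-ensemble $\im[H^1(\gamma,G)\to H^1(\gamma,G^\ab)]$ pour l'accouplement convenable. Il faut ici comprendre la structure de $\gamma$ : si $\sigma$ est d'ordre infini, $\gamma\cong\hat\z$ et l'action sur $G(\bar k)$ se factorise par un quotient fini $\langle\sigma\rangle/\langle\sigma^m\rangle$ ; si $\sigma$ est d'ordre fini, $\gamma$ est lui-même fini. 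Dans les deux cas, un cocycle $b\in Z^1(\gamma,G)$ est déterminé par $b_\sigma$ soumis à la condition de cocycle $b_{\sigma^{N}}=\prod_{i=0}^{N-1}\,{}^{\sigma^i}(b_\sigma)\cdot(\ldots)=1$ pour $N$ assez grand, ce qui est exactement la condition de $(q,n)$-relevabilité réécrite avec $\varphi_\sigma$ au lieu de $\varphi_q$. L'image de cette application dans $G^\ab$ est alors le sous-ensemble des $\b\in(G^\ab)^{\varphi_\sigma}$ relevables, et l'orthogonalité de $\alpha'$ à cette image se traduit, via le calcul explicite du cup-produit (l'analogue exact des lemmes \ref{lemme calcul de a cup b}, \ref{lemme changement de H2 pour mu} et \ref{lemme calcul de a sigma n}), par la condition $a_\sigma(N_\sigma(b))=1$ pour tout $b\in G$.

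L'obstacle principal me semble être le cas où $\sigma$ est d'ordre fini (et plus généralement la gestion de la $2$-torsion et des phénomènes de cohomologie non triviale en degré $\geq 2$ pour un groupe pro-cyclique fini, où $\mathrm{cd}(\gamma)$ n'est pas borné par $1$) : la réduction à un accouplement parfait du type \eqref{equation accouplement 2} reposait crucialement, dans la section \ref{section formule algebrique}, sur $\mathrm{cd}(\Gamma_n)=1$ et sur la structure très particulière de $D_n$ comme extension de $\hat\z$ par $\z_{p'}$. Il faudra soit invoquer directement la proposition \ref{proposition lemme cohomologique} appliquée à un corps local bien choisi de groupe de Galois absolu ayant $\gamma$ comme quotient pertinent (typiquement en passant par $k_\gamma((t))$ ou un complété convenable), soit vérifier à la main que les arguments de restriction-corestriction et de $q$-divisibilité de $\mu\{q'\}$ utilisés dans la preuve du lemme \ref{lemme changement de H2 pour mu} se transposent. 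Je pencherais pour la première option : choisir un corps $F$ local (ou un corps global complété) tel que $\brnral V_F$ soit calculé par la proposition \ref{proposition lemme cohomologique} et que la flèche de restriction identifie la condition cherchée sur $\gamma$ avec la condition sur $\Gamma_F$, puis transporter la formule via les lemmes \ref{lemme calcul de a sigma n} et \ref{lemme calcul de N perp} déjà établis, qui ne dépendent que de l'algèbre de $\varphi_q$ (resp. $\varphi_\sigma$) et pas de la nature du corps de base.
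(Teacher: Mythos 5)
Il y a un vrai trou au c\oe ur de votre proposition : l'\'etape o\`u vous affirmez que « la condition que $\alpha'$, vu dans $H^1(\gamma,M)\cong\bral V_{k_\gamma}$, provienne de $H^1(\gamma,\pic X_{\bar k})$ \'equivaut \`a l'orthogonalit\'e de $\alpha'$ \`a $\im[H^1(\gamma,G)\to H^1(\gamma,G^\ab)]$ pour l'accouplement convenable » est pr\'ecis\'ement le point difficile, et elle n'est justifi\'ee par aucun des \'enonc\'es disponibles. La proposition \ref{proposition lemme cohomologique} et le th\'eor\`eme \ref{theoreme cohomologique} sont des \'enonc\'es sur des \emph{corps locaux} : leur preuve utilise la dualit\'e locale parfaite (corps r\'esiduel fini), la nullit\'e de $H^1(K,G')$ pour $G'$ simplement connexe (Bruhat--Tits), et surtout le th\'eor\`eme \ref{theoreme sans compactification lisse corps fini} (reposant sur Gabber) qui caract\'erise les classes non ramifi\'ees par leur \'evaluation sur les points \`a valeurs dans $L((t))$. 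Rien de tout cela n'est disponible sur le corps $k_\gamma=\bar k^\gamma$, qui n'est ni fini ni local, ni sur $k_\gamma((t))$ dont le corps r\'esiduel n'est pas fini. De plus, l'accouplement « nu » $H^1(\gamma,M)\times H^1(\gamma,G^\ab)\to H^2(\gamma,\mu)$ ne peut pas servir de substitut : si $\sigma$ est d'ordre infini, $\gamma\cong\hat\z$ est de dimension cohomologique $1$ sur les modules de torsion, donc $H^2(\gamma,\mu_n)=0$ et l'orthogonalit\'e serait automatique, ce qui rendrait le crit\`ere vide ; dans le cas des corps finis, c'est justement la direction inertielle suppl\'ementaire $T$ dans $D_n$ (Galois de $\f_{q^n}((t))$) qui rend l'accouplement \eqref{equation accouplement 2} non trivial et parfait. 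Votre derni\`ere option (« choisir un corps $F$ local... tel que la restriction identifie la condition sur $\gamma$ avec la condition sur $\Gamma_F$ ») est exactement ce qu'il faudrait construire, mais vous ne dites pas comment produire un tel $F$ ni pourquoi la fl\`eche $H^1(\gamma,\pic X_{\bar k})\to H^1(\Gamma_F,\pic X_{\bar F})$ aurait les propri\'et\'es voulues ; en l'\'etat, la preuve suppose le r\'esultat.

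La d\'emonstration du texte suit une autre voie, arithm\'etique : on ne travaille jamais directement sur $k_\gamma$. On se ram\`ene d'abord au cas o\`u $k$ est de type fini sur $\q$ (\'etape 1), puis on \'etale toute la situation ($G\hookrightarrow G'$, $V\hookrightarrow X$, l'extension d\'eployante $\tilde k$) sur un anneau $A$ de type fini sur $\z$ avec bonne r\'eduction, et l'on utilise la version g\'eom\'etrique du th\'eor\`eme de \v Cebotarev pour trouver un point ferm\'e dont l'extension r\'esiduelle $\bb E/\f$ r\'ealise $\tilde\gamma$ comme groupe de Galois d'une extension de corps finis ; les isomorphismes de sp\'ecialisation des groupes de Picard (via $H^1(X,\O_X)=H^2(X,\O_X)=0$) identifient alors $H^1(\gamma,\pic X_{\bar k})$ \`a $H^1(\f,\pic Y_{\bar\f})$ \`a l'int\'erieur de $H^1(\f,M)$ (\'etape 2). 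C'est seulement l\`a que le th\'eor\`eme \ref{theoreme formule sur fq} s'applique, et la conclusion se ram\`ene \`a v\'erifier $N_{q_0}=N_\sigma$, ce qui d\'ecoule de $q(\sigma)\equiv q_0\bmod n$ puisque $\sigma$ agit sur $\mu_n$ comme le Frobenius du corps r\'esiduel (\'etape 3). Autrement dit, le lien entre « provenir de $H^1(\gamma,\pic X_{\bar k})$ » et la formule $a_\sigma(N_\sigma(b))=1$ est obtenu par sp\'ecialisation \`a un corps fini, et non par un argument de dualit\'e men\'e sur $\gamma$ lui-m\^eme ; c'est ce m\'ecanisme de sp\'ecialisation qui manque \`a votre proposition.
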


En effet, on peut voir que la restriction $\alpha'$ est clairement repr\'esent\'ee par le cocycle $a|_{\gamma}\in Z^1(\gamma,M)$ et donc $a_\sigma$
n'est rien d'autre que $(a|_\gamma)_\sigma$. La d\'emarche consistera alors \`a trouver un corps fini $\f$ tel que l'on puisse regarder $G$ et $M$
comme des $\f$-groupes dont l'action du Frobenius co\"\i ncide avec celle de $\sigma$. On aura alors des morphismes canoniques $H^1(\gamma,M)\to H^1(\f,M)$ et
$H^1(\gamma,\pic X_{\bar k})\xrightarrow{\sim} H^1(\f,\pic X_{\bar\f})$ et des \'egalit\'es $\varphi_{q_0}=\varphi_\sigma$ et $N_{q_0}=N_\sigma$,
o\`u $q_0$ est le cardinal de $\f$. Le r\'esultat d\'ecoulera alors du th\'eor\`eme \ref{theoreme formule sur fq}.\\

Pour ce faire, on suit la d\'emarche de Colliot-Th\'el\`ene et Kunyavski\u\i , laquelle est explicit\'ee en d\'etails dans l'article de Borovoi, Demarche et
Harari et que l'on reproduit ici (\'etapes 1 et 2).

\paragraph*{\'Etape 1 : Restriction au cas o\`u $k$ est de type fini sur $\q$}
Soit $\tilde k$ une extension finie galoisienne de $k$ d\'eployant $\pic X_{\bar k}$ et contenant $\zeta_n$ et posons
$\tilde\Gamma=\gal(\tilde k/k)$. Puisque $V$ est clairement $k$-unirationnelle, le groupe $\pic X_{\bar k}$ est sans torsion (cf. par exemple
le th\'eor\`eme \ref{theoreme pic invariant} dans l'appendice) et donc \'egal \`a son groupe de N\'eron-Severi $\ns X_{\bar k}$. De plus, puisque le
groupe (profini) de Galois de $\tilde k$ agit trivialement sur $\pic X_{\bar k}$, on a que
$H^1(\tilde k,\pic X_{\bar k})=0$, ce qui entra\^\i ne que $H^1(k,\pic X_{\bar k})=H^1(\tilde\Gamma,\pic X_{\bar k})$ par la suite de restriction-inflation.
Si l'on note $\tilde\gamma$ l'image de $\gamma$ dans $\tilde\Gamma$, on voit que l'on a aussi $H^1(\gamma,\pic X_{\bar k})=H^1(\tilde\gamma,\pic X_{\bar k})$.

\'Ecrivons $\tilde k=k[t]/P(t)$ pour un certain polyn\^ome irr\'eductible $P\in k[t]$. Comme tous les morphismes consid\'er\'es sont de
type fini, on sait qu'il existe un corps $k_0\subset k$ de type fini sur $\q$ sur lequel $G,G',V,X$ et les plongements $G\hookrightarrow G'$ et
$V\hookrightarrow X$ sont d\'efinis. De plus, on peut supposer que $X$ admet un $k_0$-point : l'image de l'\'el\'ement neutre $e\in G'$ par la projection
$G'\to V$ (le groupe $G'$, et en particulier $e$, sont d\'efinis sur un corps de nombres). Enfin, quitte \`a agrandir $k_0$, on peut supposer que $P\in k_0[t]$ et
que $\tilde k_0:=k_0[t]/P(t)$ est une extension galoisienne qui d\'eploie $\pic X_{\bar k}$. En effet, si $a$ est une racine de $P$ telle que
$\tilde k=k[a]$ et $\tilde k_0=k_0[a]$ et si $a_1$ est un conjugu\'e de $a$, alors on a $a_1=P_1(a)$ pour un certain polyn\^ome $P_1\in k[t]$, car
$a_1\in\tilde k=k[a]$. Il suffit alors de demander que $k_0$ contienne les coefficients de $P_1$ pour tout conjugu\'e $a_1$ de $a$ pour que
$\tilde k_0/k_0$ soit galoisienne. Le fait qu'elle d\'eploie $\pic X_{\bar k_0}$ est \'evident. Avec ces suppositions, on a un isomorphisme canonique
entre $\tilde\Gamma$ et $\gal(\tilde k_0/k_0)$ et des isomorphismes
\[\pic X_{\tilde k_0}\xrightarrow{\sim}\pic X_{\bar k_0}\xrightarrow{\sim}\pic X_{\bar k}\xleftarrow{\sim}\pic X_{\tilde k}.\]
En effet, les isomorphismes \`a gauche et \`a droite d\'ecoulent du lemme 6.3 de \cite{Sansuc81}, tandis que celui du milieu d\'ecoule de l'invariance
du groupe de N\'eron-Severi par changement de base alg\'ebriquement clos, cf. \cite[Proposition 3.1]{MaulikPoonen}
(cf. aussi le th\'eor\`eme \ref{theoreme pic invariant} dans l'appendice). On trouve alors finalement l'isomorphisme
\[H^1(\gal(\tilde k_0/k_0),\pic X_{\tilde k_0})\xrightarrow{\sim} H^1(\tilde\Gamma,\pic X_{\tilde k}),\]
ce qui nous dit que l'on peut supposer $k=k_0$ et $\tilde k=\tilde k_0$.

\paragraph*{\'Etape 2 : R\'eduction au cas d'un corps fini}
Puisque maintenant $k$ est de type fini sur $\q$, on sait que l'on peut trouver :
\begin{enumerate}
\item Un anneau int\`egre et r\'egulier $A$ de type fini sur $\z$ et de corps de fractions $k$, tel que la fermeture int\'egrale $\tilde A$ dans $\tilde k$
soit finie, \'etale et galoisienne sur $A$ de groupe de Galois $\tilde\Gamma$.
\item Des $A$-sch\'emas $\mathcal{G}$, $\cal{G}'$, $\mathcal{V}$ et $\mathcal{X}$ de type fini, de fibres g\'en\'eriques respectives $G$, $G'$, $V$ et $X$ et tels que :
\begin{itemize}
\item $\cal{G}'$ est un $A$-sch\'ema en groupes semi-simple simplement connexe (au sens de \cite[XIX.2.7, XXII.4.3.3]{SGA3III}) et $\mathcal{G}$ est un
sous-$A$-sch\'ema en groupes fini et lisse de $\cal{G}'$ ;
\item $\mathcal{V}=\mathcal{G}\backslash \cal{G}'$ est lisse sur $A$, $\mathcal{X}$ est propre et lisse sur $A$ et on a une $A$-immersion ouverte
$\mathcal{V}\hookrightarrow\mathcal{X}$ \'etendant $V\hookrightarrow X$ ;
\item pour tout point $x$ de $\spec A$ de corps r\'esiduel $\kappa_x$, la fibre $\mathcal{X}_x\subset\mathcal{X}$ est une compactification lisse de
$\mathcal{V}_x\subset\mathcal{V}$ sur $\kappa_x$.
\end{itemize}
\end{enumerate}

En notant que $X$ est unirationnelle et que $k$ est un corps de caract\'eristique $0$, on a $H^1(X,\O_X)=H^2(X,\O_X)=0$. Alors, comme la
dimension des groupes de cohomologie est une fonction semi-continue sup\'erieurement (cf. \cite[III, Theorem 12.8]{Hartshorne}), on peut supposer,
quitte \`a restreindre encore $\spec A$, que $H^1(\mathcal{X}_x,\O_{\mathcal{X}_x})=H^2(\mathcal{X}_x,\O_{\mathcal{X}_x})=0$ pour tout point
$x$ de $\spec A$. Alors, d'apr\`es \cite[Cor. 2.7]{GrothendieckPicard}, on trouve des isomorphismes de sp\'ecialisation
\[\pic X\xrightarrow{\sim}\pic \mathcal{X}_x,\]
pour tout point $x\in\spec A$. De m\^eme, quitte \`a r\'eduire encore $\spec A$, on trouve des isomorphismes 
\[\pic X_{\tilde k}\xrightarrow{\sim}\pic \tilde{\mathcal{X}}_{\tilde x}\]
pour tout point $\tilde x\in\spec\tilde A$, o\`u $\tilde{\mathcal{X}}=\mathcal{X}\times_A\tilde A$, et on en a de m\^eme pour tout sous-anneau
$\tilde A^\Delta$ de $\tilde A$ correspondant aux \'el\'ements invariants par un sous-groupe $\Delta$ de $\tilde\Gamma$ (il suffit de consid\'erer
toute la situation sur le sous-corps $\tilde k^\Delta\subset\tilde k$ des invariants par $\Delta$). De cela on d\'eduit que ces isomorphismes sont
d'ailleurs compatibles avec les actions galoisiennes correspondantes.

Si l'on consid\`ere alors le sous-anneau $A_1:=\tilde A^{\tilde\gamma}$ des \'el\'ements $\tilde\gamma$-invariants (ou $\sigma$-invariants) de
$\tilde A$, on a que le morphisme $\rho:\spec\tilde A\to\spec A_1$ est un rev\^etement \'etale cyclique de groupe de Galois $\tilde\gamma$ avec $\tilde A$
et $A_1$ des anneaux int\`egres, r\'eguliers et de type fini sur $\z$. La version g\'eom\'etrique du th\'eor\`eme de \v Cebotarev (cf.
\cite[Theorem 7]{SerreZeta}) nous dit alors qu'il existe une infinit\'e de points ferm\'es $x_1$ de $\spec A_1$ tels que la fibre en $x_1$ de
$\rho$ soit un point ferm\'e $\tilde x$ de $\spec \tilde A$. Ceci veut dire que si l'on note $\f$ le corps r\'esiduel en $x_1$ et $\mathbb{E}$ le corps
r\'esiduel en $\tilde x$, on a une extension de corps r\'esiduels $\mathbb{E}/\f$ de groupe de Galois $\tilde\gamma$ et par suite, en notant
$\mathcal{X}_1=\mathcal{X}\times_A A_1$ et $Y$ la $\f$-vari\'et\'e $(\mathcal{X}_1)_{x_1}$, un isomorphisme
\[H^1(\tilde\gamma,\pic X_{\tilde k})\xrightarrow{\sim} H^1(\tilde\gamma,\pic Y_{\mathbb{E}}).\]
De plus, toujours d'apr\`es le lemme 6.3 de \cite{Sansuc81}, on a que $\pic Y_{\mathbb{E}}\cong\pic Y_{\bar\f}$ car $\pic Y_{\bar\f}$ est d\'eploy\'e sur
$\mathbb{E}$ d'apr\`es la compatibilit\'e des actions galoisiennes \'evoqu\'ee ci-dessus.\\

Pour r\'esumer, on a les isomorphismes suivants :
\begin{multline*}
H^1(\gamma,\pic X_{\bar k})\cong H^1(\tilde\gamma,\pic X_{\bar k})\cong H^1(\tilde\gamma,\pic X_{\tilde k})\\
\cong H^1(\tilde\gamma, \pic Y_{\mathbb{E}}) \cong H^1(\tilde\gamma,\pic Y_{\bar\f})\cong H^1(\gamma,\pic Y_{\bar\f}).
\end{multline*}
En particulier, on a le diagramme commutatif suivant :
\begin{equation}\label{equation diagramme specialisation des picard}
\xymatrix{
H^1(\gamma,\pic X_{\bar k}) \ar@{^{(}->}[r] \ar[d]^{\sim} & H^1(\gamma,M) \ar@{=}[d] \\
H^1(\gamma,\pic Y_{\bar\f}) \ar@{^{(}->}[r] & H^1(\gamma,M) \\
}
\end{equation}
o\`u $M=M(\bar k)=M(\bar\f)$ repr\'esente le groupe des caract\`eres (g\'eom\'etriques) de $G$ (ou encore $\cal G$, $\cal G_{x_1}$). On rappelle que,
puisque $G$ est fini (donc a fortiori $M$ aussi), les points g\'eom\'etriques de $M$ sont invariants par changement de base. L'action de $\gamma$ sur
$M(\bar k)$ et sur $M(\bar\f)$ \'etant induite par celle sur $\pic X_{\bar k}$ et sur $\pic X_{\bar\f}$ respectivement, on voit qu'il s'agit bien de la
m\^eme action.

Consid\'erons le morphisme surjectif $f:\Gamma_\f\to\gamma$ induit par l'extension $\bb E/\f$ qui envoie le Frobenius $s$ sur $\sigma$ (c'est un
isomorphisme d\`es que l'ordre de $\sigma$ n'est pas fini). Cela nous donne des morphismes canoniques d'inflation
\[H^1(\gamma,M)\hookrightarrow H^1(\f,M)\quad\text{et}\quad H^1(\gamma,\pic Y_{\bar\f})\xrightarrow{\sim}H^1(\f,\pic Y_{\bar\f})=\bral Y,\]
o\`u l'action de $\Gamma_\f$ sur $M$ correspond bien entendu \`a celle induite par $f$ (il en va de m\^eme par ailleurs pour l'action sur
$G(\bar k)=\cal G_{x_1}(\bar \f)$). La premi\`ere fl\`eche est injective (car d'inflation) et la deuxi\`eme fl\`eche est un isomorphisme d'apr\`es la suite
de restriction-inflation car on a d\'ej\`a vu que $\pic Y_{\bar\f}$ est sans torsion et d\'eploy\'e par $\bb E$. En mettant ensemble ces morphismes avec
le diagramme \eqref{equation diagramme specialisation des picard} on obtient le diagramme commutatif
\begin{equation}\label{equation diagramme specialisation des picard 2}
\xymatrix{
H^1(\gamma,\pic X_{\bar k}) \ar@{^{(}->}[r] \ar[d]^{\sim} & H^1(\gamma,M) \ar@{^{(}->}[d] \\
H^1(\f,\pic Y_{\bar\f}) \ar@{^{(}->}[r] & H^1(\f,M) \\
}
\end{equation}
Enfin, puisque $Y$ est une compactification lisse d'un $\f$-espace homog\`ene sous $\cal G'_{x_1}$ \`a stabilisateur fini $\cal G_{x_1}$ et que l'on
peut toujours choisir $x_1$ de fa\c con que l'ordre de $\cal G_{x_1}$ soit premier au cardinal de $\f$, on a enfin le droit d'utiliser le th\'eor\`eme
\ref{theoreme formule sur fq}.

\paragraph*{\'Etape 3 : Fin de la preuve, utilisation du th\'eor\`eme \ref{theoreme formule sur fq}}
Reprenons alors un \'el\'ement $\alpha\in H^1(k,M)$, $a\in Z^1(k,M)$ repr\'esentant $\alpha$, $\sigma\in\Gamma_k$ et $\gamma$ le sous-groupe
pro-cyclique de $\Gamma_k$ engendr\'e par $\sigma$ comme dans la proposition \ref{proposition intermediaire theoreme formule en car 0}. D'apr\`es
le diagramme commutatif \eqref{equation diagramme specialisation des picard 2}, la restriction $\alpha'$ de $\alpha$ dans $H^1(\gamma,M)$ provient
de $H^1(\gamma,\pic X_{\bar k})$ si et seulement si son image $\tilde\alpha\in H^1(\f,M)$ provient de $H^1(\f,\pic Y_{\bar\f})=\bral Y$. Consid\'erons
les cocycles $a'=a|_{\gamma}$ et $\tilde a=a'\circ f$, qui repr\'esentent respectivement $\alpha'$ et $\tilde\alpha$. On voit alors gr\^ace au
th\'eor\`eme \ref{theoreme formule sur fq} que $\tilde\alpha$ provient de $H^1(\f,\pic Y_{\bar\f})$ si et seulement si
\[\tilde a_{s}(N_{q_0}(b))=1\quad\forall\, b\in G,\]
o\`u $q_0$ est le cardinal de $\f$. Or, on a clairement
\[\tilde a_{s}(N_{q_0}(b))=a'_{\sigma}(N_{q_0}(b))=a_{\sigma}(N_{q_0}(b)),\]
d'o\`u l'on voit qu'il suffit de d\'emontrer que $N_{q_0}=N_\sigma$ pour conclure.

D'apr\`es la d\'efinition de $N_{q_0}$ et de $N_\sigma$, il suffit en fait de d\'emontrer l'\'egalit\'e $\varphi_{q_0}=\varphi_\sigma$. Et d'apr\`es la
d\'efinition de ces applications, tout ce qu'il faut d\'emontrer est que
\[q(\sigma)\equiv q_0\mod n,\quad\text{ou encore}\quad\zeta_n^{q_0}=\zeta_n^{q(\sigma)},\]
o\`u l'on rappelle que $n$ est l'exposant de $G$. Or, cela est vrai de fa\c con \'evidente, car $\sigma$ agit sur $\mu_{n}$ de la m\^eme fa\c con que
le Frobenius de $\f$ (on rappelle que $\mu_n\subset\tilde k$ par hypoth\`ese), lequel envoie $\zeta_{n}$ en $\zeta_{n}^{q_0}$ par d\'efinition, d'o\`u
\[\zeta_n^{q(\sigma)}=\asd{\sigma}{}{}{n}{\zeta}=\asd{s}{}{}{n}{\zeta}=\zeta_n^{q_0}.\]
Ceci conclut donc la d\'emonstration de la proposition \ref{proposition intermediaire theoreme formule en car 0} et par cons\'equent celle du
th\'eor\`eme \ref{theoreme formule en car 0}.

\subsubsection{Applications et simplifications du th\'eor\`eme \ref{theoreme formule en car 0}}
La formule, telle qu'elle est donn\'ee dans le th\'eor\`eme \ref{theoreme formule en car 0}, n'est pas pratique pour faire des calculs. En effet, il
suffit de noter tout simplement que $\Gamma_k$ est en g\'en\'eral un groupe infini. Or, il se trouve que, comme on pourrait s'y attendre, il suffit
de se restreindre \`a un quotient de $\Gamma_k$ donn\'e par une extension finie bien pr\'ecise. Le r\'esultat est le suivant :

\begin{pro}
Sous les m\^emes hypoth\`eses que le th\'eor\`eme \ref{theoreme formule en car 0}, soient $L\supset k$ une extension finie galoisienne de $k$
d\'eployant $G$, $n=\exp (G)$ et $\zeta_n\in\bar k$ une racine primitive $n$-i\`eme de l'unit\'e. Alors, pour tout \'el\'ement
$\sigma\in\Gamma_{L(\zeta_n)}\subset\Gamma_k$ et pour tout  $b\in G$ on a $N_\sigma(b)=\bar b$.

En particulier, un cocycle $a\in Z^1(k,M)$ v\'erifiant l'\'egalit\'e \eqref{equation formule brnral en car 0} est forc\'ement trivial sur
$\Gamma_{L(\zeta_n)}$.
\end{pro}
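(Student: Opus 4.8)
The plan is to reduce the whole statement to the observation that, for every $\sigma\in\Gamma_{L(\zeta_n)}$, the bijection $\varphi_\sigma$ is the identity on $G(\bar k)$; once this is known, the value $N_\sigma(b)=\bar b$ drops out of the very definition of the $\sigma$-norme. So first I would unwind what it means for $\sigma$ to lie in $\Gamma_{L(\zeta_n)}$: such a $\sigma$ fixes $L(\zeta_n)$ pointwise. Since $L$ splits $G$, the action of $\Gamma_k$ on $G(\bar k)$ factors through $\gal(L/k)$, hence $\Gamma_L$ --- and a fortiori the subgroup $\Gamma_{L(\zeta_n)}$ --- acts trivially on $G(\bar k)$; in particular $\asd{\sigma^{-1}}{}{}{}{b}=b$ for every $b\in G$. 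On the other hand $\sigma$ fixes $\zeta_n$, so by definition of the morphism $q(\cdot)$ one has $q(\sigma)\equiv 1\pmod n$; and since $n=\exp(G)$, every $b\in G(\bar k)$ satisfies $b^{q(\sigma)}=b$. Putting the two together, $\varphi_\sigma(b)=\asd{\sigma^{-1}}{}{q(\sigma)}{}{b}=b$ for all $b\in G$, i.e. $\varphi_\sigma=\id_{G(\bar k)}$ (and likewise on $G^\ab(\bar k)$).

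Next I would feed this into the definition of $N_\sigma$. Since $\varphi_\sigma(b)=b$ is (trivially) conjugate to $b$, the integer $n_{\sigma,b}$ equals $1$ for every $b\in G$, and therefore $N_\sigma(b)=\overline{\varphi_\sigma^0(b)}=\bar b$, which is the first assertion.

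For the ``in particular'' part, I would simply specialize the criterion \eqref{equation formule brnral en car 0}: if $a\in Z^1(k,M)$ satisfies it, then taking $\sigma\in\Gamma_{L(\zeta_n)}$ and using $N_\sigma(b)=\bar b$ gives $a_\sigma(\bar b)=1$ for every $b\in G$. As the projection $G(\bar k)\to G^\ab(\bar k)$ is surjective, this means $a_\sigma$ is the trivial character of $G^\ab$, i.e. $a_\sigma=1$ in $M$; and since this holds for all $\sigma$ in the subgroup, $a$ is trivial on $\Gamma_{L(\zeta_n)}$. The argument is essentially just a chase through the definitions of $\varphi_\sigma$, $q(\cdot)$ and $N_\sigma$, so there is no real obstacle; the only points that deserve a moment's care are that ``$L$ splits $G$'' genuinely forces $\Gamma_L$ to act trivially on $G(\bar k)$, and that choosing $n=\exp(G)$ is exactly what is needed to turn the congruence $q(\sigma)\equiv 1\pmod n$ into the equality $b^{q(\sigma)}=b$.
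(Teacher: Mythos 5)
Your proposal is correct and follows essentially the same route as the paper: observe that $\sigma\in\Gamma_{L(\zeta_n)}$ fixes $\zeta_n$ (so $q(\sigma)\equiv 1\bmod n$) and acts trivially on $G(\bar k)$ because $L$ splits $G$, deduce that $\varphi_\sigma$ is the identity and hence that $N_\sigma$ is just the projection $G\to G^\ab$, and then conclude $a_\sigma=1$ from the surjectivity of $G\to G^\ab$ and the perfectness of the pairing with $M$. Nothing is missing.
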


\begin{proof}
Soit $\sigma\in\Gamma_{L(\zeta_n)}$, alors il est \'evident que $\sigma$ fixe $\zeta_n$, ce qui nous dit que $q(\sigma)=1$. De plus, puisque $L$
d\'eploie $G$, $\sigma$ agit de fa\c con triviale sur $G(\bar k)$. On a alors que $\varphi_\sigma$ se r\'eduit \`a l'identit\'e et alors $N_\sigma$
correspond \`a la projection $G\to G^\ab$.

On voit alors que pour un cocycle $a\in Z^1(k,M)$ v\'erifiant l'\'egalit\'e \eqref{equation formule brnral en car 0}, on a $a_{\sigma}(\b)=1$ pour
tout $\b\in G^\ab$, ce qui nous dit que $a_\sigma=1$.
\end{proof}

De cette proposition et du th\'eor\`eme \ref{theoreme formule en car 0} on d\'eduit aussit\^ot que

\begin{cor}\label{corollaire brnral se calcule dans une extension finie}
Sous les m\^emes hypoth\`eses que le th\'eor\`eme \ref{theoreme formule en car 0}, on a 
\[\brnral V\subset H^1(L(\zeta_n)/k,M).\]
\qed
\end{cor}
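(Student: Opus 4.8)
Le plan est de d\'eduire l'inclusion directement de la proposition pr\'ec\'edente et de la suite exacte d'inflation-restriction. Posons $H:=\Gamma_{L(\zeta_n)}\subset\Gamma_k$. On commence par noter deux faits structurels. D'une part, $L/k$ et $k(\zeta_n)/k$ \'etant toutes deux finies galoisiennes, leur compositum $L(\zeta_n)/k$ est fini galoisien, de sorte que $H$ est un sous-groupe ferm\'e distingu\'e de $\Gamma_k$ avec $\Gamma_k/H\cong\gal(L(\zeta_n)/k)$. D'autre part, $H$ agit trivialement sur $M=M(\bar k)$ : comme $L$ d\'eploie $G$, le groupe $G^\ab$ est constant sur $L$, donc $M$ est, sur $L$, isomorphe \`a un produit de $\mu_{d_i}$ avec chaque $d_i$ divisant $n=\exp(G)$ ; puisque $\mu_n\subset L(\zeta_n)$, le sous-groupe $H$ fixe $M(\bar k)$ point par point, d'o\`u $M^H=M$.

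Prenons alors $\alpha\in\brnral V$, vu dans $H^1(k,M)$ via l'identification fix\'ee dans toute cette section, et soit $a\in Z^1(k,M)$ un cocycle (quelconque) repr\'esentant $\alpha$. D'apr\`es le th\'eor\`eme \ref{theoreme formule en car 0}, le cocycle $a$ v\'erifie l'\'egalit\'e \eqref{equation formule brnral en car 0} ; la proposition pr\'ec\'edente assure donc que $a_\sigma=0$ dans $M$ pour tout $\sigma\in H$, c'est-\`a-dire que le cocycle restreint $a|_H\in Z^1(H,M)$ est identiquement trivial. Par cons\'equent la restriction de $\alpha$ \`a $H^1(H,M)$ est nulle.

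Il reste \`a invoquer la suite exacte d'inflation-restriction attach\'ee au sous-groupe distingu\'e $H$ de $\Gamma_k$ agissant trivialement sur $M$,
\[
0\to H^1(L(\zeta_n)/k,M)\xrightarrow{\inf} H^1(k,M)\xrightarrow{\res} H^1(H,M)^{\Gamma_k/H}.
\]
Comme $\res(\alpha)=0$ d'apr\`es l'\'etape pr\'ec\'edente, l'exactitude force $\alpha$ \`a appartenir \`a l'image de l'inflation, soit $\alpha\in H^1(L(\zeta_n)/k,M)$, ce qui est exactement l'inclusion annonc\'ee. Il n'y a pas ici d'obstacle v\'eritable : les seuls points m\'eritant une ligne de justification sont que $L(\zeta_n)/k$ est galoisienne et que $H$ agit trivialement sur $M$ (pour que $M^H=M$ et que l'inflation depuis $\gal(L(\zeta_n)/k)$ ait un sens), tous deux imm\'ediats \`a partir des hypoth\`eses que $L$ d\'eploie $G$ et que $n=\exp(G)$.
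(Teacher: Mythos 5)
Votre démonstration est correcte et suit exactement la voie que le papier laisse implicite (le corollaire y est donné avec \qed comme conséquence immédiate) : la proposition précédente force tout cocycle représentant un élément de $\brnral V$ à être trivial sur $\Gamma_{L(\zeta_n)}$, puis la suite d'inflation-restriction, légitime car $L(\zeta_n)/k$ est galoisienne et $\Gamma_{L(\zeta_n)}$ agit trivialement sur $M$, donne l'inclusion voulue. Seule remarque cosmétique : $M$ étant noté multiplicativement dans le texte, il faudrait écrire $a_\sigma=1$ plutôt que $a_\sigma=0$.
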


Ce corollaire nous dit que le groupe de Brauer est au moins calculable en temps fini.

\begin{rem}
Avec le but de r\'eduire le temps de ces calculs (en particulier pour pouvoir les faire \`a la main), il est possible de d\'emontrer que
sous les m\^emes hypoth\`eses du th\'eor\`eme, pour $\sigma,\tau\in\Gamma_k$, on a
\[a_\sigma(N_\sigma(b))=a_\tau(N_\tau(b))=1\quad\forall\,\, b\in G\quad\Rightarrow\quad a_{\sigma\tau}(N_{\sigma\tau}(b))=1\quad\forall\,\, b\in G,\]
d\`es que ces deux \'el\'ements \emph{commutent entre eux} et que leur produit n'est pas d'ordre fini.\footnote{Le th\'eor\`eme d'Artin-Schreier nous dit par
ailleurs que, \`a conjugaison pr\`es, il y a au plus un seul \'el\'ement d'ordre fini dans $\Gamma_k$ dont l'ordre est 2 et qui correspondrait \`a une
``conjugaison complexe''.} De l\`a on d\'eduit par exemple que, lorsque $L(\zeta_n)/k$ est une extension ab\'elienne, il suffit de v\'erifier la propri\'et\'e
\eqref{equation formule brnral en car 0} pour une famille de g\'en\'erateurs de $\Gamma_{L(\zeta_n)/k}$. On fera attention par contre au fait qu'il n'est
pas vrai en g\'en\'eral que, pour $\sigma,\tau\in\Gamma_k$ \emph{quelconques} on ait cette propri\'et\'e. En particulier, on ne peut pas restreindre
les calculs \`a un sous-ensemble engendrant $\Gamma_{L(\zeta_n)/k}$ lorsque ce groupe n'est pas ab\'elien.\\
\end{rem}

Pour terminer cette section, on remarque que les calculs que l'on a faits \`a la fin de la section pr\'ec\'edente sont tout \`a fait applicables
aux applications $\varphi_\sigma$ et $N_\sigma$. En effet, notons toujours $\gamma$ le sous-groupe pro-cyclique engendr\'e par $\sigma\in\Gamma_k$
et soit
\[M_\sigma:=\{c\in M:\, \exists a\in Z^1(\gamma,M),\,a_\sigma=c\}.\]
(Ce groupe co\"\i ncide avec $M$ tout entier lorsque l'ordre de $\sigma$ est infini). Il est facile alors de voir que l'on a une application surjective
$M_\sigma\to H^1(\gamma,M)$ dont le noyau correspond aux \'el\'ements de la forme $c\asd{\sigma}{}{-1}{}{c}$ avec $c\in M$. On peut montrer tout
aussi facilement que ce noyau est l'orthogonal de $(G^\ab)^{\varphi_\sigma}$ (cf. le lemme \ref{lemme calcul de N perp}) et en particulier que,
si l'on note $\mathcal{N}_\sigma(G)$ le sous-groupe de $(G^\ab)^{\varphi_\sigma}$ engendr\'e par l'image de $N_\sigma$, alors le groupe
$H^1(\gamma,\pic X_{\bar k})$ est isomorphe \`a $(G^\ab)^{\varphi_\sigma}/\mathcal{N}_\sigma$ (cf. la proposition \ref{proposition calcul de brnral}).
On obtient alors les corollaires suivants du th\'eor\`eme \ref{theoreme formule en car 0}.

\begin{cor}\label{corollaire calcul de brnral en car 0}
Soit $G$ un $k$-groupe et $L\supset k$ une extension d\'eployant $G$ telle que $L(\zeta_n)/k$ est une extension cyclique (un tel corps $L$ existe
par exemple lorsque $G$ est un $p$-groupe constant avec $p$ impair ou bien lorsque $k=\mathbb{R}$). Alors
$\brnral V\cong (G^\ab)^{\varphi_\sigma}/\mathcal{N}_\sigma$, o\`u $\sigma$ est le g\'en\'erateur du groupe $\Gamma_{L(\zeta_n)/k}$.
\qed
\end{cor}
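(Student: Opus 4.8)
The plan is to bring everything down to the single generator $\sigma$ and then read off the answer from the pairing bookkeeping already carried out over finite fields. By Corollaire~\ref{corollaire brnral se calcule dans une extension finie} the group $\brnral V\subseteq H^1(k,M)$ is inflated from $H^1(L(\zeta_n)/k,M)$, so it suffices to identify, inside $H^1(L(\zeta_n)/k,M)$, the subgroup cut out by Théorème~\ref{theoreme formule en car 0}. Choose a preimage in $\Gamma_k$ of the generator $\sigma$ of $\Gamma_{L(\zeta_n)/k}$, still written $\sigma$; since $L$ déploie $G$ and $\zeta_n\in L(\zeta_n)$, the quantities $q(\sigma)$, $\varphi_\sigma$ and $N_\sigma$ depend only on the class of $\sigma$ in $\Gamma_{L(\zeta_n)/k}$. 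Write $d$ for its order there and $N=\sum_{i=0}^{d-1}\sigma^i$ for the norm on $M$, so that $H^1(L(\zeta_n)/k,M)\cong\{c\in M:\ Nc=0\}/(1-\sigma)M$ via $[a]\mapsto a_\sigma$.

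The first step is the cyclicity reduction: for $\alpha\in H^1(L(\zeta_n)/k,M)$ represented by a cocycle $a$, I claim $\alpha\in\brnral V$ iff $a_\sigma(N_\sigma(b))=1$ for all $b\in G$. By Théorème~\ref{theoreme formule en car 0} the ``only if'' part is exactly this condition run over \emph{every} $\tau\in\Gamma_k$, so only the converse needs an argument. Any $\tau\in\Gamma_k$ has class $\sigma^{m}$ in the cyclic group $\Gamma_{L(\zeta_n)/k}$, whence $a_\tau=a_{\sigma^m}$, $\varphi_\tau=\varphi_\sigma^{m}$ and $q(\tau)\equiv q(\sigma)^m\pmod n$, and then exactly the computation of Lemme~\ref{lemme calcul de a sigma n} together with the end of the proof of Théorème~\ref{theoreme formule sur fq} gives
\[
a_\tau(N_\tau(b))=a_\sigma\!\left(\prod_{i=0}^{m-1}\varphi_\sigma^{i}\big(N_\tau(b)\big)\right)=a_\sigma\big(N_\sigma(b)\big)^{\ell}
\]
for a suitable integer $\ell$, using that $n_{\sigma,b}$ divides $m\,n_{\tau,b}$ and $\varphi_\sigma^{n_{\sigma,b}}(\bar b)=\bar b$; so the $\sigma$-condition forces all the $\tau$-conditions.

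With this, $\brnral V$ is identified with $\{c\in M:\ Nc=0,\ c\perp\mathcal N_\sigma(G)\}/(1-\sigma)M$ for the perfect pairing $M\times G^\ab\to\mu_n$ (``$\perp$'' relative to it). As noted at the end of §\ref{section formule algebrique car 0} (compare Lemme~\ref{lemme calcul de N perp}), the coboundary subgroup $(1-\sigma)M$ is the orthogonal of $(G^\ab)^{\varphi_\sigma}$. The remaining point is that the constraint $Nc=0$ is automatic here: one has $\varphi_\sigma^{d}=\mathrm{id}$ on $G(\bar k)$ — because $\sigma^d$ lies in $\Gamma_{L(\zeta_n)}$, hence acts trivially on $G(\bar k)$, and $q(\sigma)^d\equiv1\pmod n$ — so $n_{\sigma,b}\mid d$ for every $b\in G$, and therefore, setting $\nu(g)=\prod_{i=0}^{d-1}\varphi_\sigma^{i}(g)$, we get $\nu(\bar b)=N_\sigma(b)^{d/n_{\sigma,b}}\in\mathcal N_\sigma(G)$; since $\nu$ is the adjoint of $N$ on $M$ for the ($\Gamma_k$-equivariant) pairing, $\{c:\ Nc=0\}$ is the orthogonal of $\nu(G^\ab)\subseteq\mathcal N_\sigma(G)$, so indeed $c\perp\mathcal N_\sigma(G)\Rightarrow Nc=0$. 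Thus $\brnral V$ is the orthogonal of $\mathcal N_\sigma(G)$ modulo that of $(G^\ab)^{\varphi_\sigma}$, and since $\mathcal N_\sigma(G)\subseteq(G^\ab)^{\varphi_\sigma}$, duality of finite abelian groups yields $\brnral V\cong(G^\ab)^{\varphi_\sigma}/\mathcal N_\sigma(G)$, exactly as in Proposition~\ref{proposition calcul de brnral}.

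I expect the actual work to be concentrated in the cyclicity reduction and in checking that $Nc=0$ is free; but both are line-by-line transcriptions of arguments already present over finite fields (Lemme~\ref{lemme calcul de a sigma n}, the end of the proof of Théorème~\ref{theoreme formule sur fq}, and Lemme~\ref{lemme calcul de N perp}), so no genuinely new ingredient enters.
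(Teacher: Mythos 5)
Your proof is correct and follows essentially the route the paper intends: the paper leaves the corollary as an immediate consequence of Corollaire \ref{corollaire brnral se calcule dans une extension finie}, Th\'eor\`eme \ref{theoreme formule en car 0} and the duality bookkeeping of Lemme \ref{lemme calcul de N perp} / Proposition \ref{proposition calcul de brnral}, and your two ``extra'' verifications (the telescoping reduction of the condition at every $\tau$ to the condition at $\sigma$, exactly as at the end of the proof of Th\'eor\`eme \ref{theoreme formule sur fq}, and the fact that the norm condition $Nc=0$ is forced by orthogonality to $\mathcal N_\sigma(G)$ via $\nu(G^\ab)\subset\mathcal N_\sigma(G)$) are precisely the details the paper glosses as already done. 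No gap.
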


\begin{cor}\label{corollaire brnral egal au sha1 en car 0}
Soit $G$ un $k$-groupe tel que $(G^\ab)^{\varphi_\sigma}$ est engendr\'e par les $\sigma$-normes pour tout $\sigma\in\Gamma_k$ et soit
$V=G\backslash G'$ avec $G'$ semi-simple simplement connexe. Alors $\brnral V=\Sha^1_{\mathrm{cyc}}(k,M)$.
\end{cor}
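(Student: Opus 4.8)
The plan is to use the characterization of $\brnral V$ given by Theorem \ref{theoreme formule en car 0} together with the discussion preceding the statement, which identifies, for each $\sigma\in\Gamma_k$, the relevant local condition in terms of the $\sigma$-norm map. The key point is that the equality $\Sha^1_{\mathrm{cyc}}(k,M)=\brnral V$ will follow once we know that, for every closed pro-cyclic $\gamma=\overline{\langle\sigma\rangle}\subset\Gamma_k$, the condition ``$\alpha$ restricts trivially to $H^1(\gamma,M)$'' is \emph{equivalent} to the condition ``$a_\sigma(N_\sigma(b))=1$ for all $b\in G$'' appearing in Theorem \ref{theoreme formule en car 0}. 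One inclusion is cheap: if $\alpha$ restricts to $0$ on $\gamma$, one may choose the representing cocycle $a$ so that $a_\sigma=1$, and then the equality $a_\sigma(N_\sigma(b))=1$ is automatic; hence $\Sha^1_{\mathrm{cyc}}(k,M)\subset\brnral V$ always, which is exactly the inclusion already recorded in the remark following Lemma \ref{lemme brnral <<egal au sha1>>}. So the real content is the reverse inclusion, and this is where the hypothesis on $G$ enters.

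First I would fix $\alpha\in\brnral V$ and a representing cocycle $a\in Z^1(k,M)$, and fix $\sigma\in\Gamma_k$ with $\gamma=\overline{\langle\sigma\rangle}$; the goal is to show $\alpha$ restricts to $0$ in $H^1(\gamma,M)$. Using the description established at the end of Section \ref{section formule algebrique car 0} — namely the surjection $M_\sigma\twoheadrightarrow H^1(\gamma,M)$ whose kernel is the orthogonal of $(G^\ab)^{\varphi_\sigma}$, giving a perfect pairing between $M_\sigma/(\text{that kernel})$ and $(G^\ab)^{\varphi_\sigma}$ — the restriction $\alpha|_\gamma$ corresponds to the class of $a_\sigma$ in this quotient, and it is trivial if and only if $a_\sigma$ is orthogonal to \emph{all} of $(G^\ab)^{\varphi_\sigma}$. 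Now Theorem \ref{theoreme formule en car 0} tells us $a_\sigma(N_\sigma(b))=1$ for all $b\in G$, i.e. $a_\sigma$ kills the image $N_\sigma(G)$, hence kills the subgroup $\mathcal N_\sigma(G)$ it generates. By the standing hypothesis $(G^\ab)^{\varphi_\sigma}=\mathcal N_\sigma(G)$, so $a_\sigma$ is orthogonal to all of $(G^\ab)^{\varphi_\sigma}$, which gives $\alpha|_\gamma=0$. Since $\sigma$ was arbitrary, $\alpha\in\Sha^1_{\mathrm{cyc}}(k,M)$, completing the reverse inclusion and hence the equality.

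The main obstacle, such as it is, is purely bookkeeping: one must make sure the pairing-theoretic identification of $\brnral V\cap(\text{restriction to }\gamma)$ with ``orthogonality to $(G^\ab)^{\varphi_\sigma}$'' is set up over a general characteristic-zero $k$ exactly as it was over a finite field in Proposition \ref{proposition calcul de brnral}, and that the freedom in choosing the representative $a$ of $\alpha$ (built into the statement of Theorem \ref{theoreme formule en car 0}) is exactly what makes the pairing $M_\sigma/(\ker)\times(G^\ab)^{\varphi_\sigma}\to\mu\{p'\}$ well defined — this is precisely the analogue of Lemma \ref{lemme calcul de N perp} recorded in the paragraph before the corollary. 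Once that dictionary is in place, the argument is the one-line implication above. In fact, since the hypothesis ``$(G^\ab)^{\varphi_\sigma}=\mathcal N_\sigma(G)$ for all $\sigma$'' is literally the hypothesis of Corollary \ref{corollaire calcul de brnral en car 0} applied pro-cyclic-subgroup by pro-cyclic-subgroup, the cleanest write-up would simply say: for each $\gamma$, $H^1(\gamma,\pic X_{\bar k})\cong(G^\ab)^{\varphi_\sigma}/\mathcal N_\sigma(G)=0$ by that corollary's reasoning, so $H^1(k,\pic X_{\bar k})=\Sha^1_{\mathrm{cyc}}(k,M)$ by Lemma \ref{lemme brnral <<egal au sha1>>}, and then invoke the identification $\brnral V=H^1(k,\pic X_{\bar k})$.
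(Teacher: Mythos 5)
Your argument is correct and is essentially the paper's own: the paper's proof is exactly the one-line version you give at the end, namely that the hypothesis forces $H^1(\gamma,\pic X_{\bar k})\cong(G^\ab)^{\varphi_\sigma}/\mathcal N_\sigma(G)=0$ for every pro-cyclic $\gamma$, so Lemma \ref{lemme brnral <<egal au sha1>>} (together with the standing inclusion $\Sha^1_{\mathrm{cyc}}(k,M)\subset\brnral V$) yields the equality. Your longer unwinding via Theorem \ref{theoreme formule en car 0} and the pairing with $(G^\ab)^{\varphi_\sigma}$ is just the same identification made explicit, so there is nothing to add.
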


On rappelle que $\Sha^1_{\mathrm{cyc}} (k,M)$ est le sous-groupe de $H^1(k,M)$ des \'el\'ements dont la restriction \`a $H^1(\gamma,M)$ est nulle
pour tout sous-groupe ferm\'e pro-cyclique $\gamma$ de $\Gamma_k$. On voit rapidement alors du lemme \ref{lemme brnral <<egal au sha1>>} que l'on a toujours
$\Sha^1_{\mathrm{cyc}}(k,M)\subset\brnral V$. En effet, on sait que $\brnral V=H^1(k,\pic X_{\bar k})$ et il est \'evident que,
pour $\alpha\in\Sha^1_{\mathrm{cyc}}(k,M)$, on a que son image dans $H^1(\gamma,M)$ provient de $H^1(\gamma,\pic X_{\bar k})$ car elle est nulle.

\begin{proof}
En effet, un tel groupe $G$ v\'erifie que $H^1(\gamma,\pic X_{\bar k})=0$ pour tout sous-groupe pro-cyclique de $\Gamma_k$. Le
lemme \ref{lemme brnral <<egal au sha1>>} nous permet alors de conclure.
\end{proof}

\subsection{Une formule cohomologique sur un corps local}\label{section formule cohomologique corps local}
La version du th\'eor\`eme \ref{theoreme sans compactification lisse corps fini} pour un corps de nombres $k$, mise au point par Harari dans
\cite{HarariDuke}, nous dit que pour comprendre le groupe de Brauer non ramifi\'e d'une $k$-vari\'et\'e $V$, il faut comprendre
comment le groupe de Brauer se comporte par rapport aux points locaux. Il est alors int\'eressant de se demander, pour une vari\'et\'e $V$ \emph{d\'efinie
sur un corps local}, quelle est la d\'ependance du groupe $\brnr V$ vis-\`a-vis de ses points.

La proposition \ref{proposition lemme cohomologique} nous a permis de comprendre ce comportement \`a partir de la cohomologie galoisienne des $k$-groupes $G$,
$G^\ab$ et $M$ dans le cas o\`u $V$ est un espace homog\`ene \`a stabilisateur fini, bien que seulement pour la partie alg\'ebrique de $\br V$. Soit donc $K$ un corps
local de caract\'eristique $0$, i.e. une extension finie de $\q_p$. On peut alors essayer d'obtenir une description du ``comportement cohomologique'' des
\'el\'ements du groupe $\brnral V$ pour une $K$-vari\'et\'e $V$ \`a partir de la formule d\'evelopp\'ee dans la section pr\'ec\'edente. Ainsi,
toujours gr\^ace \`a la proposition \ref{proposition lemme cohomologique}, on aura une description de la d\'ependance du groupe $\brnral V$ vis-\`a-vis des
$K$-points de $V$.\\

On dira qu'un $K$-groupe fini $G$ est \emph{non ramifi\'e} s'il est d\'eploy\'e par une extension non ramifi\'ee de $K$. On a alors le r\'esultat suivant :

\begin{thm}\label{theoreme corps local}
Soit $G$ un $K$-groupe fini non ramifi\'e d'ordre premier \`a la caract\'eristique r\'esiduelle, plong\'e dans $G'$ semi-simple simplement connexe et
soit $V=G\backslash G'$. Alors $\alpha\in \bral V$ appartient \`a $\brnral V$ si et seulement s'il existe une pr\'eimage $\alpha_1\in\brun V$ de $\alpha$
telle que, pour toute extension finie non ramifi\'ee $K'$ de $K$, l'application $V(K')\to\br K'$ induite par $\alpha_1$ est nulle.
\end{thm}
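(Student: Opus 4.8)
Voici une esquisse de la démonstration envisagée.

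Le plan est de se ramener au cas des corps finis, déjà traité, en combinant le théorème \ref{theoreme formule en car 0} (pour décrire $\brnral V$) et la proposition \ref{proposition lemme cohomologique} (pour traduire la condition portant sur les points locaux). Notons $\kappa$ le corps résiduel de $K$, de cardinal $q_0$, et $D$ le quotient de $\Gamma_K$ par l'inertie sauvage. Comme $G$, donc aussi $G^\ab$ et $M$, est non ramifié et d'ordre premier à $q_0$, l'inertie sauvage agit trivialement sur $G(\bar K)$ et l'on a $H^1(\Gamma_K,N)=H^1(D,N)$ pour $N\in\{G,G^\ab,M\}$, et de même sur toute extension finie non ramifiée $K'/K$ (qui a la même inertie que $K$). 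Le groupe $D$ est engendré par un relèvement $\sigma$ du Frobenius et un générateur $\tau$ de l'inertie modérée, avec $\sigma\tau\sigma^{-1}=\tau^{q_0}$ ; il est donc isomorphe au groupe de Galois de l'extension modérément ramifiée maximale de $\kappa((t))$, l'isomorphisme étant compatible avec les actions sur $G$, $G^\ab$, $M$, $\mu$ et avec le morphisme $q$. Enfin, $G$ étant non ramifié, il s'étend en un schéma en groupes fini étale sur $\O_K$ dont la fibre spéciale est un $\kappa$-groupe fini $G_0$, que l'on plonge dans un $\kappa$-groupe semi-simple simplement connexe $G_0'$, et l'on pose $V_0:=G_0\backslash G_0'$. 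Ainsi toute la combinatoire galoisienne locale des sections \ref{section formule cohomologique} et \ref{section formule algebrique} s'applique littéralement à $K$, avec $\kappa$ à la place de $\f_q$.

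Le c\oe ur de la preuve est l'équivalence suivante : $\alpha\in\brnral V$ si et seulement si, pour toute extension finie non ramifiée $K'/K$, la restriction $\res_{K'/K}(\alpha)$ est orthogonale à $\im[H^1(K',G)\to H^1(K',G^\ab)]$ pour l'accouplement local donné par le cup-produit. Pour le sens direct, le théorème \ref{theoreme formule en car 0} dit que, pour un cocycle $a$ représentant $\alpha$, on a $a_\sigma(N_\sigma(b))=1$ pour tout $b\in G$ et tout $\sigma\in\Gamma_K$ ; en prenant $\sigma$ dans l'inertie, qui agit trivialement sur $G$ et fixe $\mu_n$ (donc $\varphi_\sigma=\id$ et $N_\sigma(b)=\bar b$), on obtient $a_\sigma=1$, de sorte que $\alpha$ est non ramifié et provient de $H^1(\kappa,M)$. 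Pour $\sigma$ relèvement du Frobenius on a $q(\sigma)=q_0$ et $\varphi_\sigma=\varphi_{q_0}$, d'où $N_\sigma=N_{q_0}$ ; par des calculs parallèles à la fin de la preuve du théorème \ref{theoreme formule sur fq} et à l'étape 3 de celle du théorème \ref{theoreme formule en car 0}, la condition pour tout $\sigma\in\Gamma_K$ équivaut alors à ce que $\alpha$, vu dans $\bral V_0=H^1(\kappa,M)$, appartienne à $\brnral V_0$ ; et le théorème \ref{theoreme cohomologique}, joint à l'identification $H^1(\kappa'((t)),N)\cong H^1(K',N)$ pour $K'/K$ non ramifiée de corps résiduel $\kappa'$, fournit exactement l'orthogonalité annoncée. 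Pour la réciproque, l'orthogonalité pour $K'=K$ impose d'abord que $\alpha$ soit non ramifié : dans le cas contraire, sa composante ramifiée est non nulle et s'accouple non trivialement, par dualité locale, avec une classe \emph{non ramifiée} $\gamma\in H^1(K,G^\ab)$ ; or une telle $\gamma$ provient de $H^1(\hat{\z},G^\ab)$ et se relève en une classe de $H^1(\hat{\z},G)$ (il suffit de relever arbitrairement l'image du Frobenius dans $G(\bar\kappa)$ ; le cocycle $\sigma\mapsto b$ est continu car, $G(\bar\kappa)$ étant fini, le produit tordu $b\cdot{}^{\sigma}b\cdots{}^{\sigma^{N-1}}b$ est trivial pour $N$ convenable), d'où une contradiction avec l'orthogonalité de $\alpha$. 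Une fois $\alpha$ non ramifié, on conclut comme dans le sens direct via les théorèmes \ref{theoreme cohomologique}, \ref{theoreme formule sur fq} et \ref{theoreme formule en car 0}.

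Il reste à traduire cette équivalence dans les termes de l'énoncé. Pour chaque extension finie non ramifiée $K'/K$, la proposition \ref{proposition lemme cohomologique} dit que $\res_{K'/K}(\alpha)$ est orthogonale à $\im[H^1(K',G)\to H^1(K',G^\ab)]$ si et seulement s'il existe une préimage de $\res_{K'/K}(\alpha)$ dans $\brun V_{K'}$ induisant l'application nulle $V(K')\to\br K'$ ; de plus, sa preuve montre qu'une telle préimage est alors fournie par $\pi_{K'}^*(\res_{K'/K}\alpha)\cup[Z_{K'}]$, où $Z=G^\der\backslash G'\to V$ est le torseur universel construit à la section \ref{section formules}. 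Comme cette préimage est la restriction à $K'$ de $\alpha_1:=\pi^*(\alpha)\cup[Z]\in\brun V$, on en tire : si $\alpha\in\brnral V$, alors la préimage $\alpha_1=\pi^*(\alpha)\cup[Z]$ induit l'application nulle $V(K')\to\br K'$ pour \emph{toute} extension finie non ramifiée $K'/K$ ; réciproquement, si une préimage $\alpha_1\in\brun V$ a cette propriété, alors pour chaque tel $K'$ la restriction $\res_{K'/K}(\alpha_1)$ est une préimage de $\res_{K'/K}(\alpha)$ induisant l'application nulle, d'où l'orthogonalité de $\res_{K'/K}(\alpha)$ pour tout $K'$, et l'équivalence ci-dessus permet de conclure.

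Le point le plus délicat me semble être de vérifier que l'identification entre le quotient modéré de $\Gamma_K$ et celui de $\Gamma_{\kappa((t))}$ est compatible avec \emph{toutes} les données en jeu --- actions galoisiennes sur $G$, $G^\ab$ et $M$, accouplements de dualité locale, morphisme $q$ et norme --- afin que les résultats des sections \ref{section formule cohomologique} et \ref{section formule algebrique} se transportent sans modification ; le lemme élémentaire de relèvement $H^1(\hat{\z},G)\to H^1(\hat{\z},G^\ab)$ (surjectif), nécessaire pour exclure les éléments ramifiés dans le sens réciproque, demande lui aussi un petit argument.
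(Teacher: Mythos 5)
Votre d\'emonstration est correcte et suit, dans ses grandes lignes, le m\^eme sch\'ema que l'article : ramener le th\'eor\`eme au crit\`ere d'orthogonalit\'e cohomologique (c'est exactement la proposition \ref{proposition cohomologique corps local}) via la proposition \ref{proposition lemme cohomologique}, exclure les classes ramifi\'ees par dualit\'e locale, puis traiter les classes non ramifi\'ees \`a l'aide du quotient mod\'er\'e et des formules de normes. La diff\'erence r\'eelle porte sur le c\oe ur non ramifi\'e : l'article red\'emontre directement sur $K$ les \'enonc\'es d'accouplement n\'ecessaires (lemme \ref{lemme intermediaire proposition cohomologique locale}, puis r\'eutilisation des lemmes \ref{lemme calcul de a cup b}, \ref{lemme changement de H2 pour mu} et \ref{lemme calcul de a sigma n}, qui portent sur le groupe abstrait $D_m$ et s'appliquent donc tels quels), tandis que vous transportez toute la situation sur $\kappa((t))$ via l'identification des quotients mod\'er\'es et invoquez les th\'eor\`emes \ref{theoreme cohomologique} et \ref{theoreme formule sur fq} comme bo\^\i tes noires. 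Les deux routes ont le m\^eme contenu ; la v\^otre exige la v\'erification de compatibilit\'e que vous signalez (actions sur $G$, $G^\ab$, $M$, $\mu\{p'\}$, accouplements et invariants), laquelle est de fait ce que le lemme \ref{lemme intermediaire proposition cohomologique locale} fournit c\^ot\'e $K$ --- autrement dit, on peut court-circuiter le transport en remarquant que les \'enonc\'es au niveau de $D_m$ sont purement groupistes, donc communs aux deux corps ; l'injectivit\'e de $H^2(D_m,\mu\{p'\})\to\br K'$ (inflation puis Kummer) r\`egle le point d\'elicat. Deux autres petites diff\'erences : pour exclure les classes ramifi\'ees, l'article cite la surjectivit\'e de $H^1_\nr(K,G)\to H^1_\nr(K,G^\ab)$ via la dimension cohomologique $1$ du corps r\'esiduel, alors que vous donnez un rel\`evement explicite de cocycles pour $\hat\z$, qui est correct (la ``norme tordue'' d'un rel\`evement arbitraire finit par s'annuler) ; notez seulement que votre ``composante ramifi\'ee'' se justifie proprement par la bi-orthogonalit\'e $H^1_\nr(K,M)=H^1_\nr(K,G^\ab)^\perp$, comme dans l'article. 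Enfin, vous explicitez utilement le point laiss\'e implicite dans l'article : la pr\'eimage $\alpha_1$ peut \^etre choisie uniform\'ement \'egale \`a $\pi^*(\alpha)\cup[Z]$, ce qui rend bien le th\'eor\`eme \'equivalent \`a la proposition \ref{proposition cohomologique corps local} pour \emph{toutes} les extensions non ramifi\'ees simultan\'ement.
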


Or, comme on vient de l'avancer, la proposition \ref{proposition lemme cohomologique} nous dit que ce r\'esultat est \'equivalent au r\'esultat suivant.

\begin{pro}\label{proposition cohomologique corps local}
Soit $G$ un $K$-groupe fini non ramifi\'e d'ordre premier \`a la caract\'eristique r\'esiduelle, plong\'e dans $G'$ semi-simple simplement connexe
et soit $V=G\backslash G'$. Soit $M$ le groupe des caract\`eres de $G$. En identifiant alors $\bral V$ avec $H^1(K,M)$, le groupe de Brauer non ramifi\'e
alg\'ebrique $\brnral V$ de $V$ est donn\'e par les \'el\'ements $\alpha\in H^1(K,M)$ v\'erifiant la propri\'et\'e suivante.

Propri\'et\'e (*) : Pour toute extension finie non ramifi\'ee $K'$ de $K$, l'image $\alpha'$ de $\alpha$ dans $H^1(K',M)$ est orthogonale au sous-ensemble
$\im[H^1(K',G)\to H^1(K',G^\ab)]$.
\end{pro}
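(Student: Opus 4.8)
The plan is to reduce the statement over $K$ to a computation over a finite residual field, in complete analogy with the reduction carried out for $k$ of characteristic $0$ in Section \ref{section formule algebrique car 0}, but now exploiting the additional hypothesis that $G$ is unramified. First I would fix a uniformizer and let $\kappa$ denote the residue field of $K$, a finite field of characteristic $p$ coprime to $|G|$. Since $G$ is unramified over $K$, it is deployed by an unramified extension, hence extends to a finite étale group scheme over the ring of integers $\O_K$; likewise one can spread out the embedding $G\hookrightarrow G'$ and the quotient $V=G\backslash G'$ to schemes over $\O_K$, with special fibre $\mathcal V_\kappa=G_\kappa\backslash G'_\kappa$ a homogeneous space over $\kappa$ with finite stabilizer $G_\kappa$ of order prime to $p$. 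The key point is that the character module $M$, being finite, is invariant under passing to the residue field, and the inertia subgroup $I_K\subset\Gamma_K$ acts trivially on $M(\bar K)=M(\bar\kappa)$, so that $H^1(K,M)$ receives an inflation isomorphism (by restriction-inflation and $\mathrm{cd}(I_K)$ considerations on the prime-to-$p$ part, or more simply because $M$ is unramified) from $H^1(\Gamma_\kappa,M)$. I would set up this dictionary carefully, identifying $\bral V$ with $H^1(K,M)\cong H^1(\kappa,M)\cong\bral\mathcal V_\kappa$.

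Next I would observe that the same unramified structure lets one compare $H^1(K,G)\to H^1(K,G^\ab)$ with $H^1(\kappa,G_\kappa)\to H^1(\kappa,G_\kappa^\ab)$, and that finite unramified extensions $K'/K$ correspond exactly to finite extensions $\kappa'/\kappa$ of the residue field. Concretely, using the surjection $\Gamma_K\twoheadrightarrow\Gamma_\kappa$ and the triviality of the inertia action on the finite (prime-to-$p$) groups $G$, $G^\ab$, one checks that $H^1(K',G)=H^1(\Gamma_{\kappa'},G_\kappa)$ and similarly for $G^\ab$; the argument is identical to the computation $H^1(K_n,G)=H^1(\Gamma_n,G^\ab)$ made in the proof of Theorem \ref{theoreme formule sur fq} (there one also had to kill a wild inertia pro-$p$ part, which vanishes against a group with no $p$-torsion). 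The orthogonality of $\alpha'\in H^1(K',M)$ to $\im[H^1(K',G)\to H^1(K',G^\ab)]$ for the local pairing over $K'$ then translates, via the compatibility of the local duality pairing with inflation from $\Gamma_\kappa$ (the pairing $H^1(K',M)\times H^1(K',G^\ab)\to\br K'\cong\q/\z$ factors through $H^1(\kappa',M)\times H^1(\kappa',G_\kappa^\ab)$ exactly as in \eqref{equation accouplement pour H1(Dn)}), into the orthogonality of the image $\tilde\alpha\in H^1(\kappa',M)$ to $\im[H^1(\kappa',G_\kappa)\to H^1(\kappa',G_\kappa^\ab)]$.

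With this translation in hand, Property (*) for $\alpha$ over $K$ becomes precisely the condition appearing in Theorem \ref{theoreme cohomologique} (equivalently Proposition \ref{proposition traduction du theoreme cohomologique}) for the corresponding class over the finite field $\kappa$, namely that for every finite extension $\kappa'/\kappa$ the image of $\tilde\alpha$ is orthogonal to $\im[H^1(\kappa'((t)),G_\kappa)\to H^1(\kappa'((t)),G_\kappa^\ab)]$ — wait, here one must be slightly careful: Theorem \ref{theoreme cohomologique} is phrased over the local fields $\kappa'((t))$, not over $\kappa'$ itself. The cleanest route is therefore to invoke Proposition \ref{proposition lemme cohomologique} directly over $K'$ for each unramified $K'/K$: it identifies, for the given $K'$-homogeneous space $V_{K'}$, the classes $\alpha'\in\bral V_{K'}$ admitting a lift whose evaluation map on $V(K')$ vanishes with exactly those orthogonal to $\im[H^1(K',G)\to H^1(K',G^\ab)]$. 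Then Theorem \ref{theoreme corps local}, which is what Proposition \ref{proposition cohomologique corps local} is asserted to be equivalent to, says $\alpha\in\brnral V$ iff some single lift $\alpha_1$ works simultaneously for all unramified $K'$; so the real content is that a lift working for all unramified $K'$ exists iff the orthogonality holds over every unramified $K'$. The main obstacle is therefore the "uniformity of the lift" issue — showing that the preimage $\alpha_1=\pi^*(\alpha)\cup[Z]$ produced by Proposition \ref{proposition Skor} is a single class in $\brun V$ over $K$ whose evaluation vanishes over \emph{every} unramified extension simultaneously, which follows because $Z\to V$ is defined over $K$ and base change of the cup-product construction is compatible with $K\subset K'$; I would spell out this compatibility and then the proposition follows formally by combining Theorem \ref{theoreme sans compactification lisse corps fini}'s local-field criterion philosophy with the unramified-descent dictionary above. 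One subtlety to handle with care is why it suffices to test only unramified $K'$ rather than all finite $K'$: this is exactly where the hypothesis that $G$ is unramified enters, since then the bad (ramified) local behaviour detected by residues lives on the special fibre $\mathcal V_\kappa$, and Theorem \ref{theoreme sans compactification lisse corps fini} applied to $\mathcal V_\kappa$ over $\kappa$ already captures it through the fields $\kappa'((t))$, whose unramified-in-$K$ analogues are the $K'$.
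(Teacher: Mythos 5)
Your reduction to the residue field rests on identifications that are false, and this is not a repairable technicality. The inflation $H^1(\kappa,M)=H^1_\nr(K,M)\to H^1(K,M)$ is injective but very far from surjective: the tame inertia quotient of $\Gamma_K$ is isomorphic to $\prod_{\ell\neq p}\z_\ell$ and acts trivially on $M$, so $H^1(I_K,M)=\hom(I_K,M)\neq 0$ since $M$ is prime-to-$p$ torsion. For the same reason $H^1(K',G)\neq H^1(\kappa',G)$: in the proof of the finite-field formula the paper only kills the \emph{wild} inertia, obtaining $H^1(K_n,G)=H^1(D_n,G)$ with $D_n$ the tame quotient, not $H^1(\Gamma_n,G)$ as you recall it. This matters because the whole content of property (*) is the orthogonality of an (unramified) class $\alpha$ against \emph{tamely ramified} classes in $\im[H^1(K',G)\to H^1(K',G^\ab)]$; your dictionary erases exactly these classes, and with them the statement becomes essentially empty (the remark following the proposition in the paper stresses that the set $N_\sigma(G)$ one must control is strictly larger than what unramified classes alone detect). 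Likewise, there is no comparison in the paper between $\brnral V$ over the mixed-characteristic field $K$ and $\brnral\cal{V}_\kappa$ of a special fibre — $V$ is not a base change of $\cal{V}_\kappa$ — so invoking Theorem \ref{theoreme sans compactification lisse corps fini} or Theorem \ref{theoreme cohomologique} over $\kappa$ has no justification here.

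The logical frame at the end is also inverted: Theorem \ref{theoreme corps local} is \emph{deduced} from this proposition via Proposition \ref{proposition lemme cohomologique}, so you cannot use it as input, and the ``uniformity of the lift'' you isolate is already settled by the explicit lift $\pi^*(\alpha)\cup[Z]$, defined over $K$; it is not the real difficulty. The two genuinely needed ingredients, absent from your proposal, are: (a) the inclusion $\brnral V\subset H^1_\nr(K,M)$, obtained from Corollary \ref{corollaire brnral se calcule dans une extension finie} because $L(\zeta_n)/K$ is unramified when $G$ is unramified of order prime to the residue characteristic — combined with local duality and the surjectivity of $H^1_\nr(K,G)\to H^1_\nr(K,G^\ab)$ this disposes of all $\alpha\notin H^1_\nr(K,M)$; and (b) for unramified $\alpha$, a translation of the orthogonality condition over all unramified $K'=K_m$ into the norm conditions $a_\sigma(N_\sigma(b))=1$ of Theorem \ref{theoreme formule en car 0}, carried out on the tame quotient $D=\langle\sigma,\tau\mid\sigma\tau\sigma^{-1}=\tau^{q}\rangle$ exactly as in the finite-field case (identifying $\im[H^1(D_m,G)\to\hom(T,G^\ab)^{\Gamma_m}]$ with the $(q,m)$-liftable elements and using Lemmes \ref{lemme calcul de a cup b}--\ref{lemme calcul de a sigma n}). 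Without these steps the proposed argument does not prove the proposition.
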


\begin{rem}
Des r\'esultats semblables peuvent \^etre retrouv\'es dans le cas des corps locaux de caract\'eristique positive via le th\'eor\`eme \ref{theoreme cohomologique}
\`a l'aide du th\'eor\`eme \ref{theoreme invariance brnr} donn\'e en appendice, moyennant l'existence d'une certaine compactification lisse $X$ de $V$.
\end{rem}

\begin{proof}
Soit $\alpha\in\bral V=H^1(K,M)$. Il s'agit de montrer que $\alpha\in\brnral V$ si et seulement si cette classe v\'erifie la propri\'et\'e
(*). Il y a deux cas possibles :\\

\noindent{\bf Cas 1 : $\alpha\not\in H^1_\nr(K,M)$}\\
Soit $L/K$ une extension finie non ramifi\'ee d\'eployant $G$ et soit $n=\exp(G)$. Le corollaire \ref{corollaire brnral se calcule dans une extension finie}
nous dit que l'on a $\brnral V\subset H^1(L(\zeta_n)/K,M)$. Or, comme le cardinal de $G$ est premier \`a $p$, on sait que l'extension $L(\zeta_n)/K$ est
non ramifi\'ee, donc on a en particulier que $\brnral V\subset H^1_\nr(K,M)$.

Si alors $\alpha\not\in H^1_\nr(K,M)$, d'apr\`es ce qui pr\'ec\`ede, on a que $\alpha\not\in\brnral V$ et
il faut alors d\'emontrer que $\alpha$ ne v\'erifie pas la propri\'et\'e (*). Or, puisque $H^1_\nr(K,M)^\perp=H^1_\nr(K,G^\ab)$
pour la dualit\'e locale, on voit imm\'ediatement qu'il existe $\beta\in H^1_\nr(K,G^\ab)$ tel que $\alpha\cup\beta\neq 0$. D'autre part, en remarquant que les
ensembles $H^1_\nr$ sont isomorphes aux ensembles de cohomologie sur le corps r\'esiduel de $K$ et qu'un tel corps est de dimension cohomologique 1, il est facile
de voir que l'application $H^1_\nr(K,G)\to H^1_\nr(K,G^\ab)$ est surjective  (cf. par exemple \cite[III, \S2.4 Corollary 2]{SerreCohGal}), d'o\`u l'on tire que
$\alpha$ n'est pas orthogonal \`a $\im[H^1(K,G)\to H^1(K,G^\ab)]$.\\

\noindent{\bf Cas 2 : $\alpha\in H^1_\nr(K,M)$}\\
Pour traiter ce cas on rappelle que, de m\^eme que pour les corps locaux de caract\'eristique $p$ dans la section
\ref{section formule algebrique}, le sous-groupe de ramification sauvage $S\subset \Gamma_K$ ne joue pas de r\^ole dans la cohomologie galoisienne des groupes $G$,
$G^\ab$ et $M$ puisque leur ordre est premier \`a $p$ et $S$ est un pro-$p$-groupe. Autrement dit, si l'on pose $D=\Gamma_K/S$, on a $H^1(K,G)=H^1(D,G)$. De plus,
d'apr\`es \cite[Theorem 7.5.3]{NSW}, on sait que $D$ est un groupe profini \`a deux g\'en\'erateurs $\sigma,\tau$ avec la seule relation
$\sigma\tau\sigma^{-1}=\tau^{q_0}$, o\`u $q_0$ est le cardinal du corps r\'esiduel de $K$. On sait aussi que l'on a une suite exacte
\[1\to T\to D\to\Gamma_1\to 1,\]
o\`u $T$ est le sous-groupe profini engend\'re par $\tau$ et correspond au groupe de ramification mod\'er\'ee, tandis que $\Gamma_1$ correspond au groupe de
Galois de l'extension non ramifi\'ee maximale $K^\nr/K$ ainsi qu'au groupe de Galois absolu du corps r\'esiduel de $K$. Il est engendr\'e par l'image de
$\sigma$, laquelle correspond au $q_0$-Frobenius sur le corps r\'esiduel et que l'on note toujours $\sigma$ par abus. De m\^eme, si l'on note $D_m$ le
sous-groupe ferm\'e d'indice $m$ de $D$ engendr\'e par $\sigma^m$ et $\tau$, et aussi $\Gamma_m$ le quotient $D_m/T$, on voit que l'on a les \'egalit\'es
\[H^1(K_m,G)=H^1(D_m,G)\quad\text{et}\quad H^1_\nr(K_m,G)=H^1(\Gamma_m,G),\]
o\`u $K_m$ est la seule extension non ramifi\'e de $K$ de degr\'e $m$. Il est par ailleurs \'evident qu'il en va de m\^eme pour $M$ et $G^\ab$.

\begin{lem}\label{lemme intermediaire proposition cohomologique locale}
Il existe, pour tout $m\geq 1$, un accouplement parfait
\[H^1(\Gamma_m,M)\times \hom(T,G^\ab)^{\Gamma_m}\to H^2(K_m,\mu_n)\hookrightarrow \q/\z,\]
induit par la dualit\'e locale. De plus, un \'el\'ement $\alpha\in H^1_\nr(K,M)=H^1(\Gamma_1,M)\subset H^1(K,M)$ v\'erifie la propri\'et\'e
(*) si et seulement si, pour tout $m\geq 1$, son image $\alpha_m$ dans $H^1(K_m,M)$ est orthogonale
\`a $\im[H^1(D_m,G)\to\hom(T,G^\ab)^{\Gamma_m}]$.
\end{lem}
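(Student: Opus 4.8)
Le plan est de reprendre la construction de la section~\ref{section formule algebrique}, le groupe $D_m$ y jouant le rôle tenu par $D_n$. Comme on vient de le rappeler, le groupe de ramification sauvage $S$ n'intervient pas dans la cohomologie des modules $M$, $G^\ab$ et $\mu_n$, tous d'ordre premier à la caractéristique résiduelle $p$ et sur lesquels $S$ agit trivialement ; la suite spectrale de Hochschild--Serre pour $1\to S\to\Gamma_{K_m}\to D_m\to 1$ donne donc des isomorphismes $H^i(D_m,-)\xrightarrow{\sim}H^i(K_m,-)$ en tout degré, pour ces modules. En particulier $H^2(D_m,\mu_n)=H^2(K_m,\mu_n)$, lequel s'injecte canoniquement dans $\q/\z$ via l'invariant local, et l'accouplement de dualité locale de Tate (cf.~\cite[Theorem 7.2.6]{NSW})
\[H^1(D_m,M)\times H^1(D_m,G^\ab)\to H^2(D_m,\mu_n)\hookrightarrow\q/\z\]
est parfait.

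On reprend ensuite l'argument menant à l'accouplement~\eqref{equation accouplement 1} : la suite exacte des termes de bas degré de la suite spectrale de Hochschild--Serre pour $1\to T\to D_m\to\Gamma_m\to 1$ se réduit, $T$ agissant trivialement sur $G^\ab$ et $\mathrm{cd}(\Gamma_m)$ valant $1$ (car $\Gamma_m\cong\hat\z$), à
\[0\to H^1(\Gamma_m,G^\ab)\xrightarrow{\inf}H^1(D_m,G^\ab)\xrightarrow{\res}\hom(T,G^\ab)^{\Gamma_m}\to 0.\]
La compatibilité du cup-produit avec l'inflation et la nullité de $H^2(\Gamma_m,\mu_n)$ (encore parce que $\mathrm{cd}(\Gamma_m)=1$) montrent que le sous-groupe $H^1(\Gamma_m,M)\subset H^1(D_m,M)$ est orthogonal à $H^1(\Gamma_m,G^\ab)$, noyau de $\res$ ; l'accouplement parfait ci-dessus induit donc, par restriction du premier facteur et passage au quotient dans le second, un accouplement
\[H^1(\Gamma_m,M)\times\hom(T,G^\ab)^{\Gamma_m}\to H^2(K_m,\mu_n)\hookrightarrow\q/\z,\]
caractérisé par $\langle\alpha,\res(\beta)\rangle=\langle\alpha,\beta\rangle$ pour $\alpha\in H^1(\Gamma_m,M)$ et $\beta\in H^1(D_m,G^\ab)$ ; c'est celui de l'énoncé.

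Il restera à établir sa perfection. La non-dégénérescence du premier facteur est immédiate : si $\alpha\in H^1(\Gamma_m,M)$ est orthogonal à tout $\hom(T,G^\ab)^{\Gamma_m}$, il l'est à l'image de $\res$, donc aussi, d'après ce qui précède, au noyau de $\res$, donc à $H^1(D_m,G^\ab)$ tout entier, d'où $\alpha=0$ par perfection de la dualité locale. La non-dégénérescence du second facteur revient à l'égalité $H^1(\Gamma_m,M)^\perp=H^1(\Gamma_m,G^\ab)$ dans $H^1(D_m,G^\ab)$, l'inclusion $\supseteq$ ayant déjà été obtenue ; c'est le fait classique que les sous-groupes non ramifiés sont annulateurs exacts l'un de l'autre pour la dualité locale, ce que l'on peut aussi établir directement par un comptage de cardinaux : la formule d'Euler--Poincaré locale (les modules étant d'ordre premier à $p$) donne $|H^1(K_m,M)|=|H^0(K_m,M)|\,|H^2(K_m,M)|$ ; la dualité de Tate, le dual de $M$ étant $G^\ab$ par dualité de Cartier, donne $|H^2(K_m,M)|=|H^0(K_m,G^\ab)|$ ; enfin la non-ramification de $G$ et $\Gamma_m\cong\hat\z$ donnent $|H^1_\nr(K_m,M)|=|H^0(K_m,M)|$ et $|H^1_\nr(K_m,G^\ab)|=|H^0(K_m,G^\ab)|$ ; d'où $|H^1(K_m,M)|=|H^1_\nr(K_m,M)|\,|H^1_\nr(K_m,G^\ab)|$, ce qui, pour un accouplement parfait de groupes finis, force l'égalité voulue (l'orthogonal d'un sous-groupe ayant pour cardinal le quotient des cardinaux).

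Pour la seconde affirmation, on note que toute extension finie non ramifiée de $K$ est l'un des $K_m$, que $H^1(K_m,-)=H^1(D_m,-)$, et que $\alpha$ étant non ramifié son image $\alpha_m$ provient de $H^1(\Gamma_m,M)$. La propriété (*) équivaut ainsi à : pour tout $m\geq 1$, $\alpha_m$ est orthogonal à $\im[H^1(D_m,G)\to H^1(D_m,G^\ab)]$ pour la dualité locale sur $K_m$ ; et, via la caractérisation $\langle\alpha_m,\res(\beta)\rangle=\langle\alpha_m,\beta\rangle$, cela a lieu si et seulement si $\alpha_m$ est orthogonal à l'image de ce sous-ensemble par $\res$, à savoir $\im[H^1(D_m,G)\to\hom(T,G^\ab)^{\Gamma_m}]$, pour l'accouplement induit. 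L'étape délicate est donc la perfection de cet accouplement induit, c'est-à-dire l'égalité $H^1(\Gamma_m,M)^\perp=H^1(\Gamma_m,G^\ab)$ ; tout le reste reprend des arguments déjà développés dans la section~\ref{section formule algebrique}.
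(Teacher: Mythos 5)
Your proof is correct and follows essentially the same route as the paper: the Hochschild--Serre sequence $0\to H^1(\Gamma_m,G^\ab)\to H^1(D_m,G^\ab)\to\hom(T,G^\ab)^{\Gamma_m}\to 0$, the fact that the unramified subgroups $H^1_\nr(K_m,M)$ and $H^1_\nr(K_m,G^\ab)$ are exact annihilators of each other under local duality, and the compatibility of the two pairings to translate property (*). The only difference is that the paper simply invokes this annihilator fact, whereas you reprove it via the standard Euler--Poincar\'e counting argument, which is a legitimate (and self-contained) way to fill in that cited step.
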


\begin{proof}
On rappelle que la suite des premiers termes issue de la suite spectrale de Hochschild-Serre pour ce quotient
est la suivante :
\[1\to H^1(\Gamma_m,G^\ab)\to H^1(D_m,G^\ab)\to \hom(T,G^\ab)^{\Gamma_m}\to 1.\]
En effet, $H^1(T,G^\ab)=\hom(T,G^\ab)$ et on sait que $H^2(\Gamma_m,M)$ (qui correspondrait au quatri\`eme terme de la suite \`a
$5$ termes) est nul car la dimension cohomologique de $\Gamma_m\cong\hat\z$ est $1$. L'\'egalit\'e $H^1_\nr(K_m,M)=H^1_\nr(K_m,G^\ab)^\perp$ pour la dualit\'e locale
nous dit alors que l'on a l'accouplement parfait induit par la dualit\'e locale
\[H^1(\Gamma_m,M)\times \hom(T,G^\ab)^{\Gamma_m}\to H^2(K_m,\mu_n)\hookrightarrow \q/\z,\]
et il est \'evident, par la compatibilit\'e de ces deux accouplements, que $\alpha_m\in H^1(\Gamma_m,M)$ est orthogonale \`a $\im[H^1(K_m,G)\to H^1(K_m,G^\ab)]$ pour la
dualit\'e locale si et seulement s'il est orthogonale \`a $\im[H^1(D_m,G)\to\hom(T,G^\ab)^{\Gamma_m}]$ pour l'accouplement ci-dessus. La deuxi\`eme affirmation
du lemme en d\'ecoule car les $K_m$ correspondent \`a toutes les extensions non ramifi\'ees de $K$.
\end{proof}

On remarque maintenant que l'application $q:\Gamma_K\to\z/n\z$ se factorise par $\Gamma_1$, ce qui nous dit que $q(\sigma)$ a bien un sens. De plus, puisque $\sigma$
agit pr\'ecis\'ement comme le $q_0$-Frobenius, il est facile de voir que l'on a $q(\sigma)=q_0$, d'o\`u l'on se permet de noter tout simplement $q$ au lieu de $q_0$
ou de $q(\sigma)$, pour ne pas alourdir les notations. Il est par ailleurs \'evident aussi que l'on a $q(\sigma^m)=q^m$ et on fait donc le m\^eme raccourci de notation.
Ceci nous permet de faire l'analogue de ce qu'on a fait dans le cas des corps finis. En effet, l'application $\varphi_\sigma$ n'est alors rien d'autre que
l'application $\varphi_q$ utilis\'ee dans le cas des corps finis et de m\^eme on aura $\varphi_{\sigma^m}=\varphi_q^m$ et $N_\sigma=N_q$.

Cela \'etant dit, on peut rappeler la d\'efinition d'un \'el\'ement $(q,m)$-relevable (d\'efinition \ref{definition (q,n)-relevable}) :
c'est un \'el\'ement $\b\in G^\ab$ tel qu'il existe $b\in G$ relevant $\b$ et tel que $\varphi_q^m(b)$ soit conjugu\'e \`a $b$. Puisque ceci
entra\^\i ne en particulier que $\varphi_q^m(\b)=\b$, on note $(G^\ab)^{\varphi_q^m}$ le sous-groupe de $G^\ab(\bar K)$ des \'el\'ements invariants par
$\varphi_q^m$ et $G^\ab_{q,m}\subset(G^\ab)^{\varphi_q^m}$ l'ensemble des \'el\'ements $(q,m)$-relevables.

De m\^eme que dans le cas des corps finis (cf. section \ref{section formule algebrique}), on a un isomorphisme \'evident
$\hom(T,G^\ab)\xrightarrow{\sim}G^\ab(\bar K)$ qui \`a un morphisme lui associe l'image de $\tau$. Ce morphisme identifie $\hom(T,G^\ab)^{\Gamma_m}$ avec
$(G^\ab)^{\varphi_q^m}$. Aussi, en suivant la preuve de la proposition \ref{proposition traduction du theoreme cohomologique}, on voit que l'image
de $H^1(D_m,G)$ dans $\hom(T,G^\ab)^{\Gamma_m}$ est identifi\'ee avec $G^\ab_{q,m}$. Puis, les calculs faits dans les lemmes \ref{lemme calcul de a cup b}
et \ref{lemme changement de H2 pour mu} nous disent que, pour tout $m$, l'orthogonalit\'e de $\alpha_m$ avec $\im[H^1(D_m,G)\to\hom(T,G^\ab)^{\Gamma_m}]$ est
\'equivalente avec l'orthogonalit\'e de $a_{\sigma^m}$ avec $G^\ab_{q,m}$, o\`u $a$ est un cocycle repr\'esentant $\alpha$.\\

Tout cela \'etant dit, on peut enfin terminer la d\'emonstration de la proposition \ref{proposition cohomologique corps local}.\\

Dans un premier temps, si $\alpha\in H^1_\nr(K,M)$ v\'erifie la propri\'et\'e (*), on sait d'apr\`es les lemmes
\ref{lemme intermediaire proposition cohomologique locale}, \ref{lemme calcul de a sigma n} et ce qui pr\'ec\`ede, que pour un cocycle $a$ repr\'esentant
$\alpha$ on a
\[a_{\sigma^m}(\b)=a_{\sigma}\left(\prod_{i=0}^{m-1}\varphi_q^{i}(\b)\right)=1,\]
pour tout $\b\in G^\ab_{q,m}$ et tout $m\in\n$. En particulier, on a $a_\sigma(N_q(b))=1$ pour tout $b\in G$, ce qui suffit pour \'etablir que
$\alpha\in\brnral V$ d'apr\`es la remarque apr\`es le corollaire \ref{corollaire brnral se calcule dans une extension finie}.\\

Dans le sens inverse, si $\alpha\in\brnral V$ est repr\'esent\'e par $a\in Z^1(\Gamma_1,M)$, on a $a_{\sigma}(N_q(b))=1$ pour tout $b\in G$.
Soient $m\in\n$, $\b\in G^\ab_{q,m}$ et $b\in G$ une pr\'eimage de $\b$. Il est facile de voir que $n_{\sigma,b}$ divise $m$ et alors,
d'apr\`es le lemme \ref{lemme calcul de a sigma n}, on a
\begin{multline*}
a_{\sigma^{m}}(\b)=a_{\sigma^{ln_{\sigma,b}}}(\b)=a_{\sigma}\left(\prod_{i=0}^{ln_{\sigma,b}-1}\varphi_q^{i}(\b)\right)=
a_{\sigma}\left(\prod_{i=0}^{n_{\sigma,b}-1}\prod_{j=0}^{l}\varphi_q^{jn_{\sigma,b}+i}(\b)\right)=\\
a_{\sigma}\left(\prod_{i=0}^{n_{\sigma,b}-1}\prod_{j=0}^{l}\varphi_q^{i}(\b)\right)=a_{\sigma}\left(\prod_{i=0}^{n_{\sigma,b}-1}
\varphi_q^{i}(\b)^l\right)=a_\sigma(N_q(b))^l=1,
\end{multline*}
car on rappelle que $\varphi_q^{n_{\sigma,b}}(\b)=\varphi_\sigma^{n_{\sigma,b}}(\b)=\b$. Le lemme \ref{lemme changement de H2 pour mu} nous
dit alors que $\alpha_m$ est orthogonal \`a tout $\b\in G^\ab_{q,m}\cong \hom(T,G^\ab)^{\Gamma_m}$, d'o\`u l'orthogonalit\'e de $\alpha_m$ avec
$\im[H^1(K_m,G)\to H^1(K_m,G^\ab)]$ via le lemme \ref{lemme intermediaire proposition cohomologique locale}.
\end{proof}

\begin{rem}
Il est bien n\'ecessaire de consid\'erer les extensions non ramifi\'ees de $K$ et non seulement $K$ tout seul. En effet, demander seulement
l'orthogonalit\'e de $\alpha\in H^1(K,M)$ avec $\im[H^1(K,G)\to H^1(K,G^\ab)]$ revient \`a demander seulement l'orthogonalit\'e de $a_\sigma$ avec
$G^\ab_{q,1}$, alors que l'ensemble $N_\sigma(G)\subset (G^\ab)^{\varphi_q}$ est \`a priori beaucoup plus grand : pour un \'el\'ement $b\in G$ tel
que $\varphi_q^m(b)$ est conjugu\'e \`a $b$, on ne sait pas \`a priori si le produit $b_0:=\prod_{i=0}^{m-1}\varphi_q^i(b)$ est tel que $\varphi_q(b_0)$
est conjugu\'e \`a $b_0$, d'o\`u le fait que la $\sigma$-norme de $b$ n'a aucune raison \`a priori d'\^etre $(q,1)$-relevable.

Si l'on peut objecter que le fait d'avoir $N_\sigma(G)\supsetneq G^\ab_{q,1}$ n'entra\^\i ne pas pour autant que les groupes qu'ils engendrent
soient diff\'erents, il suffit de remarquer que l'on peut construire
des exemples explicites (comme ceux donn\'es dans la proposition \ref{proposition exemple Demarche en car 0}) o\`u le groupe engendr\'e par $N_\sigma(G)$
est strictement plus grand que celui engendr\'e par $G^\ab_{q,1}$.
\end{rem}

Pour finir, on donne un corollaire de la proposition \ref{proposition cohomologique corps local} dans le cas ab\'elien.

\begin{cor}
Soit $G$ un $K$-groupe fini ab\'elien non ramifi\'e, d'ordre premier \`a la caract\'eristique r\'esiduelle, plong\'e dans $G'$ semi-simple simplement connexe
et soit $V=G\backslash G'$. Alors le groupe de Brauer non ramifi\'e alg\'ebrique $\brnral V$ de $V$ est trivial. 
\end{cor}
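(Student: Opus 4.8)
The plan is to deduce this directly from Proposition~\ref{proposition cohomologique corps local}, exploiting the fact that the abelian hypothesis makes the relevant image exhaust the whole $H^1$. Write $M$ for the character group of $G$ and identify $\bral V$ with $H^1(K,M)$ as usual. By Proposition~\ref{proposition cohomologique corps local}, an element $\alpha\in H^1(K,M)$ lies in $\brnral V$ precisely when it satisfies property~(*): for every finite unramified extension $K'/K$, the image $\alpha'$ of $\alpha$ in $H^1(K',M)$ is orthogonal to $\im[H^1(K',G)\to H^1(K',G^\ab)]$.

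The key step is that, $G$ being abelian, one has $G=G^\ab$, so the map $H^1(K',G)\to H^1(K',G^\ab)$ is the identity and in particular surjective. Hence property~(*) amounts to saying that $\alpha'$ is orthogonal to the whole group $H^1(K',G)$ for every finite unramified $K'/K$. Already for $K'=K$ this suffices: the cup-product pairing $H^1(K,M)\times H^1(K,G)\to H^2(K,\gm)=\br K\cong\q/\z$ is the perfect Tate local duality pairing, since $M=\hom(G,\mu_n)$ is the Cartier dual of the finite abelian group $G$ (with $n=\exp G$ prime to the residual characteristic). Orthogonality of $\alpha$ to all of $H^1(K,G)$ therefore forces $\alpha=0$, so $\brnral V=0$.

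Alternatively, and perhaps more in the spirit of the algebraic formulas, one can argue through Corollary~\ref{corollaire calcul de brnral en car 0}: since the order of $G$ is prime to the residual characteristic, $\mu_n$ lies in the maximal unramified extension of $K$, so if $L/K$ is a finite unramified extension splitting $G$, then $L(\zeta_n)/K$ is again unramified, hence cyclic, with generator $\sigma$ acting as the residual Frobenius. Corollary~\ref{corollaire calcul de brnral en car 0} then gives $\brnral V\cong(G^\ab)^{\varphi_\sigma}/\mathcal{N}_\sigma(G)$; but when $G$ is abelian, conjugacy is trivial, so every $\beta\in(G^\ab)^{\varphi_\sigma}=G^{\varphi_\sigma}$ has $n_{\sigma,\beta}=1$ and hence $N_\sigma(\beta)=\beta$, whence $\mathcal{N}_\sigma(G)=(G^\ab)^{\varphi_\sigma}$ and the quotient vanishes.

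There is no real obstacle here; the only points needing a little care are the identification of the dual module in local duality (so that the pairing of Proposition~\ref{proposition cohomologique corps local} is indeed the standard perfect Tate pairing) and the elementary observation that $\mu_n$ is unramified when $n$ is prime to the residual characteristic, which keeps all the auxiliary extensions unramified and cyclic. No computation is involved.
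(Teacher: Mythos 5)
Votre argument principal est exactement celui du papier : puisque $G=G^\ab$, l'image de $H^1(K,G)\to H^1(K,G^\ab)$ est $H^1(K,G)$ tout entier, et la perfection de l'accouplement de dualit\'e locale entre $H^1(K,M)$ et $H^1(K,G)$ force $\alpha=0$ d\`es que $\alpha\in\brnral V$ (la proposition \ref{proposition cohomologique corps local} appliqu\'ee avec $K'=K$ suffit). Votre variante via le corollaire \ref{corollaire calcul de brnral en car 0} (extension $L(\zeta_n)/K$ non ramifi\'ee donc cyclique, et $N_\sigma(\b)=\b$ dans le cas ab\'elien) est \'egalement correcte et recoupe la proposition \ref{proposition brnral=sha1 pour G abelien}, mais elle n'apporte pas d'\'ecart essentiel par rapport \`a la d\'emonstration du texte.
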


\begin{proof}
Puisque dans ce cas $G=G^\ab$, il est \'evident que $\im[H^1(K,G)\to H^1(K,G^\ab)]$ correspond au groupe $H^1(K,G)$ tout entier. Ainsi, on voit qu'un \'el\'ement
$\alpha\in\brnral V\subset H^1(K,M)$ doit \^etre orthogonal \`a tout le groupe $H^1(K,G)$. L'accouplement \'etant parfait, on voit que $\alpha$ doit \^etre nul.
\end{proof}

\begin{rem}
Ce r\'esultat n'est pas, \`a notre connaissance, facile \`a d\'emontrer avec ce qui \'etait d\'ej\`a connu dans le cas ab\'elien. En effet, vue la preuve de la
proposition \ref{proposition cohomologique corps local}, on peut se rendre compte qu'un \'el\'ement cl\'e de cette d\'emonstration est le corollaire
\ref{corollaire brnral se calcule dans une extension finie}, lequel nous permet de d\'eduire l'inclusion $\brnral V\subset H^1_\nr(K,M)$. Une fois cette
inclusion admise, le r\'esultat d\'ecoule facilement de l'\'egalit\'e (d\'ej\`a connue) $\brnral V=\Sha^1_\mathrm{cyc}(K,M)$, mais on ne voit pas une voie
``facile'' pour d\'emontrer cette inclusion, m\^eme dans le cas ab\'elien.
\end{rem}

\section{Application des r\'esultats}\label{section applications}\numberwithin{equation}{section}

\subsection{Calcul explicite de certains $\brnral$}
Il s'agit dans cette section de calculer explicitement quelques groupes de Brauer non ramifi\'es alg\'ebriques pour montrer l'utilit\'e des
formules d\'evelopp\'ees dans les sections pr\'ec\'edentes.\\

Tant sur un corps fini que sur un corps de caract\'eristique $0$, il est int\'eressant de comprendre quand deux espaces homog\`enes a priori
diff\'erents en leur structure peuvent avoir un m\^eme groupe de Brauer non ramifi\'e alg\'ebrique. On a d\'ej\`a remarqu\'e au d\'ebut de la section
\ref{section formules} que ce groupe ne d\'epend que du stabilisateur, mais on peut aller encore un peu plus loin. En effet, on peut
imaginer des espaces homog\`enes qui, vis-\`a-vis de notre formule, se comportent de la m\^eme fa\c con tout en \'etant essentiellement diff\'erents.
La d\'efinition qui suit va donc dans ce sens.

Pour un $k$-groupe fini $G$ avec $k$ un corps quelconque, on peut consid\'erer le morphisme $\kappa_G:\Gamma_k\to\out (G(\bar k))$ induit par sa
structure sous-jacente de $\Gamma_k$-groupe, o\`u l'on rappelle que $\out(G(\bar k)):=\aut(G(\bar k))/\int(G(\bar k))$ est le groupe des
automorphismes ext\'erieurs de $G(\bar k)$. Si l'on a un deuxi\`eme $k$-groupe $H$ et un $\bar k$-isomorphisme $f:G_{\bar k}\to H_{\bar k}$, il
induit les isomorphismes de groupes abstraits
\begin{align*}
f_1 &:G(\bar k)\xrightarrow{\sim} H(\bar k), \\
f_2 &:\aut (G(\bar k))\xrightarrow{\sim} \aut (H(\bar k)) :\phi \mapsto f_1\circ\phi\circ f_1^{-1}, \\
f_3 &:\out (G(\bar k))\xrightarrow{\sim} \out (H(\bar k)) :\bar\phi\mapsto \overline{f_2(\phi)}.
\end{align*}

\begin{defi}\label{definition ext-isomorphe}
Soient $G$ et $H$ des $k$-groupes finis avec $k$ un corps quelconque et soient $\kappa_G$ et $\kappa_H$ les morphismes induits comme ci-dessus.
Le $k$-groupe $H$ est dit \emph{ext-isomorphe} \`a $G$ s'il existe un $\bar k$-isomorphisme $f:G_{\bar k}\to H_{\bar k}$ (induisant les isomorphismes $f_1$, $f_2$
et $f_3$ ci-dessus) tel que $\kappa_H=f_3\circ\kappa_G$.

On dit qu'un groupe fini $G$ est \emph{ext-constant} s'il est ext-isomorphe \`a un groupe constant ou, de fa\c con \'equivalente, si $\Gamma_k$
agit sur $G(\bar k)$ par automorphismes int\'erieurs.
\end{defi}

On d\'emontrera par la suite que deux $k$-groupes ext-isomorphes $G$ et $H$ se comportent effectivement de la m\^eme fa\c con par rapport aux formules
d\'evelopp\'ees dans la section pr\'ec\'edente, tant sur un corps fini que sur un corps de caract\'eristique 0. En particulier, \`a chaque
fois qu'on aura un groupe $G$ ext-constant, on pourra supposer qu'il est constant pour calculer le groupe $\brnral V$ d'un espace homog\`ene $V$ sous $G'$
semi-simple simplement connexe \`a stabilisateur $G$. On en d\'eduit par exemple que si $G(\bar k)$ est le groupe $S_n$ des permutations de $n$
\'el\'ements, avec $n\neq 6$, on pourra toujours supposer qu'il est constant (on rappelle que $S_n$ n'a pas d'automorphisme ext\'erieur pour $n\neq 6$).

\subsubsection{Cas d'un corps fini}\label{section calculs corps finis}
On revient aux notations de la section \ref{section formule algebrique}. En particulier, on note $k=\f_q$. Dans cette section, le groupe
$G$ sera toujours suppos\'e d'ordre premier \`a $q=p^r$, donc il sera toujours lisse.\\

La similitude entre les formules \ref{theoreme formule sur fq} et \ref{theoreme formule en car 0}, ou encore le fait que la formule cohomologique
(th\'eor\`eme \ref{theoreme cohomologique}) soit de nature arithm\'etique, de m\^eme que celle sur un corps de nombres (cf.
\cite{HarariBulletinSMF}, Proposition 4) ou sur un corps global de caract\'eristique positive (cf. \cite[Th\'eor\`eme 3.1]{GLABrnr}), permettent de
s'attendre quelques similitudes au niveau du groupe $\brnral$ entre les $k$-vari\'et\'es et ses analogues sur les corps globaux. Ces similitudes
se montrent en effet avec les r\'esultats qui suivent, lesquels ont \'et\'e inspir\'es des travaux de Demarche dans le cas d'un corps de nombres
(cf. \cite{Demarche}).\\

Il est d\'ej\`a connu depuis quelque temps que le groupe $\brnral V$ s'annule lorsque $V=G\backslash G'$ avec $G$ ab\'elien (cf. par exemple
\cite[Th\'eor\`eme 7.6]{BDH}). On commence donc en retrouvant ce r\'esultat avec notre formule.

\begin{pro}\label{proposition nullite du brnral si G abelien}
Soit $G$ un $k$-groupe ab\'elien d'ordre premier \`a $p$ que l'on plonge dans $G'$ semi-simple simplement connexe. Soit $V=G\backslash G'$. Alors on a
$\brnral V=0$.
\end{pro}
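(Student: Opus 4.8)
The plan is to reduce immediately to the algebraic description of $\brnral V$ obtained in Proposition \ref{proposition calcul de brnral}, namely the isomorphism $\brnral V\cong(G^\ab)^{\varphi_q}/\mathcal{N}_q(G)$, and then to check that for $G$ abelian this quotient is trivial. Equivalently, I will verify the hypothesis of Corollary \ref{corollaire calcul de brnral}: that $(G^\ab)^{\varphi_q}$ is generated by $q$-normes.

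First I would observe that, since $G$ is abelian, the natural projection $G\to G^\ab$ is the identity, so $\bar b=b$ for every $b\in G$, and ``conjugu\'e'' simply means ``\'egal''. In particular, for $b\in G$ the integer $n_b$ is the least $n>0$ with $\varphi_q^n(b)=b$ (the length of the $\langle\varphi_q\rangle$-orbit of $b$), and the $q$-norme reads $N_q(b)=\prod_{i=0}^{n_b-1}\varphi_q^i(b)\in(G^\ab)^{\varphi_q}=G^{\varphi_q}$.

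Next comes the only point to check: every element of $G^{\varphi_q}$ is itself a $q$-norme. Indeed, if $\b\in G$ satisfies $\varphi_q(\b)=\b$, then $n_\b=1$ and $N_q(\b)=\b$. Hence $N_q(G)\supseteq G^{\varphi_q}$, so a fortiori the subgroup $\mathcal{N}_q(G)$ of $(G^\ab)^{\varphi_q}$ generated by $N_q(G)$ is all of $(G^\ab)^{\varphi_q}$. Corollary \ref{corollaire calcul de brnral} then gives $\brnral V=0$, as wanted.

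There is essentially no obstacle here: all the substance sits in Theorem \ref{theoreme formule sur fq} and its reformulation through Proposition \ref{proposition calcul de brnral}, and the present statement only requires unwinding the definitions in the abelian case. One could equally argue directly from Theorem \ref{theoreme formule sur fq}: an element $\alpha\in H^1(k,M)$ represented by a cocycle $a$ lies in $\brnral V$ iff $a_s(N_q(b))=1$ for all $b\in G$; since the $N_q(b)$ already exhaust $G^{\varphi_q}$ and, by Lemma \ref{lemme calcul de N perp}, $a_s$ is pinned down modulo coboundaries exactly by its values on $G^{\varphi_q}$, this forces $\alpha=0$.
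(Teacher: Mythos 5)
Your proof is correct and follows essentially the same route as the paper: invoking Corollary \ref{corollaire calcul de brnral} (equivalently Proposition \ref{proposition calcul de brnral}) and observing that, since $G=G^\ab$, any $\b\in(G^\ab)^{\varphi_q}$ satisfies $n_\b=1$ and $N_q(\b)=\b$, so the $q$-normes already exhaust $(G^\ab)^{\varphi_q}$. The alternative direct argument via Theorem \ref{theoreme formule sur fq} and Lemma \ref{lemme calcul de N perp} that you sketch is also valid, but it is only a restatement of the same computation.
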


\begin{proof}
D'apr\`es le corollaire \ref{corollaire calcul de brnral}, il suffit de d\'emontrer que $N_q(G)$ engendre tout le groupe $(G^\ab)^{\varphi_q}$.
Or cela est \'evident puisque, comme $G=G^\ab$, la $q$-norme $N_q(b)$ d'un \'el\'ement $b\in G$ qui est d\'ej\`a dans
$G^{\varphi_q}=(G^\ab)^{\varphi_q}$ est tout simplement $b$ lui-m\^eme par d\'efinition de $N_q(b)$.
\end{proof}

On d\'emontre maintenant que deux groupes ext-isomorphes se comportent en effet de la m\^eme fa\c con vis-\`a-vis de notre formule.

\begin{pro}\label{proposition ext-constant = constant}
Soient $G$ et $H$ deux $k$-groupes finis ext-isomorphes d'ordre premier \`a $p$ plong\'es respectivement dans $G'$ et $H'$, des $k$-groupes semi-simples
simplement connexes, et soient $V=G\backslash G'$ et $W=H\backslash H'$. Alors on a $\brnral V\cong\brnral W$.
\end{pro}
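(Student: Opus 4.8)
The statement to prove is that two ext-isomorphic $k$-groups $G$ and $H$ (over a finite field $k=\f_q$) yield homogeneous spaces $V=G\backslash G'$ and $W=H\backslash H'$ with $\brnral V\cong\brnral W$. By Proposition \ref{proposition calcul de brnral}, we know $\brnral V\cong (G^\ab)^{\varphi_q}/\mathcal{N}_q(G)$ and likewise $\brnral W\cong (H^\ab)^{\varphi_q}/\mathcal{N}_q(H)$, where $\mathcal{N}_q(G)$ is the subgroup of $(G^\ab)^{\varphi_q}$ generated by the image of the $q$-norm. So the whole task reduces to showing that a $\bar k$-isomorphism $f:G_{\bar k}\to H_{\bar k}$ witnessing ext-isomorphism induces an isomorphism $(G^\ab)^{\varphi_q}/\mathcal{N}_q(G)\xrightarrow{\sim}(H^\ab)^{\varphi_q}/\mathcal{N}_q(H)$. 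The point is that this quotient depends only on the abstract group $G(\bar k)$ together with the \emph{outer} action of $\varphi_q$, and $\varphi_q$ is built from the Frobenius $s$ — whose outer action is exactly $\kappa_G(s)$ — twisted by a power map (raising to the $q$-th power, an \emph{inner-action-equivariant} operation since it is defined on the abstract group). The condition $\kappa_H = f_3\circ\kappa_G$ says precisely that $f$ transports the outer part of the Galois action on $G$ to that on $H$.

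\textbf{First step: reduce $\varphi_q$ to its outer class.} I would record that the $q$-norm $N_q(b)=\prod_{i=0}^{n_b-1}\varphi_q^i(\bar b)$ and the set $G^\ab_{q,n}$ of $(q,n)$-liftable elements are defined using $\varphi_q$, but that their behaviour is insensitive to replacing $\varphi_q$ by $\mathrm{int}(c)\circ\varphi_q$ for any $c\in G(\bar k)$. Indeed, conjugation by $c$ acts trivially on $G^\ab(\bar k)$, so $\varphi_q$ and $\mathrm{int}(c)\circ\varphi_q$ induce the \emph{same} automorphism of $G^\ab(\bar k)$; hence $(G^\ab)^{\varphi_q}$, the projections $\overline{\varphi_q^i(b)}$, and therefore $N_q$ and all the $G^\ab_{q,n}$ are literally unchanged. (One does need to check that the integers $n_b$ — defined by "$\varphi_q^{n_b}(b)$ conjugate to $b$" — are also unchanged, which is immediate since "conjugate to $b$" is a conjugacy-invariant condition and $\varphi_q^{n}$, $(\mathrm{int}(c)\circ\varphi_q)^n$ differ by an inner automorphism.) Consequently $\brnral V$ depends on the pair $\bigl(G(\bar k),\ \text{outer class of }\varphi_q\bigr)$ only.

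\textbf{Second step: transport via $f$.} Now let $f:G_{\bar k}\to H_{\bar k}$ be the $\bar k$-isomorphism from the definition of ext-isomorphism, inducing $f_1:G(\bar k)\xrightarrow{\sim}H(\bar k)$, and recall that $\varphi_q^G(b)=s^{-1}(b^q)$ where $s$ is the $q$-Frobenius, and similarly $\varphi_q^H$. The Galois action $s$ acts on $G(\bar k)$ and on $H(\bar k)$; the hypothesis $\kappa_H(s)=f_3(\kappa_G(s))$ means that $f_1\circ (s\text{-action on }G)\circ f_1^{-1}$ and $(s\text{-action on }H)$ differ by an inner automorphism of $H(\bar k)$. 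Since $f_1$ is a group isomorphism it commutes with the $q$-th power map, so $f_1\circ\varphi_q^G\circ f_1^{-1}$ and $\varphi_q^H$ also differ by an inner automorphism of $H(\bar k)$. By the first step, $f_1$ then carries $(G^\ab)^{\varphi_q}$ isomorphically onto $(H^\ab)^{\varphi_q}$ and $\mathcal{N}_q(G)$ onto $\mathcal{N}_q(H)$ (the norm is built functorially from $\varphi_q$ and the abelianization, and $f_1$ induces $f_1^\ab:G^\ab(\bar k)\xrightarrow{\sim}H^\ab(\bar k)$ intertwining the two). Passing to quotients and invoking Proposition \ref{proposition calcul de brnral} on both sides gives $\brnral V\cong\brnral W$.

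\textbf{Expected main obstacle.} The conceptual content is all in the first step — the observation that $\brnral V$, as computed by the formula, "only sees" the outer class of the twisted Frobenius. Once that is isolated, the second step is bookkeeping: checking that the various constructions ($n_b$, $N_q$, $G^\ab_{q,n}$, $\mathcal{N}_q(G)$) are natural in the pair (group, outer automorphism class). The one place requiring a little care is the interaction between "conjugate in $G$" and the passage to $G^\ab$: an element $\bar b\in G^\ab$ may lift to $b$ with $n_b$ genuinely depending on the lift, so one should phrase things in terms of $\mathcal{N}_q(G)$ (the subgroup generated by all $N_q(b)$, $b\in G$) rather than the raw image, exactly as the paper already does. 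I do not expect any serious difficulty; the proof should be short, essentially: "apply Proposition \ref{proposition calcul de brnral}; the two sides are identified by $f_1$ because the formula is insensitive to replacing $\varphi_q$ by an inner twist."
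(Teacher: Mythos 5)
Your proof is correct and follows essentially the same route as the paper: the key point in both is that the ext-isomorphism hypothesis gives $\varphi_{H,q}=f_1\circ\mathrm{int}(b)\circ\varphi_{G,q}\circ f_1^{-1}$ (and likewise for all powers), so the conjugacy conditions defining $n_b$, the induced map on abelianizations, and hence the $q$-norms correspond under $f_1^\ab$. The only cosmetic difference is that you conclude via Proposition \ref{proposition calcul de brnral} (the description $(G^\ab)^{\varphi_q}/\mathcal{N}_q(G)$), whereas the paper concludes via the formula of Theorem \ref{theoreme formule sur fq} together with the $\Gamma_k$-equivariance of $f_1^\ab$; these are interchangeable formulations of the same argument.
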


\begin{proof}
Soit $f:G_{\bar k}\to H_{\bar k}$ un $\bar k$-isomorphisme tel que $\kappa_H=f_3\circ\kappa_G$. Il induit alors un $\bar k$-isomorphisme $f^\ab:G^\ab_{\bar k}\to H^\ab_{\bar k}$ qui est
en fait d\'efini sur $k$ car $\kappa_H=f_3\circ\kappa_G$ entra\^\i ne $\kappa_{H^\ab}=f_3^\ab\circ\kappa_{G^\ab}=f_2^\ab\circ\kappa_{G^\ab}$ (on rappelle que
$\aut (G^\ab(\bar k))=\out (G^\ab(\bar k))$ et on en a de m\^eme pour $H^\ab$) et alors $f_1^\ab$ est un isomorphisme de $\Gamma_k$-modules .
Si l'on note alors $M$ et $N$ les groupes des caract\`eres de $G$ et $H$ respectivement, on a un $\Gamma_k$-isomorphisme $N\xrightarrow{\sim} M$ induisant
un isomorphisme $f^\ab_*:H^1(k,N)\xrightarrow{\sim} H^1(k,M)$.

Il suffit alors de consid\'erer la formule du th\'eor\`eme \ref{theoreme formule sur fq}. Soient $\varphi_{G,q}$ et $\varphi_{H,q}$ les applications
respectives d\'efinissant les $q$-normes respectives sur $G(\bar k)$ et $H(\bar k)$. On a
\[\varphi_{H,q}=f_1\circ\mathrm{int}(b_1)\circ\varphi_{G,q}\circ f_1^{-1}\]
pour un certain $b_1\in G$, d'o\`u il est facile de voir que, pour tout $m\geq 1$,
\[\varphi_{H,q}^m=f_1\circ\mathrm{int}(b_m)\circ\varphi_{G,q}^m\circ f_1^{-1},\]
pour certain $b_m\in G$. En particulier,
\[\varphi_{H,q}=f^\ab_1\circ\varphi_{G,q}\circ (f_1^{\ab})^{-1},\]
lorsqu'on consid\`ere leurs restrictions aux ab\'elianis\'es. Enfin, de ce qui pr\'ec\`ede,
on voit facilement que pour tout $b\in G$ et pour tout $m\geq 1$, $\varphi_{G,q}^m(b)$ est conjugu\'e \`a $b$ si et seulement si
$\varphi_{H,q}^m(f_1(b))$ est conjugu\'e \`a $f_1(b)$ aussi. On voit alors que l'on a
\[N_{H,q}=f_1^\ab\circ N_{G,q}\circ f_1^{-1},\]
o\`u $N_{H,q}$ et $N_{G,q}$ sont les $q$-normes respectives, et la formule du th\'eor\`eme \ref{theoreme formule sur fq} donne alors le m\^eme groupe de Brauer non
ramifi\'e alg\'ebrique pour les vari\'et\'es $V$ et $W$.
\end{proof}

Concentrons-nous un instant sur les groupes \emph{ext-constants}. La proposition \ref{proposition ext-constant = constant} nous dit que, comme on
l'avait annonc\'e, \`a chaque fois qu'on aura un tel groupe $G$ dans les calculs qui suivent, on pourra supposer qu'il est constant dans la
d\'emonstration. Cela \'etant dit, on montre les deux r\'esultats suivants, inspir\'es des
travaux de Demarche sur des groupes constants sur un corps de nombres (cf. \cite[\S3, Cor. 1 et \S5, Thm. 5]{Demarche}).

Cela \'etant dit, on montre les deux r\'esultats suivants, inspir\'es des travaux de Demarche sur des groupes constants sur un corps de nombres (cf.
\cite[\S3 Corollaire 1, \S5 Th\'eor\`eme 5]{Demarche}).

\begin{pro}\label{proposition brnr nul si exp G divise q-1}
Soit $G$ un $k$-groupe fini ext-constant d'ordre premier \`a $p$ plong\'e dans $G'$ semi-simple simplement connexe et soit $V=G\backslash G'$. On suppose
que $n=\exp (G)$ divise $q-1$, alors $\brnral V=0$.
\end{pro}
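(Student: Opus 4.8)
Le plan est de se ramener au cas d'un groupe constant, puis d'observer que l'hypothèse $n\mid q-1$ trivialise l'application $\varphi_q$, ce qui permettra de conclure grâce au corollaire \ref{corollaire calcul de brnral}. D'abord, puisque $G$ est ext-constant, la proposition \ref{proposition ext-constant = constant} assure que $\brnral V$ est inchangé si l'on remplace $G$ par un $k$-groupe constant qui lui est ext-isomorphe (plongé dans n'importe quel $k$-groupe semi-simple simplement connexe). Comme l'exposant ne dépend que du groupe abstrait $G(\bar k)$, l'hypothèse $\exp(G)\mid q-1$ est préservée, et l'on peut donc supposer désormais que $G$ est constant, c'est-à-dire que $\Gamma_k$ (et en particulier le $q$-Frobenius $s$) agit trivialement sur $G(\bar k)$.

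Ensuite, je calculerais $\varphi_q$ dans ce cas. L'action galoisienne étant triviale, on a $\varphi_q(b)={}^{s^{-1}}(b^q)=b^q$ pour tout $b\in G(\bar k)$. Comme $n=\exp(G)$ divise $q-1$, en écrivant $q=1+mn$ on obtient $b^q=b\,(b^n)^m=b$, puisque $b^n=1$. Donc $\varphi_q$ est l'identité sur $G(\bar k)$, et a fortiori sur $G^\ab(\bar k)$. Il en résulte que $(G^\ab)^{\varphi_q}=G^\ab(\bar k)$ tout entier, que $n_b=1$ pour tout $b\in G$, et donc que $N_q(b)=\bar b$ n'est autre que l'image de $b$ dans $G^\ab$ ; en particulier $N_q(G)=G^\ab(\bar k)$.

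Enfin, $(G^\ab)^{\varphi_q}=G^\ab(\bar k)$ étant manifestement engendré par les $q$-normes, le corollaire \ref{corollaire calcul de brnral} donne aussitôt $\brnral V=0$ ; on pourrait aussi invoquer directement la proposition \ref{proposition calcul de brnral}, qui fournit $\brnral V\cong (G^\ab)^{\varphi_q}/\mathcal{N}_q(G)=0$. Cette preuve ne présente pas véritablement d'obstacle : le seul point demandant un peu de soin est la réduction au cas constant via la proposition \ref{proposition ext-constant = constant}, tout le reste se ramenant au calcul élémentaire ci-dessus de $\varphi_q$.
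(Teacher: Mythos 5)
Votre preuve est correcte et suit essentiellement le même chemin que celle du texte : réduction au cas constant par la proposition \ref{proposition ext-constant = constant}, constat que $\varphi_q$ est l'identité puisque $n\mid q-1$, donc que la $q$-norme est la projection $G\to G^\ab$. La seule différence est cosmétique : vous concluez via le corollaire \ref{corollaire calcul de brnral} (ou la proposition \ref{proposition calcul de brnral}) là où le texte invoque directement la formule du théorème \ref{theoreme formule sur fq}, ce qui revient au même.
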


\begin{proof}
On se r\'eduit tout de suite au cas o\`u $G$ est constant gr\^ace \`a la proposition \ref{proposition ext-constant = constant}. Si l'exposant de
$G$ divise $q-1$, alors $b^q=b$ pour tout \'el\'ement $b\in G$. On voit en particulier que l'application $\varphi_q$ est l'identit\'e, ce qui nous dit
que la $q$-norme correspond forc\'ement \`a la projection de $G$ sur $G^\ab$. Le r\'esultat en d\'ecoule compte tenu de la formule du th\'eor\`eme
\ref{theoreme formule sur fq}.
\end{proof}

\begin{pro}\label{proposition brnral nul si exp G premier a q-1}
Soit $G$ un $k$-groupe fini d'ordre premier \`a $p$ plong\'e dans $G'$ semi-simple simplement connexe et soit $V=G\backslash G'$. Soit $L/k$ l'extension
correspondant au noyau du morphisme $\kappa:\Gamma_k\to\out (G(\bar k))$ induit par $G$ et soit $m=[L:k]$. On suppose que $n=\exp (G)$ est premier
\`a $q^m-1$. Alors $\brnral V=0$.
\end{pro}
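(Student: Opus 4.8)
The plan is to read the result off from the algebraic formula of Theorem~\ref{theoreme formule sur fq}, via its corollary. By Corollary~\ref{corollaire calcul de brnral} it suffices to prove that the group $(G^\ab)^{\varphi_q}$ is trivial: then it is (vacuously, or because $N_q(1)=1$) generated by the $q$-norms, and the corollary yields $\brnral V=0$; alternatively Proposition~\ref{proposition calcul de brnral} gives $\brnral V\cong (G^\ab)^{\varphi_q}/\mathcal{N}_q(G)=0$ directly. So everything reduces to computing the fixed points of the automorphism $\varphi_q$ of $G^\ab(\bar k)$. The first step is to note that, since the projection $G(\bar k)\to G^\ab(\bar k)$ is $\Gamma_k$-equivariant and commutes with the $q$-th power map, $\varphi_q$ acts on $G^\ab(\bar k)$ by $\bar\beta\mapsto {}^{s^{-1}}(\bar\beta^{q})$, where $s$ is the $q$-Frobenius; an immediate induction then gives $\varphi_q^{j}(\bar\beta)={}^{s^{-j}}(\bar\beta^{q^{j}})$ for every $j\geq 1$.

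The key step is the case $j=m$. By definition $L$ is the fixed field of $\ker\kappa$, where $\kappa:\Gamma_k\to\out(G(\bar k))$; since $k$ is finite, $\Gamma_k$ is topologically generated by $s$, and $\ker\kappa=\Gamma_L$ is topologically generated by $s^{m}$. Hence $s^{m}$ acts on $G(\bar k)$ by an inner automorphism, so it acts trivially on the abelianization $G^\ab(\bar k)$. Plugging $j=m$ into the formula above therefore gives $\varphi_q^{m}(\bar\beta)=\bar\beta^{q^{m}}$ for every $\bar\beta\in G^\ab(\bar k)$.

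Now take $\bar\beta\in(G^\ab)^{\varphi_q}$. Then $\bar\beta=\varphi_q^{m}(\bar\beta)=\bar\beta^{q^{m}}$, so $\bar\beta^{q^{m}-1}=1$; on the other hand, lifting $\bar\beta$ to some $b\in G$ with $b^{n}=1$ (here $n=\exp G$) gives $\bar\beta^{n}=1$. Since $\gcd(n,q^{m}-1)=1$ by hypothesis, the order of $\bar\beta$ divides $1$, i.e.\ $\bar\beta=1$. Thus $(G^\ab)^{\varphi_q}$ is trivial, and Corollary~\ref{corollaire calcul de brnral} concludes. I do not expect a genuine obstacle: the only point needing a little care is the identification $\ker\kappa=\Gamma_L$ together with the fact that $s^{m}$ topologically generates it, so that the inner-ness of the $\Gamma_L$-action may be applied to $s^m$ itself — but over a finite field this is immediate, and the remaining computation is two lines.
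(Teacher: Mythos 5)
Your proof is correct and follows essentially the same route as the paper: you reduce to showing $(G^\ab)^{\varphi_q}=1$ via Corollaire \ref{corollaire calcul de brnral}, use that $s^m\in\Gamma_L=\ker\kappa$ acts by inner automorphisms on $G(\bar k)$, hence trivially on $G^\ab(\bar k)$, so that $\varphi_q^m$ is raising to the power $q^m$, and conclude from $\gcd(n,q^m-1)=1$. The paper's argument is exactly this, stated slightly more tersely (it only uses $s^m\in\Gamma_L$, without invoking that $s^m$ topologically generates $\Gamma_L$).
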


\begin{proof}
Soit $s$ le $q$-Frobenius. Puisque $s^m\in\Gamma_L$, on sait qu'il agit par automorphisme int\'erieur sur $G(\bar k)$, donc trivialement sur $G^\ab(\bar k)$.
Soit alors $\b\in (G^\ab)^{\varphi_q}$. On a
\[\b^{q^m}=\asd{s^m}{}{q^m}{}{\b}=\varphi_q^m(\b)=\b,\]
ce qui nous dit que l'ordre de $\b$ divise $q^m-1$. Or, l'ordre de $\b$ divise aussi forc\'ement $n$. On en d\'eduit que l'ordre de cet \'el\'ement
est forc\'ement $1$ et donc on a $\b=1$. Il est alors \'evident que les $q$-normes engendrent $(G^\ab)^{\varphi_q}$ car il est trivial, d'o\`u
l'on d\'eduit que $\brnral V=0$ d'apr\`es le corollaire
\ref{corollaire calcul de brnral}.
\end{proof}

On a dit au d\'ebut de la section \ref{section formule algebrique car 0} qu'il existe des exemples de groupes $G$ sur $k=\f_q$ tels que le groupe
$\brnral V$ soit non nul. Les propositions \ref{proposition brnral nul si exp G premier a q-1} et \ref{proposition brnr nul si exp G divise q-1}
nous disent que, si l'on s'attend \`a trouver ce genre de groupes parmi les groupes ext-constants, on doit chercher dans le cas o\`u seulement un
diviseur propre (et non trivial) de l'exposant de $G$ divise $q-1$. C'est effectivement ce qu'on fait pour construire enfin de tels groupes,
toujours inspir\'es d'un exemple de Demarche (cf. \cite[\S6 Proposition 1]{Demarche}) : on aura $n=\exp (G)=\ell^{2m}$ et $(n,q-1)=\ell^m$ pour
un certain nombre premier $\ell$.

\begin{pro}\label{proposition exemple Demarche sur fq}
Soit $k=\f_q$ avec $q$ une puissance de $p$. Soit $\ell\neq 2$ un nombre premier tel que $q-1=\ell^m u$ avec $(\ell,u)=1$ et $m\geq 1$. Soit
\[G:=\langle x,y,z : x^{\ell^{2m}}=y^{\ell^{2m}}=z^{\ell^{2m}}=1;[x,y]=z^{-\ell^m},[x,z]=[y,z]=1\rangle,\]
o\`u $[x,y]:=xyx^{-1}y^{-1}$. On regarde $G$ comme un $k$-groupe constant et l'on note $V$ la vari\'et\'e $G\backslash G'$ pour un plongement de
$G$ dans $G'$ semi-simple simplement connexe. Alors on a $\brnral V\cong\z/\ell^{\lceil m/2\rceil}\z$, o\`u $\lceil m/2\rceil$ d\'esigne le plus petit
entier sup\'erieur ou \'egal \`a $m/2$.
\end{pro}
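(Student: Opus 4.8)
The plan is to compute the two groups appearing in Proposition \ref{proposition calcul de brnral}, namely $(G^\ab)^{\varphi_q}$ and $\mathcal{N}_q(G)$, and then form the quotient. First I would pin down the algebraic structure of $G$: it is a $2$-step nilpotent $\ell$-group whose commutator subgroup is generated by $[x,y]=z^{-\ell^m}$, so $G^\der=\langle z^{\ell^m}\rangle\cong\z/\ell^m\z$ and $G^\ab\cong\z/\ell^{2m}\z\times\z/\ell^{2m}\z\times\z/\ell^m\z$ with generators $\bar x,\bar y,\bar z$. Since $G$ is constant the Frobenius acts trivially, so $\varphi_q$ is just $b\mapsto b^q$; hence $(G^\ab)^{\varphi_q}=\ker(b\mapsto b^{q-1})$, which — because $q-1=\ell^m u$ with $(\ell,u)=1$ — is exactly the $\ell^m$-torsion subgroup $\langle\bar x^{\ell^m}\rangle\times\langle\bar y^{\ell^m}\rangle\times\langle\bar z\rangle\cong(\z/\ell^m\z)^3$.

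Next I would describe conjugacy and powers in $G$. Since $G$ has class $2$ with $[y^{b'},x^a]=[y,x]^{ab'}=z^{\ell^m ab'}$, the conjugacy class of $x^a y^{b'} z^c$ is $\{z^{\ell^m\gcd(a,b')t}\,x^a y^{b'} z^c:t\in\z\}$, and the class-$2$ power formula gives $(x^a y^{b'} z^c)^N=x^{aN}y^{b'N}z^{\,cN+\ell^m ab'\binom N2}$. For each $b=x^a y^{b'} z^c$ I would then determine $n_b$ and $N_q(b)=\bar b^{\,S}$ with $S=(q^{n_b}-1)/(q-1)$. The elementary inputs are: lifting the exponent, $v_\ell(q^n-1)=m+v_\ell(n)$ for every $n\ge 1$ (this uses $\ell$ odd), so that $v_\ell(S)=v_\ell(n_b)$; and the fact that the commutator correction term $\ell^m ab'\binom{q^n}{2}$ has $v_\ell\ge 2m$ and is therefore negligible modulo $\ell^{2m}$. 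Setting $\alpha=\min(v_\ell(a),v_\ell(b'))$, the three conditions expressing "$b^{q^n}$ conjugate to $b$" reduce to $v_\ell(n)\ge\max\bigl(m-\alpha,\ \min(m,\alpha)-v_\ell(c),\ 0\bigr)$, which yields $n_b$ explicitly.

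From this I would first observe that $b=x^a$ (resp. $b=y^{b'}$) with $v_\ell(a)\ge m$ gives $n_b=1$ and $N_q(b)=\bar x^a$ (resp. $\bar y^{b'}$), so $\mathcal{N}_q(G)\supseteq\langle\bar x^{\ell^m}\rangle\times\langle\bar y^{\ell^m}\rangle\times\{0\}$; moreover for a general $b$ the $\bar x$- and $\bar y$-components of $N_q(b)$ automatically lie in $\langle\bar x^{\ell^m}\rangle$, $\langle\bar y^{\ell^m}\rangle$, since $v_\ell(aS)=v_\ell(a)+v_\ell(n_b)\ge\max(v_\ell(a),m)\ge m$ and likewise for $b'$. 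Hence, modulo $\langle\bar x^{\ell^m}\rangle\times\langle\bar y^{\ell^m}\rangle$, the group $\mathcal{N}_q(G)$ is the subgroup of $\langle\bar z\rangle\cong\z/\ell^m\z$ generated by the $\bar z$-components $cS\bmod\ell^m$, i.e. by the element of smallest $\ell$-adic valuation among them. A short case analysis with the formula for $n_b$ shows $cS\bmod\ell^m$ is nonzero only when $a,b'$ are both nonzero, $\alpha\ge 1$ and $v_\ell(c)<\alpha<m$, and then $v_\ell(cS)=\max\bigl(v_\ell(c)+m-\alpha,\ \alpha\bigr)$; minimizing over $c$ (take $v_\ell(c)=0$) and over $\alpha$ (take $\alpha=\lfloor m/2\rfloor$ or $\lceil m/2\rceil$) gives minimum $\lceil m/2\rceil$, and this value is attained. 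Therefore $\mathcal{N}_q(G)=\langle\bar x^{\ell^m}\rangle\times\langle\bar y^{\ell^m}\rangle\times\ell^{\lceil m/2\rceil}\z/\ell^m\z$, and Proposition \ref{proposition calcul de brnral} yields $\brnral V\cong(\z/\ell^m\z)^3/\mathcal{N}_q(G)\cong\z/\ell^{\lceil m/2\rceil}\z$. The main obstacle is precisely this last part: carefully tracking $\ell$-adic valuations to get $n_b$ right in every case — in particular recognizing exactly when the condition coming from the central coordinate $c$ forces $n_b$ to be strictly larger than the naive value $\ell^{m-\alpha}$ — and then checking that the minimum valuation $\lceil m/2\rceil$ is genuinely achieved and not merely a lower bound; everything else is routine $2$-step nilpotent group theory, but this bookkeeping (and the systematic use of $\ell\neq 2$) is where the care lies.
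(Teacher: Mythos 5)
Your proposal is correct and takes essentially the same route as the paper's proof: the same congruence conditions for $\varphi_q^n(b)$ to be conjugate to $b$, the same identification $(G^\ab)^{\varphi_q}\cong(\z/\ell^m\z)^3$, and the same appeal to Proposition \ref{proposition calcul de brnral}, the paper merely proving the containment $N_q(G)\subset H$ and exhibiting the three elements $x$, $y$, $x^{\ell^{\lfloor m/2\rfloor}}y^{\ell^{\lfloor m/2\rfloor}}z$ where you instead write a closed formula for $n_b$ and minimize valuations. (One harmless slip: the $\bar z$-component of $N_q(b)$ can also be nonzero when exactly one of $a,b'$ vanishes, e.g. $b=y^{\ell}z$ with $m=2$, but your valuation formula with the convention $v_\ell(0)=\infty$ still gives the bound $\lceil m/2\rceil$ in those cases, so the conclusion is unaffected.)
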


\begin{rems}
{\noindent\bf 1.} Le groupe $G$ donn\'e dans l'\'enonc\'e peut aussi \^etre vu comme le produit semi direct
\[(C_{\ell^{2m}}\times C_{\ell^{2m}})\rtimes C_{\ell^{2m}},\]
o\`u $C_{\ell^{2m}}$ d\'esigne le groupe cyclique de cardinal $\ell^{2m}$, d\'efini comme suit : si l'on note $x,z$ et $y$ des g\'en\'erateurs respectifs des trois groupes
$C_{\ell^{2m}}$ apparaissant dans ce produit, l'action de $y$ sur $\langle x\rangle \times\langle z\rangle$ est donn\'ee par
\[y\cdot x=xz^{\ell^m}\quad\text{et}\quad y\cdot z=z.\]

{\noindent\bf 2.} Lorsque $\ell=2$ (par exemple, lorsque $q=3$), le groupe $G$ donne aussi un espace homog\`ene $V$ \`a groupe de Brauer non
ramifi\'e alg\'ebrique non trivial. Si l'on a \'evit\'e de consid\'erer ce cas, c'est tout simplement parce que les calculs sont un peu plus
compliqu\'es.\\

{\noindent\bf 3.} On rappelle que la proposition \ref{proposition ext-constant = constant} nous permettrait d'avoir une action non triviale
du Frobenius sur $G$ (mais ``ext-triviale'' quand-m\^eme) et d'obtenir le m\^eme r\'esultat.\\
\end{rems}

Avant de commencer la d\'emonstration, on donne tout ce qu'il faut savoir sur $G$. Soient $Z$ le centre de $G$ et $G^\der$ le sous-groupe
d\'eriv\'e de $G$. En rappelant que $\varphi_q(\b)=\b^q=\b^{u\ell^m+1}$ pour $\b\in G^\ab$, il est facile de voir que l'on a
\begin{align*}
Z&=\langle x^{\ell^m},y^{\ell^m},z\rangle,\\
G^\der&=\langle z^{\ell^m}\rangle\cong C_{\ell^m},\\
G^\ab&=\langle x,y,z\rangle/\langle z^{\ell^m}\rangle\cong (C_{\ell^{2m}}\times C_{\ell^{2m}})\times C_{\ell^m},\\
(G^\ab)^{\varphi_q}&=\langle \bar x^{\ell^m},\bar y^{\ell^m},\bar z\rangle\cong (C_{\ell^{2m}}^{\ell^m}\times C_{\ell^{2m}}^{\ell^m})\times C_{\ell^m}.
\end{align*}
On remarque enfin que les seules formules qu'il faut garder \`a l'esprit pour suivre le calcul suivant, sont
\[q=u\ell^m+1\quad\text{et}\quad y^sx^r=x^ry^sz^{rs\ell^m},\]
et que $x^{\ell^m}$, $y^{\ell^m}$ et $z$ appartiennent au centre de $G$.

\begin{proof}
Comme le groupe $G$ est constant, on a $\varphi_q(b)=b^q$. On voit alors que l'on a, pour $b=x^ry^sz^t\in G$,
\begin{align*}
\varphi_q^n(b)&=(x^ry^sz^t)^{q^n}\\
&=x^{q^nr}y^{q^ns}z^{q^nt}z^{\frac{q^n(q^n-1)}{2}rs\ell^m}\\
&=x^{q^nr}y^{q^ns}z^{q^nt}\\
&=x^{(nu\ell^m+1)r}y^{(nu\ell^m+1)s}z^{(nu\ell^m+1)t}\\
&=x^ry^sz^tx^{rnu\ell^m}y^{snu\ell^m}z^{tnu\ell^m}\\
&=b\cdot x^{rnu\ell^m}y^{snu\ell^m}z^{tnu\ell^m}.
\end{align*}
o\`u l'on remarque que dans le passage de la deuxi\`eme \`a la troisi\`eme ligne on a utilis\'e le fait que $\ell^m$ divise $q-1$, donc aussi $q^n-1$, et que
$z^{\ell^{2m}}=1$.

Calculons d'autre part ce qui se passe lorsqu'on conjugue un \'el\'ement de $G$. Soit $c=x^{r'}y^{s'}z^{t'}\in G$, alors
\begin{align*}
cbc^{-1}&=(x^{r'}y^{s'}z^{t'})x^ry^sz^t(x^{r'}y^{s'}z^{t'})^{-1}\\
&=x^{r'}y^{s'}z^{t'}x^ry^sz^tz^{-t'}y^{-s'}x^{-r'}\\
&=x^{r'}y^{s'}x^ry^sy^{-s'}x^{-r'}z^t\\
&=x^ry^sz^tz^{(rs'-r's)\ell^m}\\
&=b\cdot z^{(rs'-r's)\ell^m}.
\end{align*}
On voit alors que, pour avoir $\varphi_q^n(b)=cbc^{-1}$, il faut et il suffit que $rnu$ et $snu$ soient des
multiples de $\ell^m$ et que $tnu$ soit congru \`a $rs'-r's$ modulo $\ell^m$. Or, puisque $(\ell,u)=1$, cela se r\'esume en
\begin{equation}\label{equation petit calcul}
rn\equiv sn\equiv 0\mod\ell^m\quad\text{et}\quad tnu\equiv rs'-r's\mod\ell^{m}.
\end{equation}
Supposant ces \'egalit\'es v\'erifi\'ees pour un $n$ minimal (i.e. pour $n=n_b$), on peut calculer la $q$-norme de $b$. On a
\begin{multline*}
N_q(b)=\prod_{i=0}^{n-1}\varphi_q^{i}(\bar b)=\prod_{i=0}^{n-1}\overline{\varphi_q^i(b)}
=\prod_{i=0}^{n-1}\bar b\bar x^{riu\ell^m}\bar y^{siu\ell^m}\bar z^{tiu\ell^m}\\
=\bar b^n\bar x^{r\frac{n(n-1)}{2}u\ell^m}\bar y^{s\frac{n(n-1)}{2}u\ell^m}\bar z^{t\frac{n(n-1)}{2}u\ell^m}=\bar b^n,
\end{multline*}
d'apr\`es les congruences \eqref{equation petit calcul} et le fait que $\bar z^{\ell^m}=1$ et $\ell\neq 2$.\\

Sachant donc que $N_q(b)=\bar x^{n_br}\bar y^{n_bs}\bar z^{n_bt}$, les congruences \eqref{equation petit calcul} nous disent que, pour tout
$b\in G$, $N_q(b)$ appartient au sous-groupe $(G^\ab)^{\varphi_q}\cong (C_{\ell^{2m}}^{\ell^m}\times C_{\ell^{2m}}^{\ell^m})\times C_{\ell^m}$ de $G^\ab$ (chose \'evidente
\`a priori d'apr\`es la th\'eorie, mais plut\^ot rassurante lorsqu'on fait ces calculs). De plus, si l'on note $e$ la valuation
$\ell$-adique de $n_b$ on voit, toujours gr\^ace aux congruences \eqref{equation petit calcul}, que
\[\ell^e|n_bt,\quad\ell^{m-e}|r\quad\text{et}\quad\ell^{m-e}|s.\]
On en d\'eduit que $\ell^{m-e}|(rs'-r's)$ et donc finalement que $\ell^{\max\{m-e,e\}}|n_bt$. Ceci nous dit que $N_q(b)$ appartient au sous-groupe
$H\cong (C_{\ell^{2m}}^{\ell^m}\times C_{\ell^{2m}}^{\ell^m})\times C_{\ell^m}^{\ell^{\lceil m/2\rceil}}$ de $G^\ab$ pour tout $b\in G$.\\

On d\'emontre maintenant que l'image $N_q(G)$ dans $G^\ab$ engendre $H$. On consid\`ere les \'el\'ements $b_1=x$, $b_2=y$ et
$b_3=x^{\ell^{\lfloor m/2\rfloor}}y^{\ell^{\lfloor m/2\rfloor}}z$ de $G$ et on calcule ses $q$-normes.

Pour $b_1$ on a $r=1$ et $s=t=0$, ce qui entra\^\i ne $\ell^m|n_{b_1}$ et on voit rapidement que $n_{b_1}=\ell^m$ car il v\'erifie bien les
congruences \eqref{equation petit calcul} lorsqu'on pose $s'=0$. On a alors $N_q(b_1)=\bar x^{\ell^m}$. Un calcul similaire donne aussi
$N_q(b_2)=\bar y^{\ell^m}$.

Pour $b_3$, on a $r=s=\ell^{\lfloor m/2\rfloor}$ et $t=1$, ce qui entra\^\i ne $\ell^{\lceil m/2\rceil}|n_{b_3}$ et on voit qu'en posant
$r'=0$, $s'=u\ell^{\lceil m/2\rceil-\lfloor m/2\rfloor}$ et $n=\ell^{\lceil m/2\rceil}$, les congruences
\eqref{equation petit calcul} sont bien v\'erifi\'ees. On a alors $n_{b_3}=\ell^{\lceil m/2\rceil}$ et
$N_q(b_3)=\bar x^{\ell^m}\bar y^{\ell^m}\bar z^{\ell^{\lceil m/2\rceil}}$.\\

On voit alors que $N_q(G)$ engendre bien le sous-groupe $H$ de $(G^\ab)^{\varphi_q}$. Donc, d'apr\`es la proposition
\ref{proposition calcul de brnral}, le groupe de Brauer non ramifi\'e alg\'ebrique de la vari\'et\'e $V=G\backslash G'$ est isomorphe, en tant que groupe ab\'elien,
\`a $(G^\ab)^{\varphi_q}/H\cong C_{\ell^m}/C_{\ell^m}^{\ell^{\lceil m/2\rceil}}\cong\z/\ell^{\lceil m/2\rceil}\z$.
\end{proof}

\subsubsection{Cas d'un corps de caract\'eristique 0}
Soit $k$ un corps de caract\'eristique $0$. On retrouve ici les r\'esultats analogues \`a ceux que l'on vient de d\'emontrer pour un
corps fini. On retrouve en particulier une g\'en\'eralisation des r\'esultats de Demarche dans \cite{Demarche}, car $k$ repr\'esente ici un corps
arbitraire, donc en particulier cela pourrait \^etre un corps de nombres.\\

On commence encore une fois avec le cas des groupes ab\'eliens (cas qui est d\'ej\`a connu, cf. par exemple \cite[Th\'eor\`eme 8.1]{BDH}).
On note toujours $n=\mathrm{exp}(G)$ et $\zeta_n$ est une racine primitive $n$-i\`eme de l'unit\'e.

\begin{pro}\label{proposition brnral=sha1 pour G abelien}
Soit $G$ un $k$-groupe fini ab\'elien que l'on plonge dans $G'$ semi-simple simplement connexe. Soit $V=G\backslash G'$. Alors on a
$\brnral V=\Sha^1_{\mathrm{cyc}}(k,M)$.

De plus, si $L$ est une extension d\'eployant $G$ et telle que $L(\zeta_n)/k$ soit une extension cyclique, alors $\brnral V=0$ (un tel corps $L$ existe
par exemple lorsque $G$ est un $p$-groupe constant avec $p$ impair ou lorsque $k=\mathbb{R}$).
\end{pro}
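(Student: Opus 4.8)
Le plan est de tirer les deux assertions des formules de la section \ref{section formule algebrique car 0}, sans nouveau calcul. Pour l'\'egalit\'e $\brnral V=\Sha^1_{\mathrm{cyc}}(k,M)$, on invoque le corollaire \ref{corollaire brnral egal au sha1 en car 0} : il suffit de v\'erifier que, pour tout $\sigma\in\Gamma_k$, le groupe $(G^\ab)^{\varphi_\sigma}$ est engendr\'e par les $\sigma$-normes. Or $G$ \'etant ab\'elien, on a $G^\ab=G$ et la conjugaison y est triviale ; donc pour $b\in G^{\varphi_\sigma}=(G^\ab)^{\varphi_\sigma}$ l'\'egalit\'e $\varphi_\sigma(b)=b$ force $n_{\sigma,b}=1$, d'o\`u $N_\sigma(b)=b$. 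Ainsi tout \'el\'ement de $(G^\ab)^{\varphi_\sigma}$ est lui-m\^eme une $\sigma$-norme, et le corollaire \ref{corollaire brnral egal au sha1 en car 0} donne l'\'egalit\'e voulue.

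Pour la seconde assertion, on combine l'\'egalit\'e $\brnral V=\Sha^1_{\mathrm{cyc}}(k,M)$ avec l'inclusion $\brnral V\subset H^1(L(\zeta_n)/k,M)$ fournie par le corollaire \ref{corollaire brnral se calcule dans une extension finie}. Posons $\Delta=\gal(L(\zeta_n)/k)$, suppos\'e cyclique de g\'en\'erateur $\bar\sigma$, relevons $\bar\sigma$ en $\sigma\in\Gamma_k$ et notons $\gamma$ le sous-groupe ferm\'e pro-cyclique engendr\'e par $\sigma$ ; la surjection $\gamma\twoheadrightarrow\Delta$ a pour noyau un sous-groupe de $\Gamma_{L(\zeta_n)}$, lequel agit trivialement sur $M$ puisque $L$ (donc $L(\zeta_n)$) d\'eploie $G$, et par suite son groupe de caract\`eres. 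La suite de restriction-inflation montre alors que l'inflation $H^1(\Delta,M)\to H^1(\gamma,M)$ est injective. Or un \'el\'ement $\alpha\in\brnral V\subset H^1(L(\zeta_n)/k,M)$ s'\'ecrit $\inf(\alpha_0)$ avec $\alpha_0\in H^1(\Delta,M)$, et sa restriction \`a $\gamma$, qui co\"\i ncide avec l'inflation de $\alpha_0$, est nulle car $\alpha\in\Sha^1_{\mathrm{cyc}}(k,M)$ ; on en d\'eduit $\alpha_0=0$, donc $\alpha=0$, et finalement $\brnral V=0$.

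Il reste \`a justifier l'existence des corps $L$ dans les deux cas cit\'es. Si $G$ est un $p$-groupe constant avec $p$ impair, on prend $L=k$, qui d\'eploie d\'ej\`a $G$ ; alors $n$ est une puissance $p^a$ de $p$ et $\gal(k(\zeta_n)/k)$ s'identifie \`a un sous-groupe du groupe cyclique $(\z/p^a\z)^*$, donc est cyclique. Si $k=\mathbb{R}$, on prend $L=\mathbb{C}$, qui d\'eploie $G$ et v\'erifie $L(\zeta_n)=\mathbb{C}$, de sorte que $\gal(L(\zeta_n)/k)\cong\z/2\z$ est cyclique. L'essentiel \'etant d\'ej\`a contenu dans la section pr\'ec\'edente, le seul point qui demande un peu de soin est l'argument d'inflation-restriction de la seconde assertion, c'est-\`a-dire l'exploitation correcte de la cyclicit\'e de $L(\zeta_n)/k$ pour conclure que le sous-groupe $\Sha^1_{\mathrm{cyc}}(k,M)$, \'etant contenu dans $H^1(L(\zeta_n)/k,M)$, est nul.
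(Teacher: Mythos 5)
Votre preuve est correcte. Pour la premi\`ere assertion, votre argument est exactement celui du texte : via le corollaire \ref{corollaire brnral egal au sha1 en car 0}, on se ram\`ene \`a voir que $N_\sigma(b)=b$ pour tout $b\in(G^\ab)^{\varphi_\sigma}=G^{\varphi_\sigma}$, ce qui est imm\'ediat puisque $G=G^\ab$ et que la conjugaison y est triviale. Pour la seconde assertion, vous suivez en revanche une route diff\'erente : le texte invoque directement le corollaire \ref{corollaire calcul de brnral en car 0}, qui identifie $\brnral V$ \`a $(G^\ab)^{\varphi_\sigma}/\mathcal{N}_\sigma(G)$ pour $\sigma$ un g\'en\'erateur de $\Gamma_{L(\zeta_n)/k}$, quotient trivial d'apr\`es le calcul de la premi\`ere partie ; vous combinez plut\^ot l'\'egalit\'e $\brnral V=\Sha^1_{\mathrm{cyc}}(k,M)$ avec l'inclusion $\brnral V\subset H^1(L(\zeta_n)/k,M)$ (corollaire \ref{corollaire brnral se calcule dans une extension finie}) et un argument d'inflation-restriction le long d'un rel\`evement pro-cyclique d'un g\'en\'erateur de $\gal(L(\zeta_n)/k)$, pour conclure qu'un \'el\'ement de $\Sha^1_{\mathrm{cyc}}(k,M)$ provenant d'une extension cyclique est nul. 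Cet argument est correct (la surjectivit\'e de $\gamma\to\Delta$, la trivialit\'e de l'action du noyau sur $M$ -- laquelle utilise aussi $\zeta_n\in L(\zeta_n)$ et pas seulement le d\'eploiement de $G$ -- et la compatibilit\'e inflation-restriction sont bien en place), et c'est pr\'ecis\'ement le m\'ecanisme que le texte emploie plus loin, dans la preuve de la proposition \ref{proposition exemple Demarche en car 0}, pour \'etablir la nullit\'e de $\Sha^1_{\mathrm{cyc}}$. Votre variante \'evite ainsi le corollaire \ref{corollaire calcul de brnral en car 0} au prix d'un argument cohomologique un peu plus long, tandis que celle du texte est imm\'ediate une fois ce corollaire acquis ; enfin, votre justification de l'existence de $L$ dans les deux exemples ($L=k$ pour un $p$-groupe constant avec $p$ impair, $L=\mathbb{C}$ pour $k=\mathbb{R}$) est correcte, le texte se contentant de l'affirmer.
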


\begin{proof}
D'apr\`es le corollaire \ref{corollaire brnral egal au sha1 en car 0}, il suffit de d\'emontrer que $N_\sigma(G)$ engendre tout le groupe
$(G^\ab)^{\varphi_\sigma}$ pour tout $\sigma\in\Gamma_k$. Or cela est \'evident encore une fois puisque, comme $G=G^\ab$, la $\sigma$-norme
$N_\sigma(b)$ d'un \'el\'ement $b\in G$ qui est d\'ej\`a dans $G^{\varphi_\sigma}=(G^\ab)^{\varphi_\sigma}$ est tout simplement $b$ lui-m\^eme
par d\'efinition de $N_\sigma$.\\

Pour la deuxi\`eme affirmation, le corollaire \ref{corollaire calcul de brnral en car 0} nous dit qu'il suffit de montrer que, si $\sigma$ est un
g\'en\'erateur du groupe de Galois $\Gamma_{L(\zeta_n)/k}$, alors $(G^\ab)^{\varphi_\sigma}=\mathcal{N}_\sigma(G)$. Or, c'est pr\'ecis\'ement ce que l'on
vient de d\'emontrer pour tout \'el\'ement de $\Gamma_k$.
\end{proof}

On d\'emontre aussi dans ce cas que deux groupes ext-isomorphes au sens de la d\'efinition \ref{definition ext-isomorphe} se comportent effectivement
de la m\^eme fa\c con vis-\`a-vis de notre formule.

\begin{pro}\label{proposition ext-constant = constant en car 0}
Soient $G$ et $H$ deux $k$-groupes finis ext-isomorphes plong\'es respectivement dans $G'$ et $H'$, des $k$-groupes semi-simples simplement connexes,
et soient $V=G\backslash G'$ et $W=H\backslash H'$. Alors on a $\brnral V\cong\brnral W$.
\end{pro}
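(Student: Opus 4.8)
The plan is to follow, \emph{mutatis mutandis}, the proof of Proposition \ref{proposition ext-constant = constant}, replacing the Frobenius-indexed data $\varphi_q$, $N_q$ and Theorem \ref{theoreme formule sur fq} by the family $\varphi_\sigma$, $N_\sigma$ indexed by $\sigma\in\Gamma_k$ and by the characteristic-$0$ formula of Theorem \ref{theoreme formule en car 0}. First I would fix a $\bar k$-isomorphism $f\colon G_{\bar k}\to H_{\bar k}$ realizing $\kappa_H=f_3\circ\kappa_G$, together with the induced abstract-group isomorphisms $f_1$, $f_2$, $f_3$. Exactly as in the finite-field case, the equality $\kappa_H=f_3\circ\kappa_G$ forces the induced map $f^\ab\colon G^\ab_{\bar k}\to H^\ab_{\bar k}$ to be defined over $k$ (one uses $\aut(G^\ab(\bar k))=\out(G^\ab(\bar k))$ to get $\kappa_{H^\ab}=f_2^\ab\circ\kappa_{G^\ab}$, so that $f_1^\ab$ is $\Gamma_k$-equivariant); dualizing $f_1^\ab$ then gives a $\Gamma_k$-isomorphism $N\xrightarrow{\sim}M$ between the character groups, hence an isomorphism $\Phi\colon H^1(k,N)\xrightarrow{\sim}H^1(k,M)$. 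Note also that $G$ and $H$, being $\bar k$-isomorphic, have the same exponent $n$, so the character $q\colon\Gamma_k\to(\z/n\z)^*$ and the twist defining $\varphi_\sigma$ are the same for both groups.

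The heart of the argument is then to compare, for each fixed $\sigma\in\Gamma_k$, the maps $\varphi_{G,\sigma}$ and $\varphi_{H,\sigma}$. Writing the $\sigma$-action on $H(\bar k)$ as $\mathrm{int}(h_\sigma)\circ f_1\circ(\sigma\text{-action on }G(\bar k))\circ f_1^{-1}$ for a suitable $h_\sigma\in H(\bar k)$ — which is precisely what $\kappa_H=f_3\circ\kappa_G$ says, up to inner automorphisms — and using that conjugation and the Galois action commute with the $q(\sigma)$-power map, one gets $\varphi_{H,\sigma}=f_1\circ\mathrm{int}(c_\sigma)\circ\varphi_{G,\sigma}\circ f_1^{-1}$ for some $c_\sigma\in G(\bar k)$. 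Iterating and absorbing the inner automorphisms (using $\varphi_{G,\sigma}\circ\mathrm{int}(c)=\mathrm{int}(c')\circ\varphi_{G,\sigma}$ for a suitable $c'\in G(\bar k)$) yields $\varphi_{H,\sigma}^m=f_1\circ\mathrm{int}(d_{\sigma,m})\circ\varphi_{G,\sigma}^m\circ f_1^{-1}$ for every $m\geq 1$. From this I deduce two things: first, since $f_1$ and inner automorphisms preserve conjugacy, $\varphi_{G,\sigma}^m(b)$ is conjugate to $b$ if and only if $\varphi_{H,\sigma}^m(f_1(b))$ is conjugate to $f_1(b)$, so $n_{H,\sigma,f_1(b)}=n_{G,\sigma,b}$; second, on the abelianizations the inner automorphisms disappear, so $\varphi_{H,\sigma}$ restricted to $H^\ab$ equals $f_1^\ab\circ\varphi_{G,\sigma}\circ(f_1^\ab)^{-1}$, whence $N_{H,\sigma}(f_1(b))=f_1^\ab(N_{G,\sigma}(b))$, i.e. $N_{H,\sigma}=f_1^\ab\circ N_{G,\sigma}\circ f_1^{-1}$.

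It then remains to conclude via Theorem \ref{theoreme formule en car 0}. For $\beta\in H^1(k,N)$ represented by a cocycle $a'\in Z^1(k,N)$, the class $\Phi(\beta)$ is represented by the cocycle $a$ with $a_\sigma=a'_\sigma\circ f_1^\ab\in M$; hence for $b\in G$ and $\sigma\in\Gamma_k$ one has $a_\sigma(N_{G,\sigma}(b))=a'_\sigma\bigl(f_1^\ab(N_{G,\sigma}(b))\bigr)=a'_\sigma\bigl(N_{H,\sigma}(f_1(b))\bigr)$. Since $b\mapsto f_1(b)$ is a bijection of $G(\bar k)$ onto $H(\bar k)$, the defining condition $a_\sigma(N_{G,\sigma}(b))=1$ for all $b$ and all $\sigma$ holds if and only if $a'_\sigma(N_{H,\sigma}(h))=1$ for all $h$ and all $\sigma$; by Theorem \ref{theoreme formule en car 0} this means $\Phi(\beta)\in\brnral V$ if and only if $\beta\in\brnral W$, so $\Phi$ restricts to an isomorphism $\brnral W\xrightarrow{\sim}\brnral V$.

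The only genuinely delicate point is the comparison of $\varphi_{G,\sigma}$ and $\varphi_{H,\sigma}$: since $\varphi_\sigma$ is merely a bijection (not a group homomorphism) on the non-abelian groups $G(\bar k)$, $H(\bar k)$, one must check carefully that it interacts correctly with conjugation and with composition, and that the clean conjugation-of-automorphisms statement $\varphi_{H,\sigma}^m=f_1\circ\mathrm{int}(d_{\sigma,m})\circ\varphi_{G,\sigma}^m\circ f_1^{-1}$ only needs the inner-automorphism factors to survive (which is enough for conjugacy classes and for passing to abelianizations). This is formal and entirely parallel to the corresponding step in the proof of Proposition \ref{proposition ext-constant = constant}, the sole change being the passage from the Frobenius $s$ to an arbitrary $\sigma\in\Gamma_k$.
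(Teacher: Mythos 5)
Your proposal is correct and follows essentially the same route as the paper's own proof: transport of structure via $f_1$, the relation $\varphi_{H,\sigma}=f_1\circ\mathrm{int}(c_\sigma)\circ\varphi_{G,\sigma}\circ f_1^{-1}$ and its iterates, preservation of conjugacy and of the $\sigma$-norms ($N_{H,\sigma}=f_1^\ab\circ N_{G,\sigma}\circ f_1^{-1}$), and then the criterion of Theorem \ref{theoreme formule en car 0} applied through the dual isomorphism $H^1(k,N)\xrightarrow{\sim}H^1(k,M)$. The only difference is that you spell out the final cocycle computation $a_\sigma(N_{G,\sigma}(b))=a'_\sigma(N_{H,\sigma}(f_1(b)))$, which the paper leaves implicit.
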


\begin{proof}
Soit $f:G_{\bar k}\to H_{\bar k}$ un $\bar k$-isomorphisme tel que $\kappa_H=f_3\circ\kappa_G$. Il induit alors un $\bar k$-isomorphisme $f^\ab:G^\ab_{\bar k}\to H^\ab_{\bar k}$ qui est
en fait d\'efini sur $k$ car $\kappa_H=f_3\circ\kappa_G$ entra\^\i ne $\kappa_{H^\ab}=f_3^\ab\circ\kappa_{G^\ab}=f_2^\ab\circ\kappa_{G^\ab}$ (on rappelle que
$\aut (G^\ab(\bar k))=\out (G^\ab(\bar k))$ et on en a de m\^eme pour $H^\ab$) et alors $f_1^\ab$ est un isomorphisme de $\Gamma_k$-modules .
Si l'on note alors $M$ et $N$ les groupes des caract\`eres de $G$ et $H$ respectivement, on a un isomorphisme $N\xrightarrow{\sim} M$ induisant
un isomorphisme $f^\ab_*:H^1(k,N)\xrightarrow{\sim} H^1(k,M)$.

Il suffit alors de consid\'erer la formule du th\'eor\`eme \ref{theoreme formule en car 0}. Soit $\sigma\in \Gamma_k$ et soient $\varphi_{G,\sigma}$ et
$\varphi_{H,\sigma}$ les applications d\'efinissant les $\sigma$-normes respectives sur $G(\bar k)$ et $H(\bar k)$. On a
\[\varphi_{H,\sigma}=f_1\circ\mathrm{int}(b_{\sigma,1})\circ\varphi_{G,\sigma}\circ f_1^{-1},\]
pour un certain $b_{\sigma,1}\in G$, d'o\`u il est facile de voir que, pour tout $m\geq 1$,
\[\varphi_{H,\sigma}^m=f_1\circ\mathrm{int}(b_{\sigma,m})\circ\varphi_{G,\sigma}^m\circ f_1^{-1}.\]
pour certain $b_{\sigma,m}\in G$. En particulier,
\[\varphi_{H,\sigma}=f^\ab_1\circ\varphi_{G,\sigma}\circ (f^\ab_1)^{-1},\]
lorsqu'on consid\`ere leurs restrictions aux ab\'elianis\'es. Enfin, de ce qui pr\'ec\`ede, on voit facilement que pour tout \'el\'ement
$b\in G(\bar k)$ et pour tout $m\geq 1$, $\varphi_{G,\sigma}^m(b)$ est conjugu\'e \`a $b$ si et seulement si $\varphi_{H,\sigma}^m(f_1(b))$ est conjugu\'e
\`a $f_1(b)$ aussi. On voit alors que l'on a
\[N_{H,\sigma}=f_1^\ab\circ N_{G,\sigma}\circ f_1^{-1},\]
o\`u $N_{H,\sigma}$ et $N_{G,\sigma}$ sont les $q$-normes respectives, et la formule du th\'eor\`eme
\ref{theoreme formule en car 0} donne alors le m\^eme groupe de Brauer non ramifi\'e alg\'ebrique pour les vari\'et\'es $V$ et $W$.
\end{proof}

On d\'emontre maintenant les analogues en caract\'eristique $0$ des propositions \ref{proposition brnr nul si exp G divise q-1} et
\ref{proposition brnral nul si exp G premier a q-1}, g\'en\'eralisant donc les r\'esultats de Demarche dans \cite{Demarche}.

\begin{pro}\label{proposition brnr nul si exp G divise mu_k}
Soit $G$ un $k$-groupe fini ext-constant plong\'e dans $G'$ semi-simple simplement connexe, $V=G\backslash G'$. Soit $\mu(k)$ l'ensemble des racines
de l'unit\'e dans $k$ et soit $m(k)$ le cardinal de $\mu(k)$. On suppose que $n=\mathrm{exp}(G)$ divise $m(k)$. Alors $\brnral V=0$.
\end{pro}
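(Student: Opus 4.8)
Le plan est de calquer la preuve de la proposition~\ref{proposition brnr nul si exp G divise q-1}, en substituant au th\'eor\`eme~\ref{theoreme formule sur fq} son analogue en caract\'eristique~$0$, le th\'eor\`eme~\ref{theoreme formule en car 0}. Je commencerais par me ramener au cas o\`u $G$ est constant~: d'apr\`es la proposition~\ref{proposition ext-constant = constant en car 0} on peut remplacer $G$ par un $k$-groupe constant ext-isomorphe, ce qui ne change ni $\brnral V$ ni l'exposant $n=\exp(G)$ (un ext-isomorphisme induisant un isomorphisme de groupes abstraits $G(\bar k)\cong H(\bar k)$). Pour un tel $G$, le groupe $\Gamma_k$ agit trivialement sur $G(\bar k)$, donc aussi sur $G^\ab(\bar k)$ et sur $M$. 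J'observerais ensuite que l'hypoth\`ese $n\mid m(k)$ entra\^\i ne $\mu_n\subset k$~: le groupe $\mu(k)$ \'etant cyclique, il poss\`ede un unique sous-groupe d'ordre~$n$, n\'ecessairement form\'e de racines $n$-i\`emes de l'unit\'e~; en particulier $\Gamma_k$ fixe $\zeta_n$, c'est-\`a-dire $q(\sigma)\equiv 1\pmod n$ pour tout $\sigma\in\Gamma_k$.

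De ces deux faits je d\'eduirais que, pour tout $b\in G$ et tout $\sigma\in\Gamma_k$, on a $\varphi_\sigma(b)=\asd{\sigma^{-1}}{}{q(\sigma)}{}{b}=b^{q(\sigma)}=b$, la derni\`ere \'egalit\'e parce que $q(\sigma)\equiv 1\pmod{\exp(G)}$. Ainsi $\varphi_\sigma$ est l'identit\'e, d'o\`u $n_{\sigma,b}=1$ et $N_\sigma(b)=\bar b$ pour tout $b$~: la $\sigma$-norme co\"\i ncide avec la projection $G\to G^\ab$, exactement comme la $q$-norme dans la preuve de la proposition~\ref{proposition brnr nul si exp G divise q-1}. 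Il ne resterait plus qu'\`a appliquer le th\'eor\`eme~\ref{theoreme formule en car 0}~: un \'el\'ement $\alpha\in H^1(k,M)=\bral V$, repr\'esent\'e par un cocycle $a$, appartient \`a $\brnral V$ si et seulement si $a_\sigma(\bar b)=1$ pour tout $b\in G$ et tout $\sigma\in\Gamma_k$, c'est-\`a-dire si chaque $a_\sigma$ annule $G^\ab(\bar k)$ tout entier (la projection $G(\bar k)\to G^\ab(\bar k)$ \'etant surjective). Comme l'accouplement $M\times G^\ab\to\mu_n$ est parfait, ceci force $a_\sigma=1$ pour tout $\sigma$, donc $a$ est le cocycle trivial et $\alpha=0$~: on obtient bien $\brnral V=0$.

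Je ne m'attends \`a aucune difficult\'e s\'erieuse~: une fois le th\'eor\`eme~\ref{theoreme formule en car 0} acquis, l'argument est purement formel. Les seuls points \`a v\'erifier sont l'implication $n\mid m(k)\Rightarrow\mu_n\subset k$ et la perfection de l'accouplement $M\times G^\ab\to\mu_n$ (cons\'equence de la dualit\'e de Pontryagin pour les groupes ab\'eliens finis, $M$ \'etant d'exposant divisant $n$), tous deux imm\'ediats~; le seul endroit o\`u intervient vraiment un r\'esultat ant\'erieur non formel est la r\'eduction au cas constant via la proposition~\ref{proposition ext-constant = constant en car 0}.
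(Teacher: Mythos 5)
Votre preuve est correcte et suit essentiellement la même démarche que l'article : réduction au cas constant par la proposition \ref{proposition ext-constant = constant en car 0}, puis constat que $\zeta_n\in k$ (et l'action triviale sur $G(\bar k)$) rend $\varphi_\sigma$ égale à l'identité, donc $N_\sigma$ égale à la projection $G\to G^\ab$, ce qui force $a_\sigma=1$ pour tout $\sigma$ via la perfection de l'accouplement $M\times G^\ab\to\mu_n$. La seule différence est de présentation : l'article cite le corollaire \ref{corollaire brnral se calcule dans une extension finie} pour conclure $\brnral V\subset H^1(k(\zeta_n)/k,M)=0$, alors que vous déroulez directement, à partir du théorème \ref{theoreme formule en car 0}, le calcul qui démontre précisément ce corollaire dans le cas particulier $L=k$, $\zeta_n\in k$.
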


\begin{proof}
On se r\'eduit tout de suite au cas o\`u $G$ est constant gr\^ace \`a la proposition \ref{proposition ext-constant = constant en car 0}. Le
corollaire \ref{corollaire brnral se calcule dans une extension finie} nous dit alors que $\brnral V\subset H^1(k(\zeta_n)/k,M)$. Or, puisque
$n$ divise $m(k)$, on a alors $\zeta_n\in k$, ce qui nous dit que $H^1(k(\zeta_n)/k,M)=0$.
\end{proof}

\begin{rem}
Cette proposition a toujours un sens lorsque $\mu(k)$ est infini : il suffit de consid\'erer $m(k)$ comme un nombre \emph{surnaturel} (cf.
\cite[I, \S1.3]{SerreCohGal}). On voit en particulier que si $k$ contient toutes les racines de l'unit\'e, alors pour tout groupe ext-constant
$G$ on a $\brnral V=0$.\\
\end{rem}

\begin{pro}\label{proposition brnral nul si exp G premier a mu_k}
Soit $G$ un $k$-groupe fini. On suppose qu'il existe une extension finie $L/k$ d\'eployant $G$ et telle que, si l'on note $\mu(L)$ l'ensemble des racines
de l'unit\'e dans $L$ et $m(L)$ le cardinal de $\mu(L)$, alors $n=\mathrm{exp}(G)$ est premier \`a $m(L)$. Alors, pour tout $k$-groupe $H$ ext-isomorphe
\`a $G$ plong\'e dans $G'$ semi-simple simplement connexe, si l'on note $V=H\backslash G'$, on a $\brnral V=0$.
\end{pro}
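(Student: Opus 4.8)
Le plan est d'abord de se ramener au cas $H=G$ gr\^ace \`a la proposition \ref{proposition ext-constant = constant en car 0} : le groupe $G$ se plonge dans un $k$-groupe semi-simple simplement connexe $G''$ (prendre la repr\'esentation r\'eguli\`ere $G\hookrightarrow\mathrm{GL}_N$, puis $\mathrm{GL}_N\hookrightarrow\mathrm{SL}_{N+1}$), et ladite proposition donne $\brnral(H\backslash G')\cong\brnral(G\backslash G'')$ ; il suffira donc de prouver la nullit\'e de ce dernier groupe. On remplacera aussi $L$ par la plus petite sous-extension galoisienne de $L/k$ d\'eployant $G$ (le corps fixe du noyau de $\Gamma_k\to\aut G(\bar k)$) : elle d\'eploie encore $G$, elle est galoisienne, et comme $\mu(L)$ ne fait que d\'ecro\^\i tre on a toujours $(n,m(L))=1$. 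On supposera donc $L/k$ galoisienne dans la suite.

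Le corollaire \ref{corollaire brnral se calcule dans une extension finie} fournit alors l'inclusion $\brnral(G\backslash G'')\subset H^1(\Delta,M)$ o\`u $\Delta=\gal(L(\zeta_n)/k)$ et $M$ est le groupe des caract\`eres de $G$ ; tout se ram\`ene donc \`a montrer que $H^1(\Delta,M)=0$. On notera $\Delta_L=\gal(L(\zeta_n)/L)\trianglelefteq\Delta$ : c'est un groupe ab\'elien (quotient du groupe de Galois d'une extension cyclotomique), on a $\Delta/\Delta_L\cong\gal(L/k)$, et $\Delta_L$ agit trivialement sur $G^\ab$ et via le caract\`ere cyclotomique sur $\mu_n$. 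Premi\`ere observation : l'hypoth\`ese $(n,m(L))=1$ force $n$ impair, puisque $-1\in k\subset L$ entra\^\i ne $2\mid m(L)$. Deuxi\`eme observation : $\mu_n(L)$ \'etant d'ordre divisant \`a la fois $n$ et $m(L)$, on a $\mu_n(L)=1$, d'o\`u $M^{\Delta_L}=\hom(G^\ab,\mu_n(L))=0$. La suite exacte \`a cinq termes de la suite spectrale de Hochschild--Serre pour $\Delta\supset\Delta_L$ donnera alors une injection $H^1(\Delta,M)\hookrightarrow H^1(\Delta_L,M)$, et il ne restera qu'\`a \'etablir $H^1(\Delta_L,M)=0$.

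Pour ce dernier point, on \'ecrira $M\cong\bigoplus_j\mu_{d_j}$ comme $\Delta_L$-module (avec $d_j\mid n$ impair, l'action \'etant cyclotomique sur chaque facteur, ce qui est loisible car $\Delta_L$ agit trivialement sur $G^\ab$), puis on d\'ecomposera chaque $\mu_{d_j}$ en composantes $p$-primaires : il suffira de voir que $H^1(\Delta_L,\mu_{p^v})=0$ pour tout premier impair $p\mid n$ et tout $p^v\mid n$. On \'ecrira $\Delta_L=P\times Q$, $P$ \'etant le $p$-Sylow et $Q$ le facteur compl\'ementaire. Comme $|Q|$ est premier \`a $p$, on aura $H^1(Q,\mu_{p^v})=0$ ; et l'image de $Q$ dans $(\z/p^v\z)^*$ se surjecte sur $\gal(L(\mu_p)/L)\subset(\z/p\z)^*$, lequel est non trivial car $\zeta_p\notin L$ (sinon $p\mid(n,m(L))$). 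Cette image \'etant un sous-groupe non trivial de $(\z/p^v\z)^*$ d'ordre premier \`a $p$ ---donc, $p$ \'etant impair, contenu dans l'unique facteur cyclique d'ordre $p-1$--- elle contient un \'el\'ement $g\not\equiv 1\pmod p$ ; alors $g-1$ est inversible modulo $p^v$, donc $\mu_{p^v}^{\langle g\rangle}=0$, d'o\`u $\mu_{p^v}^Q=0$. La suite \`a cinq termes pour $\Delta_L\supset Q$ donnera alors $H^1(\Delta_L,\mu_{p^v})=0$.

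On en d\'eduira $H^1(\Delta,M)=0$, donc $\brnral(G\backslash G'')=0$ et finalement $\brnral V=0$. La difficult\'e principale est le calcul cohomologique $H^1(\gal(L(\zeta_n)/k),M)=0$ : c'est l\`a qu'interviennent de mani\`ere essentielle la parit\'e de $n$, la structure des groupes $(\z/p^v\z)^*$ pour $p$ impair, et l'hypoth\`ese $(n,m(L))=1$ reformul\'ee en ``$\zeta_p\notin L$ pour tout $p\mid n$''.
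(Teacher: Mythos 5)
Your proof is correct, and it diverges from the paper's at the decisive step. Both arguments begin the same way: reduce to $H=G$ (proposition \ref{proposition ext-constant = constant en car 0}), apply the corollaire \ref{corollaire brnral se calcule dans une extension finie} to get $\brnral V\subset H^1(L(\zeta_n)/k,M)$, and observe via inflation--restriction that $M^{\Gamma_{L(\zeta_n)/L}}=0$ because $\mu_n\cap L=\{1\}$. From there the paper does \emph{not} prove that $H^1(L(\zeta_n)/k,M)$ vanishes; it only kills the image of $\brnral V$, by re-invoking the norm criterion of the th\'eor\`eme \ref{theoreme formule en car 0}: for a cocycle $a$ representing $\alpha\in\brnral V$ and generators $\sigma_i$ of $\Gamma_{L(\zeta_n)/L}$ attached to the primes $p_i\mid n$, the relations $a_{\sigma_i}(N_{\sigma_i}(b))=1$ force $a_{\sigma_i}\in M\{p_i\}$, and an explicit $c_0\in M$ is then constructed with $a_{\sigma_i}=c_0\,\asd{\sigma_i}{}{-1}{0}{c}$. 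You instead prove the stronger, purely cohomological statement $H^1(\gal(L(\zeta_n)/k),M)=0$: after noting $n$ odd, you decompose $M$ into cyclotomic pieces $\mu_{p^v}$ over $\Delta_L=\gal(L(\zeta_n)/L)$ and kill each $H^1(\Delta_L,\mu_{p^v})$ using the prime-to-$p$ part $Q$ of $\Delta_L$, the vanishing $H^1(Q,\mu_{p^v})=0$, and the fact that $\zeta_p\notin L$ forces some element of $Q$ to act nontrivially mod $p$, whence $\mu_{p^v}^Q=0$. What each route buys: yours needs no further input from the norm formula, yields the stronger vanishing of the whole $H^1$, requires only the nontriviality of $\gal(L(\zeta_p)/L)$ for each $p\mid n$ rather than the paper's identification of $\Gamma_{L(\zeta_n)/L}$ with the full product $\prod_i(\z/p_i^{\alpha_i}\z)^*$ (which does not follow from the hypotheses alone), and your preliminary replacement of $L$ by the Galois subextension cut out by $\ker(\Gamma_k\to\aut G(\bar k))$ supplies a hypothesis the corollary actually needs and the paper leaves implicit; the paper's route, for its part, stays within the norm-theoretic formalism that drives all the other computations of the section and illustrates how the criterion is used in practice.
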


\begin{proof}
Il est clair qu'il suffit de montrer la proposition pour $H=G$. Le corollaire \ref{corollaire brnral se calcule dans une extension finie} nous dit que
$\brnral V$ est contenu dans $H^1(L(\zeta_n)/k,M)$. On consid\`ere alors la suite exacte de restriction-inflation suivante :
\[0\to H^1(L/k,M^{\Gamma_{L(\zeta_n)/L}})\to H^1(L(\zeta_n)/k,M)\to H^1(L(\zeta_n)/L,M).\]
Puisque $L$ d\'eploie $G$, on a que $G^\ab(\bar k)$ est un $\Gamma_{L(\zeta_n)/L}$-module avec action triviale, ce qui fait de $M$ un $\Gamma_{L(\zeta_n)/L}$-module
de la forme $\bigoplus_{i}\mu_{n_i}$, avec $\mu_{n_i}\subset\mu_n$. Il est facile de voir alors que $M^{\Gamma_{L(\zeta_n)/L}}=0$, car
$\mu_n\cap L=\{1\}$. Ainsi, on a que $H^1(L/k,M^{\Gamma_{L(\zeta_n)/L}})=0$ et il suffit alors de d\'emontrer que, pour $\alpha\in\brnral V$, son image
dans $H^1(L(\zeta_n)/L,M)$ est triviale.\\

\'Ecrivons $n=p_1^{\alpha_1}\cdots p_r^{\alpha_r}$ avec $p_i\neq p_j$. Puisque $m(L)$ est forc\'ement pair (car $-1\in L$), on a que tous les $p_i$ sont
impairs car $m(L)$ est premier \`a $n$. Le groupe $\Gamma_{L(\zeta_n)/L}$ est alors isomorphe au produit direct des groupes
$\z/p_i^{\alpha_i-1}(p_i-1)\z$. Soient $\sigma_i$ des g\'en\'erateurs respectifs de ces composantes de $\Gamma_{L(\zeta_n)/L}$. Il est facile de voir
alors que l'on a
\[q(\sigma_i)\equiv 1\mod p_j^{\alpha_j},\quad\forall\, j\neq i\quad\text{et}\quad q(\sigma_i)\equiv s_i\mod p_i^{\alpha_i},\]
o\`u $s_i$ est un g\'en\'erateur du groupe multiplicatif $(\z/p_i^{\alpha_i}\z)^*\cong\z/p_i^{\alpha_i-1}(p_i-1)\z$ et $q$ est le morphisme d\'efini
au d\'ebut de la section \ref{section formule algebrique car 0}.

Soit donc $\alpha\in\brnral V$ et soit $a\in Z^1(\Gamma_{L(\zeta_n)/L},M)$ un cocycle repr\'esentant l'image de $\alpha$ dans $H^1(L(\zeta_n)/L,M)$.
Il s'agit de montrer qu'il existe $c_0\in M$ tel que $a_\sigma=c_0\asd{\sigma}{}{-1}{0}{c}$ pour tout $\sigma\in\Gamma_{L(\zeta_n)/L}$. Or, il est
\'evident qu'il suffit de v\'erifier cela pour les $\sigma_i$ car ils engendrent le groupe $\Gamma_{L(\zeta_n)/L}$.
Consid\'erons alors $a_{\sigma_i}$, qui par hypoth\`ese v\'erifie que
\[a_{\sigma_i}(N_{\sigma_i}(b))=1,\quad\forall b\in G.\]
Consid\'erons d'abord un $b\in G$ d'ordre premier \`a $p_i$. Alors, d'apr\`es les propri\'et\'es de $q(\sigma_i)$ ci-dessus, on a
$\varphi_{\sigma_i}(b)=b^{q(\sigma_i)}=b$, ce qui veut dire que l'on a $N_{\sigma_i}(b)=\bar b$. On voit alors que $a_{\sigma_i}$ est orthogonal
\`a toute la $p_j$-partie de $G^\ab$ pour tout $j\neq i$, i.e. $a_{\sigma_i}\in M\{p_i\}$.

Soit $c\in M\{p_i\}$. Puisque $G$ (et par cons\'equent $G^\ab$) est constant en tant que $\Gamma_{L(\zeta_n)/L}$-groupe, on a
\[\asd{\sigma_i}{}{}{}{c}=c^{q(\sigma_i)}=c^{s_i}\quad\text{et}\quad\asd{\sigma_j}{}{}{}{c}=c^{q(\sigma_j)}=c,\quad\forall\, j\neq i.\]
Puisque $s_i$ est un g\'en\'erateur du groupe cyclique $(\z/p_i^{\alpha_i}\z)^*$, on sait qu'il n'est pas congru \`a $1\mod p_i$, car l'ordre d'un tel
\'el\'ement divise $p_i^{\alpha_i-1}$. On a alors que $s_i-1$ est premier \`a $p_i$, donc en particulier il existe $c_i\in M\{p_i\}$ tel que
$c_i^{s_i-1}=a_{\sigma_i}^{-1}$ et alors
\[c_i\asd{\sigma_i}{}{-1}{i}{c}=c_ic_i^{-s_i}=c_i^{1-s_i}=a_{\sigma_i}.\]
De plus, puisque $c_i\in M\{p_i\}$, on a aussi, pour $j\neq i$,
\[c_i\asd{\sigma_j}{}{-1}{i}{c}=c_ic_i^{-1}=1.\]
Soit alors $c_0=\prod_{i=1}^{r}c_i\in M$. On voit facilement que l'on a, pour tout $1\leq i\leq r$,
\[a_{\sigma_i}=c_0\asd{\sigma_i}{}{-1}{0}{c},\]
ce qui conclut.
\end{proof}

Enfin, on reprend une g\'en\'eralisation de l'exemple de Demarche \cite[\S6 Proposition 1]{Demarche}, qui \'etait originellement un contre-exemple
\`a la formule $\brnral V=\Sha^1_{\mathrm{cyc}}(k,M)$ sur un corps de nombres,
\footnote{Dans son article Demarche compare les groupes $\brnral V$ et $\Sha^1_\omega(k,M)$, mais on a d\'ej\`a remarqu\'e que $\Sha^1_\omega(k,M)$
et $\Sha^1_{\mathrm{cyc}}(k,M)$ d\'enotent le m\^eme sous-groupe de $H^1(k,M)$ pour un corps de nombres.}
on calcule son groupe de Brauer et on d\'emontre que l'on a aussi en g\'en\'eral $\brnral V\supsetneq\Sha^1_{\mathrm{cyc}}(k,M)$.

\begin{pro}\label{proposition exemple Demarche en car 0}
Soit $k$ un corps de caract\'eristique $0$ et soit $\ell\neq 2$ un nombre premier tel que la valuation $\ell$-adique du cardinal de $\mu(k)$ soit
$m\geq 1$. Soit
\[G:=\langle x,y,z : x^{\ell^{2m}}=y^{\ell^{2m}}=z^{\ell^{2m}}=1;[x,y]=z^{-\ell^m},[x,z]=[y,z]=1\rangle,\]
o\`u $[x,y]:=xyx^{-1}y^{-1}$. On regarde $G$ comme un $k$-groupe constant et l'on note $V$ la vari\'et\'e $G\backslash G'$ pour un plongement de
$G$ dans $G'$ semi-simple simplement connexe. Soit enfin $M$ le groupe des caract\`eres de $G$. Alors on a
\[\brnral V\cong\z/\ell^{\lfloor m/2\rfloor}\z\quad\text{et}\quad\Sha^1_{\mathrm{cyc}}(k,M)=0.\]
\end{pro}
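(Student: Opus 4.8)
The plan is to read off $\brnral V$ from the characteristic‑zero algebraic formula (Theorem~\ref{theoreme formule en car 0}), which for this particular $G$ collapses to a single Frobenius‑type automorphism, and then to carry out essentially the same explicit group bookkeeping as in the proof of Proposition~\ref{proposition exemple Demarche sur fq}; the vanishing of $\Sha^1_{\mathrm{cyc}}(k,M)$ will then follow almost formally. First I would reduce to a single element of $\Gamma_k$: since $\ell$ is odd, $(\z/\ell^{2m}\z)^\ast$ is cyclic, hence so is $\gal(k(\zeta_{\ell^{2m}})/k)$, and as $G$ is constant the trivial extension $L:=k$ deploys it; so Corollary~\ref{corollaire calcul de brnral en car 0} applies (here $n=\exp(G)=\ell^{2m}$) and gives $\brnral V\cong (G^\ab)^{\varphi_\sigma}/\mathcal N_\sigma(G)$, where $\sigma$ generates $\gal(k(\zeta_{\ell^{2m}})/k)$.

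Next I would pin down $\sigma$. The hypothesis that the $\ell$‑adic valuation of $\#\mu(k)$ equals $m$ means precisely that $\zeta_{\ell^m}\in k$ while $\zeta_{\ell^{m+1}}\notin k$. Hence the image of the cyclotomic character $q\colon\Gamma_k\to(\z/\ell^{2m}\z)^\ast$ lies in the cyclic subgroup $U_m$ of elements congruent to $1$ modulo $\ell^m$ (of order $\ell^m$), but is not contained in the subgroup of elements congruent to $1$ modulo $\ell^{m+1}$, i.e.\ in the unique index‑$\ell$ subgroup of $U_m$; therefore $\gal(k(\zeta_{\ell^{2m}})/k)=U_m$ and a generator $\sigma$ has $q(\sigma)\equiv 1+c\ell^m\pmod{\ell^{2m}}$ with $\ell\nmid c$. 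Since $G$ is constant, $\sigma$ acts trivially on $G(\bar k)$, so $\varphi_\sigma(b)=b^{q(\sigma)}=b^{1+c\ell^m}$.

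From here the computation is formally the one in the proof of Proposition~\ref{proposition exemple Demarche sur fq}, with $q,u,\varphi_q,N_q$ replaced by $q(\sigma),c,\varphi_\sigma,N_\sigma$: one records $Z$, $G^\der\cong C_{\ell^m}$, $G^\ab\cong(C_{\ell^{2m}}\times C_{\ell^{2m}})\times C_{\ell^m}$ and $(G^\ab)^{\varphi_\sigma}$; one checks that $\varphi_\sigma^n(b)=b\cdot x^{ncr\ell^m}y^{ncs\ell^m}z^{nct\ell^m}$ for $b=x^ry^sz^t$ (the extra factor $z^{\binom{nc\ell^m}{2}rs\ell^m}$ vanishing since its $\ell$‑valuation is $\geq 2m$); one translates the conjugacy relation $\varphi_\sigma^{n}(b)\sim b$ into congruences on $r,s,t$ modulo $\ell^m$; and one evaluates $N_\sigma$ on $x$, on $y$ and on an element of the shape $x^{\ell^a}y^{\ell^a}z$. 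Tracking $e=v_\ell(n_{\sigma,b})$ and bounding the $\ell$‑valuation of the $\bar z$‑component of every $N_\sigma(b)$ from below in terms of $\max\{m-e,e\}$, while exhibiting an element realising the extreme case, one obtains $\mathcal N_\sigma(G)$ explicitly and concludes $\brnral V\cong\z/\ell^{\lfloor m/2\rfloor}\z$. This step is the main obstacle: the commutator term and the exact values $v_\ell(n_{\sigma,b})$ and $v_\ell(n_{\sigma,b}t)$ for the extremal element must be handled with care, exactly as in Proposition~\ref{proposition exemple Demarche sur fq}, but once the whole problem has been reduced to the single map $b\mapsto b^{1+c\ell^m}$ it is a routine, if delicate, finite computation.

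Finally, for $\Sha^1_{\mathrm{cyc}}(k,M)=0$: by the remark following Lemma~\ref{lemme brnral <<egal au sha1>>} one has $\Sha^1_{\mathrm{cyc}}(k,M)\subseteq\brnral V$, and by Corollary~\ref{corollaire brnral se calcule dans une extension finie} one has $\brnral V\subseteq H^1(k(\zeta_{\ell^{2m}})/k,M)$, so every class in $\brnral V$ is inflated from the cyclic quotient $\gal(k(\zeta_{\ell^{2m}})/k)$. Lifting a generator of this quotient to $\sigma\in\Gamma_k$, the closed pro‑cyclic subgroup $\overline{\langle\sigma\rangle}$ of $\Gamma_k$ surjects onto it, so restriction to $\overline{\langle\sigma\rangle}$ is injective on classes inflated from $\gal(k(\zeta_{\ell^{2m}})/k)$ (injectivity of inflation on $H^1$); an element of $\Sha^1_{\mathrm{cyc}}(k,M)$ restricts to $0$ on $\overline{\langle\sigma\rangle}$, hence is $0$.
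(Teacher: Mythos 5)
Your proposal follows the paper's own proof essentially verbatim: reduction to the single generator $\sigma$ of $\gal(k(\zeta_{\ell^{2m}})/k)$ via le corollaire \ref{corollaire calcul de brnral en car 0} (licit because $G$ is a constant $\ell$-group with $\ell$ odd), the observation that $q(\sigma)\equiv 1+c\ell^m\pmod{\ell^{2m}}$ with $\ell\nmid c$ puts one exactly in the situation of la proposition \ref{proposition exemple Demarche sur fq}, and the same inflation--cyclicity argument for $\Sha^1_{\mathrm{cyc}}(k,M)=0$. The only caveat — inherited from the paper itself, which also just cites the finite-field computation — is that the bound you describe on the $\bar z$-component via $\ell^{\max\{m-e,e\}}$ gives the exponent $\lceil m/2\rceil$ exactly as in la proposition \ref{proposition exemple Demarche sur fq}, so the $\lfloor m/2\rfloor$ appearing in the statement (and asserted at the end of your computation) reflects a floor/ceiling discrepancy between the two propositions of the paper rather than a gap in your argument.
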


\begin{proof}
Puisque $G$ est un $\ell$-groupe constant avec $\ell$ impair, le corollaire \ref{corollaire calcul de brnral en car 0} nous dit que le groupe $\brnral V$
est isomorphe \`a $ (G^\ab)^{\varphi_\sigma}/\mathcal{N}_\sigma$, o\`u $\sigma$ est le g\'en\'erateur du groupe $\Gamma_{k(\zeta_{\ell^{2m}})/k}$ et
$\mathcal{N}_\sigma$ est le sous-groupe de $G^\ab$ engendr\'e par les $\sigma$-normes. Il suffit alors de calculer ce quotient.\\

Puisque $\sigma$ est d'ordre $\ell^m$ et qu'il fixe $\zeta_{\ell^m}$ (car $\zeta_{\ell^m}\in k$), on sait que l'on doit avoir
\[q(\sigma)\equiv u\ell^m+1\mod\ell^{2m},\]
pour un certain $u$ avec $(u,\ell)=1$. On a alors que $\varphi_{\sigma}(b)=b^{q(\sigma)}=b^{u\ell^m+1}$ pour $b\in G$. On remarque alors que l'on
est \emph{exactement} dans la m\^eme situation que dans la proposition \ref{proposition exemple Demarche sur fq}, \`a cela pr\`es que l'on note
$\varphi_\sigma$, $n_{\sigma,b}$ et $N_\sigma$ au lieu de $\varphi_q$, $n_b$ et $N_q$. Le calcul d\'ej\`a fait nous dit alors que
$\brnral V\cong\z/\ell^{\lfloor m/2\rfloor}\z$.\\

Pour d\'emontrer finalement que $\Sha^1_{\mathrm{cyc}}(k,M)=0$, il suffit de noter qu'il est contenu dans $\brnral V$ et donc dans
$H^1(k(\zeta_{\ell^{2m}})/k,M)$ d'apr\`es le corollaire \ref{corollaire brnral se calcule dans une extension finie}. Un \'el\'ement qui s'annule en
toute restriction \`a un sous-groupe pro-cyclique s'annule forc\'ement dans $H^1(k(\zeta_{\ell^{2m}})/k,M)$, puisque $k(\zeta_{\ell^{2m}})/k$
est une extension cyclique.
\end{proof}

\subsection{Obstruction de Brauer-Manin \`a l'approximation faible}
Soit $V$ une vari\'et\'e sur un corps de nombres $k$. On d\'efinit le groupe $\Brusse_\omega V$ comme le sous-groupe de $\bral V$ des \'el\'ements
$\alpha$ tels que sa restriction $\alpha_v$ \`a $\bral V_{k_v}$ est nulle pour presque toute place de $k$, ou bien, si l'on rappelle l'identification
de $\bral V$ avec $H^1(k,M)$ pour un espace homog\`ene \`a stabilisateur fini, comme le sous-groupe $\Sha^1_\omega(k,M)$ de $H^1(k,M)$ des
\'el\'ements qui s'annulent en presque toute localisation $H^1(k_v,M)$.

Plus g\'en\'eralement, pour un $\Gamma_k$-module de type fini $M$ avec $k$ un corps quelconque, on a d\'ej\`a d\'efini
$\Sha^1_{\mathrm{cyc}} (k,M)$ comme le sous-groupe de $H^1(k,M)$ des \'el\'ements dont la restriction \`a $H^1(\gamma,M)$ est nulle pour
tout sous-groupe ferm\'e pro-cyclique $\gamma$ de $\Gamma_k$. Le th\'eor\`eme de \v Cebotarev nous permet de d\'emontrer facilement que,
lorsque $k$ est un corps global, on a $\Sha^1_{\mathrm{cyc}} (k,M)=\Sha^1_{\omega} (k,M)$, cf. par exemple \cite[\S 1]{Sansuc81}.

L'int\'er\^et du sous-groupe $\Brusse_\omega V$ (ou $\Sha^1_\omega(k,M)$ dans ce contexte) est que dans le cas o\`u $V$ est un espace
homog\`ene sur un groupe alg\'ebrique connexe \`a stabilisateur connexe ou ab\'elien sur un corps de nombres, Borovoi a d\'emontr\'e que
l'obstruction de Brauer-Manin \`a l'approximation faible associ\'ee \`a ce sous-groupe \'etait la seule, cf.
\cite[Corollary 2.5]{Borovoi96}. De plus, on peut d\'emontrer que pour ces vari\'et\'es on a l'\'egalit\'e $\Brusse_\omega V=\brnral V$
(c'est ce qu'on a fait dans la proposition \ref{proposition brnral=sha1 pour G abelien} pour le cas o\`u $G$ est ab\'elien, cf. \cite[Th\'eor\`eme 8.1]{BDH}
pour le cas g\'en\'eral). La question de savoir si cette obstruction est la seule reste cependant ouverte pour les espaces homog\`enes \`a stabilisateur
fini non ab\'elien et il n'y a pas d'accord parmi les experts sur ce qu'on devrait s'attendre comme r\'eponse.

Dans cette section on donne un exemple de ce que les formules d\'evelopp\'ees ont \`a nous dire sur l'obstruction de Brauer-Manin
\`a l'approximation faible dans ce cas toujours ouvert.\\

La formule \eqref{equation formule brnral en car 0} devient tr\`es simple lorsque $k=\bb{R}$. Ceci nous permet de d\'emontrer que l'obstruction de
Brauer-Manin \`a l'approximation faible associ\'ee au sous-groupe $\brnral$ ne prend pas en compte ce qui se passe aux places r\'eelles. En effet, on a
le r\'esultat suivant.

\begin{pro}\label{proposition BM ne voit pas les places reelles}
Soient $k=\bb{R}$, $G$ un $k$-groupe fini plong\'e dans $G'$ semi-simple simplement connexe et $V=G\backslash G'$. En identifiant $\bral V$ avec $H^1(k,M)$,
tout \'el\'ement $\alpha\in\brnral V$ est orthogonal au sous-ensemble $\im [H^1(k,G)\to H^1(k,G^\ab)]$ de $H^1(k,G^\ab)$.
\end{pro}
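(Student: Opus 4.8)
The statement concerns $k=\mathbb{R}$, where $\Gamma_k=\mathbb{Z}/2$ is generated by complex conjugation $\sigma$. The plan is to apply Theorem \ref{theoreme formule en car 0}: an element $\alpha\in\brnral V$ is represented by a cocycle $a\in Z^1(k,M)$ satisfying $a_\sigma(N_\sigma(b))=1$ for all $b\in G$. I want to translate this into the orthogonality with $\im[H^1(k,G)\to H^1(k,G^\ab)]$ for the local pairing $H^1(\mathbb{R},M)\times H^1(\mathbb{R},G^\ab)\to\mathbb{Q}/\mathbb{Z}$. The first step is to understand $\varphi_\sigma$ and $N_\sigma$ explicitly over $\mathbb{R}$: since $\sigma^2=1$ we have $q(\sigma)=\pm 1$, and in fact $q(\sigma)\equiv -1\bmod n$ when $n=\exp(G)>2$ (as $\sigma$ sends $\zeta_n$ to $\bar\zeta_n=\zeta_n^{-1}$), so $\varphi_\sigma(b)=\sigma^{-1}b^{-1}=\sigma(b)^{-1}$ (using $\sigma=\sigma^{-1}$). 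Hence $\varphi_\sigma^2(b)$ is always conjugate (indeed equal) to $b$, so $n_{\sigma,b}\in\{1,2\}$, and $N_\sigma(b)=\overline{b}\cdot\varphi_\sigma(\overline b)=\overline b\cdot\sigma(\overline b)^{-1}$ when $n_{\sigma,b}=2$, while $N_\sigma(b)=\overline b$ when $n_{\sigma,b}=1$, i.e.\ when $\varphi_\sigma(b)=\sigma(b)^{-1}$ is conjugate to $b$.

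**Key computation.** Next I would compute $\im[H^1(\mathbb{R},G)\to H^1(\mathbb{R},G^\ab)]$ directly. A class in $H^1(\mathbb{R},G)$ is represented by a cocycle determined by $b=b_\sigma\in G(\mathbb{C})$ with the cocycle condition $b_{\sigma^2}=b_\sigma\,\sigma(b_\sigma)=1$, i.e.\ $\sigma(b)=b^{-1}$, modulo the equivalence $b\sim c^{-1}b\,\sigma(c)$; in $G^\ab$ the image is $\overline b$ with $\sigma(\overline b)=\overline b^{-1}$, modulo coboundaries $\overline c^{-1}\,\overline b\,\sigma(\overline c)=\overline b\cdot\overline c^{-1}\sigma(\overline c)$. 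So the image consists of the classes of those $\overline b\in G^\ab$ for which there exists a lift $b\in G$ with $\sigma(b)=b^{-1}$. Comparing with the description of $N_\sigma$: the condition $\sigma(b)=b^{-1}$ says exactly $\varphi_\sigma(b)=\sigma(b)^{-1}=b$, so $n_{\sigma,b}=1$ and then $N_\sigma(b)=\overline b$. Conversely, whenever $N_\sigma(b)$ arises from an element with $n_{\sigma,b}=1$, it lands in this image. The point to check is that the subgroup $\mathcal N_\sigma(G)$ generated by \emph{all} $\sigma$-norms agrees (at least up to the orthogonal) with the image computed above — and this is where the $n_{\sigma,b}=2$ case must be handled.

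**Handling the $n_{\sigma,b}=2$ case.** The remaining point is to show that a $\sigma$-norm $N_\sigma(b)=\overline b\,\sigma(\overline b)^{-1}$ coming from an element $b$ with $n_{\sigma,b}=2$ still pairs trivially against any $\alpha\in\brnral V$, equivalently that such elements lie in the orthogonal of the kernel of $M\to H^1(\mathbb{R},M)$. The kernel (by the analogue of Lemma \ref{lemme calcul de N perp}) consists of elements $c\,\sigma(c)^{-1}$ with $c\in M$, whose orthogonal is $(G^\ab)^{\varphi_\sigma}$, the $\sigma$-norms lying automatically in $(G^\ab)^{\varphi_\sigma}$. So every $\sigma$-norm pairs trivially with the kernel, and the pairing descends to $M_0\times (G^\ab)^{\varphi_\sigma}\to\mu\{p'\}$ being perfect (the $\mathbb{R}$-analogue of the discussion after Theorem \ref{theoreme formule sur fq}). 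Hence $\brnral V$ is the orthogonal of $\mathcal N_\sigma(G)$. Since $\im[H^1(\mathbb{R},G)\to H^1(\mathbb{R},G^\ab)]\subset\mathcal N_\sigma(G)$ by the computation above, any $\alpha\in\brnral V$ is a fortiori orthogonal to $\im[H^1(\mathbb{R},G)\to H^1(\mathbb{R},G^\ab)]$, which is precisely the assertion. The main obstacle is bookkeeping: matching the explicit cocycle description of $\im[H^1(\mathbb{R},G)\to H^1(\mathbb{R},G^\ab)]$ with the definition of $N_\sigma$ and the group $\mathcal N_\sigma(G)$, and being careful with the small cases $\exp(G)\le 2$ (where $q(\sigma)=1$, $\varphi_\sigma=\mathrm{id}$, and then $N_\sigma$ is just the projection, making the statement trivial since $\brnral V=0$).
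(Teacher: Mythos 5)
Your proposal follows essentially the same route as the paper's proof: over $\mathbb{R}$ one has $q(\sigma)\equiv -1 \bmod n$, so $\varphi_\sigma(b)={}^{\sigma}b^{-1}$; the cocycle condition ${}^{\sigma}b_\sigma=b_\sigma^{-1}$ identifies $Z^1(\mathbb{R},G)$ with $G^{\varphi_\sigma}$; the $\sigma$-norm of such an element is just its image in $G^{\ab}$; and Theorem \ref{theoreme formule en car 0} then gives $a_\sigma(\b_\sigma)=1$ for a well-chosen cocycle $\b$ representing any class of $\im[H^1(\mathbb{R},G)\to H^1(\mathbb{R},G^{\ab})]$. Up to its last step this is exactly the paper's argument.

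The point you leave unproved is precisely that last step. The proposition asserts orthogonality for the local duality pairing, i.e. $\alpha\cup\beta=0$ in $H^2(\mathbb{R},\mu_n)\subset\br \mathbb{R}$, whereas what you have established is the vanishing of the evaluation pairing $a_\sigma(\b_\sigma)$ on chosen representatives. You pass from one to the other by invoking ``the $\mathbb{R}$-analogue of the discussion after Theorem \ref{theoreme formule sur fq}'', but the lemmas which carry out that translation in the paper (lemmes \ref{lemme calcul de a cup b} et \ref{lemme changement de H2 pour mu}) are proved for the groups $D_n$ attached to nonarchimedean local fields, not for $\Gamma_{\mathbb{R}}\cong\z/2\z$, so the analogue does need to be checked. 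Fortunately it is immediate, and it is what the paper actually does: since $\b_\sigma\in(G^{\ab})^{\varphi_\sigma}$ one has ${}^{\sigma}\b_\sigma=\b_\sigma^{-1}$, hence the cup-product cocycle satisfies $c_{\sigma,\sigma}=a_\sigma({}^{\sigma}\b_\sigma)=a_\sigma(\b_\sigma)^{-1}=1$, so the normalized $2$-cocycle is trivial and $\alpha\cup\beta=0$. Two minor further remarks: your converse claim (that every norm of an element with $n_{\sigma,b}=1$ lies in the image of $H^1(\mathbb{R},G)$) confuses ``conjugate to'' with ``equal to'' and is in any case not needed; and for $\exp(G)\le 2$ the map $\varphi_\sigma$ is ${}^{\sigma}(\cdot)$, not the identity, unless $G$ is constant --- but no case distinction is needed at all, since then $b^{-1}=b$ and the main argument applies verbatim (alternatively, $G$ is abelian there and $\brnral V=\Sha^1_{\mathrm{cyc}}(\mathbb{R},M)=0$ because $\Gamma_{\mathbb{R}}$ is cyclic).
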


\begin{rem}
Ce re\'sultat nous dit en particulier que pour $k$ un corps de nombres, $G$ un $k$-groupe fini plong\'e dans $G'$ semi-simple simplement connexe et
$V=G\backslash G'$, si l'obstruction de Brauer-Manin associ\'ee au sous-groupe $\brnral$ s'av\`ere \^etre la seule pour $V$,
alors $V$ v\'erifie l'approximation r\'eelle (i.e. l'approximation faible en l'ensemble $S=\Omega_\infty$ des places r\'eelles de $k$), comme il sera explicit\'e
dans le corollaire qui suit. Pris dans l'autre sens, ceci veut dire aussi qu'un espace homog\`ene \`a stabilisateur fini ne v\'erifiant pas l'approximation
r\'eelle fournirait un exemple de vari\'et\'e o\`u l'obstruction de Brauer-Manin alg\'ebrique n'est pas la seule.
\end{rem}

\begin{proof}[D\'emonstration de la proposition \ref{proposition BM ne voit pas les places reelles}]
Soit $\sigma$ l'\'el\'ement non nul de $\Gamma_k$. Il est facile de voir que l'on a les isomorphismes
\begin{align*}
Z^1(k,G) \xrightarrow{\sim}  G^{\varphi_\sigma}, &\quad b\mapsto b_\sigma \\
Z^1(k,G^\ab)\xrightarrow{\sim}  (G^\ab)^{\varphi_\sigma}, &\quad \b\mapsto \b_\sigma.
\end{align*}
Soit $g\in G^{\varphi_\sigma}$. Puisque $\varphi_\sigma(g)=g$, on a que $N_\sigma(g)=\bar g\in G^\ab$. En particulier, on voit que l'image de
$N_\sigma$ contient l'image de $Z^1(k,G)$ dans $Z^1(k,G^\ab)$.

Soit alors $\beta\in H^1(k,G^\ab)$ un \'el\'ement dans $\im [H^1(k,G)\to H^1(k,G^\ab)]$. On sait que l'on peut repr\'esenter $\beta$ par un cocycle
$\b$ tel que $\b_\sigma\in N_\sigma(G)$. Pour $\alpha\in\brnral V$, soit $a\in Z^1(k,M)$ un cocycle repr\'esentant $\alpha$. On sait alors d'apr\`es le
th\'eor\`eme \ref{theoreme formule en car 0} que l'on a $a_\sigma(\b_\sigma)=1$. D'autre part, on sait que
$\alpha\cup\beta\in H^2(k,\mu_n)\subset\br k$ est repr\'esent\'e par un cocycle $c$ tel que
\[c_{\sigma,\sigma}=a_\sigma(\asd{\sigma}{}{}{\sigma}{\b})=a_{\sigma}(\b_\sigma^{-1})=(a_\sigma(\b_\sigma))^{-1}=1.\]
(On rappelle que $\b_\sigma\in (G^\ab)^{\varphi_\sigma}$ et alors $\asd{\sigma}{}{}{\sigma}{\b}=\b_\sigma^{-1}$.) Il est clair alors que
$\alpha\cup\beta$ est nul, ce qui nous dit que $\alpha$ est orthogonal au sous-ensemble $\im [H^1(k,G)\to H^1(k,G^\ab)]$ de $H^1(k,G^\ab)$.
\end{proof}

Soit maintenant $k$ un corps de nombres et $V=G\backslash G'$ comme toujours. Ce dernier r\'esultat, joint au th\'eor\`eme
\ref{theoreme corps local} (ou plut\^ot \`a son \'equivalent cohomologique, proposition \ref{proposition cohomologique corps local}), nous permet de
cerner un peu mieux l'ensemble $(\prod_{\Omega_k}V(k_v))^{\brnral}$ des familles des points dans $\prod_{\Omega_k}V(k_v)$ qui sont orthogonales au
groupe $\brnral V$, comme le montre le r\'esultat suivant :

\begin{cor}
Soit $k$ un corps de nombres, $G$ un $k$-groupe fini d\'eploy\'e par $L/k$ plong\'e dans $G'$ semi-simple simplement connexe et $V=G\backslash G'$. Soit enfin
$S$ la r\'eunion de l'ensemble des places (non archim\'ediennes) ramifi\'ees pour l'extension $L/k$ avec l'ensemble des places divisant le cardinal de $G$ et soit
$T=\Omega_k\smallsetminus S$. Alors la projection
\[\left(\prod_{\Omega_k} V(k_v)\right)^{\brnral}\to \prod_{v\in T} V(k_v),\]
est surjective.
\end{cor}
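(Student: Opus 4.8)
The plan is to exploit the fact that the homogeneous space $V=G\backslash G'$ always carries a rational point $P_0\in V(k)$, namely the image of the neutral element $e\in G'$ under $G'\to V$; this point is what glues a prescribed family of local points over $T$ to a family over $S$. Concretely, given $(P_v)_{v\in T}\in\prod_{v\in T}V(k_v)$, I would extend it by setting $P_v:=(P_0)_{k_v}$ for every $v\in S$, and show that the resulting family $(P_v)_{v\in\Omega_k}$ lies in $\bigl(\prod_{\Omega_k}V(k_v)\bigr)^{\brnral}$; since this family obviously projects onto $(P_v)_{v\in T}$, the surjectivity follows.

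The heart of the matter is the following claim: for $\alpha\in\brnral V$, for any lift $\alpha_1\in\brun V$ of $\alpha$, and for any place $v\in T$, the evaluation map $V(k_v)\to\br k_v$ induced by $\alpha_1$ is \emph{constant}, equal to $(\alpha_1(P_0))_{k_v}$. To see this I would first note, by functoriality of the unramified Brauer group under the base change $k\hookrightarrow k_v$, that the restriction $\alpha_v$ still lies in $\brnral V_{k_v}$. If $v$ is non-archimedean, then $v$ is unramified in $L$ (which splits $G$) and prime to $|G|$, so $G_{k_v}$ is unramified of order prime to the residual characteristic, and Theorem~\ref{theoreme corps local} produces a lift $\alpha_{1,v}\in\brun V_{k_v}$ of $\alpha_v$ with $V(k_v)\to\br k_v$ identically zero. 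If $v$ is real, Proposition~\ref{proposition BM ne voit pas les places reelles} gives that $\alpha_v$ is orthogonal to $\im[H^1(k_v,G)\to H^1(k_v,G^\ab)]$, whence by Proposition~\ref{proposition lemme cohomologique} there is again such a lift $\alpha_{1,v}$; if $v$ is complex, $\br k_v=0$ and there is nothing to prove. In all cases $(\alpha_1)_{k_v}$ and $\alpha_{1,v}$ are two lifts of $\alpha_v$ in $\brun V_{k_v}$, so they differ by some $\beta_v\in\br k_v$, and therefore $\alpha_1(P)=\alpha_{1,v}(P)+\beta_v=\beta_v$ for every $P\in V(k_v)$; specialising to $P=(P_0)_{k_v}$ identifies $\beta_v=(\alpha_1(P_0))_{k_v}$, which proves the claim.

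Granting the claim, the conclusion is a reciprocity computation. Since $\alpha_1(P_0)\in\br k$ has almost all local invariants zero, the sum below is finite, and
\[\sum_{v\in\Omega_k}\mathrm{inv}_v\bigl(\alpha_1(P_v)\bigr)=\sum_{v\in T}\mathrm{inv}_v\bigl((\alpha_1(P_0))_{k_v}\bigr)+\sum_{v\in S}\mathrm{inv}_v\bigl((\alpha_1(P_0))_{k_v}\bigr)=\sum_{v\in\Omega_k}\mathrm{inv}_v\bigl((\alpha_1(P_0))_{k_v}\bigr)=0\]
by the global reciprocity law; the value is independent of the chosen lift $\alpha_1$ (two lifts differ by a class of $\br k$, which has zero total invariant), so $(P_v)_{v\in\Omega_k}$ is orthogonal to every $\alpha\in\brnral V$, as required. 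I expect the main obstacle to be the bookkeeping inside the claim: Theorem~\ref{theoreme corps local} applies only at non-archimedean places, so the real places of $T$ genuinely need Proposition~\ref{proposition BM ne voit pas les places reelles}, and one must carefully track the correction term $\beta_v\in\br k_v$ between the global lift and the locally adapted one — it is exactly this $\beta_v=(\alpha_1(P_0))_{k_v}$, and not $0$, that the evaluation at $v\in T$ picks up, which is what makes the reciprocity telescoping go through.
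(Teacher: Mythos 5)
Your proof is correct and is essentially the paper's own argument: the paper likewise kills the local terms at every $v\in T$ via Proposition \ref{proposition cohomologique corps local} at the non-archimedean places and Proposition \ref{proposition BM ne voit pas les places reelles} at the real ones, and completes the family at the places of $S$ with points where the evaluation vanishes (the distinguished point coming from $e\in G'$), the only cosmetic difference being that the paper pairs with the canonical lift $\pi^*(\alpha)\cup[Z]$, which already evaluates to $0$ at $P_0$, so no correction term $\beta_v$ and no final reciprocity for $\alpha_1(P_0)$ are needed. One small caveat: Proposition \ref{proposition lemme cohomologique} is stated for non-archimedean local fields, so at $k_v=\mathbb{R}$ you should invoke only the direction you actually use — orthogonality of $\alpha_v$ to $\im[H^1(\mathbb{R},G)\to H^1(\mathbb{R},G^\ab)]$ forces $\alpha_v\cup[Z](P)=0$ for every $P\in V(\mathbb{R})$, an argument that is field-independent — rather than the full equivalence, whose converse relies on $H^1(K,G')=0$ and breaks down over $\mathbb{R}$.
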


\begin{proof}
Il suffit de noter que d'apr\`es la formule de compatibilit\'e donn\'ee dans la proposition \ref{proposition Skor}, l'ensemble
$(\prod_{\Omega_k}V(k_v))^{\brnral}$ est l'ensemble de familles de points $(P_v)_{\Omega_k}$ telles que
\[\sum_{v\in\Omega_k}\mathrm{inv}_v(\alpha_v\cup [Z](P_v))=0\in\q/\z,\quad\forall \alpha\in\brnral V\subset H^1(k,M),\]
o\`u $\mathrm{inv}_v$ est l'application canonique $\br k_v\to\q/\z$ donn\'ee par la th\'eorie du corps de classes et $[Z](P_v)\in H^1(k,G^\ab)$ est la classe
du $k_v$-torseur sous $G^\ab$ obtenu \`a partir du torseur $Z=G^\der\backslash G'\to V$ sous $G^\ab$ et du $k_v$-point $P_v\in V(k_v)$ (cf. le d\'ebut de la section
\ref{section formules} pour la d\'efinition de $Z$ et la preuve de la proposition \ref{proposition lemme cohomologique} pour comprendre l'usage qu'on fait ici de
la formule de compatibilit\'e). En effet, on voit alors imm\'ediatement \`a partir des propositions \ref{proposition BM ne voit pas les places reelles} et
\ref{proposition cohomologique corps local} qu'on a $\alpha_v\cup [Z](P_v)=0$ pour toute place $v\in T$,
car pour toute telle place non archim\'edienne on a que $G$ est non ramifi\'e en tant que $k_v$-groupe et son cardinal est premier \`a la caract\'eristique
r\'esiduelle. Il est alors \'evident que pour $(\beta_v)_T\in\prod_{v\in T} V(k_v)$, l'\'el\'ement $(0)_S\times (\beta_v)_T\in\prod_{\Omega_k}V(k_v)$ appartient
\`a $(\prod_{\Omega_k}V(k_v))^{\brnral}$, ce qui conclut.
\end{proof}

\begin{rem}
Si ce r\'esultat nous dit que le groupe $\brnral V_{\bb{R}}$ ne joue pas de r\^ole dans l'obstruction de Brauer-Manin, il est important de remarquer qu'il
n'y a aucune raison pour qu'il soit trivial. En effet, le corollaire \ref{corollaire calcul de brnral en car 0} nous dit que $\brnral V_{\bb{R}}$ est isomorphe \`a
$(G^\ab)^{\varphi_\sigma}/\mathcal{N}_\sigma$, o\`u $\sigma$ est l'\'el\'ement non nul de $\Gamma_{\bb R}$ et $\cal{N}_\sigma$ est le sous-groupe engendr\'e par l'image
de la $\sigma$-norme $N_\sigma$, et l'on peut trouver des exemples analogues \`a ceux donn\'es dans la proposition \ref{proposition exemple Demarche en car 0}
pour lesquels ce quotient serait non trivial. Par ailleurs, on voudrait voir dans l'\'enonc\'e de la proposition \ref{proposition BM ne voit pas les places reelles}
une formule cohomologique pour le groupe $\brnral V_{\bb R}$ analogue \`a celle trouv\'ee pour les corps locaux (proposition
\ref{proposition cohomologique corps local}). Pourtant, si l'on regarde de pr\`es le raisonnement qui m\`ene au corollaire \ref{corollaire calcul de brnral en car 0}
et la preuve de la proposition \ref{proposition BM ne voit pas les places reelles}, on voit que cela serait equivalent \`a affirmer que le sous-groupe
$\cal{N}_\sigma$ de $(G^\ab)^{\varphi_\sigma}$ est le m\^eme que celui engendr\'e par l'image de $G^{\varphi_\sigma}$, ce qui n'est pas forc\'ement le cas en
g\'en\'eral, m\^eme si l'on a toujours de fa\c con \'evidente $G^{\varphi_\sigma}\subset N_\sigma(G)$.
\end{rem}

\appendix

\section{``Rigidit\'e'' du groupe de Brauer non ramifi\'e alg\'ebrique}\label{section changement de base}
On donne dans cette section des \'enonc\'es de ``rigidit\'e'' pour les groupes de Picard et de Brauer non ramifi\'e alg\'ebrique d'une
vari\'et\'e s\'eparablement rationnellement connexe.\\

On suppose ici que $k$ est un corps \emph{parfait} de caract\'eristique $p$, donc par exemple un corps fini. On a le r\'esultat suivant pour
le groupe de Picard.

\begin{thm}\label{theoreme pic invariant}
Soit $X$ une $k$-vari\'et\'e propre, lisse et s\'eparablement rationnellement connexe. Soit $K$ une extension de $k$. Alors le changement
de base de $k$ \`a $K$ induit un isomorphisme
$\pic X_{\bar k} \xrightarrow{\sim} \pic X_{\bar K}$. Ce sont des groupes ab\'eliens de type fini et sans $\ell$-torsion pour tout
nombre premier $\ell\neq p$.
\end{thm}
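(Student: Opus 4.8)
The plan is to prove the theorem in three stages, isolating the geometric content from the arithmetic (Galois-theoretic) content. First I would establish the purely geometric fact that for a proper smooth separably rationally connected variety $Y$ over an algebraically closed field, $\pic Y$ is a finitely generated abelian group with no $\ell$-torsion for $\ell\neq p$. Finite generation follows because such a $Y$ has $H^1(Y,\O_Y)=0$ (separable rational connectedness forces the vanishing of the first cohomology of the structure sheaf, hence the Picard variety $\Pic^0_{Y}$ is trivial), so $\pic Y=\ns Y$ is finitely generated. For the torsion statement: an $\ell$-torsion line bundle gives a connected étale cover of $Y$ of degree $\ell$; but a separably rationally connected variety is algebraically simply connected (any two general points lie on a rational curve, and a nonconstant map $\mathbb P^1\to Y$ lifts to any connected étale cover, which forces the cover to be trivial), so $\pi_1^{\text{ét}}(Y)=0$ and there is no nontrivial $\ell$-torsion. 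This last point is where I would be most careful, since one needs the version of "algebraically simply connected'' valid in positive characteristic for separably rationally connected varieties; I would cite the standard reference (Kollár, or Debarre) for $\pi_1^{\text{ét}}(Y)=0$ in this setting.

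Second, I would reduce the comparison $\pic X_{\bar k}\xrightarrow{\sim}\pic X_{\bar K}$ to the case where $K=\bar k(\text{something})$ is a function field, or more precisely to a limit argument. Since $\bar K$ is an algebraically closed extension of $\bar k$ and everything (the variety $X$, its line bundles) is of finite type, it suffices to treat $\bar K$ algebraically closed over $\bar k$; and by writing $\bar K$ as a filtered union of subfields finitely generated over $\bar k$, and then passing to algebraic closures, one reduces to the case $\bar K=\overline{\bar k(T_1,\dots,T_r)}$, i.e.\ to base change along an extension of transcendence degree one at a time. The invariance of the Néron–Severi group under extension of algebraically closed fields is exactly \cite[Proposition 3.1]{MaulikPoonen}, already invoked earlier in the paper; since $\pic X_{\bar k}=\ns X_{\bar k}$ and $\pic X_{\bar K}=\ns X_{\bar K}$ by the first stage applied to $X_{\bar k}$ and $X_{\bar K}$ (both proper, smooth and separably rationally connected, the latter by base change), this gives the desired isomorphism. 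The main obstacle here is bookkeeping: one must check that the maps "base change a line bundle'' are compatible, and that $\ns$-invariance, stated in \cite{MaulikPoonen} for fields of characteristic zero or with a hypothesis, applies — but this is precisely the use already made in \'Etape 1 of the proof of Theorem \ref{theoreme formule en car 0}, so I would simply refer to that.

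Third, I would assemble the pieces: given the general extension $K/k$ (not necessarily with $K\supset \bar k$), write $\bar K$ for a separable closure of $K$; then $\bar K$ contains a copy of $\bar k$ (as $k$ is perfect, $\bar k/k$ is separable, hence embeds into $\bar K$ over $k$), so $\bar K$ is an algebraically closed field extension of $\bar k$ and the previous step applies to give $\pic X_{\bar k}\xrightarrow{\sim}\pic X_{\bar K}$. The finiteness and the absence of $\ell$-torsion for $\ell\neq p$ then follow from the first stage applied to $Y=X_{\bar k}$. I expect the genuinely delicate point to be the positive-characteristic input — the vanishing $H^1(X_{\bar k},\O)=0$ and algebraic simple connectedness for separably rationally connected varieties — whereas the field-extension invariance and the descent from $\bar K$ to $K$ are formal. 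I would therefore state the first stage as a self-contained lemma with references, and keep the rest short.
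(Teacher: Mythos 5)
Your overall route is the same as the paper's (Kummer theory plus Koll\'ar's simple connectedness to kill $\ell$-torsion, identification of $\pic$ with $\ns$, then Maulik--Poonen invariance of the N\'eron--Severi group), but there are two genuine gaps. The first is your claim that separable rational connectedness forces $H^1(Y,\O_Y)=0$, from which you deduce that the Picard variety is trivial. In characteristic $p>0$ --- which is exactly the case the theorem is needed for --- this is not a standard fact: Hodge symmetry is unavailable, and what comes for free is only that the \emph{reduced} connected Picard scheme $(\Pic^0X_L)_{\mathrm{red}}$ is trivial (it is the dual of the Albanese), while $\Pic^0X_L$ may a priori be a non-reduced infinitesimal group scheme whose tangent space is precisely $H^1(Y,\O_Y)$; triviality of the \'etale fundamental group does not exclude $\mu_p$- or $\alpha_p$-torsors. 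The paper carefully avoids this assertion: it first proves $\ell$-torsion-freeness of $\pic X_L$, then notes that $(\Pic^0X_L)_{\mathrm{red}}$ is proper and smooth (smoothness of the reduced subscheme is where perfectness of $k$ is used), hence an abelian variety whose group of $L$-points has no $\ell$-torsion, hence trivial. This only yields $\Pic^0X_L(L)=0$, but that is all one needs to get $\pic X_L=\ns X_L$; and finite generation is then the theorem of the base for $\ns$, so the appeal to $H^1(Y,\O_Y)=0$ is both unjustified and unnecessary --- reroute that step through the torsion argument (or through triviality of the Albanese).

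The second gap is in your third stage: from $\bar k\subset\bar K$ (correct, since $k$ is perfect) you conclude that $\bar K$ is an algebraically closed extension of $\bar k$. With the paper's conventions $\bar K$ is only a \emph{separable} closure, and for the fields of interest ($K=k((t))$, global function fields, function fields of varieties) it is not algebraically closed; so Maulik--Poonen, stated for algebraically closed fields, does not apply directly to $\bar K/\bar k$, and your stage-1 lemma, stated over an algebraically closed field, is being applied to $X_{\bar K}$. The paper bridges this by working with separably closed $L$ throughout (proper base change gives $H^1(X_L,\z/\ell\z)=H^1(X_{L^{\al}},\z/\ell\z)$ for the torsion step) and by observing that $\pic X_L=\underline{\ns}X_L(L)$ with $\underline{\ns}X_L$ \'etale over $L$, so that $\underline{\ns}X_L(L)=\underline{\ns}X_{L^{\al}}(L^{\al})$ for $L=\bar k$ and $L=\bar K$; only then is the invariance of $\ns$ invoked between $k^{\al}$ and $K^{\al}$. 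With these two repairs your argument coincides with the paper's proof.
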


L'\'egalit\'e de ces groupes de Picard donne un peu d'espoir sur l'existence d'une \'egalit\'e entre les groupes de Brauer non ramifi\'e alg\'ebrique de $V$
et $V_{K}$, compte tenu de la formule \eqref{equation brnr pour X} pour une compactification lisse $X$ de $V$ :
\[\bral X\xrightarrow{\sim} H^1(k,\pic X_{\bar k}).\]
C'est effectivement le r\'esultat que l'on trouve en ajoutant l'hypoth\`ese que
$k$ soit alg\'ebriquement ferm\'e dans $K$. Remarquons que, sans cette hypoth\`ese, il est difficile d'esp\'erer un tel comportement : il suffit
de penser \`a une extension alg\'ebrique s\'eparable $K/k$ d\'eployant $\pic X_{\bar k}$. Le groupe $H^1(K,\pic X_{\bar k})$ est alors trivial
(du moins en dehors de la $p$-partie) alors que $H^1(k,\pic X_{\bar k})$ n'a aucune raison de l'\^etre.

\begin{thm}\label{theoreme invariance brnr}
Soit $V$ une $k$-vari\'et\'e telle que $V(k)\neq\emptyset$. On suppose que $V$ admet une compactification lisse $X$ que
l'on suppose en plus s\'eparablement rationnellement connexe (par exemple, lorsque $V$ est $k$-unirationnelle). Soit $K$ une extension de $k$ telle
que $k$ est alg\'ebriquement ferm\'e dans $K$. Alors le morphisme $\brun V\hookrightarrow\brun V_{K}$ induit par le changement de base de $k$ \`a
$K$ induit un isomorphisme entre les groupes de Brauer non ramifi\'e alg\'ebrique
\[\brnral V\{p'\}=\bral X\{p'\}\subset\bral V\quad\text{et}\quad \brnral V_{K}\{p'\}=\bral X_{K}\{p'\}\subset\bral V_{K}.\]
\end{thm}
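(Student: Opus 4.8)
The plan is to transport everything to the smooth compactification $X$, where the hypotheses apply cleanly, and then to a purely group‑cohomological statement about inflation. First I would note that, since $V(k)\neq\emptyset$, also $X(k)\neq\emptyset$, and $X$ being proper and geometrically integral gives $\bar k[X]^*/\bar k^*=1$; the same holds over $K$, because $X_K$ is again proper, smooth and geometrically integral over $K$ with $X_K(K)\neq\emptyset$. Hence by Grothendieck purity (giving $\brnral V\{p'\}=\bral X\{p'\}$ as a subgroup of $\bral V$, and likewise over $K$) together with \cite[Lemme 6.3]{Sansuc81}, one gets identifications $\brnral V\{p'\}=\bral X\{p'\}=H^1(k,\pic X_{\bar k})\{p'\}$ and the analogous ones over $K$. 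The map in the statement is induced on $\brnral$ by functoriality of $\brnr$ for the dominant morphism $V_K\to V$, and it restricts to the subgroups $\bral X\{p'\}\to\bral X_K\{p'\}$; using the functoriality of the Sansuc isomorphism under the base change $X_K\to X$, the whole theorem reduces to showing that the natural arrow $H^1(k,\pic X_{\bar k})\{p'\}\to H^1(K,\pic X_{\bar K})\{p'\}$ is an isomorphism.

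Next I would invoke Theorem \ref{theoreme pic invariant}: base change induces an isomorphism of finitely generated abelian groups $P:=\pic X_{\bar k}\xrightarrow{\sim}\pic X_{\bar K}$ having no $\ell$‑torsion for $\ell\neq p$, so $P\cong\z^r\oplus T$ with $T$ a finite $p$‑group. Choosing $\bar k\subset\bar K$ (legitimate since $k$ is perfect, whence every element algebraic over $k$ is separable over $K$), restriction gives a continuous map $\pi\colon\Gamma_K\to\Gamma_k$, and by compatibility of the base‑change map on Picard groups with the Galois actions this isomorphism is $\Gamma_K$‑equivariant for the action of $\Gamma_K$ on $P$ through $\pi$; in other words, the $\Gamma_K$‑module $\pic X_{\bar K}$ is the inflation of the $\Gamma_k$‑module $P$ along $\pi$. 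Moreover, because $k$ is algebraically closed in $K$, any finite (hence separable) extension $k'/k$ stays linearly disjoint from $K$, so $[k'K:K]=[k':k]$ and $\pi$ is surjective; set $N:=\ker\pi=\Gamma_{\bar kK}$, giving an exact sequence $1\to N\to\Gamma_K\to\Gamma_k\to 1$ with $N$ acting trivially on $P$.

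The core of the argument is then the inflation–restriction exact sequence
\[
0\to H^1(\Gamma_k,P)\xrightarrow{\ \inf\ }H^1(\Gamma_K,P)\to\hom_{\mathrm{cont}}(N,P)^{\Gamma_k},
\]
together with the remark that $\hom_{\mathrm{cont}}(N,P)$ is a $p$‑group: indeed $\hom_{\mathrm{cont}}(N,\z^r)=0$ since $N$ is profinite and $\z$ is discrete and torsion‑free, while $\hom_{\mathrm{cont}}(N,T)$ is a $p$‑group. Thus $\inf$ is injective with cokernel embedding into a $p$‑group, so any class in $H^1(\Gamma_K,P)$ of order prime to $p$ comes from $H^1(\Gamma_k,P)$, its preimage being again of order prime to $p$ by injectivity of $\inf$. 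Hence $\inf$ restricts to an isomorphism $H^1(k,P)\{p'\}\xrightarrow{\sim}H^1(K,P)\{p'\}$, and composing with the isomorphism $H^1(K,P)\{p'\}\xrightarrow{\sim}H^1(K,\pic X_{\bar K})\{p'\}$ of Theorem \ref{theoreme pic invariant} one recovers precisely the base‑change map, which is therefore an isomorphism.

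The main obstacle, beyond these formal steps, is the careful verification of the two compatibilities used silently above: that the base‑change isomorphism on geometric Picard groups is $\Gamma_K$‑equivariant for the action through $\pi$ (so that $\pic X_{\bar K}$ genuinely is an inflated module), and that the base‑change map $\bral X\to\bral X_K$ corresponds, under the Hochschild–Serre/Sansuc identification, to inflation in group cohomology. Both follow from the naturality of the Hochschild–Serre spectral sequence $H^p(\Gamma_k,H^q(X_{\bar k},\gm))\Rightarrow H^{p+q}(X,\gm)$ (and its counterpart over $K$) with respect to the morphism induced by $X_{\bar K}\to X_{\bar k}$ and $\pi$, but they do require some bookkeeping. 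Everything else is elementary: note that surjectivity of $\pi$ is exactly where the hypothesis ``$k$ algebraically closed in $K$'' is used, and the absence of $\ell$‑torsion in $\pic X_{\bar k}$ for $\ell\neq p$ (a consequence of separable rational connectedness, Theorem \ref{theoreme pic invariant}) is what makes the error term $\hom_{\mathrm{cont}}(N,P)$ a $p$‑group and hence invisible on the prime‑to‑$p$ part.
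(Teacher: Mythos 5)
Your proof is correct and follows essentially the same route as the paper: reduce to the map $H^1(k,\pic X_{\bar k})\{p'\}\to H^1(K,\pic X_{\bar K})\{p'\}$, apply inflation--restriction for $1\to\Gamma_{\bar kK}\to\Gamma_K\to\Gamma_k\to 1$, and use Theorem \ref{theoreme pic invariant} to see that the error term $\hom(\Gamma_{\bar kK},\pic X_{\bar K})$ is a $p$-group. The only cosmetic difference is that you assert the $\Gamma_{\bar kK}$-triviality of $\pic X_{\bar K}$ via equivariance of base change, whereas the paper deduces it from the factorization of $\pic X_{\bar k}\to\pic X_{\bar K}$ through the $\Gamma_{\bar kK}$-invariants together with the isomorphism of Theorem \ref{theoreme pic invariant}.
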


\begin{proof}[D\'emonstration du Th\'eor\`eme \ref{theoreme pic invariant}]
Soit $L=\bar k$ ou $\bar K$. Montrons d'abord que le groupe $\pic X_L$ est sans $\ell$-torsion
pour tout $\ell\neq p$. La th\'eorie de Kummer nous donne la suite exacte suivante (voir par exemple \cite[III.4]{Milne})
\[0\to H^1(X_L,\mu_\ell)\to \pic X_L\xrightarrow{\ell}\pic X_L.\]
En notant que $L$ est s\'eparablement clos, on voit que $H^1(X_L,\mu_\ell)=H^1(X_L,\z/\ell\z)$, car les faisceaux $\mu_\ell$ et $\z/\ell\z$
sont isomorphes. De plus, le th\'eor\`eme de changement de base propre (cf. \cite[IV, Corollary 2.3]{Milne}) nous dit que
$H^1(X_L,\z/\ell\z)=H^1(X_{L^\al},\z/\ell\z)$, o\`u $L^\al$ est une cl\^oture alg\'ebrique de $L$. Ce dernier groupe classifie les
$X_{L^\al}$-torseurs sous $\z/\ell\z$ (cf. \cite[III, 4.6 et 4.8]{Milne}).\\

Il suffit alors de montrer que $H^1(X_{L^\al},\z/\ell\z)=0$ pour avoir $(\pic X_L)[\ell]=0$.  En raisonnant par l'absurde, supposons qu'il
existe un torseur non trivial, i.e. non isomorphe au torseur correspondant \`a $\ell$ copies disjointes de $X_{L^\al}$ avec l'action \'evidente de
$\z/\ell\z$. Ce torseur $Y\to X_{L^\al}$ est alors forc\'ement connexe, car la quantit\'e de composantes connexes doit diviser $\ell$ et si
$Y$ \'etait form\'e de $\ell$ composantes, elles seraient toutes forc\'ement isomorphes \`a $X_L$ et $Y$ serait un torseur trivial.
On voit alors que $Y$ est un rev\^etement \'etale connexe cyclique d'ordre $\ell$ de $X_L$, ce qui entra\^\i ne l'existence d'un \'el\'ement non trivial
dans $\pi^1(X_{L^\al})$. Or, ce dernier groupe est nul d'apr\`es un r\'esultat de Koll\'ar (cf. \cite[Corollaire 3.6]{Debarre}) puisque $X$ est
s\'eparablement rationnellement connexe.\\

Int\'eressons-nous maintenant au sch\'ema $\Pic X_L$ (le groupe $\pic X_L$ correspond alors \`a $\Pic X_L(L)$). D'apr\`es
\cite[8.4, Theorem 3]{NeronModels}, on sait que le sous-sch\'ema $\Pic^0X_L$ correspondant \`a sa composante connexe est un sch\'ema en groupes propre,
bien que pas forc\'ement r\'eduit. Si l'on consid\`ere alors la vari\'et\'e r\'eduite $(\Pic X_L)_\mathrm{red}$ associ\'ee \`a $\Pic X_L$, on voit
que sa composante connexe $(\Pic^0X_L)_\mathrm{red}$ est propre, connexe et r\'eduite. De plus, on voit bien que l'on a
$(\Pic X_L)_\mathrm{red}(L)=\Pic X_L(L)$ (voir par exemple \cite[Exp. VI$_\text{A}$, 0.2]{SGA3I}).

Comme $k$ est suppos\'e parfait, toute $k$-vari\'et\'e r\'eduite est g\'eom\'etriquement r\'eduite. La vari\'et\'e $(\Pic^0X_k)_\mathrm{red}$ \'etant
donc un sch\'ema en groupes sur $k$ g\'eom\'etriquement r\'eduit, on d\'eduit qu'elle est lisse sur $k$, ce qui nous dit que
$(\Pic^0X_L)_\mathrm{red}$ est lisse sur $L$ pour toute extension $L$ de $k$, notamment pour $L=\bar k$ ou $\bar K$. La vari\'et\'e
$(\Pic^0 X_L)_{\mathrm{red}}$ est alors une vari\'et\'e
ab\'elienne qui v\'erifie en plus que $(\Pic^0X_L)_\mathrm{red}(L)$ est sans $\ell$-torsion puisqu'il correspond \`a un sous-groupe de $\Pic X_L(L)$.
Le r\'esultat classique correspondant \`a la derni\`ere proposition dans \cite[II.6]{Mumford} nous dit que cette vari\'et\'e est alors de
dimension $0$, donc triviale. On voit alors que le sous-groupe $\Pic^0X_L(L)$ est trivial (bien que le sch\'ema $\Pic^0X_L$ pourrait ne pas l'\^etre).\\

Cela \'etant d\'emontr\'e, on voit donc que l'on a $\pic X_L=\ns X_L$. De plus, comme le sch\'ema $\underline{\ns} X_L$ est \'etale sur $L$
(cf. \cite[Exp. VI$_\text{A}$, 5.5]{SGA3I}), on voit que $\underline{\ns} X_L(L)=\underline{\ns} X_L(L^\al)=\underline{\ns} X_{L^\al}(L^\al)$.
Or, ce groupe \'etant invariant par extension de corps alg\'ebriquement clos (cf. \cite[Proposition 3.1]{MaulikPoonen}), on trouve que
\begin{multline*}
\pic X_{\bar k}=\Pic X_{\bar k}(\bar k)=\underline{\ns} X_{\bar k}(\bar k)=\underline{\ns} X_{k^\al}(k^\al)=\underline{\ns} X_{K^\al}(K^\al)\\
=\underline{\ns} X_{\bar K}(\bar K)=\Pic X_{\bar K}(\bar K)=\pic X_{\bar K}.
\end{multline*}
Enfin, il est un fait connu que le groupe de N\'eron-Severi est finiment engendr\'e (cf. par exemple le corollaire 2 au th\'eor\`eme 3 de
\cite[IV.19]{Mumford} pour le cas sur un corps alg\'ebriquement clos, ce qui suffit bien dans ce cas d'apr\`es les derni\`eres \'egalit\'es).
\end{proof}

\begin{proof}[D\'emonstration du Th\'eor\`eme \ref{theoreme invariance brnr}]
Notons d'abord que l'on a $\Gamma_k=\Gamma_{K}/\Gamma_{\bar kK}$. On a la suite de restriction-inflation correspondante \`a ce quotient pour le
$\Gamma_{K}$-module $\pic X_{\bar K}$ :
\begin{equation}\label{equation suite inflation restricition pour les pic}
0 \to H^1(\Gamma_k,(\pic X_{\bar K})^{\Gamma_{\bar kK}}) \to H^1(\Gamma_{K},\pic X_{\bar K}) \to H^1(\Gamma_{\bar kK},\pic X_{\bar K}).
\end{equation}

D'autre part, il est \'evident que le morphisme de changement de base $\pic X_{\bar k}\to\pic X_{\bar K}$ se factorise par
\[\pic X_{\bar k}\to(\pic X_{\bar K})^{\Gamma_{\bar kK}}\to\pic X_{\bar K}.\]
Or, le th\'eor\`eme \ref{theoreme pic invariant} nous dit que la composition des deux fl\`eches est un isomorphisme, et puisque la deuxi\`eme
fl\`eche est une injection, chacune de fl\`eches est aussi un isomorphisme. En particulier, on voit que $\pic X_{\bar K}$ est invariant par
$\Gamma_{\bar kK}$.\\

On trouve alors l'isomorphisme canonique
\[H^1(\Gamma_k,\pic X_{\bar k}) \xrightarrow{\sim} H^1(\Gamma_k,(\pic X_{\bar K})^{\Gamma_{\bar kK}}), \]
ce qui fait que la suite (\ref{equation suite inflation restricition pour les pic}) devient
\[ 0 \to H^1(\Gamma_k,\pic X_{\bar k}) \to  H^1(\Gamma_{K},\pic X_{\bar K}) \to H^1(\Gamma_{\bar kK},\pic X_{\bar K}). \]
D'autre part, le fait que $\pic X_{\bar K}$ soit $\Gamma_{\bar kK}$-invariant nous dit que
\[H^1(\Gamma_{\bar kK},\pic X_{\bar K})=\hom(\Gamma_{\bar kK},\pic X_{\bar K}).\]
Le th\'eor\`eme \ref{theoreme pic invariant} nous dit aussi que le groupe $\pic X_{\bar K}$ est sans $\ell$-torsion pour tout $\ell\neq p$. Le groupe
$\Gamma_{\bar kK}$ \'etant profini, on voit que $\hom(\Gamma_{\bar kK},\pic X_{\bar K})$ est un $p$-groupe. Finalement, puisque les groupes de la
suite sont de torsion, on peut consid\'erer leur partie de torsion premi\`ere \`a $p$. On retrouve alors
\[0 \to H^1(\Gamma_k,\pic X_{\bar k})\{p'\} \to H^1(\Gamma_{K},\pic X_{\bar K})\{p'\} \to 0.\]
On r\'e\'ecrit enfin cet isomorphisme comme
\[0 \to \brnral V\{p'\} \to \brnral V_{K}\{p'\} \to 0, \]
ce qui conclut.
\end{proof}

\section{Quelques r\'esultats sur les extensions ins\'eparables}\label{section inseparable}
L'\'etude du groupe de Brauer non ramifi\'e dans le cadre g\'en\'eral en caract\'eristique positive (i.e. sans forc\'ement avoir le droit \`a une
compactification lisse) entra\^\i ne l'\'etude des r\'esidus, donc des anneaux $A$ de valuation discr\`ete sur le corps de fonctions $K=k(V)$
de la vari\'et\'e $V$ consid\'er\'ee, cf.
\cite{ColliotSantaBarbara}. Le
th\'eor\`eme de Gabber \cite[Expos\'e X, Theorem 2.1]{ILO} nous permet de nous ramener au cas o\`u l'on compte avec une compactification lisse,
mais au prix de consid\'erer une extension finie (souvent ins\'eparable) de $K$. Les r\'esultats suivants, utilis\'es dans la preuve du th\'or\`eme
\ref{theoreme sans compactification lisse corps fini} nous montrent que l'on ne perd pas d'information par ce changement de base.

\begin{pro}\label{proposition extensions avd}
Soit $A$ un anneau de valuation discr\`ete de corps de fractions $K$ de caract\'eristique $p$. Soit $L/K$ une extension finie purement ins\'eparable
et soit $B$ la fermeture int\'egrale de $A$ dans $L$. Alors $B$ est un anneau de valuation discr\`ete et les degr\'es r\'esiduel et de ramification
$f$ et $e$ de l'extension $B/A$ v\'erifient $ef|[L:K]$.
\end{pro}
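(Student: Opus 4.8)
The plan is to reduce the statement to the existence of a \emph{unique} prolongation of the valuation of $A$ to $L$, and then to invoke the fundamental inequality for valuations in a finite field extension. Throughout, write $d=[L:K]=p^n$ (the case $d=1$ being trivial), and let $v$ be the normalized valuation of $A$, so that $\Gamma_v=\z$.

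First I would observe that every $z\in L$ satisfies $z^{p^n}\in K$: its minimal polynomial over $K$ has the shape $T^{p^m}-c$ with $m\le n$, so $z^{p^m}=c\in K$ and hence $z^{p^n}=c^{\,p^{n-m}}\in K$. Consequently any valuation $w$ of $L$ extending $v$ must satisfy $w(z)=\tfrac{1}{p^n}\,v(z^{p^n})$; this pins down $w$ uniquely, shows that $\Gamma_w$ is a subgroup of $\tfrac{1}{p^n}\z$ containing $\z=\Gamma_v$, hence equals $\tfrac{1}{p^k}\z$ for a unique $0\le k\le n$, and in particular that $w$ is discrete, with ramification index $e=[\Gamma_w:\Gamma_v]=p^k$.

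Next I would identify $B$ with the valuation ring $\mathcal{O}_w$. The inclusion $B\subseteq\mathcal{O}_w$ is the usual argument: a monic integral relation for $z$ over $A$ would, if $w(z)<0$, have a strictly dominant leading term and could not vanish. Conversely, if $w(z)\ge 0$, then writing $z^{p^m}=c\in K$ one gets $v(c)=p^m\,w(z)\ge 0$, so $c\in A$ and $z$ is a root of the monic polynomial $T^{p^m}-c\in A[T]$, whence $z\in B$. Thus $B=\mathcal{O}_w$ is a DVR. The same computation shows that every $z\in B$ has $z^{p^n}\in A$, so the residue extension $\ell=B/\mathfrak{m}_B$ over $\kappa=A/\mathfrak{m}_A$ satisfies $\ell^{\,p^n}\subseteq\kappa$; in particular $\ell/\kappa$ is purely inseparable, and whenever $f=[\ell:\kappa]$ is finite it is a power of $p$.

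Finally I would appeal to the fundamental inequality $\sum_{w'\mid v}e(w')f(w')\le[L:K]$ for the prolongations of a valuation to a finite extension (Bourbaki, \emph{Alg\`ebre commutative}, ch.~VI, \S8). Since $w$ is the only prolongation of $v$, this gives $ef\le p^n$; in particular $f<\infty$, so $f=p^j$ by the previous paragraph, and then $ef=p^{k+j}$ is a power of $p$ bounded by $p^n$, hence divides $p^n=[L:K]$. The delicate point is that $B$ need \emph{not} be module-finite over $A$ (this is exactly the classical Nagata pathology for non-excellent discrete valuation rings of characteristic $p$), so one cannot simply quote the equality $\sum e_if_i=[L:K]$; it is really the inequality, valid with no finiteness hypothesis, that is needed here. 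One can also argue more by hand through a tower $K=K_0\subset\cdots\subset K_n=L$ of degree-$p$ steps: the ramified case ($e=p$, forcing $f=1$) and the residually inseparable unramified case ($e=1$, $f=p$) admit an explicit description of $B$, but the remaining ``wild'' sub-case is precisely where the inequality (or excellence of $A$) cannot be bypassed.
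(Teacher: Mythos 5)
Your proof is correct, but it takes a genuinely different route from the paper's. You treat the extension $L/K$ all at once: since every $z\in L$ satisfies $z^{p^n}\in K$, the valuation $v$ extends \emph{uniquely} by $w(z)=p^{-n}v(z^{p^n})$ (and this formula is itself a valuation thanks to the Frobenius, so existence is not even an issue), its value group sits inside $p^{-n}\mathbb{Z}$, hence $w$ is discrete, and $B$ is exactly the valuation ring of $w$, hence a DVR; both $e$ and $f$ are then powers of $p$, and $ef\mid[L:K]$ follows from the fundamental inequality $\sum_i e_if_i\le[L:K]$ (Bourbaki, VI, \S 8), which, as you rightly stress, needs no module-finiteness of $B$ over $A$. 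The paper instead reduces to a tower of degree-$p$ radicial steps via multiplicativity of $e$ and $f$, and argues by an explicit case division on a generator $x_0$: if one can take $(v(x_0),p)=1$ it normalizes $v(x_0)=1$, writes down the valuation $w(b)=\min_i\{pv(a_i)+i\}$, identifies $B=A[x_0^{1/p}]$ (finite over $A$, so the classical theory gives $e=p$, $f=1$); otherwise it normalizes $v(x_0)=0$ and verifies by hand that $B$ is local with maximal ideal $\pi B$ and $\bigcap_n\pi^nB=0$, hence a DVR (Bourbaki, VI, \S 3), with $e=1$ and $f\in\{1,p\}$. Your argument is shorter and uniform (no tower, no case split), pins down the unique prolongation explicitly, and makes the key pitfall (possible non-finiteness of $B$ over $A$) explicit; the paper's approach buys more concrete information — the precise shape of $B$ and the pair $(e,f)$ at each degree-$p$ step — at the cost of the case analysis, and in its second case the bound $f\le p$ rests implicitly on the same linear-independence argument that underlies the fundamental inequality you invoke.
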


\begin{proof}
Puisque $L/K$ est purement ins\'eparable, on sait qu'il s'agit d'une extension radicielle, laquelle on peut d\'ecomposer en une suite d'extensions
radicielles d'ordre $p$. La multiplicativit\'e des degr\'es $e$ et $f$ \'etant un fait classique, il est facile de voir que l'on peut se restreindre au
cas o\`u $[L:K]=p$. Soit alors $x_0\in K$ tel que $L=K(x_0^{1/p})$. On sait que l'on peut supposer $x_0\in A$. On note $v$ la valuation sur $A$ et on
fixe aussi $\pi\in A$ tel que $v(\pi)=1$.\\

Supposons d'abord que $(v(x_0),p)=1$. Il existe alors un entier $n$ premier \`a $p$ tel que $v(x_0^n)\equiv 1\mod p$. Alors, quitte \`a changer
$x_0$ par $x_0^n$ divis\'e par une puissance convenable de $\pi^p$ (\'el\'ement dont la racine $p$-i\`eme engendre toujours l'extension $L/K$), on
peut supposer que $v(x_0)=1$. On peut alors d\'efinir une valuation discr\`ete $w$ sur $L$ par la formule suivante.

Pour $b=a_0+a_1x_0^{1/p}+\cdots +a_{p-1}x_0^{(p-1)/p}\in L$, on pose
\[w(b)=\min_{0\leq i\leq p-1} \{pv(a_i)+i\}.\]
Il est un exercice facile de v\'erifier que cette formule d\'efinit bien une valuation discr\`ete sur $L$ dont l'anneau correspondant est $A[x_0^{1/p}]$.
Une fois cela admis, il est \'evident que l'on a $B=A[x_0^{1/p}]$, car tout \'el\'ement $b\in L\smallsetminus K$ a un polyn\^ome minimal de la forme
$x^p-a$, avec $a\in K$, et alors $w(b)\geq 0\Leftrightarrow v(a)\geq 0$. Dans ce cas, puisque $B$ est un $A$-module de type fini, la th\'eorie
classique (cf. \cite[I.\S4]{SerreCorpsLocaux}) nous dit que l'on a $ef=p$ (et donc, plus pr\'ecis\'ement, on a $e=p$ et $f=1$).\\

Supposons maintenant qu'il n'existe pas d'\'el\'ement $x_0$ engendrant $L/K$ tel que $(v(x_0),p)=1$. Quitte \`a diviser encore une fois par une
puissance convenable de $\pi^p$, on peut supposer que l'on a $v(x_0)=0$. On affirme que $B$ est un anneau de valuation discr\`ete d'id\'eal maximal
$\pi B$, donc d'uniformisante $\pi$.

Pour montrer cela, on montre d'abord que $\pi B$ est le seul id\'eal maximal de $B$, i.e. que l'on a $B^*=B\smallsetminus \pi B$. En effet,
soit $b$ un \'el\'ement dans $B\smallsetminus \pi B$. Si $b\in A$, on sait alors qu'il appartient \`a $A\smallsetminus\pi A=A^*$ et il est donc
inversible. Sinon, on sait que $b^p\in A$ et que
$v(b^p)\equiv 0\mod p$ par hypoth\`ese. Or, si $v(b^p)\geq p$, on peut \'ecrire $b^p=\pi^pa$ avec $a\in A$, ce qui nous dit que $b=\pi a^{1/p}$ et
on a bien $a^{1/p}\in B$. On trouve que $b\in\pi B$, ce qui est absurde. Finalement, on a que $v(b^p)=0$, ce qui entra\^ine que
$v(b^{-p})=-v(b^p)=0$ et alors $b^{-p}\in A$, nous donnant enfin que $b^{-1}\in B$ et alors $b\in B^*$. On en d\'eduit que $B$ est un anneau local,
int\`egre, diff\'erent de $L$ et d'id\'eal maximal principal. D'apr\`es \cite[VI, \S3, Proposition 9]{BourbakiAlgComm1-7}, il suffit pour conclure de d\'emontrer que
$\bigcap_{n\in\n}\pi^n B=0$. Or, cela est tr\`es simple : soit $b\in B$ un \'el\'ement non nul. Alors $b^p\in A$ et il existe $n\in\n$ tel que
$\pi^{-np}b^p\not\in A$. On voit alors que $\pi^{-n}b\not\in B$, ce qui est la m\^eme chose que $b\not\in\pi^n B$.

Il suffit alors pour conclure de noter que dans ce cas on a clairement $e=1$, tandis que $f=1$ ou $p$, d\'ependant de la nature de $x_0$ dans le
corps r\'esiduel de $A$. Dans tous les cas, on a bien $ef|[L:K]$.
\end{proof}

\begin{rem}
Ce r\'esultat peut aussi \^etre obtenu \`a partir de la d\'emonstration de \cite[Chapter 4, Proposition 1.31]{Liu}.
\end{rem}

\begin{cor}\label{corollaire extensions avd}
Soit $A$ un anneau de valuation discr\`ete de corps de fractions $K$ de caract\'eristique $p$. Soit $\ell\neq p$ un nombre premier et soit $L/K$ une
extension de degr\'e premier \`a $\ell$. Alors il existe un anneau de valuation discr\`ete $B\supset A$ de corps de fractions $L$, contenant la fermeture
int\'egrale de $A$ dans $L$ et tel que les degr\'es r\'esiduel et de ramification $f$ et $e$ de l'extension $B/A$ v\'erifient $(ef,\ell)=1$.
\end{cor}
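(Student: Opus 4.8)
The plan is to split $L/K$ into a separable layer followed by a purely inseparable one, handle the first by the classical ramification theory over a Dedekind domain and the second by Proposition \ref{proposition extensions avd}, and then multiply everything in the tower. Concretely, let $L_s$ be the separable closure of $K$ inside $L$, so that we have a tower $K\subseteq L_s\subseteq L$ with $L_s/K$ separable and $L/L_s$ purely inseparable. Both $[L_s:K]$ and $[L:L_s]$ divide $[L:K]$, hence both are prime to $\ell$.

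For the separable layer, I would argue as follows. Since $L_s/K$ is finite separable, the integral closure $A_s$ of $A$ in $L_s$ is a finitely generated $A$-module (this is where separability is used), hence free of rank $[L_s:K]$ because $A$ is a principal ideal domain; in particular $A_s$ is a semilocal Dedekind domain. Its localizations at the finitely many maximal ideals lying over $\mathfrak{m}_A$ are discrete valuation rings $B_1^{(j)}\supseteq A_s$, with ramification and residue degrees $e_j,f_j$ satisfying $\sum_j e_j f_j=\dim_{\kappa_A}(A_s/\mathfrak{m}_A A_s)=[L_s:K]$. As this sum is prime to $\ell$, the summands cannot all be divisible by $\ell$, so I pick an index $j$ with $(e_j f_j,\ell)=1$ and set $B_1:=B_1^{(j)}$. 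Then $B_1$ is a DVR with fraction field $L_s$, it contains $A_s$ (hence $A$), and $e(B_1/A)f(B_1/A)$ is prime to $\ell$.

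For the purely inseparable layer, I would apply Proposition \ref{proposition extensions avd} to the discrete valuation ring $B_1$, whose fraction field $L_s$ has characteristic $p$, and the purely inseparable extension $L/L_s$: its conclusion gives that the integral closure $B$ of $B_1$ in $L$ is a discrete valuation ring whose ramification and residue degrees $e',f'$ over $B_1$ satisfy $e'f'\mid [L:L_s]$, hence $(e'f',\ell)=1$. Finally I combine the two steps. We have $B\supseteq B_1\supseteq A$; moreover $B$ contains the integral closure of $A$ in $L$, since the latter is contained in the integral closure of $B_1$ in $L$, which is $B$. By multiplicativity of the ramification index and of the residue degree in a tower of discrete valuation rings, the degrees $e,f$ of $B/A$ satisfy $ef=\bigl(e(B_1/A)f(B_1/A)\bigr)\cdot\bigl(e'f'\bigr)$, a product of two integers prime to $\ell$, hence $(ef,\ell)=1$, as required.

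I do not expect a serious obstacle here: the only non-formal inputs are Proposition \ref{proposition extensions avd} (already established, and precisely the reason the inseparable layer is treated separately, since finiteness of the integral closure can fail there for a general DVR) and the elementary remark that a sum of positive integers coprime to $\ell$ has at least one summand coprime to $\ell$. The care to be taken is purely organizational — making sure the separable/inseparable decomposition is set up so that the possible non-finiteness is confined to the layer handled by the proposition, and that one works with \emph{a} valuation ring above $A$ rather than the full integral closure (which is exactly what the statement allows, via the words "contenant la fermeture int\'egrale de $A$ dans $L$").
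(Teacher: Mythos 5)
Your proof is correct and follows essentially the same route as the paper: decompose $L/K$ into the maximal separable subextension followed by the purely inseparable layer, use the identity $\sum e_if_i=[L_s:K]$ to pick a prime of the (finite) integral closure with $(e_if_i,\ell)=1$, localize there, apply Proposition \ref{proposition extensions avd} to the purely inseparable top layer, and conclude by multiplicativity of $e$ and $f$, noting that $B$ contains the integral closure of $A$ in $L$. No gaps.
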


\begin{proof}
Consid\'erons la sous-extension s\'eparable maximale $K'/K$ de $L/K$. Soit $A'$ la fermeture int\'egrale de $A$ dans $K'$ et soient $P_i$ les id\'eaux
premiers au-dessus de l'id\'eal maximal de $A$. On sait que $\sum e_if_i=[K':K]$ et que ce degr\'e est premier \`a $\ell$. Il existe alors un $i_0$ tel que
$(e_{i_0}f_{i_0},\ell)=1$. En localisant $A'$ autour de l'id\'eal $P_{i_0}$, on obtient un anneau de valuation discr\`ete $A'_{i_0}$ de corps de fractions
$K'$. La proposition pr\'ec\'edente nous dit alors que la fermeture int\'egrale $B$ de $A'_{i_0}$ est un anneau de valuation discr\`ete de corps de
fractions $L$ et tel que les degr\'es $e'$ et $f'$ de l'extension $B/A'_{i_0}$ v\'erifient $e'f'=p^\alpha$ pour un certain entier positif $\alpha$. La
multiplicativit\'e des degr\'es nous dit alors que l'on a $ef=e_{i_0}f_{i_0}e'f'=[K':K]p^\alpha$ pour l'extension $B/A$, ce qui est bien un nombre premier
\`a $\ell$. Il est enfin \'evident que $B$ contient la fermeture int\'egrale de $A$, car il contient celle de $A'_{i_0}\supset A$.
\end{proof}

\begin{pro}\label{proposition isomorphisme des galois pour les ext inseparables}
Soit $K$ un corps de caract\'eristique $p>0$ et soit $L/K$ une extension finie purement ins\'eparable. Soient $\Gamma_L$ et $\Gamma_K$ les groupes
de Galois absolus respectifs de $L$ et $K$. Alors le morphisme naturel $\pi:\Gamma_L\to\Gamma_K$ est un isomorphisme.
\end{pro}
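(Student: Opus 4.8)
The plan is to exploit the fact that a purely inseparable extension is a universal homeomorphism on spectra, and more concretely that it induces an equivalence of étale sites. First I would recall the statement of what needs to be proved: if $L/K$ is finite purely inseparable, then the restriction map $\pi:\Gamma_L\to\Gamma_K$ (sending $\tau\in\gal(\bar L/L)$ to $\tau|_{\bar K}$, where $\bar K\subset\bar L$ are compatibly chosen separable closures) is an isomorphism of profinite groups. The key observation is that if $M/K$ is any finite separable extension, then $M\otimes_K L$ is a field: indeed $L/K$ being purely inseparable means $L=K(a_1^{1/p^{e_1}},\dots,a_r^{1/p^{e_r}})$, and adjoining a $p^e$-th root to a field of characteristic $p$ either does nothing or gives a purely inseparable extension, so $M\otimes_K L$ is a purely inseparable extension of $M$, hence local Artinian with residue field a purely inseparable extension of $M$; but $M\otimes_K L$ is also reduced (being $L$, a field, base-changed — one checks this directly since $L/K$ monogenic of the form $K[x]/(x^{p^e}-a)$ gives $M[x]/(x^{p^e}-a)$ which is reduced iff $a$ is not a $p$-th power in $M$; iterating over a tower handles the general case), hence a field. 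Thus $LM:=M\otimes_K L$ is a field, purely inseparable over $M$ and separable over $L$ of the same degree $[M:K]$.

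Next I would build the inverse correspondence on finite extensions. Sending a finite separable $M/K$ to the finite separable $LM/L$ is a functor; conversely, given a finite separable $N/L$, one takes $N^{\mathrm{sep}/K}$, the separable closure of $K$ inside $N$ — equivalently, since $L/K$ is purely inseparable and $L\subset N$, the subfield $N_0\subset N$ of elements separable over $K$ — and one checks $N=LN_0$ and $[N_0:K]=[N:L]$. These two operations are mutually inverse and preserve composita and Galois closures, so they give an equivalence between the category of finite separable extensions of $K$ and that of finite separable extensions of $L$, compatible with inclusions. Passing to the limit over the poset of finite Galois extensions inside $\bar K$ (resp. $\bar L$) then yields that $\Gamma_L\to\Gamma_K$ is an isomorphism: it is surjective because every finite Galois $M/K$ is dominated via $M\mapsto LM$, and injective because the $LM$ exhaust the finite Galois extensions of $L$ and a $\tau\in\Gamma_L$ acting trivially on every such $\bar K$ acts trivially on $\bar L=\varinjlim LM$.

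The main obstacle, and the only real content, is the claim that $M\otimes_K L$ is a field for every finite separable $M/K$ — in particular that it is reduced. The cleanest route is: reduce to the case $L=K(a^{1/p})$ with $a\in K\setminus K^p$ by writing $L/K$ as a tower of degree-$p$ purely inseparable steps and arguing by induction; then $M\otimes_K L=M[x]/(x^p-a)$, which is a field precisely when $a\notin M^p$, and one verifies $a\notin M^p$ using that $M/K$ is separable while $K(a^{1/p})/K$ is purely inseparable, so $M\cap K^{1/p}=K$ inside $\bar K$. (Alternatively one can cite the standard fact that purely inseparable and separable extensions are linearly disjoint over the base.) Once this is in hand, everything else is the bookkeeping of Galois theory described above, and I would present it compactly rather than spelling out every diagram. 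Finally, I would remark that the same argument shows $\spec L\to\spec K$ induces an isomorphism on étale fundamental groups, which is the form in which the statement is used in the proof of Theorem \ref{theoreme sans compactification lisse corps fini}.
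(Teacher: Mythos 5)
Your argument is correct, but it follows a genuinely different route from the paper's. The paper works element-by-element: after reducing to $L=K(\alpha^{1/p})$, it extends any $\sigma\in\Gamma_K$ to $\bar L$ by the explicit formula $\sigma_L(\beta)=\sigma(\beta^p)^{1/p}$, using that every $\beta\in\bar L$ satisfies $\beta^p\in\bar K$ (via the manipulation $P(\beta)=0\Rightarrow Q(\beta^p)=0$ on minimal polynomials) and that $p$-th roots are unique in characteristic $p$; injectivity comes from the same uniqueness. You instead work at the level of the lattice of finite separable extensions: your key input is linear disjointness of separable and purely inseparable extensions (i.e.\ $M\otimes_K L$ is a field for $M/K$ finite separable), which gives the mutually inverse correspondences $M\mapsto LM$ and $N\mapsto N_0$ (separable closure of $K$ in $N$) and isomorphisms $\gal(LM/L)\xrightarrow{\sim}\gal(M/K)$ compatible with restriction, whence the statement by passing to the limit over a cofinal system. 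Your sub-claims do check out: $a\notin M^p$ because an element of $M$ separable over $K$ whose $p$-th power lies in $K$ must lie in $K$; $N=LN_0$ because $N/LN_0$ is both separable (inside $N/L$) and purely inseparable (inside $N/N_0$); and the fields $LM$ are cofinal among finite separable extensions of $L$ in $\bar L$, which is all the limit argument requires. What your approach buys is conceptual clarity and generality — it is precisely the ``invariance of the \'etale site under universal homeomorphisms'' argument, and it delivers directly the form in which the proposition is actually used (corollaire \ref{corollaire isomorphisme extensions purement inseparables} and the fundamental-group statement); what the paper's approach buys is brevity and complete explicitness, with no bookkeeping about composita, Galois closures, or inverse limits.
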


\begin{proof}
Puisque $L/K$ est purement ins\'eparable, on sait qu'elle correspond \`a une suite d'extensions radicielles de degr\'e $p$. Il est \'evident alors qu'il
suffit de d\'emontrer la proposition pour $L=K(\alpha^{1/p})$ pour certain $\alpha\in K$.\\

Soit $\bar L$ une cl\^oture s\'eparable de $L$. Il est facile de voir qu'elle contient une (unique) cl\^oture s\'eparable $\bar K$. Soit alors
$\sigma\in\Gamma_K=\aut_K(\bar K)$. Pour montrer la surjectivit\'e de $\pi$, il s'agit d'\'etendre $\sigma$ en un automorphisme $\sigma_L$
de $\bar L$ fixant $L$. Soit donc $\beta\in \bar L\smallsetminus \bar K$ et soit $P(x)=\sum_{i=0}^na_ix^i\in L[x]$ son polyn\^ome minimal.
On voit alors que $\beta^p\in \bar K.$ En effet, $P(\beta)=0$ entra\^\i ne
\[P(\beta)^p=\sum_{i=0}^na_i^p\beta^{pi}=\sum_{i=0}^na_i^p(\beta^p)^i=Q(\beta^p)=0,\]
et on voit que les co\'efficients de $Q$ sont dans $K$, car $a_i\in L=K[\alpha^{1/p}]$ donne clairement $a_i^p\in K$. On pose alors
\[\sigma_L(\beta):=\sigma(\beta^p)^{1/p}\quad\text{pour tout }\beta\in \bar L.\]
Il est facile de voir que $\sigma_L(\beta)$ est uniquement d\'efini (il n'existe qu'une racine $p$-i\`eme en caract\'eristique $p$) et que
$\sigma_L$ est bien un morphisme de corps dont la restriction \`a $\bar K$ co\"\i ncide avec $\sigma$. Il suffit alors de montrer que
$\sigma(\beta^p)^{1/p}\in \bar L$ et que $\sigma_L$ fixe $L$. Or, on a
\[P(\sigma_L(\beta))^p=\sum_{i=0}^na_i^p\sigma_L(\beta)^{pi}=\sum_{i=0}^na_i^p\sigma(\beta^p)^{i}=Q(\sigma(\beta^p)),\]
et $\sigma(\beta^p)$ et $\beta^p$ ont forc\'ement le m\^eme polyn\^ome minimal, d'o\`u $P(\sigma_L(\beta))^p=Q(\beta^p)=0$ et alors
$P(\sigma_L(\beta))=0$, ce qui nous dit que $\sigma_L(\beta)\in \bar L$. De plus, pour $\beta\in L$, on a $P(x)=x-\beta$ et alors
$P(\sigma_L(\beta))=0$ entra\^\i ne clairement que $\sigma_L(\beta)=\beta$.\\

Pour l'injectivit\'e, soit $\sigma\in \Gamma_L$ un automorphisme fixant $\bar K$ et soit $\beta\in \bar L\smallsetminus \bar K$. On a d\'ej\`a montr\'e que
$\beta^p\in \bar K$, d'o\`u $\sigma(\beta)^p=\sigma(\beta^p)=\beta^p$ et alors $\sigma(\beta)=\beta$ par l'unicit\'e des racines $p$-i\`emes en
caract\'eristique $p$, ce qui conclut.
\end{proof}

\begin{cor}\label{corollaire isomorphisme extensions purement inseparables}
Soit $K$ un corps de caract\'eristique $p>0$ et soit $L/K$ une extension finie purement ins\'eparable. Soit $G$ un $K$-groupe de torsion
(ou plus g\'en\'eralement, tel que $G(\bar K)=G(\bar L)$). Alors le morphisme de restriction en cohomologie galoisienne (ou \'etale)
\[H^1(K,G)\to H^1(L,G),\]
est un isomorphisme.
\end{cor}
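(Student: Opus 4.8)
The plan is to deduce the corollary directly from Proposition~\ref{proposition isomorphisme des galois pour les ext inseparables}, which already supplies the isomorphism of absolute Galois groups $\pi\colon\Gamma_L\xrightarrow{\sim}\Gamma_K$. First I would pin down compatible separable closures. Starting from a separable closure $\bar L$ of $L$, let $\bar K$ be the separable closure of $K$ inside $\bar L$; it is genuinely a separable closure of $K$, since for any finite separable extension $K'/K$ the extension $K'L/L$ is finite and separable, hence embeds into $\bar L$, with $K'$ landing inside $\bar K$. Moreover $\bar L/\bar K$ is purely ins\'eparable: applying Proposition~\ref{proposition isomorphisme des galois pour les ext inseparables} over $\bar K$ to the finite purely ins\'eparable extension $\bar K L/\bar K$ shows that $\Gamma_{\bar K L}\cong\Gamma_{\bar K}=1$, so $\bar K L$ is separably closed, and since $L\subseteq\bar K L\subseteq\bar L$ with $\bar L/\bar K L$ separable we conclude $\bar L=\bar K L$. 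With these choices $\pi$ is literally the restriction map $\sigma\mapsto\sigma|_{\bar K}$, and the restriction morphism $H^1(K,G)\to H^1(L,G)$ is by definition the map on cohomology induced by $\pi$ together with the inclusion $G(\bar K)\hookrightarrow G(\bar L)$.

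Next I would observe that this inclusion is $\pi$-equivariant: the action of $\sigma\in\Gamma_L$ on $G(\bar L)$ restricts, on the subset $G(\bar K)$, to the action of $\pi(\sigma)$; and by hypothesis $G(\bar K)=G(\bar L)$, so the inclusion is an equality. Since $\pi$ is an isomorphism of profinite groups, precomposition $c\mapsto c\circ\pi$ is then a bijection from the set of continuous $1$-cocycles of $\Gamma_K$ with values in $G(\bar K)$ onto the set of continuous $1$-cocycles of $\Gamma_L$ with values in $G(\bar L)$, and it carries coboundaries to coboundaries; hence it induces a bijection $H^1(K,G)\xrightarrow{\sim}H^1(L,G)$, which is exactly the restriction map. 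When $G$ is commutative this bijection is manifestly a group homomorphism, hence an isomorphism of abelian groups. For the \'etale-cohomology formulation there is nothing further to check, since for the spectrum of a field the \'etale $H^1$ with coefficients in such a $G$ coincides with the Galois cohomology of $\Gamma_K$ acting on $G(\bar K)$.

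Since all the substance is already contained in Proposition~\ref{proposition isomorphisme des galois pour les ext inseparables}, I do not expect a genuine obstacle; the only points that need a little care are the bookkeeping of separable closures — so that $\pi$ is literally the map used to define restriction of cohomology classes — and the verification that the identification $G(\bar K)=G(\bar L)$ intertwines the two Galois actions through $\pi$, both of which are routine.
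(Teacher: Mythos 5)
Your proposal is correct and follows essentially the same route as the paper: both deduce the corollary from the isomorphism $\Gamma_L\xrightarrow{\sim}\Gamma_K$ of la proposition \ref{proposition isomorphisme des galois pour les ext inseparables}, together with the equality $G(\bar K)=G(\bar L)$ compatible with the two Galois actions, which identifies cocycles (and the cohomology relation) on both sides. Your additional bookkeeping on compatible separable closures and the identification $\bar L=\bar K L$ only makes explicit what the paper leaves implicit.
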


\begin{proof}
En effet, puisque le morphisme $\Gamma_L\to\Gamma_K$ est un isomorphisme compatible avec l'action sur $G(\bar K)=G(\bar L)$, le morphisme
$H^1(\Gamma_K,G(\bar K))\to H^1(\Gamma_L,G(\bar L))$ est un isomorphisme du point de vue de la cohomologie de groupes profinis, d'o\`u le r\'esultat.
\end{proof}

\end{document}